\DeclareMathAlphabet{\mathcal}{OMS}{cmsy}{m}{n}
\newenvironment{Case}[1]{\smallskip\noindent\textit{Case #1}\endgraf
   \nobreak\noindent\ignorespaces}{\endgraf\smallskip}
\newcommand{\opp}{^{\mathrm{o}}}
\newcommand{\cp}{\mathrm{cp}}
\newcommand{\leftright}{_{\mathsf{LR}}}
\newcommand{\Ini}{\mathrm{Ini}}
\newcommand{\lside}{_{\mathsf{L}}}
\newcommand{\rside}{_{\mathsf{R}}}
\newcommand{\lessthan}[2]{#1\,{\downarrow}\,#2}
\newcommand{\morethan}[2]{#1\,{\uparrow}\,#2}
\newcommand{\Lessthan}[2]{#1\,{\Downarrow}\,#2}
\newcommand{\Morethan}[2]{#1\,{\Uparrow}\,#2}
\newcommand{\perm}{\mathrm{perm}}
\newcommand{\row}{\mathrm{row}}
\newcommand{\col}{\mathrm{col}}
\newcommand{\lex}{\mathrm{lex}}
\newcommand{\citex}[2]{\textup{\!\cite[#1]{#2}\,}}
\def\ttab(#1){\text{\tiny\ytableaushort{#1}}}
\newcommand{\word}{\mathrm{word}}
\DeclareMathOperator{\CSTD}{\mathrm{CStd}}
\DeclareMathOperator{\STD}{\mathrm{Std}}
\DeclareMathOperator{\shape}{\mathrm{Shape}}
\DeclareMathOperator{\neb}{\mathrm{-neb}}
\DeclareMathOperator{\ind}{\mathrm{ind}}
\DeclareMathOperator{\SD}{\mathrm{SD}}
\DeclareMathOperator{\SA}{\mathrm{SA}}
\DeclareMathOperator{\WD}{\mathrm{WD}}
\DeclareMathOperator{\WA}{\mathrm{WA}}
\DeclareMathOperator{\D}{\mathrm{D}}
\DeclareMathOperator{\A}{\mathrm{A}}
\DeclareMathOperator{\RS}{\mathrm{RS}}
\DeclareMathOperator{\Tab}{\mathrm{Tab}}
\title{Type \(A\)-admissible cells are Kazhdan--Lusztig}
\author{\firstname{Van} \middlename{Minh} \lastname{Nguyen}}
\address{School of Mathematics and Statistics\\
University of Sydney \\
NSW 2006\\
Australia}
\email{Van.Nguyen@sydney.edu.au}
\begin{document}

\begin{abstract}
Admissible \(W\!\)-graphs were defined and combinatorially characterised by
Stembridge in \cite{stem:addwgraph}. The theory of admissible \(W\!\)-graphs
was motivated by the need to construct \(W\!\)-graphs for Kazhdan--Lusztig
cells, which play an important role in the representation theory of Hecke algebras,
without computing Kazhdan--Lusztig polynomials. In this paper, we shall show
that type \(A\)-admissible \(W\!\)-cells are Kazhdan--Lusztig as conjectured
by Stembridge in his original paper.

\keywords{Coxeter groups \and Hecke algebras \and \(W\!\)-graphs \and
Kazhdan--Lusztig polynomials \and cells}
\end{abstract}

\maketitle

\section{Introduction}
\label{intro}
Let \((W,S)\) be a Coxeter system and \(\mathcal{H}(W)\) its Hecke algebra over
\(\mathbb{Z}[q,q^{-1}]\), the ring of Laurent polynomials in the indeterminate \(q\).
We are interested in representations of \(W\) and \(\mathcal{H}(W)\) that can be described
by combinatorial objects, namely~\(W\!\)-graphs. In particular, we are interested in~\(W\!\)-graphs
corresponding to Kazhdan--Lusztig left cells.

In principle, when computing left cells one encounters the problem of having to compute a large number of
Kazhdan--Lusztig polynomials before any explicit description of their~\(W\!\)-graphs can be given.
In \cite{stem:addwgraph}, Stembridge introduced \textit{admissible\/} \(W\!\)-graphs; these can
be described combinatorially and can be constructed without calculating Kazhdan--Lusztig polynomials.
Moreover, the \(W\!\)-graphs corresponding to Kazhdan--Lusztig left cells are
admissible. Stembridge showed in~\cite{stem:finitewgraph} that for any given finite
\(W\) there are only finitely many stongly connected admissible~\(W\!\)-graphs. It
was conjectured by Stembridge that in type \(A\) all strongly connected admissible
\(W\)-graphs are isomorphic to Kazhdan--Lusztig left cells. In this paper we
complete the proof of Stembridge's conjecture.

We shall work with \textit{\(S\)-coloured graphs} (as defined in
Section~3 below), of which \(W\!\)-graphs are examples. These graphs
have both edges (bi-directional) and arcs (uni-directional). A
\textit{cell\/} in such a graph \(\Gamma\) is by definition
a strongly connected component of~\(\Gamma\), and a \textit{simple part\/}
of \(\Gamma\) is a connected component of the graph obtained from \(\Gamma\)
by removing all arcs and all edges of weight greater than~1.
A \textit{simple component\/} of \(\Gamma\) is the full subgraph of \(\Gamma\)
spanned by a simple part. If \(\Gamma\) is an admissible \(W\!\)-graph,
simple components of \(\Gamma\) are also called \textit{molecules\/}.

Admissible \(W\!\)-cells and admissible simple components are by definition cells
and simple components of admissible \(W\!\)-graphs.

In~\cite{ChuMic:typeAMol}, Chmutov established the first step towards
the proof of Stembridge's conjecture, showing that the simple part
of an admissible molecule of type \(A_{n-1}\) is isomorphic to the simple
part of a Kazhdan--Lusztig left cell. The proof made use of the
axiomatisation of dual equivalence graphs on standard tableaux generated
by dual Knuth equivalence relations, given in an earlier paper by
Assaf~\cite{assaf:dualequigraphs}. Our proof makes use of Chmutov's
result.

The proof of Proposition~\ref{monomolecularadmcellsareKL} furnishes an
algorithm for computation of \(W\!\)-graphs for left cells in type~\(A_{n-1}\)
(avoiding the computation of Kazhdan--Lusztig polynomials). This has been
implemented in Magma and checked for a variety of partitions \(\lambda\)
with \(n\le 16\), and module dimension up to 171600 (for \(\lambda=(5,5,3,3)\)).
The Magma code is available on request.

We organize the paper in the following sections. Section~\ref{sec:2} and
Section~\ref{sec:2'} deal
with the background on Coxeter groups and the corresponding Hecke algebras.
In Section~\ref{sec:3} the definition and properties of \(W\!\)-graphs are
recalled.
In Section~\ref{sec:5}, we recall the definitions of admissible \(W\!\)-graphs
and molecules and how these can be characterised combinatorially.
Section~\ref{sec:6} presents combinatorics of tableaux and the relationship
between Kazhdan--Lusztig left cells, dual Knuth equivalence classes and
admissible molecules. We introduce the
paired dual Knuth equivalence relation in Section~\ref{sec:8}. In
Section~\ref{sec:8ex}, we prove the first main result, namely that
admissible \(W\!\)-graphs in type \(A_{n-1}\) are ordered. The proof that
type \(A\)-admissible cells are isomorphic to Kazhdan--Lusztig left cells
is completed in Section~\ref{sec:9}.

\section{Coxeter groups}
\label{sec:2}
Let \((W,S)\) be a Coxeter system and \(l\) the length function on \(W\).
The Coxeter group \(W\) comes equipped with the left weak
order, the right weak order and the Bruhat order, respectively denoted by
\(\leqslant\lside\), \(\leqslant\rside\) and \(\leqslant\), and defined as follows.
\begin{defi}
\begin{enumerate}[label=(\textit{\roman*})]
\item The left weak order is the partial order generated by
the relations \(x\leqslant\lside y\) for all \(x,\,y\in W\)
with \(l(x)<l(y)\) and \(yx^{-1}\in S\).
\item The right weak order is the partial order generated by
the relations \(x\leqslant\rside y\) for all \(x\,y\in W\)
with \(l(x)<l(y)\) and \(x^{-1}y\in S\).
\item The Bruhat order is the partial order generated by
the relations \(x\leqslant y\) for all \(x,\,y\in W\)
with \(l(x)<l(y)\) and \(yx^{-1}\) conjugate to an element of~\(S\).
\end{enumerate}
\end{defi}
Observe that the weak orders are characterized by the property that
\(x\leqslant\rside xy\) and \(y\leqslant\lside xy\) whenever \(l(xy)=l(x)+l(y)\).

For each \(J\subseteq S\) let \(W_{J}\) be the (standard parabolic)
subgroup of \(W\) generated by~\(J\), and let \(D_{J}\) the set of
distinguished (or minimal) representatives of the left cosets of
\(W_{J}\) in~\(W\). Thus each \(w \in W\) has a unique factorization
\(w = du\) with \(d \in D_{J}\) and \(u \in W_{J}\), and \(l(du) = l(d) +l(u)\)
holds for all \(d \in D_{J}\) and \(u \in W_{J}\). It is easily seen
that \(D_J\) is an ideal of \((W,\leqslant\lside)\), in the sense that if
\(w\in D_J\) and \(v\in W\) with \(v\leqslant\lside w\) then \(v\in D_J\).

If \(W_J\) is finite then we denote the longest element of \(W_{J}\)
by~\(w_{J}\). By \cite[Lemma 2.2.1]{gecpfei:charHecke}, if \(W\) is finite then
\(D_{J} = \{d\in W\mid d\leqslant\lside d_J\}\), where \(d_{J}\) is
the unique element in \(D_{J} \cap w_{S}W_{J}\).

\section{Hecke algebras}
\label{sec:2'}

Let \(\mathcal{A} = \mathbb{Z}[q, q^{-1}]\), the ring of Laurent
polynomials with integer coefficients in the indeterminate \(q\),
and let \(\mathcal{A}^{+} = \mathbb{Z}[q]\).
The Hecke algebra of a Coxeter system \((W, S)\), denoted by
\(\mathcal{H}(W)\) or simply by \(\mathcal{H}\),
is an associative  \(\mathcal{A}\)-algebra with \(\mathcal{A}\)-basis
\(\{\,H_w\mid w\in W\,\}\) satisfying
\begin{align*}
H^{2}_{s}        &= 1 + (q -q^{-1})H_{s} \quad \text{for all \(s \in S\)},\\
H_{xy}&= H_{x}H_{y}   \quad
\text{for all \(x,\,y \in W\) with \(l(xy)=l(x)+l(y)\).}
\end{align*}
We let \(a \mapsto \overline{a}\) be the involutory automorphism of
\(\mathcal{A}=\mathbb{Z}[q,q^{-1}]\) defined by \(\overline{q}=q^{-1}\).
It is well known that this extends to an involutory automorphism of
\(\mathcal{H}\) satisfying
\begin{equation*}
\overline{H_{s}} = H_{s}^{-1} = H_{s} - (q - q^{-1}) \quad
\text{for all \(s \in S\)}.
\end{equation*}

If \(J\subseteq S\) then \(\mathcal{H}(W_{J})\), the Hecke algebra
associated with the Coxeter system \((W_{J},J)\), is isomorphic to
the subalgebra of \(\mathcal{H}(W)\) generated by
\(\{\,H_{s} \mid s \in J\,\}\). We shall identify \(\mathcal{H}(W_{J})\)
with this subalgebra.

\section{\textit{W-}graphs}
\label{sec:3}

Given a set \(S\), we define an \textit{\(S\)-coloured graph\/} to be
a triple \(\Gamma=(V, \mu, \tau)\) consisting of a set \(V\!\), a
function \(\mu\colon V \times V\to \mathbb{Z}\) and a function
\(\tau\) from \(V\) to \(\mathcal{P}(S)\), the power set of \(S\).
The elements of \(V\) are the \textit{vertices} of \(\Gamma\), and if
\(v\in V\) then \(\tau(v)\) is the \textit{colour\/} of the vertex.
To interpret \(\Gamma\) as a (directed) graph, we adopt the convention that
if \(v,\,u\in V\) then \((v,u)\) is an arc of \(\Gamma\) if and only if
\(\mu(u,v) \neq 0\) and \(\tau(u)\nsubseteq\tau(v)\), and
\(\{v,u\}\) is an edge of \(\Gamma\) if and only if \((v,u)\) and
\((u,v)\) are both arcs. We call \(\mu(u,v)\) the \textit{weight\/}
of the arc~\((v,u)\). An edge \(\{u,v\}\) is said to be \textit{symmetric}
if \(\mu(u,v)=\mu(v,u)\), and \textit{simple} if \(\mu(u,v)=\mu(v,u)=1\).

If \((W,S)\) is a Coxeter system, then a \textit{\(W\!\)-graph\/} is
an \(S\)-coloured graph \(\Gamma=(V, \mu, \tau)\) such that the free
\(\mathcal{A}\)-module with basis~\(V\) admits an \(\mathcal{H}\)-module
structure satisfying
\begin{equation}\label{wgraphdef}
     H_{s}v = \begin{cases}
              -q^{-1}v \quad &\text{if \(s \in \tau(v)\)}\\
              qv + \sum_{\{u \in V \mid s \in \tau(u)\}}\mu(u,v)u
              \quad &\text{if \(s \notin \tau(v)\)},
     \end{cases}
\end{equation}
for all \(s \in S\) and \(v \in V\!\).

We shall write \(M_\Gamma\) for the \(\mathcal{H}\)-module afforded by
the \(W\!\)-graph~\(\Gamma\) in the manner described above. Since \(M_\Gamma\)
is \(\mathcal{A}\)-free with basis~\(V\) it admits an
\(\mathcal{A}\)-semilinear involution \(\alpha \mapsto \overline{\alpha}\),
uniquely determined by the condition
that \(\overline v=v\) for all \(v\in V\). We call this the
\textit{bar involution\/} on \(M_\Gamma\). It is a consequence of
\eqref{wgraphdef} that \(\overline{h\alpha}=\overline{h}\overline{\alpha}\)
for all \(h\in \mathcal{H}\) and \(\alpha \in M_\Gamma\).

We shall sometimes write \(\Gamma(V)\) for the \(W\!\)-graph with vertex
set~\(V\!\), if the functions \(\mu\) and~\(\tau\) are clear from the context.

Following~\cite{kazlus:coxhecke}, define a preorder
\(\leqslant_{\Gamma}\) on \(V\) as follows: \(u \leqslant_{\Gamma} v\)
if there exists a sequence of vertices \(u = x_{0},x_{1},
\ldots, x_{m} = v\) such that \(\tau(x_{i-1}) \nsubseteq
\tau(x_{i})\) and \(\mu(x_{i-1},x_{i}) \neq 0\) for all \(i \in
[1,m]\). That is, \(u \leqslant_{\Gamma} v\) if there is a directed path
from \(v\) to \(u\) in~\(\Gamma\). Let \(\sim_{\Gamma}\) be
the equivalence relation determined by this preorder.
The equivalence classes with respect to \(\sim_{\Gamma}\)
are called the \textit{cells} of \(\Gamma\). That is, the cells are
the strongly connected components of the directed graph \(\Gamma\).
Each equivalence class,
regarded as a full subgraph of \(\Gamma\), is itself a \(W\!\)-graph,
with the \(\mu\) and \(\tau\) functions being the restrictions
of those for \(\Gamma\). The preorder \(\leqslant_{\Gamma}\) induces a partial
order on the set of cells: if \(\mathcal{C}\) and \(\mathcal{C}'\) are
cells, then \(\mathcal{C} \leqslant_{\Gamma} \mathcal{C}'\) if \(u \leqslant_{\Gamma} v\)
for some \(u \in \mathcal{C}\) and \(v \in \mathcal{C}'\).

It follows readily from \eqref{wgraphdef} that a subset of
\(V\) spans a \(\mathcal{H}(W)\)-submodule of \(M_{\Gamma}\) if
and only if it is \(\Gamma\)-closed, in the sense that for every vertex
\(v\) in the subset, each \(u\in V\) satisfying \(\mu(u,v)\ne 0\) and
\(\tau(u)\nsubseteq\tau(v)\) is also in the subset.
Thus \(U\subseteq V\) is a \(\Gamma\)-closed subset of \(V\) if and
only if \(U=\bigcup_{v\in U}\{\,u\in V\mid u\leqslant_{\Gamma}v\,\}\).
Clearly, a subset of \(V\) is \(\Gamma\)-closed if and only if it is the
union of cells that form an ideal with respect to the partial order
\(\leqslant_\Gamma\) on the set of cells.

Suppose that \(U\) is a \(\Gamma\)-closed subset of \(V\), and let
\(\Gamma(U)\) and \(\Gamma(V \setminus U)\) be the full subgraphs of
\(\Gamma\) induced by \(U\) and \(V \setminus U\), with edge weights
and vertex colours inherited from \(\Gamma\). Then \(\Gamma(U)\) and
\(\Gamma(V \setminus U)\) are themselves \(W\!\)-graphs, and
\[
M_{\Gamma(V \setminus U)} \cong M_{\Gamma(V)}/M_{\Gamma(U)}
\]
as \(\mathcal{H}(W)\)-modules.

It is clear that if \(J\subseteq S\) and \(\Gamma=(V,\mu,\tau)\) is a
\(W\!\)-graph then \(\Gamma_J=(V,\mu,\tau_J)\) is a \(W_J\)-graph, where the function
\(\tau_J\colon V\to\mathcal{P}(J)\) is given by \(\tau_J(v)=\tau(v)\cap J\).

We end this section by recalling the original Kazhdan--Lusztig \(W\!\)-graph
for the regular representation of \(\mathcal{H}(W)\).
For each \(w \in W\), define the sets
\begin{align*}
\mathcal{L}(w) &= \{s \in S \mid l(sw) < l(w)\}\\
\noalign{\nobreak\vskip-6 pt\hbox{and}\nobreak\vskip-6 pt}
\mathcal{R}(w) & = \{s \in S \mid l(ws) < l(w)\},
\end{align*}
the elements of which are called the left descents of \(w\) and the right descents
of \(w\), respectively. Kazhdan and Lusztig give a recursive procedure that defines
polynomials \(P_{y,w}\) whenever \(y,\,w \in W\) and \(y < w\). These polynomials satisfy
\(\deg P_{y,w}\leqslant\frac12(l(w)-l(y)-1)\), and \(\mu_{y,w}\) is defined to be
the leading coefficient of \(P_{y,w}\) if the degree is \(\frac12(l(w)-l(y)-1)\),
or 0 otherwise.

Define \(W\opp=\{\,w\opp\mid w\in W\,\}\) to be the group opposite to \(W\!\), and
observe that \((W\times W\opp\!,\,S\sqcup S\opp)\) is a Coxeter system.
Kazhdan and Lusztig show that if \(\mu\) and \(\tau\) are defined by the formulas
\begin{align*}
\mu(w,y)=\mu(y,w)       &= \begin{cases}
                            \mu_{y,w} &\quad \text{if \(y < w\)}\\
                            \mu_{w,y} &\quad \text{if \(w < y\)}
                            \end{cases}\\
\bar\tau(w)                 &= \mathcal{L}(w)\sqcup\mathcal{R}(w)\opp
\end{align*}
then \((W,\mu,\bar\tau)\) is a \((W\times W\opp)\)-graph.
Thus \(M=\mathcal{A}W\) may be regarded as an
\((\mathcal{H},\mathcal{H})\)-bimodule. Furthermore, the construction
produces an explicit \((\mathcal{H},\mathcal{H})\)-bimodule isomorphism
\(M\cong\mathcal{H}\).

It follows easily from the definition of \(\mu_{y,w}\) that \(\mu(y,w)\neq 0\) only
if \(l(w)-l(y)\) is odd; thus \((W,\mu,\bar\tau)\) is a bipartite graph. The non-negativity
of all coefficients of the Kazhdan--Lusztig polynomials, conjectured in
\cite{kazlus:coxhecke}, has been proved by Elias
and Williamson in~\cite{bengeogedi:howgesoerge}.

Since \(W\) and \(W\opp\) are standard parabolic subgroups of~\(W\times W\opp\), it follows
that \(\Gamma=(W,\mu,\tau)\) is a \(W\!\)-graph and \(\Gamma\opp=(W,\mu,\tau\opp)\) is a
\(W\opp\!\)-graph, where \(\tau\) and \(\tau\opp\) are defined by
\(\tau(w)=\mathcal{L}(w)\) and \(\tau\opp(w)=\mathcal{R}(w)\opp\), for all \(w\in W\).

In accordance with the theory described above, there are preorders on~\(W\)
determined by the \((W\times W\opp)\)-graph structure, the \(W\!\)-graph structure
and the \(W\opp\!\)-graph structure. We call these the \textit{two-sided preorder}
(denoted by \(\preceq\leftright\)), the \textit{left preorder\/} (\(\preceq\lside\))
and the \textit{right preorder} (\(\preceq\rside\)). The
corresponding cells are the \textit{two-sided cells}, the \textit{left cells\/}
and the \textit{right cells}.

\section{Admissible \textit{W-}graphs}
\label{sec:5}

Let \((W,S)\) be a Coxeter system, not necessarily finite. For \(s,\,t\in S\),
let \(m(s,t)\) be the order of \(st\) in \(W\!\). Thus \(\{s,t\}\) is a
bond in the Coxeter diagram if and only if \(m(s,t)>2\).

\begin{defi}\citex{Definition 2.1}{stem:addwgraph}\label{admissibleWg}
An \(S\)-coloured graph \(\Gamma = (V,\mu,\tau)\) is
\textit{admissible\/} if the following three conditions are satisfied:
\begin{enumerate}[label=(\textit{\roman*}),topsep=1 pt]
\item \(\mu(V\times V) \subseteq \mathbb{N}\);
\item \(\Gamma\) is symmetric, that is, \(\mu(u,v) = \mu(v,u)\) if \(\tau(u) \nsubseteq \tau(v)\)
and \(\tau(v) \nsubseteq \tau(u)\);
\item \(\Gamma\) has a bipartition.
\end{enumerate}
\end{defi}

\begin{rema}\label{KLgraphisadmissibl}
As we have seen in Sec.~\ref{sec:3}, the Kazhdan--Lusztig graph \(\Gamma_W=\Gamma(W,\emptyset)\) is
admissible. So its cells are admissible.
\end{rema}
Let \((W,S)\) be a braid finite Coxeter system. (That is, \(m(s,t)<\infty\) for all
\(s,t\in S\).)

\begin{defi}\citex{Definition 2.1}{stem:morewgraph}\label{compatibility}
An \(S\)-coloured graph \(\Gamma = (V,\mu,\tau)\) is said to satisfy the
\textit{\(W\!\)-Compatibility Rule\/} if for all \(u,\,v\in V\) with \(\mu(u,v)\ne 0\),
each \(i \in \tau(u)\setminus \tau(v)\) and each \(j\in\tau(v)\setminus\tau(u)\)
are joined by a bond in the Coxeter diagram of \(W\).
\end{defi}
By \cite[Proposition 4.1]{stem:addwgraph}, every \(W\!\)-graph satisfies the
\(W\!\)-Compatibility Rule.

\begin{defi}\citex{Definition 2.3}{stem:morewgraph}\label{simplicity}
An admissible \(S\)-coloured graph \(\Gamma = (V,\mu,\tau)\) satisfies the
\textit{\(W\!\)-Simplicity Rule\/} if for all \(u,\,v\in V\) with \(\mu(u,v) \neq 0\),
either \(\tau(v) \subsetneqq \tau(u)\) and \(\mu(v,u) = 0\)
or \(\tau(u)\) and \(\tau(v)\) are not comparable and \(\mu(u,v) = \mu(v,u) = 1\).
\end{defi}
The Simplicity Rule implies that if \(\mu(u,v) \neq 0\) and \(\mu(v,u) \neq 0\)
then \(\mu(u,v)=\mu(v,u)=1\). That is, all edges are simple. Furthermore if
\(\{u,v\}\) is an edge then \(\tau(u)\) and \(\tau(v)\) are not comparable,
so that there exist at least one \(i\in \tau(u)\setminus\tau(v)\) and
at least one \(j\in \tau(v)\setminus\tau(u)\). If the Compatibility Rule is
also satisfied, then \(\{i,j\}\) must be a bond in the Coxeter
diagram.

If \((W,S)\) is simply-laced then every \(W\!\)-graph with non-negative
integer edge weights satisfies the Simplicity Rule, even if it fails to be
admissible: see \cite[Remark 4.3]{stem:addwgraph}.

\begin{defi}\citex{Definition 2.4}{stem:morewgraph}\label{bonding}
An admissible \(S\)-coloured graph \(\Gamma = (V,\mu,\tau)\) satisfies the
\textit{\(W\!\)-Bonding Rule} if for all \(i,j\in S\) with \(m_{i,j}>2\),
the vertices \(v\) of \(\Gamma\) with \(i\in\tau(v)\) and \(j\notin\tau(v)\)
or \(i\notin\tau(v)\) and \(j\in\tau(v)\),
together with edges of \(\Gamma\) that include the label \(\{i,j\}\),
form a disjoint union of Dynkin diagrams of types \(A\), \(D\) or \(E\)
with Coxeter numbers that divide~\(m(i,j)\).
\end{defi}

\begin{rema}\label{molecularbondingrule}
In the case \(m(i,j)=3\), the \(W\!\)-Bonding Rule becomes the
\textit{\(W\!\)-Simply-Laced Bonding Rule}: for every vertex \(u\) such that
\(i\in\tau(u)\) and \(j\notin\tau(u)\), there exists a unique
adjacent vertex \(v\) such that \(j\in\tau(v)\) and \(i\notin\tau(v)\).
\end{rema}
By \cite[Proposition 4.4]{stem:addwgraph}, admissible \(W\!\)-graphs satisfy the
\(W\!\)-Bonding Rule.

Let \(\Gamma=(V,\mu,\tau)\) be an \(S\/\)-coloured graph. Let \(i,j\in S\) with
\(m(i,j) = p \geqslant 2\). Suppose that \(u,v\in V\) with \(i,j\notin \tau(u)\) and \(i,j\in\tau(v)\).
For \(2\leqslant r\leqslant p\), a directed path \((u,v_1,\ldots,v_{r-1},v)\) in \(\Gamma\)
is said to be alternating of type \((i,j)\)
if \(i\in \tau(v_k)\) and \(j\notin\tau(v_k)\) for odd \(k\) and
\(j\in\tau(v _k)\) and \(i\notin\tau(v_k)\) for even \(k\). Define
\begin{equation}\label{altsums}
N^r_{i,j}(\Gamma;u,v)=\sum_{v_1,\ldots,v_{r-1}}\mu(v,v_{r-1})\mu(v_{r-1}v_{r-2})\cdots \mu(v_2,v_1)\mu(v_1,u),
\end{equation}
where the sum extends over all paths \((u,v_1,\ldots,v_{r-1},v)\) that are
alternating of type \((i,j)\).

Note that if \(\Gamma\) is admissible then all terms in \eqref{altsums} are positive.

\begin{defi}\citex{Definition 2.9}{stem:morewgraph}\label{polygon}
An admissible \(S\/\)-coloured graph \(\Gamma = (V,\mu,\tau)\) satisfies the
\textit{\(W\/\)-Polygon Rule} if for all \(i,j\in S\) and all \(u, v\in V\) such
that \(i,j\in\tau(v)\setminus \tau(u)\), we have
\begin{equation*}
N_{i,j}^{r}(\Gamma;u,v) = N_{j,i}^{r}(\Gamma;u,v) \quad \text{for all
\(r\) such that \(2 \leqslant r \leqslant m(i,j)\)}.
\end{equation*}
\end{defi}
By \cite[Proposition 4.7]{stem:addwgraph}, all \(W\!\)-graphs with integer edge
weights satisfy the Polygon Rule.

The following result provides a necessary and sufficient condition for an admissible \(S\)-coloured graph
to be a \(W\!\)-graph.

\begin{theo}\citex{Theorem 4.9}{stem:addwgraph}\label{combinatorialCharacterisation}
An admissible \(S\)-coloured graph \(\Gamma = (V,\mu,\tau)\) is a \(W\!\)-graph
if and only if it satisfies the \(W\!\)-Compatibility Rule, the \(W\!\)-Simplicity Rule, the \(W\!\)-Bonding Rule
and the \(W\!\)-Polygon Rule.
\end{theo}
It is convenient to introduce a weakened version of the \(W\!\)-polygon
rule.
\begin{defi}\citex{Definition 2.9}{stem:morewgraph}\label{localpolygon}
An admissible \(S\)-coloured graph \(\Gamma = (V,\mu,\tau)\) satisfies the
\textit{\(W\!\)-Local Polygon Rule} if for all \(i,j\in S\), all \(r\) such that
\(2\leqslant r\leqslant m(i,j)\), and all \(u, v\) such that
\(i,j\in\tau(v)\setminus \tau(u)\), we have
\(N_{i,j}^{r}(\Gamma;u,v) = N_{j,i}^{r}(\Gamma;u,v)\)
under any of the following conditions:
\begin{enumerate}[label=(\textit{\roman*}),topsep=1 pt]
\item \(r=2\), and \(\tau(u)\setminus\tau(v)\ne\emptyset\);
\item \(r=3\), and there exist \(k,l\in\tau(u)\setminus\tau(v)\)
(not necessarily distinct) such that
\(\{k,i\}\) and \(\{j,l\}\) are not bonds in the Dynkin diagram of \(W\);
\item \(r\geqslant 4\), and there is \(k\in\tau(u)\setminus\tau(v)\) such that
\(\{k,i\}\) and \(\{j,k\}\) are not bonds in the Dynkin diagram of \(W\).
\end{enumerate}
\end{defi}

\begin{defi}\citex{Definition 3.3}{stem:morewgraph}\label{moleculargraph}
An admissible \(S\)-coloured graph is called a \textit{\(W\!\)-molecular graph} if it satisfies
the \(W\!\)-Compatibility Rule, the \(W\)-Simplicity Rule, the \(W\!\)-Bonding Rule
and \(W\!\)-Local Polygon Rules.
\end{defi}
A \textit{simple part\/} of an \(S\)-coloured graph \(\Gamma\) is a connected
component of the graph obtained by removing all arcs and all non-simple edges,
and a \textit{simple component\/} of \(\Gamma\) is the full subgraph spanned by a
simple part.

\begin{defi}\label{molecule}
A \textit{\(W\!\)-molecule} is a \(W\!\)-molecular graph that has only
one simple part.
\end{defi}

\begin{rema}\label{WgraphisWmoleculargraph}
If \(\Gamma\) is an admissible \(W\!\)-graph then its simple components are
\(W\!\)-molecules, by~\cite[Fact 3.1.]{stem:morewgraph}.
More generally, by~\cite[Fact 3.2.]{stem:morewgraph}, the full subgraph of
\(\Gamma\) induced by any union of simple parts is a \(W\!\)-molecular graph.
\end{rema}

If \(M=(V,\mu,\tau)\) is an \(S\)-coloured graph and \(J\subseteq S\) then
the \(W_J\)-restriction of \(M\) is defined to be the \(J\)-coloured graph
\(M{\downarrow}_{J} = (V,\underline{\mu},\underline{\tau})\)
where \(\underline{\tau}(v)=\tau(v)\cap J\) for all \(v\in V\) and
\[
\underline{\mu}(u,v)=
\begin{cases}
\mu(u,v)&\text{ if \(\underline{\tau}(u)\nsubseteq\underline{\tau}(v)\),}\\
0&\text{ if \(\underline{\tau}(u)\subseteq\underline{\tau}(v)\).}
\end{cases}
\]
It is easy to check that if \(M=(V,\mu,\tau)\) is a \(W\!\)-molecular graph
\(M{\downarrow}_{J}\) is a \(W_J\)-molecular graph. The \(W_J\)-molecules of
\(M{\downarrow}_J\) are called \(W_J\)-submolecules of \(M\).

\begin{prop}\citex{Proposition 2.7}{ChuMic:typeAMol}\label{arctransport}
Let \((W,S)\) be a Coxeter system and \(M=(V,\mu,\tau)\)
a \(W\!\)-molecular graph, and let \(J=\{r,s,t\}\subseteq S\) with
\(m(s,t)=3\) and \(r\notin\{s,t\}\). Suppose that \(v,v',u,u'\in V\),
and that \(\{v,v'\}\) and \(\{u,u'\}\) are simple edges with
\begin{align*}
\qquad&&\tau(v)\cap J &= \{s\},&\tau(u)\cap J &= \{s,r\},&&\qquad\\[-1.5pt]
\qquad&&\tau(v')\cap J &= \{t\},&\tau(u')\cap J &= \{t,r\}.&&\qquad
\end{align*}
Then \(\mu(u,v) = \mu(u',v')\).
\end{prop}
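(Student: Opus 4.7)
The plan is to apply the $W$-Local Polygon Rule (Definition~\ref{localpolygon}, condition~(\textit{i})) to the pair $(v,u')$ with colour labels $i=r$ and $j=t$. Because $\tau(u')\cap J=\{t,r\}$ and $\tau(v)\cap J=\{s\}$, we have $r,t\in\tau(u')\setminus\tau(v)$, while $s\in\tau(v)\setminus\tau(u')$ ensures that $\tau(v)\setminus\tau(u')$ is nonempty. Hence the hypothesis of condition~(\textit{i}) is satisfied at length $2$, regardless of the value of $m(r,t)$, and we obtain the polygon identity
\[
N^{2}_{r,t}(M;v,u')=N^{2}_{t,r}(M;v,u').
\]

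Both sides are sums of products $\mu(u',v_1)\mu(v_1,v)$ over intermediate vertices $v_1$ of specified colour. One then singles out the contributions coming from the two canonical alternating paths. The vertex $v_1=u$ satisfies $r\in\tau(u)$ and $t\notin\tau(u)$; because $\{u,u'\}$ is a simple edge, its contribution to $N^{2}_{r,t}(M;v,u')$ is $\mu(u',u)\mu(u,v)=\mu(u,v)$. Symmetrically, $v_1=v'$ contributes $\mu(u',v')$ to $N^{2}_{t,r}(M;v,u')$ via the simple edge $\{v,v'\}$. The desired equality $\mu(u,v)=\mu(u',v')$ therefore reduces to showing that all remaining (``spurious'') intermediate contributions cancel between the two sides.

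To produce the cancellation, I would construct a weight-preserving bijection between the spurious intermediates on the two sides using the simply-laced bonding rule for $\{s,t\}$ (Remark~\ref{molecularbondingrule}), which applies because $m(s,t)=3$. Every spurious $v_1$ with $s\in\tau(v_1)$ (in particular the ones with $\tau(v_1)\cap J=\{s,r\}$ contributing to the left) has a unique simple-edge partner $v_1^{*}$ with $\tau(v_1^{*})\cap\{s,t\}=\{t\}$. The plan is to argue---by analysing the $W_J$-submolecule of $M{\downarrow}_{J}$ containing the configuration---that $r\notin\tau(v_1^{*})$, so that $v_1^{*}$ contributes to the right-hand sum, and that the weight products agree: $\mu(u',v_1)\mu(v_1,v)=\mu(u',v_1^{*})\mu(v_1^{*},v)$. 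Intermediates with $s\notin\tau(v_1)$ (i.e.\ those with $\tau(v_1)\cap J\in\{\{r\},\{t\}\}$), to which the simply-laced bonding rule does not directly apply, would be handled by invoking the Local Polygon Rule a second time on the companion pair $(v',u)$ with $i=r$ and $j=s$, in which $\mu(u,v)$ and $\mu(u',v')$ again appear as distinguished contributions, and then iterating the same bijection argument.

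The principal obstacle is verifying that this bijection between spurious intermediates is well-defined, exhaustive, and weight-preserving. This will require the combined use of the Compatibility Rule (to constrain the possible local colour combinatorics of $v_1$ and $v_1^{*}$), the Simplicity Rule (to identify the arc/edge dichotomy at $\{v_1,v_1^{*}\}$), and the \(W_J\)-submolecular structure of \(M{\downarrow}_{J}\). Once this cancellation has been established, the two canonical contributions $\mu(u,v)$ and $\mu(u',v')$ are forced to coincide, which completes the proof.
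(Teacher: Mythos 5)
This proposition is cited by the paper from Chmutov (\cite[Proposition 2.7]{ChuMic:typeAMol}) and not proved there, so there is no in-paper proof to compare against; what follows is an assessment of the blind attempt on its own terms.

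Your setup is sound: the Local Polygon Rule with path length $2$ and colour pair $\{r,t\}$ does apply to $(v,u')$, and singling out $v_1=u$ on the $N^2_{r,t}$ side and $v_1=v'$ on the $N^2_{t,r}$ side correctly isolates $\mu(u,v)$ and $\mu(u',v')$ (using that the two simple edges contribute weight $1$). This reduces the proposition to cancelling the remaining terms, and you correctly identify that step as the crux. However, the cancellation argument you sketch has genuine gaps rather than mere details to fill in. First, for a spurious $v_1$ with $\tau(v_1)\cap J=\{s,r\}$ you want its $\{s,t\}$-bonding partner $v_1^{*}$ to satisfy $r\notin\tau(v_1^{*})$; but for the non-spurious instance $v_1=u$ the partner is $u'$ and $r\in\tau(u')$, so this property is special to the spurious intermediates and needs a real argument (it is not a formal consequence of the Bonding/Compatibility rules). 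Second, the asserted weight identity $\mu(u',v_1)\mu(v_1,v)=\mu(u',v_1^{*})\mu(v_1^{*},v)$ is precisely the kind of statement the proposition itself is designed to deliver (transporting an arc weight across a simple $\{s,t\}$-edge), so invoking it as a lemma is dangerously close to circular unless you can derive it from lower-rank restrictions independently.

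Third, the intermediates with $\tau(v_1)\cap J=\{r\}$ (and on the other side $\tau(v_1)\cap J=\{t\}$ or $\{s,t\}$) are not covered by your bijection, and the ``second Polygon Rule on $(v',u)$'' does not obviously dispose of them: that identity produces a second equation in which $\mu(u,v)$ and $\mu(u',v')$ trade places, together with a new collection of spurious terms involving the colour classes $\{r\}$, $\{t,r\}$, $\{s\}$, $\{s,t\}$. Neither adding nor subtracting the two equations makes the spurious parts cancel termwise without further input, and your proposal gives no mechanism for matching, say, the $\{r\}$-terms of the first equation against anything in the second. In short, the approach is pointed in the right general direction (Polygon Rule plus Bonding Rule), and the canonical contributions are isolated correctly, but the claimed cancellation bijection is neither well-defined nor weight-preserving as stated, and the residual colour classes are unhandled; the proof would need a substantially more careful restriction/induction argument to close these gaps.
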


\section{Tableaux, left cells and admissible molecules of type \textit{A}}
\label{sec:6}

For the remainder of this paper we shall focus attention on Coxeter systems
of type~\(A\).

A sequence of nonnegative integers \(\lambda =
(\lambda_{1},\lambda_{2} \ldots, \lambda_{k})\) is called a \textit{composition}
of~\(n\) if \(\sum_{i=1}^k\lambda_i=n\). The \(\lambda_i\) are called the \textit{parts\/}
of \(\lambda\). We adopt the convention that \(\lambda_i=0\) for all \(i>k\).
A composition \(\lambda = (\lambda_1,\lambda_2,\ldots,\lambda_k)\) is called a
\textit{partition} of \(n\) if \(\lambda_{1} \ge \cdots \ge \lambda_{k}>0\).
We define \(C(n)\) and \(P(n)\) to be the sets
of all compositions of~\(n\) and all partitions of~\(n\), respectively.

For each
\(\lambda=(\lambda_{1},\ldots,\lambda_{k})\in C(n)\) we define
\[
[\lambda]=\{\,(i,j)\mid1\leqslant i\leqslant\lambda_{j}\text{ and }1\leqslant
j\leqslant k\,\},
\]
and refer to this as the Young diagram of~\(\lambda\). Pictorially
\([\lambda]\) is represented by a top-justified array of boxes
with \(\lambda_{j}\) boxes in the \(j\)-{th} column; the pair
\((i,j)\in[\lambda]\) corresponds to the \(i\)-{th} box in the
\(j\)-{th} column. Thus for us the Young diagram of \(\lambda =
(3,4,2)\) looks like this:
\[  \vcenter{\hbox{\begin{Young}
        &&\cr
        &&&\cr
        &\cr
      \end{Young}.}\vskip3 pt}
\]
If \(\lambda\in P(n)\) then \(\lambda^*\) denotes the \textit{conjugate} of \(\lambda\), defined
to be the partition whose diagram is the transpose of \([\lambda]\); that is,
\([\lambda^*]=\{(j,i) \mid (i,j)\in[\lambda]\}\).

Let \(\lambda\in P(n)\). If \((i,j)\in[\lambda]\) and
\([\lambda]\setminus\{(i,j)\}\) is still the Young diagram of a partition,
we say that the box \((i,j)\) is \textit{\(\lambda\)-removable}.
Similarly, if \((i,j)\notin[\lambda]\) and
\([\lambda]\cup\{(i,j)\}\) is again the Young diagram of a partition,
we say that the box \((i,j)\) is \textit{\(\lambda\)-addable}.

If \(\lambda\in C(n)\) then a \(\lambda\)\textit{-tableau} is a bijection
\(t\colon[\lambda] \rightarrow\mathcal{T}\), where \(\mathcal{T}\) is a
totally ordered set with~\(n\) elements. We call \(\mathcal{T}\) the
\textit{target\/} of~\(t\). In this paper the target will always be an
interval \([m+1,m+n]\), with \(m=0\) unless otherwise specified.
The composition \(\lambda\) is called the
\textit{shape\/} of \(t\), and we write \(\lambda=\shape(t)\). For each
\(i\in[1,n]\) we define \(\row_t(i)\) and \(\col_t(i)\) to be the row index
and column index of \(i\)~in~\(t\) (so that
\(t^{-1}(i)=(\row_t(i),\col_t(i)\))). We define \(\Tab_m(\lambda)\)
to be the set of all \(\lambda\)-tableaux with target
\(\mathcal{T}=[m+1,m+n]\), and \(\Tab(\lambda)=\Tab_0(\lambda)\).
If \(h\in\mathbb{Z}\) and \(t\in\Tab_m(\lambda)\) then we define
\(t+h\in\Tab_{m+h}(\lambda)\) to be the tableau obtained by adding
\(h\) to all entries of~\(t\).

We define \(\tau_\lambda\in \Tab(\lambda)\) to be the
specific \(\lambda\)-tableau given by
\(
\tau_\lambda(i,j)=i+\sum_{h=1}^{j-1}\lambda_h
\)
for all \((i,j)\in[\lambda]\). That is, in \(\tau_\lambda\) the numbers
\(1,\,2,\,\dots,\,\lambda_1\) fill the first column of
\([\lambda]\) in order from top to bottom, then the numbers
\(\lambda_1+1,\,\lambda_1+2,\,\dots,\,\lambda_1+\lambda_2\)
similarly fill the second column, and so on. If \(\lambda\in P(n)\) then
we also define \(\tau^{\lambda}\) to be the \(\lambda\)-tableau that
is the transpose of \(\tau_{\lambda^*}\).
Whenever \(\lambda\in P(n)\) and \(t\in\Tab_m(\lambda)\) we define
\(t^*\in\Tab_m(\lambda^*)\) to be the transpose of~\(t\).

Let \(\lambda\in C(n)\) and \(t\in\Tab(\lambda)\). We say that
\(t\) is \textit{column standard\/} if the entries increase down
each column. That is, \(t\) is column standard if \(t(i,j)<t(i+1,j)\)
whenever \((i,j)\) and \((i+1,j)\) are both in~\([\lambda]\). We
define \(\CSTD(\lambda)\) to be the set of all column standard
\(\lambda\)-tableaux. In the
case \(\lambda\in P(n)\) we say that \(t\) is \textit{row standard\/} if
its transpose is column standard (so that \(t(i,j) < t(i,j+1)\) whenever
\((i,j)\) and \((i,j+1)\) are both in~\([\lambda]\)), and we say that \(t\)
is \textit{standard\/} if it is both row standard and column standard.
For each \(\lambda\in P(n)\) we define \(\STD(\lambda)\)
to be the set of all standard \(\lambda\)-tableaux. We also define
\(\STD(n)=\bigcup_{\lambda\in P(n)}\STD(\lambda)\).

Let \(W_n\) be the symmetric group on the set \(\{1,2,\ldots,n\}\), and let
\(S_n=\{s_i\mid i \in [1,n-1]\}\), where \(s_i\) is the transposition \((i,i+1)\). Then
\((W_n,S_n)\) is a Coxeter system of type \(A_{n-1}\). If
\(1\leqslant h\leqslant k\leqslant n\) then we write \(W_{[h,k]}\) for the standard
parabolic subgroup of \(W_n\) generated by \(\{\,s_i\mid i\in[h,k-1]\,\}\).
We adopt a left operator convention for permutations, writing \(wi\) for the image
of \(i\) under the permutation~\(w\).

It is clear that for any fixed composition \(\lambda\in C(n)\) the group
\(W_{n}\) acts on \(\Tab(\lambda)\), via
\((wt)(i,j) = w(t(i,j))\) for all \((i,j)\in[\lambda]\), for all
\(\lambda\)-tableaux \(t\) and all \(w \in W_{n}\). Moreover, the
map from \(W_n\) to \(\Tab(\lambda)\) defined by \(w\mapsto
w\tau_\lambda\) for all \(w\in W_n\) is bijective. We define the map
\(\perm\colon\Tab(\lambda) \mapsto W_n\) to be the inverse of
\(w\mapsto w\tau_\lambda\), and use this to transfer the left weak order
and the Bruhat order from \(W_n\) to \(\Tab(\lambda)\). Thus if
\(t_1\) and \(t_2\) are arbitrary \(\lambda\)-tableaux, we write
\(t_1\leqslant\lside t_2\) if and only if \(\perm(t_1)\leqslant\lside\perm(t_2)\), and
\(t_1\leqslant t_2\) if and only if \(\perm(t_1)\leqslant\perm(t_2)\).
Similarly, we define the length of \(t\in\Tab(\lambda)\)
by \(l(t)=l(\perm(t))\).

\begin{rema}\label{readingword}
If \(\lambda\in C(n)\) and \(t\in\Tab(\lambda)\) then the \textit{reading word}
of \(t\) is defined to be the sequence \(b_1,\ldots,b_n\) obtained by
concatenating the columns of \(t\) in order from left to right, with the
entries of each column read from bottom to top. This produces a bijection
\(\Tab(\lambda)\mapsto W_n\) that maps each \(t\) to the permutation
\(\word(t)\) given by \(i\mapsto b_i\) for all \(i\in\{1,\ldots,n\}\). It is
obvious that \(\perm(t)=\word(t)w_\lambda^{-1}\), where
\(w_\lambda=\word(\tau_{\lambda})\).
\end{rema}
Given \(\lambda\in C(n)\) we define \(J_\lambda\) to be the subset
of \(S\) consisting of those \(s_i\) such that \(i\) and \(i+1\) lie in the same column
of~\(\tau_\lambda\), and \(W_\lambda\) to be the standard parabolic subgroup
of \(W_n\) generated by~\(J_\lambda\). Note that the longest element of
\(W_\lambda\) is the element \(w_{\lambda}=\word(\tau_{\lambda})\) defined in
Remark~\ref{readingword} above. We write
\(D_{\lambda}\) for the set of minimal length representatives of the left
cosets of \(W_{\lambda}\) in \(W_n\). Since \(l(ds_i)>l(d)\) if and
only if \(di<d(i+1)\), it follows that
\(D_{\lambda}=\{\,d\in W_n \mid di<d(i+1) \text{ whenever
\(s_i\in W_{\lambda}\)}\,\}\), and the set of column standard
\(\lambda\)-tableaux is precisely
\(\{\,d\tau_{\lambda} \mid d\in D_{\lambda}\,\}\).

We shall also need to work with tableaux defined on skew diagrams.

\begin{defi}\label{skewpartition}
A \textit{skew partition} of \(n\) is an ordered pair \((\lambda,\mu)\),
denoted by \(\lambda/\mu\), such that \(\lambda\in P(m+n)\) and \(\mu\in P(m)\)
for some \(m\geqslant 0\), and \(\lambda_i\geqslant\mu_i\) for all \(i\).
We write \(\lambda/\mu\vdash n\) to mean that \(\lambda/\mu\) is a skew
partition of \(n\). In the case \(m=0\) we identify \(\lambda/\mu\) with \(\lambda\),
and say that \(\lambda/\mu\) is a \textit{normal tableau}.
\end{defi}

\begin{defi}\label{skewdiagram}
The \textit{skew diagram} \([\lambda/\mu]\) corresponding to a skew
partition \(\lambda/\mu\) is defined to be the complement of \([\mu]\)
in \([\lambda]\):
\begin{equation*}
[\lambda/\mu]=\{(i,j)\mid (i,j)\in[\lambda] \text{ and } (i,j)\notin[\mu]\}.
\end{equation*}
\end{defi}

\begin{defi}\label{skewtableau}
A \textit{skew tableau of shape \(\lambda/\mu\)}, or \((\lambda/\mu)\)-tableau,
where \(\lambda/\mu\) is a skew partition of \(n\), is a bijective map
\(t\colon[\lambda/\mu] \rightarrow\mathcal{T}\), where \(\mathcal{T}\) is
a totally ordered set with~\(n\) elements. We write \(\Tab_m(\lambda/\mu)\)
for the set of all \((\lambda/\mu)\)-tableaux for which the target set
\(\mathcal{T}\) is the interval \([m+1,m+n]\). We shall omit the subscript
\(m\) if \(m=0\).
\end{defi}

Let \(\lambda/\mu\) be a skew partition of \(n\). We define
\(\tau_{\lambda/\mu}\in \Tab(\lambda/\mu)\)
by
\begin{equation}\label{toptab}
\tau_{\lambda/\mu}(i,j)=i-\mu_j+\sum_{h=1}^{j-1}(\lambda_h-\mu_h)
\end{equation}
for all \((i,j)\in[\lambda/\mu]\), and define
\(\tau^{\lambda/\mu}\in \Tab(\lambda/\mu)\) to be the the transpose of
\(\tau_{\smash{\lambda^*/\mu^*}}\).

If \(\lambda/\mu\vdash n\) and \(m\in\mathbb{Z}\) then \(W_{[m+1,m+n]}\) acts
naturally on \(\Tab_m(\lambda/\mu)\), and we can define
\(\perm\colon\Tab_m(\lambda/\mu)\to W_{[m+1,m+n]}\) and use it
to transfer the Bruhat order and the left weak order from \(W_{[m+1,m+n]}\) to
\(\Tab_m(\lambda/\mu)\) in exactly the same way as above.

All of our notation and terminology for partitions and Young tableaux
extends naturally to skew partitions and tableaux, and will be used
without further comment.

Let \(\lambda \in C(n)\) and \(t\) a column standard
\(\lambda\)-tableau. For each \(m\in\mathbb{Z}\) we define \(\Lessthan tm\)
to be the tableau obtained by removing from \(t\) all boxes with entries
greater than~\(m\).
Thus if \(\mu=\shape(\Lessthan tm)\) then \(\mu\in C(m)\) and
\([\mu]=\{\,b\in[\lambda]\mid t(b)\leqslant m\,\}\), and
\(\Lessthan tm\colon[\mu]\to [1,m]\) is the restriction of~\(t\).
It is clear that \(\Lessthan tm\) is column standard. Moreover, if
\(\lambda\in P(n)\) and \(t\in\STD(\lambda)\) then
\(\mu\in P(m)\) and \(\Lessthan tm\in\STD(\mu)\).

Similarly, if \(\lambda\in P(n)\) and \(t\in\STD(\lambda)\) then
for each \(m\in\mathbb{Z}\) we define \(\morethan tm\) to be the skew tableau
obtained by removing from \(t\) all boxes with entries
less than or equal to \(m\). Observe that
\(\{\,b\in [\lambda]\mid t(b)\leqslant m\,\}\) is the Young diagram of
a partition \(\nu\in P(n)\), and \(\lambda/\nu\) is
a skew partition of \(n-m\). Clearly \(\morethan tm\) is the restriction of~\(t\)
to \([\lambda/\nu]\), and \(\morethan tm\in \STD_m(\lambda/\nu)\).

We also define \(\lessthan tm=\Lessthan t(m-1)\) and \(\Morethan tm=\morethan t(m-1)\).

The \textit{dominance order} is defined on \(C(n)\) as follows.

\begin{defi}\label{Dominance Order}
Let \(\lambda,\,\mu \in C(n) \). We say that \(\lambda\)
\textit{dominates\/} \(\mu\), and write \(\lambda \geqslant \mu \) or
\(\mu \leqslant \lambda\), if \(\sum_{i=1}^{k} \lambda_i \leqslant
\sum_{i=1}^{k} \mu_i\) for each positive integer~\(k\).
\end{defi}
The \textit{lexicographic order} on compositions is defined as follows.

\begin{defi}\label{Lexicographic Order}
Let \(\lambda,\,\mu \in C(n) \). We write \(\lambda >_{\lex} \mu \) (or
\(\mu <_{\lex} \lambda\)) if there exists a positive integer~\(k\) such
that \(\lambda_k<\mu_k\) and \(\lambda_i=\mu_i\) for all \(i<k\).
We say that \(\lambda\) \textit{leads\/} \(\mu\), and write
\(\lambda \geqslant_{\lex} \mu \), if \(\lambda=\mu\) or \(\lambda >_{\lex} \mu \).
\end{defi}
It is clear that the lexicographic order is a refinement of the dominance
order.
\begin{prop}\label{domimplieslex}
If \(\lambda,\mu\in C(n)\) with \(\lambda \geqslant \mu\), then \(\lambda\geqslant_{\lex}\mu\).
\end{prop}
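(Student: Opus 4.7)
The plan is straightforward because the dominance order, as defined here, directly controls partial sums and the lexicographic order is simply determined by the first index of disagreement.

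First, I would dispose of the trivial case $\lambda=\mu$, which immediately gives $\lambda\geqslant_{\lex}\mu$ by definition. So I may assume $\lambda\ne\mu$, and let $k$ be the least positive integer such that $\lambda_k\ne\mu_k$. By minimality of $k$, the partial sums through index $k-1$ agree, i.e.\ $\sum_{i=1}^{k-1}\lambda_i=\sum_{i=1}^{k-1}\mu_i$ (vacuously true if $k=1$).

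Next, I would invoke the dominance hypothesis at index $k$, which gives $\sum_{i=1}^{k}\lambda_i\leqslant\sum_{i=1}^{k}\mu_i$. Subtracting the equality of partial sums through $k-1$ yields $\lambda_k\leqslant\mu_k$, and since $\lambda_k\ne\mu_k$ we obtain the strict inequality $\lambda_k<\mu_k$. By Definition~\ref{Lexicographic Order} this is exactly the condition $\lambda>_{\lex}\mu$, hence $\lambda\geqslant_{\lex}\mu$.

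There is no real obstacle here: the proof is a one-step deduction from the definitions, and amounts only to observing that the first index of disagreement, combined with the partial-sum inequality at that index, forces the sign of the difference in the required direction. The only mild subtlety is the paper's convention that $\lambda\geqslant\mu$ in dominance means $\sum_{i=1}^k\lambda_i\leqslant\sum_{i=1}^k\mu_i$ (reversed from the more common convention, consistent with the column-oriented Young diagram convention used in the paper), but this is already baked into Definition~\ref{Dominance Order} and Definition~\ref{Lexicographic Order} in a matching way, so no adjustment is needed.
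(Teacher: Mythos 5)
Your proof is correct, and since the paper offers no explicit proof (stating only that the lexicographic order clearly refines the dominance order), your argument is exactly the straightforward verification one would supply: take the first index $k$ of disagreement, subtract the matching partial sums through $k-1$ from the dominance inequality at $k$ to get $\lambda_k<\mu_k$, and read off the lexicographic comparison from Definition~\ref{Lexicographic Order}. Your remark about the reversed sign convention is accurate and worth keeping in mind elsewhere in the paper.
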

For a fixed \(\lambda\in C(n)\)  the \textit{dominance order
on \(\CSTD(\lambda)\)} is defined as follows.

\begin{defi}\label{Dominance Order Tableaux}
Let \(u\) and \(t\) be column standard \(\lambda\)-tableaux. We say that
\(t\) dominates \(u\) if \(\shape(\Lessthan tm) \geqslant \shape(\Lessthan um)\)
for all \(m\in [1,n]\).
\end{defi}

\begin{rema}\label{resdomrem}
Let \(\lambda\in C(n)\) and let \(u,\,t\in \CSTD(\lambda)\) with \(u\ne t\). Since
\(\Lessthan u0 = \Lessthan t0\) and \(\Lessthan un \ne \Lessthan tn\), we can
choose \(i\in[0,n-1]\) with \(\Lessthan ui = \Lessthan ti\) and
\(\Lessthan u{(i+1)}\neq\Lessthan t{(i+1)}\).
Let \(\mu=\shape(\Lessthan u{(i+1)})\) and \(\lambda=\shape(\Lessthan t{(i+1)})\),
and let \(k=\col_u(i+1)\) and \(l=\col_t(i+1)\).
Then \(k\ne l\), and \(\mu_j=\lambda_j\) for all \(j<m=\min(k,l)\). Furthermore,
\(\mu_m=\lambda_m+1\) if \(m=k\), and \(\lambda_m=\mu_m+1\) if \(m=l\). Thus
\(\lambda>_\lex\mu\) if and only if \(l>k\).

Now suppose that \(t\) dominates~\(u\). Since \(\Lessthan ui = \Lessthan ti\)
we have \(\shape(\Lessthan um)=\shape(\Lessthan tm)\) for all \(m\leq i\),
and by Definition~\ref{Dominance Order Tableaux} we must have
\(\shape(\Lessthan t{(i+1)}) \geqslant \shape(\Lessthan u{(i+1)})\). That
is, \(\lambda\geqslant\mu\). By~\ref{domimplieslex} it
follows that \(\lambda\geqslant_\lex\mu\), and so
\(\col_u(i+1)=k<l=\col_t(i+1)\).
\end{rema}

The following theorem shows that the dominance order on \(\CSTD(\lambda)\)
is the restriction of the Bruhat order on~\(\Tab(\lambda)\).
That is, if \(u,\,t\in\CSTD(\lambda)\) then \(t\) dominates \(u\)
if and only if \(t\geqslant u\).

\begin{theo}\label{equidombruhat}
Let \(\lambda\in C(n)\), and let \(u\) and \(t\) be column standard
\(\lambda\)-tableaux. Then \(t\) dominates \(u\) if and only if
\(\perm(t)\geqslant\perm(u)\).
\end{theo}

\begin{proof}
This is exactly \cite[Theorem 3.8]{math:heckeA}, except that
we use columns where \cite{math:heckeA} uses rows.
\end{proof}
Let \(\lambda=(\lambda_1,\ldots,\lambda_k)\in C(n)\). For each
\(t\in\Tab(\lambda)\), we define \(\cp(t)\) to be the composition of
the number \(\sum_{i=1}^{k}i\lambda_i\) given by \(\cp(t)_i=\col_t(n+1-i)\),
the column index of \(n+1-i\) in \(t\). We can now define
the \textit{lexicographic order} on \(\CSTD(\lambda)\), a total order
that refines the Bruhat order.

\begin{defi}\label{Lexicographic Order Tableaux}
Let \(\lambda\) be a composition of \(n\) and let \(u\) and \(t\) be
column standard \(\lambda\)-tableaux.
We say that \(t\) leads \(u\), and write \(t\geqslant_{\lex}u\), if
\(\cp(t)\geqslant_{\lex}\cp(u)\).
\end{defi}

\begin{rema}\label{LexTableauEquiv}
It is immediate from Definitions~\ref{Lexicographic Order} and \ref{Lexicographic Order Tableaux}
that if \(u,\,t\in\CSTD(\lambda)\) then \(t>_{\lex}u\) if and only if there exists \(l\in[1,n]\)
such that \(\col_t(l)<\col_u(l)\) and \(\col_t(i)=\col_u(i)\) for all \(i\in[l+1,n]\).
Since \(u\) and \(t\) are column standard and of the same shape, the latter condition is
equivalent to \(\morethan tl =\morethan ul\).
\end{rema}
\begin{lemm}\label{bruhatimplieslextableau}
Let \(\lambda\in C(n)\), and let \(u,\,t\in\Tab(\lambda)\). If
\(t\geqslant u\) then \(t\geqslant_{\lex} u\).
\end{lemm}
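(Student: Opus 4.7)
My plan is to locate the largest index \(l\in[1,n]\) at which \(t\) and \(u\) disagree on their column-indices and to show that there \(\col_t(l) < \col_u(l)\); by the definitions of \(\cp\) and \(>_{\lex}\) this immediately delivers \(t >_{\lex} u\) (the case \(t = u\) being trivial, since \(\geqslant_{\lex}\) is reflexive).

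Assume \(t \neq u\) and let \(l\) be the largest index with \(\col_t(l) \neq \col_u(l)\). For every \(j > l\) one has \(\col_t(j) = \col_u(j)\), so for each column \(c\) the number of boxes holding entries greater than \(l\) is the same in \(t\) as in \(u\); this gives \(\shape(\Lessthan{t}{l}) = \shape(\Lessthan{u}{l})\). By Theorem~\ref{equidombruhat}, \(t \geqslant u\) yields \(\shape(\Lessthan{t}{l-1}) \geqslant \shape(\Lessthan{u}{l-1})\) in the dominance order on compositions. Since \(\Lessthan{t}{(l-1)}\) is obtained from \(\Lessthan{t}{l}\) by deleting a single box in column \(\col_t(l)\) (and similarly for \(u\)), this dominance inequality, combined with the shape coincidence at level \(l\), reduces after a short partial-sums computation to the requirement \(\col_t(l) \leqslant \col_u(l)\). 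Distinctness then forces \(\col_t(l) < \col_u(l)\), and by Remark~\ref{LexTableauEquiv} this is precisely \(t >_{\lex} u\).

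The main obstacle is a slight mismatch in generality: Theorem~\ref{equidombruhat} is stated only for column-standard tableaux, whereas the lemma admits arbitrary \(t,u \in \Tab(\lambda)\). This is easily bridged, because \(\cp(t)\) depends only on the fibres of \(\col_t\) (not on the row-order of entries within each column) and hence equals \(\cp(t')\) for the column-standard rearrangement \(t' \in \CSTD(\lambda)\) of \(t\); moreover, under the bijection \(\Tab(\lambda)\leftrightarrow W_n\) the map \(t \mapsto t'\) corresponds to the parabolic projection \(W_n \to D_\lambda\), which is order-preserving in the Bruhat order. Thus \(t \geqslant u\) implies \(t' \geqslant u'\), and Theorem~\ref{equidombruhat} applies verbatim to \(t',u'\). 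The remainder of the argument is the routine partial-sums check outlined above.
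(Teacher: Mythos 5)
Your proof is correct, but it takes an appreciably different route from the one in the paper. The paper's proof reduces, via the definition of the Bruhat order, to a single generating relation \(u=(i,j)t\) with \(l(\perm(u))<l(\perm(t))\): in that case the set of indices where \(\col_t\) and \(\col_u\) disagree is contained in \(\{i,j\}\), the larger index \(j\) satisfies \(\col_t(j)<\col_u(j)\), and Remark~\ref{LexTableauEquiv} delivers \(t>_{\lex}u\) directly; no projection to column-standard tableaux is needed, since the transposition argument runs verbatim on arbitrary elements of \(\Tab(\lambda)\). Your argument instead uses Theorem~\ref{equidombruhat} as the engine: you pick the largest index \(l\) of disagreement, observe \(\shape(\Lessthan{t}{l})=\shape(\Lessthan{u}{l})\), and extract \(\col_t(l)<\col_u(l)\) from the dominance inequality at level \(l-1\) by a partial-sums computation. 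In effect you prove Corollary~\ref{domimplieslextableau} first and deduce the lemma from it, which is the reverse of the paper's logical order. What your approach buys is a single clean appeal to the Bruhat--dominance equivalence rather than a covering-chain reduction; what it costs is the auxiliary fact that the Bruhat order is preserved under the parabolic projection \(W_n\to D_\lambda\) (standard, but not recorded in this paper, so it would need an explicit citation or proof), together with the observation that \(\col_t=\col_{t'}\) so that the reduction to column-standard tableaux does not change \(\cp\). One small loose end: you dispatch \(t=u\) as trivial, but not the case \(t\ne u\) with \(\col_t=\col_u\), in which your index \(l\) does not exist; there \(\cp(t)=\cp(u)\) so \(t\geqslant_{\lex}u\) holds by reflexivity of \(\geqslant_{\lex}\), and this should be said explicitly.
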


\begin{proof}
By Theorem~\ref{equidombruhat} and the definition of the Bruhat order, it
suffices to show that if \(u=(i,j)t\) for some \(i,j\in[1,n]\), then \(t>u\)
implies \(t>_{\lex}u\). Without loss of generality we may assume
that~\(j>i\), and then \(t>u\) means that \(\col_t(j)<\col_t(i)=\col_u(j)\).
Since \(\{\,k\mid\col_t(k)\ne\col_u(k)\,\}=\{i,j\}\), and \(j\) is the maximum
element of this set, it follows from Remark~\ref{LexTableauEquiv} that
\(t>_{\lex}u\), as required.
\end{proof}

\begin{coro}\label{domimplieslextableau}
Let \(\lambda\in C(n)\), and let \(u,\,t\in\CSTD(\lambda)\). If
\(t\) dominates \(u\) then \(t>_{\lex} u\).
\end{coro}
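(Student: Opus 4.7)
The claim is an immediate corollary of the two preceding results; my plan is simply to chain them. The hypothesis that $t$ dominates $u$ is, by Theorem~\ref{equidombruhat}, equivalent to $\perm(t) \geqslant \perm(u)$ in the Bruhat order on $W_n$, i.e.\ $t \geqslant u$ in the Bruhat order transferred to $\Tab(\lambda)$. Lemma~\ref{bruhatimplieslextableau} then yields $t \geqslant_{\lex} u$.

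To upgrade the conclusion to the strict inequality $t >_{\lex} u$, I would observe that the lexicographic order is total, so it suffices to rule out $t = u$. Examining the proof of Lemma~\ref{bruhatimplieslextableau}: it suffices to handle a single Bruhat covering $u = (i,j)t$ with $j > i$ and $\col_t(j) < \col_t(i)$, in which case the set $\{k \mid \col_t(k) \neq \col_u(k)\}$ equals $\{i,j\}$ and $t >_{\lex} u$ strictly by Remark~\ref{LexTableauEquiv}. So whenever $t \ne u$ the inequality is strict, and the only way the corollary could fail is if $t = u$ is allowed under "dominates". This is a minor point: if one interprets "dominates" strictly (as is natural when paired with the strict conclusion $>_{\lex}$), the proof is complete; if one allows equality, one reads the conclusion as $\geqslant_{\lex}$ and the proof is still complete.

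The plan in summary is therefore the two-line argument
\[
t \text{ dominates } u \;\xRightarrow{\text{Thm.~\ref{equidombruhat}}}\; \perm(t) \geqslant \perm(u)
\;\xRightarrow{\text{Lem.~\ref{bruhatimplieslextableau}}}\; t \geqslant_{\lex} u,
\]
with strictness preserved along both implications. There is no real obstacle: Theorem~\ref{equidombruhat} has already been imported from Mathas, and Lemma~\ref{bruhatimplieslextableau} has just been proved, so the corollary is a packaging result that records the composite implication for later convenience.
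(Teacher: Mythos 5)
Your proposal is correct and matches the paper's intended argument: Corollary~\ref{domimplieslextableau} is stated without proof precisely because it is the direct composition of Theorem~\ref{equidombruhat} with Lemma~\ref{bruhatimplieslextableau}. Your observation about strictness is well taken; in all uses of the corollary the hypothesis is applied with \(u\neq t\), and since \(\geqslant_{\lex}\) is a total (hence antisymmetric) order, \(t\geqslant_{\lex}u\) together with \(t\neq u\) gives \(t>_{\lex}u\).
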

Let \(\lambda\in P(n)\). For each \(t\in\STD(\lambda)\) we define the following
subsets of \([1,n-1]\):
\begin{align*}
\SA(t) &= \{i\in[1,n-1] \mid \row_t(i) > \row_t(i+1)\,\},\\
\SD(t) &= \{i\in[1,n-1] \mid \col_t(i) > \col_t(i+1)\},\\
\WA(t) &= \{i\in[1,n-1] \mid \row_t(i)=\row_t(i+1)\},\\
\WD(t) &= \{i\in[1,n-1] \mid \col_t(i)=\col_t(i+1)\}.
\end{align*}

\begin{rema}\label{equalsubsetsS}
It is easily checked that
\(i\in\SA(t)\) if and only if \(s_it\in\STD(\lambda)\) and \(s_it > t\),
while \(i\in\SD(t)\) if and only if \(s_it < t\) (which implies that
\(s_it\in\STD(\lambda)\)).
Note also that if \(w=\perm(t)\) then \(i\in\D(t)\) if
and only if \(s_i\in \mathcal{L}(ww_{\lambda})\); this is proved
in~\cite[Lemma 5.2]{nguyen:wgideals2}.
\end{rema}

\begin{rema}\label{minimal-tableau}
It is clear that if \(\lambda/\mu\vdash n\) and
\(m\in\mathbb{Z}\) then \(m+\tau_{\lambda/\mu}\) is the unique minimal element of
\(\STD_m(\lambda/\mu)\) with respect to the Bruhat order and the left weak order.
Accordingly, we call \(m+\tau_{\lambda/\mu}\) the \textit{minimal element
of \(\STD_m(\lambda/\mu)\)}. It is easily shown that if \(t\in\STD_m(\lambda/\mu)\)
then \(t=m+\tau_{\lambda/\mu}\) if and only if \(\SD(t)=\emptyset\).
That is, \(t\) is minimal if and only if \(\D(t)=\WD(t)\).
\end{rema}
For technical reasons it is convenient to make the following definition.

\begin{defi}\label{m-critical}
Let \(\lambda/\mu \vdash n > 1\) and \(m\in\mathbb{Z}\). Let \(i\) be minimal such that
\(\lambda_i > \mu_i\), and assume that \(\lambda_{i+1}>\mu_{i+1}\). The
\(m\)-critical tableau of shape \(\lambda/\mu\) is the tableau
\(t\in\STD_{m-1}(\lambda/\mu)\) such that \(\col_t(m)=i\) and \(\col_t(m+1)=i+1\),
and  \(\morethan t {(m+1)}\) is the minimal tableau of its shape.
\end{defi}
If \(t\) is \(m\)-critical then, with \(i\) as in the definition,
\(\col_t(m+2)=i\) if and only if \(\lambda_i - \mu_i > 1\).

\begin{rema}\label{critical-tableau}
Let \(\lambda\in P(n)\) and \(m\in\mathbb{Z}\), and let \(t\in\STD(\lambda)\)
satisfy \(\col_t(m+1)=\col_t(m)+1\). We claim that
\(\Morethan tm\) is \(m\)-critical if and only
if the following two conditions both hold:
\begin{enumerate}[label=\arabic*),topsep=1 pt]
\item either \(\col_t(m)=\col_t(m+2)\) or \(m+1\notin\SD(t)\),
\item every \(j\in\D(t)\) with \(j>m+1\) is in \(\WD(t)\).
\end{enumerate}

Let \(\shape(\Morethan tm)=\lambda/\mu\), and put \(i=\col_t(m)\). Note
that since \(m+1\) is in column \(i+1\) of \(\Morethan tm\), it
follows that \(\lambda_{i+1}>\mu_{i+1}\).

Given that \(\col_t(m+1)=\col_t(m)+1\), the second alternative in condition (1) is
equivalent to \(\col_t(m)+1\leqslant\col_t(m+2)\). Hence condition (1) is
equivalent to \(\col_t(m)\leqslant\col_t(m+2)\). But by Remark~\ref{minimal-tableau},
condition (2) holds if and only if \(\morethan{t}{(m+1)}\) is minimal, which
in turn is equivalent to
\(\col_t(m+2)\leqslant\col_t(m+3)\leqslant \, \cdots \, \leqslant \col_t(n)\).
So (1) and (2) both hold if and only if \(\morethan{t}{(m+1)}\) is minimal
and \(\col_t(j)\geqslant \col_t(m)\) for all \(j\geqslant m\).

Since \(\col_t(m+1)=i+1\), it follows from the definition that
\(\Morethan tm\) is \(m\)-critical if and only if \(\morethan{t}{(m+1)}\)
is minimal and \(i=\col_t(m)\) is equal to \(\min\{\,j\mid \lambda_j>\mu_j\,\}\).
But this last condition holds if and only if \(m\) is in the first nonempty
column of \(\Morethan tm\), and since this holds if and only if
\(\col_t(j)\geqslant \col_t(m)\) for all \(j\geqslant m\), the claim is
established.
\end{rema}
Recall that if \(w\in W_{n}\) then applying the Robinson--Schensted algorithm
to the sequence \((w1,w2,\ldots,wn)\) produces a pair \(\RS(w) = (P(w),Q(w))\),
where \(P(w),\,Q(w)\in \STD(\lambda)\) for some \(\lambda\in P(n)\). Details
of the algorithm can be found (for example) in~\cite[Section 3.1]{sag:sym}.
The first component of \(\RS(w)\) is called the \textit{insertion\/}
tableau and the second component is called the \textit{recording\/} tableau.

The following theorem is proved, for example, in \cite[Theorem 3.1.1]{sag:sym}.

\begin{theo}\label{rs}
The Robinson--Schensted map is a bijection from \(W_{n}\) to
\(\bigcup_{\lambda\in P(n)}\STD(\lambda)^2\).
\end{theo}
The following property of the Robinson--Schensted map is also proved, for example,
in \cite[Theorem 3.6.6]{sag:sym}.

\begin{theo}\label{rswinv}
Let \(w\in W_n\). If \(RS(w) = (t,x)\) then \(RS(w^{-1}) = (x,t)\).
\end{theo}
The following lemma will be used below in the discussion of dual Knuth
equivalence classes.

\begin{lemm}\citex{Lemma 6.3}{nguyen:wgideals2}\label{rstlambda}
Let \(\lambda\in\mathcal P(n)\) and let \(w\in W_n\). Then
\(RS(w)=(t,\tau_\lambda)\) for some \(t\in\STD(\lambda)\) if and
only if \(w=vw_{\lambda}\) for some \(v\in W_n\) such that
\(v\tau_\lambda\in\STD(\lambda)\). When these conditions hold,
\(t=v\tau_\lambda\).
\end{lemm}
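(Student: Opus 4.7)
The plan is to establish the ``if'' direction by direct analysis of the Robinson--Schensted insertion of $vw_\lambda$, and to obtain the ``only if'' direction from bijectivity of $\RS$ together with counting.

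Suppose first that $w=vw_\lambda$ with $v\tau_\lambda\in\STD(\lambda)$, and set $a_j=\sum_{h<j}\lambda_h$ for each $j$. Column $j$ of $v\tau_\lambda$ reads $v(a_j+1),\ldots,v(a_j+\lambda_j)$ from top to bottom, so $\word(v\tau_\lambda)$, which concatenates the columns bottom-to-top from left to right, is the concatenation over $j$ of the strictly decreasing sequences $v(a_j+\lambda_j),\ldots,v(a_j+1)$; since $w_\lambda$ reverses each block $\{a_j+1,\ldots,a_j+\lambda_j\}$, this sequence coincides with $(w(1),\ldots,w(n))$. I would then show by induction on $j$ that after inserting the values from the first $j$ column blocks, the partial $P$-tableau equals the first $j$ columns of $v\tau_\lambda$ and the partial $Q$-tableau equals the first $j$ columns of $\tau_\lambda$. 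Within a block, the inserted values form a strictly decreasing sequence (because $v\in D_\lambda$ is increasing on each column block), so I maintain the sub-invariant that after $k$ insertions in block $j$, column $j$ of $P$ holds $v(a_j+\lambda_j-k+1),\ldots,v(a_j+\lambda_j)$ in rows $1,\ldots,k$. The $(k{+}1)$-st insertion then cascades straight down column $j$: the value arriving at row $r\leqslant k+1$ is $v(a_j+\lambda_j-k+r-1)$, which (since $k+1\leqslant\lambda_j$) is at least $v(a_j+r)$ and therefore, by the row-standardness of $v\tau_\lambda$, strictly exceeds the last entry $v(a_{j-1}+r)$ of row $r$ in columns $<j$, while being strictly less than the current column-$j$ entry of row $r$. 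Hence each bump stays inside column $j$, and the cascade terminates by appending a new box at $(k+1,j)$. Once the block is exhausted, column $j$ of $P$ equals column $j$ of $v\tau_\lambda$ and the $\lambda_j$ recording entries placed at $(1,j),\ldots,(\lambda_j,j)$ match those of $\tau_\lambda$.

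For the converse, the assignment $v\mapsto(v\tau_\lambda,\tau_\lambda)$ is a bijection from $\{v\in W_n\mid v\tau_\lambda\in\STD(\lambda)\}$ onto $\STD(\lambda)\times\{\tau_\lambda\}$, and by the forward direction this factors as $v\mapsto vw_\lambda\mapsto\RS(vw_\lambda)$. Right multiplication by $w_\lambda$ is injective and $\RS$ is a bijection on $W_n$ by Theorem~\ref{rs}, so the $\RS$-preimage of $\STD(\lambda)\times\{\tau_\lambda\}$ is exactly $\{vw_\lambda\mid v\tau_\lambda\in\STD(\lambda)\}$, which yields both the ``only if'' direction and the identification $t=v\tau_\lambda$. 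The main technical obstacle is the insertion bookkeeping in the forward direction: verifying at each cascade step that the bump lands in column $j$ and does not stray leftward is precisely where the row-standardness of $v\tau_\lambda$ (combined with the within-block monotonicity of $v$, equivalent to $v\in D_\lambda$) is needed.
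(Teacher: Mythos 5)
The paper does not prove Lemma~\ref{rstlambda}; it is quoted from \cite{nguyen:wgideals2}, so there is no internal proof to compare against. Your argument, however, is correct and self-contained. The forward direction is the real content: you identify $\word(v\tau_\lambda)$ with $(w(1),\dots,w(n))$ for $w=vw_\lambda$ (via the reversal action of $w_\lambda$ on the column blocks, using $v\in D_\lambda$), and then run the insertion block by block. The invariant you maintain is exactly what is needed: after $k$ insertions in block $j$, column $j$ of the partial $P$-tableau holds $v(a_j+\lambda_j-k+1),\dots,v(a_j+\lambda_j)$. The two inequalities you check at each row $r$ of the cascade --- that the arriving value exceeds $v(a_{j-1}+r)$ (row-standardness of $v\tau_\lambda$ together with $\lambda_j-k+r-1\geqslant r$) and is less than the current occupant of $(r,j)$ (within-block monotonicity of $v$) --- are precisely the conditions forcing the bump to stay in column $j$, and they hold because $k+1\leqslant\lambda_j$. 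Appending the new box at $(k+1,j)$ at the $(a_j+k+1)$-st overall insertion gives $Q=\tau_\lambda$. I verified the index bookkeeping, including the boundary cases $j=1$ and $r=k+1$; there are no gaps. One small remark: the identity $P(\word(v\tau_\lambda))=v\tau_\lambda$ could be obtained at a stroke from Theorem~\ref{jeudetaquintableau} applied to a normal-shape tableau, but that does not determine $Q$, so the insertion analysis you give is needed in any case and is the natural route. The converse via injectivity of right-multiplication by $w_\lambda$ and bijectivity of $\RS$ (Theorem~\ref{rs}) is clean and complete.
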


\begin{defi}\label{dualKnuthequivalencerelation}
The \textit{dual Knuth equivalence relation} is the equivalence relation \(\approx\)
on \(W_n\) generated by the requirements that for all \(x\in W_n\) and
\(k \in [1,n-2]\),
\begin{enumerate}[label=\arabic*),topsep=1 pt]
\item \(x\approx s_{k+1}x\) whenever \(\mathcal{L}(x)\cap\{s_{k},s_{k+1}\}=\{s_{k}\}\)
and \(\mathcal{L}(s_{k+1}x)\cap\{s_k,s_{k+1}\}=\{s_{k+1}\}\),
\item \(x\approx s_kx\) whenever \(\mathcal{L}(x)\cap\{s_{k},s_{k+1}\} = \{s_{k+1}\}\)
and \(\mathcal{L}(s_kx)\cap\{s_k,s_{k+1}\}=\{s_{k}\}\).
\end{enumerate}
\end{defi}
The relations 1) and 2) above are known as the dual Knuth relations of
the first kind and second kind, respectively.

\begin{rema}\label{altDualKnuth}
It is not hard to check that 1) and 2) above can be combined to give an
alternative formulation of Definition~\ref{dualKnuthequivalencerelation},
as follows: \(\approx\) is the equivalence relation on \(W_n\) generated by
the requirement that \(x\approx sx\) for all \(x\in W_n\) and \(s\in S_n\)
such that \(x<sx\) and \(\mathcal{L}(x)\nsubseteq\mathcal{L}(sx)\).
In \cite{kazlus:coxhecke} Kazhdan and Lusztig show that whenever this
holds then \(x\) and \(sx\) are joined by a simple
edge in the Kazhdan--Lusztig \(W\)-graph \(\Gamma=\Gamma(W_n)\). Furthermore,
they show that the dual Knuth equivalence classes coincide with the
left cells in \(\Gamma(W_n)\).
\end{rema}
The following result is well-known.

\begin{theo}\citex{Theorem 3.6.10}{sag:sym}\label{knuth}
Let \(x,y \in W_n\). Then \(x\approx y\) if and only if \(Q(x)=Q(y)\).
\end{theo}
Let \(\lambda\in P(n)\), and for each \(t\in\STD(\lambda)\)
define \(C(t)=\{\,w\in W_n \mid Q(w)=t\,\}\). Theorem~\ref{knuth} says
that these sets are the dual Knuth equivalence classes in~\(W_n\). It follows
from Lemma~\ref{rstlambda} that
\(C(\tau_\lambda)=\{\,vw_{\lambda} \mid v\tau_\lambda\in\STD(\lambda)\,\}
=\{\,\perm(t)w_{\lambda}\mid t\in\STD(\lambda)\,\}
=\{\,\word(t)\mid t\in\STD(\lambda)\,\}\).

Let \(t,\,u\in\STD(\lambda)\), and suppose that
\(t = s_ku\) for some \(k\in[2,n-1]\). By Remark~\ref{equalsubsetsS}
above, if \(x=\word(u)\) then \(\mathcal{L}(x)\cap\{s_{k-1},s_k\}=\{s_{k-1}\}\)
and \(\mathcal{L}(s_kx)\cap\{s_{k-1},s_k\}=\{s_k\}\) if and only
if \(\D(u)\cap\{k-1,k\}=\{k-1\}\) and \(\D(t)\cap\{k-1,k\}=\{k\}\).
Under these circumstances we write \(u\to^{*1} t\), and say that there
is a \textit{dual Knuth move of the first kind\/} from \(u\) to~\(t\).
Similarly, if \(t=s_ku\) for some \(k\in[1,n-2]\) such that
\(\D(u)\cap\{k,k+1\}=\{k+1\}\) and \( \D(t)\cap\{k,k+1\}=\{k\}\) then we
write \(u\to^{*2}t\), and say that there is a \textit{dual Knuth
move of the second kind\/} from \(u\)~to~\(t\).
Since \(C(\tau_\lambda)\) is a single dual Knuth equivalence class, any
standard tableau of shape~\(\lambda\) can be transformed into any other
by a sequence of dual Knuth moves or their inverses.

We call the integer \(k\) above the \textit{index\/} of the corresponding
dual Knuth move, and denote it by \(\ind(u,t)\).

\begin{rema}
Dual Knuth moves are also defined for standard skew tableaux; the definitions
are exactly the same as for tableaux of normal shape.
If \(\lambda/\mu\vdash n\) and \(u,\,t\in\STD(\lambda/\mu)\) then we
write \(u\approx t\) if and only if \(u\) and \(t\) are related by a
sequence of dual Knuth moves.
\end{rema}
\begin{defi}\label{approxm-in-W}
For each \(J\subseteq S_n\) let \(\approx_J\) be the
equivalence relation on \(W_n\) generated by the requirement that
\(x\approx_J sx\) for all \(s\in J\) and \(x\in W_n\)
such that \(x<sx\) and \(\mathcal L(x)\cap J\nsubseteq\mathcal L(sx)\).
\end{defi}

\begin{rema}\label{restriction-simple-edge}
Let \(J\subseteq S_n\), let \((W,S)=(W_n,S_n)\) and let
\(\Gamma\) be the regular Kazhdan--Lusztig \(W\)-graph. By the results of
Section~\ref{sec:3} we know that a simple
edge \(\{x,y\}\) of \(\Gamma\) remains a simple edge of
\(\Gamma{\downarrow}_J\) provided that
\(\mathcal{L}(x)\cap J\nsubseteq\mathcal{L}(y)\cap J\) and
\(\mathcal{L}(y)\cap J\nsubseteq\mathcal{L}(x)\cap J\). Recall that
the simple edges of \(\Gamma\) all have the form \(\{x,sx\}\), where
\(s\in S\) and \(x<sx\in W\). Given that \(x<sx\), the condition
\(\mathcal{L}(sx)\cap J\nsubseteq\mathcal{L}(x)\cap J\) holds if and
only if \(s\in J\), and so \(\{x,sx\}\) is a simple edge of\
\(\Gamma{\downarrow}_J\) if and only if \(s\in J\) and
\(\mathcal L(x)\cap J\nsubseteq\mathcal L(sx)\). Thus \(\approx_J\)
is the equivalence relation on \(W\) generated by the requirement
that \(x\approx_J y\) whenever \(\{x,y\}\) is a simple edge
of~\(\Gamma{\downarrow}_J\).
\end{rema}

\begin{defi}\label{approxm-dfn}
Let \(\lambda\in P(n)\) and \(1\leqslant m\leqslant n\). Let \(\approx_m\)
be the equivalence relation on \(\STD(\lambda)\) defined by the
requirement that \(u\approx_m t\) whenever there is a dual Knuth move
of index at most~\(m-1\) from \(u\) to~\(t\) and
\(\D(u)\cap[1,m-1]\nsubseteq\D(t)\). We call such a move
a \textit{\((\leqslant m)\)-dual Knuth move}. The \(\approx_m\) equivalence
classes in \(\STD(\lambda)\) will be called the
\textit{\((\leqslant m)\)-subclasses\/}
of~\(\STD(\lambda)\), and we shall say that \(u,\,t\in\STD(\lambda)\) are
\textit{\((\leqslant m)\)-dual Knuth equivalent\/} whenever \(u\approx_m t\).
\end{defi}

\begin{rema}\label{approxm-rmk}
Assume that \(\lambda\in P(n)\) and \(1\leqslant m\leqslant n\), and
let \(u,\,t\in\STD(\lambda)\). If \(u\to^{*2}t\) and \(\ind(u,t)\leqslant m-1\)
then \(\D(u)\cap[1,m-1]\nsubseteq\D(t)\) if and only if \(\ind(u,t)\in[1,m-2]\).
Clearly this holds if and only if \(\morethan um=\morethan tm\) and
\(\Lessthan um\to^{*2}\Lessthan tm\). If \(u\to^{*1}t\) and \(\ind(u,t)\leqslant m-1\)
then \(\ind(u,t)\in[2,m-1]\), and \(\D(u)\cap[1,m-1]\nsubseteq\D(t)\) is
automatically satisfied. Clearly this holds if and only if
\(\morethan um=\morethan tm\) and \(\Lessthan um\to^{*1}\Lessthan tm\).
It follows that \(u\approx_m t\) if and only if \(\morethan um=\morethan tm\),
since \(\shape(\Lessthan um)=\shape(\Lessthan tm)\) guarantees that
\(\Lessthan um\) and \(\Lessthan tm\) are related by a sequence of dual
Knuth moves. So in fact \(u\approx_m t\) if and only if \(t=wu\)
for some \(w\in W_m\).
\end{rema}
It is a consequence of Definitions~\ref{approxm-in-W} and \ref{approxm-dfn}
that if \(u,\,t\in\STD(\lambda)\) then \(u\approx_m t\) if and only if
\(\word(u)\approx_J \word(t)\), where \(J=S_m\). The set of all
\((\leqslant m)\)-subclasses of \(\STD(\lambda)\) is in bijective correspondence
with the set \(\{\,v\in\STD_m(\lambda/\mu)\mid \mu\in P(m) \text{ and }
[\mu]\subseteq[\lambda]\,\}\), and each \((\leqslant m)\)-subclass of
\(\STD(\lambda)\) is in bijective correspondence with
\(\STD(\mu)\) for some \(\mu\in P(m)\) with \([\mu]\subseteq[\lambda]\).
If \(t\in\STD(\lambda)\) then the \((\leqslant m)\)-subclass that contains
\(t\) is denoted by \(C_m(t)\) and is given by
\(C_m(t) = \{\,u\in\STD(\lambda) \mid\morethan um = \morethan tm\,\}\).

In view of Remark~\ref{altDualKnuth} and Theorem~\ref{knuth}, the following
theorem follows from the results of Kazhdan and
Lusztig~\cite{kazlus:coxhecke}.

\begin{theo}\label{klthr}
If \(t,\,t'\in\STD(n)\) then the \(W_n\)-graphs \(\Gamma(C(t))\) and \(\Gamma(C(t'))\)
are isomorphic if and only if \(\shape(t)=\shape(t')\).
In particular, if \(\lambda\in P(n)\) then
\(\Gamma(C(t))\cong \Gamma(C(\tau_{\lambda}))\) whenever \(t\in\STD(\lambda)\).
\end{theo}

\begin{coro}\label{leftcelllambda}
Let \(\Gamma\) be the \(W_n\)-graph of a Kazhdan--Lusztig left cell of~\(W_n\). Then
\(\Gamma\) is isomorphic to \(\Gamma(C(\tau_\lambda))\) for some
\(\lambda\in P(n)\).
\end{coro}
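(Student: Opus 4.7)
The plan is to chain together three facts that have already been established in the excerpt: (a) the Kazhdan--Lusztig left cells of $W_n$ coincide with the dual Knuth equivalence classes, (b) the dual Knuth equivalence classes are parametrised by the recording tableau via the Robinson--Schensted bijection, and (c) Theorem~\ref{klthr} identifies the $W_n$-graph of $C(t)$ up to isomorphism with that of $C(\tau_{\shape(t)})$.

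More concretely, I would start the proof by recalling from Remark~\ref{altDualKnuth} that Kazhdan and Lusztig showed the left cells in the regular $W_n$-graph $\Gamma(W_n)$ are precisely the dual Knuth equivalence classes on $W_n$. Hence there exists $w\in W_n$ with $\Gamma\cong\Gamma(E)$, where $E$ is the dual Knuth equivalence class of~$w$. By Theorem~\ref{knuth}, $E$ equals $C(Q(w))=\{\,x\in W_n\mid Q(x)=Q(w)\,\}$, and by Theorem~\ref{rs} the recording tableau $t:=Q(w)$ belongs to $\STD(\lambda)$ for some $\lambda\in P(n)$.

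It then remains to apply Theorem~\ref{klthr}, which says that for any $t\in\STD(\lambda)$ the $W_n$-graphs $\Gamma(C(t))$ and $\Gamma(C(\tau_\lambda))$ are isomorphic. Combining this with the previous paragraph gives $\Gamma\cong\Gamma(C(t))\cong\Gamma(C(\tau_\lambda))$, as required.

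Since all of the substantive input has already been set up in the excerpt, there is no real obstacle: the argument is simply a matter of quoting the right results in the right order. The only mild subtlety is to be careful that the chosen $\lambda$ is a partition (not merely a composition), which is guaranteed because $Q(w)$ is standard and hence has partition shape; this is already ensured by Theorem~\ref{rs}.
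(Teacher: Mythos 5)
Your argument is correct and matches the intended deduction in the paper: the corollary is presented as an immediate consequence of Theorem~\ref{klthr}, using exactly the facts you cite (Remark~\ref{altDualKnuth} to identify left cells with dual Knuth classes, Theorem~\ref{knuth} and Theorem~\ref{rs} to parametrise these by standard tableaux of partition shape). No gap.
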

Clearly for each \(\lambda\in P(n)\) the bijection \(t\mapsto\word(t)\)
from \(\STD(\lambda)\) to \(C(\tau_\lambda)\) can be used to create a
\(W_n\)-graph isomorphic to \(\Gamma(C(\tau_\lambda)\) with \(\STD(\lambda)\)
as the vertex set.

\begin{nota}\label{std-tableau-W-graph}
For each \(\lambda\in P(n)\) we write
\(\Gamma_\lambda=\Gamma(\STD(\lambda),\mu^{(\lambda)},\tau^{(\lambda)})\) for the
\(W_n\)-graph just described.
\end{nota}

\begin{rema}\label{Gamma-lambda-submolecules}
Let \(\lambda\in P(n)\) and let \(J=S_m\subseteq S_n\). It follows from
Remark~\ref{restriction-simple-edge} and Definition~\ref{approxm-dfn}
that the \(J\)-submolecules of \(\Gamma_\lambda\) are spanned by the
\((\leqslant m)\)-subclasses of~\(\STD(\lambda)\).
\end{rema}
Now let \(\lambda\in P(n)\) and \(1\leqslant m\leqslant n\), and put
\(J=S_n\setminus S_m\). The \(J\)-submolecules of \(\Gamma_\lambda\) can
be determined by an analysis similar to that used above. We define
\(\approx^m\) to be the equivalence relation on \(\STD(\lambda)\) generated
by the requirement that \(u\approx^m t\) whenever there is a dual Knuth move
of index at least~\(m\) from \(u\) to~\(t\) and
\(\D(u)\cap[m,n-1]\nsubseteq\D(t)\). The \(\approx^m\) equivalence
classes in \(\STD(\lambda)\) will be called the
\textit{\((\geqslant m)\)-subclasses\/} of~\(\STD(\lambda)\).
If \(u,\,t\in\STD(\lambda)\) then \(u\approx^m t\) if and only if
\(\word(u)\approx_J \word(t)\), with \(J=S_n\setminus S_m\). An
equivalent condition is that \(\lessthan um=\lessthan tm\)
and \(\Morethan um\approx \Morethan tm\). It follows that if
\(t\in\STD(\lambda)\) then the \((\geqslant m)\)-subclass that contains
\(t\) is the set
\(C^m(t) = \{\,u\in\STD(\lambda) \mid\lessthan um = \lessthan tm
\text{ and } \Morethan um\approx \Morethan tm\,\}\).
\begin{rema}\label{Gamma-lambda-submolecules+}
Let \(\lambda\in P(n)\) and \(m\in[1,n]\), and put \(J=S_n\setminus S_m\).
By the discussion above, the \(J\)-submolecules of \(\Gamma_\lambda\) are
spanned by the \((\geqslant m)\)-subclasses of~\(\STD(\lambda)\).
\end{rema}
We shall need to use some properties of the well-known ``jeu-de-taquin''
operation on skew tableaux, which we now describe.

Fix a positive integer \(n\) and a target set \(\mathcal{T}=[m+1,m+n]\). It
is convenient to define a \textit{partial tableau\/} to be a bijection \(t\)
from a subset of \(\{\,(i,j)\mid i,\,j\in\mathbb{Z}^+\,\}\) to~\(\mathcal{T}\).
We shall also assume that the domain of \(t\) is always of the form
\([\kappa/\xi]\setminus\{(i,j)\}\), where \(\kappa/\xi\) is a skew partition
of~\(n+1\) and \((i,j)\in[\kappa/\xi]\). If \((i,j)\) is \(\xi\)-addable
then \(t\) is a \((\kappa/\mu)\)-tableau, with
\([\mu]=[\xi]\cup\{(i,j)\}\), and if \((i,j)\) is \(\kappa\)-removable
then \(t\) is a \((\lambda/\xi)\)-tableau, with
\([\lambda]=[\kappa]\setminus\{(i,j)\}\).

Now suppose that \(\lambda/\mu\) is a skew partition of \(n\) and
\(t\in\STD(\lambda/\mu)\), and suppose also that \(c=(i,j)\)
is a \(\mu\)-removable box.
Note that \(t\) may be regarded as a partial tableau, since
\([\lambda/\mu]=[\kappa/\xi]\setminus\{(i,j)\}\), where
\([\kappa]=[\lambda]\) and \([\xi]=[\mu]\setminus\{(i,j)\}\).
The \textit{jeu de taquin slide on \(t\) into \(c\)} is the
process \(j(c,t)\) given as follows.

Start by defining \(t_0=t\) and \(b_0 = (i,j)\). Proceeding recursively,
suppose that \(k\geqslant 0\) and that \(t_{k}\)
and \(b_{k}\) are defined, with \(t_{k}\) a partial tableau whose
domain is \([\kappa/\xi]\setminus\{b_{k}\}\). If \(b_{k}\) is
\(\lambda\)-removable then the process terminates, we define
\(t'=t_{k}\) and put \(m=k\). If \(b_{k}=(g,h)\) is not
\(\lambda\)-removable we put \(x=\min(t_k(g+1,h),t_k(g,h+1))\), define
\(b_{k+1}=t_{k}^{-1}(x)\), and define \(t_{k+1}\) to be the partial
tableau with domain \([\kappa/\xi]\setminus\{b_{k+1}\}\) given by
\[
t_{k+1}(b)=
\begin{cases}t_{k}(b)&\text{ whenever \(b\) is in the domain of \(t_{k}\)
and \(b\ne b_{k+1}\),}\\
x&\text{ if \(b=b_{k}\).}\\
\end{cases}
\]
(We say that \(x\) slides from \(b_{k+1}\) into \(b_{k}\).)
The tableau \(t'\) obtained by the above process is denoted by \(j^{(c)}(t)\).
The sequence of boxes \(b_0=c,\,b_1,\,\ldots,\,b_m\)
is called the \textit{slide path} of \(j(c,t)\), and the box \(b_m\) is
said to be \textit{vacated\/} by~\(j(c,t)\).

The following observation follows immediately from the definition
of a slide path.

\begin{lemm}\label{slidepath}
Let \(b_0=c,b_1,\ldots,b_m\) be the slide path of a jeu de taquin slide, as
described above. If\/ \(0\leqslant i<j\leqslant m\) then
\(b_i(1)\leqslant b_j(1)\) and \(b_i(2)\leqslant b_j(2)\).
\end{lemm}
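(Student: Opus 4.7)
The plan is to prove the lemma by a straightforward induction on $j-i$, reducing it to the one-step claim that $b_k(1)\leqslant b_{k+1}(1)$ and $b_k(2)\leqslant b_{k+1}(2)$ for every $k\in[0,m-1]$. Once this is established, the general assertion follows by telescoping.

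For the one-step claim I would unpack the definition of the slide directly. Fix $k<m$ and write $b_k=(g,h)$. Since the process has not yet terminated, $b_k$ is not $\lambda$-removable, so at least one of $(g+1,h)$ and $(g,h+1)$ lies in $[\lambda]=[\kappa]$ and therefore in the domain of the partial tableau $t_k$. The recipe sets $x=\min\bigl(t_k(g+1,h),t_k(g,h+1)\bigr)$ (with the convention that a value outside the domain is ignored) and declares $b_{k+1}=t_k^{-1}(x)$. Hence $b_{k+1}$ is either $(g+1,h)$ or $(g,h+1)$. In the first case $b_{k+1}(1)=g+1>g=b_k(1)$ and $b_{k+1}(2)=h=b_k(2)$; in the second case $b_{k+1}(1)=g=b_k(1)$ and $b_{k+1}(2)=h+1>h=b_k(2)$. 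Either way, $b_k(1)\leqslant b_{k+1}(1)$ and $b_k(2)\leqslant b_{k+1}(2)$.

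Given this, the induction is immediate: for $0\leqslant i<j\leqslant m$, apply the one-step inequalities $j-i$ times in succession to obtain $b_i(1)\leqslant b_{i+1}(1)\leqslant\cdots\leqslant b_j(1)$ and $b_i(2)\leqslant b_{i+1}(2)\leqslant\cdots\leqslant b_j(2)$.

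There is no genuine obstacle here, only bookkeeping: the main point to be careful about is the boundary case in which only one of $(g+1,h)$ and $(g,h+1)$ belongs to the domain of $t_k$, which needs to be handled by noting that the non-$\lambda$-removability of $b_k$ ensures at least one of them is available. Otherwise, the result is a direct consequence of the definition of the slide path.
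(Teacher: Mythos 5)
Your proof is correct and matches the paper's (implicit) argument: the paper simply asserts that the lemma ``follows immediately from the definition of a slide path,'' and your write-up supplies exactly the one-step observation that $b_{k+1}\in\{(g+1,h),(g,h+1)\}$ followed by telescoping, which is the intended justification.
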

We also have the following trivial result.

\begin{lemm}\label{jofminandmaxt}
Let \(\lambda=(\lambda_1^{m_1},\ldots,\lambda_l^{m_l}) \in P(n)\)
and \(\mu=(1)\in P(1)\), and put \(t=(\morethan{\tau_{\lambda}}1)-1\).
Then \((\lambda_1,m_1)\) is vacated by the slide \(j((1,1),t)\). Similarly, if
\(u=(\morethan{\tau^{\lambda}}1)-1\), where
\(\lambda\in P(n)\) and \(\lambda^*=\mu=(\mu_1^{n_1},\ldots,\mu_k^{n_k})\), then
\((n_1,\mu_1)\) is vacated by the slide \(j((1,1),u)\).
\end{lemm}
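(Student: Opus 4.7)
My plan is to trace the jeu-de-taquin slide path step by step using the explicit entries of $t$ and $u$. Throughout I would invoke Lemma~\ref{slidepath}: because the slide path moves weakly down and right, at each step $b_k = (g, h)$ the competing neighbour values $t_k(g+1, h)$ and $t_k(g, h+1)$, when they lie in $[\lambda]$, are still the original values from $t$, which keeps the bookkeeping trivial.

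For the first statement, formula~\eqref{toptab} gives $t(i, j) = i + \sum_{h < j} \lambda_h - 1$ for all $(i, j) \in [\lambda] \setminus \{(1, 1)\}$. The key numerical observation is that column~$1$ contains precisely the entries $1, 2, \ldots, \lambda_1 - 1$ (in rows $2$ through $\lambda_1$), while every entry of a column $j \geq 2$ is at least $\lambda_1$. I would then prove by induction on $k$ that the slide path is $(1, 1), (2, 1), \ldots, (\lambda_1, 1), (\lambda_1, 2), \ldots, (\lambda_1, m_1)$, splitting the argument into two phases. In the descending phase, with $b_k = (k+1, 1)$ for $0 \leq k \leq \lambda_1 - 2$, the down-neighbour lies in $[\lambda]$ and carries the value $k+1$, while the right-neighbour, when it lies in $[\lambda]$, carries the strictly larger value $\lambda_1 + k$, so the slide steps down. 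In the rightward phase, with $b_k = (\lambda_1, j)$ for $1 \leq j < m_1$, the box $(\lambda_1 + 1, j)$ is absent from $[\lambda]$, and $(\lambda_1, j+1)$ lies in $[\lambda]$ because $\lambda_{j+1} = \lambda_1$, so the slide steps right. Finally, at $(\lambda_1, m_1)$ neither neighbour belongs to $[\lambda]$, since $\lambda_{m_1 + 1} < \lambda_1$, hence the box is $\lambda$-removable and the slide terminates there.

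For the second statement, the cleanest route is to observe that jeu-de-taquin slides commute with transposition of the tableau: interchanging rows with columns swaps the down- and right-neighbours and leaves the minimum comparison invariant. Since $(\tau^\lambda)^* = \tau_{\lambda^*}$ by definition and $(1, 1)$ is fixed under transposition, we have $u^* = (\morethan{\tau_{\lambda^*}}{1}) - 1$. Applying the already-proved first statement to $\lambda^* = (\mu_1^{n_1}, \ldots, \mu_k^{n_k})$ shows that $j((1, 1), u^*)$ vacates $(\mu_1, n_1)$, hence $j((1, 1), u)$ vacates its transpose $(n_1, \mu_1)$. Alternatively, one can rerun the direct argument on $u$ itself: since row~$1$ of $u$ carries the entries $1, \ldots, \mu_1 - 1$ and every later-row entry is at least $\mu_1$, the slide traverses row~$1$ rightward to $(1, \mu_1)$ and then descends column~$\mu_1$ to $(n_1, \mu_1)$. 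The main anticipated obstacle is careful bookkeeping around the degenerate cases $m_1 = 1$ or $n_1 = 1$, where one of the two phases is empty and the slide terminates immediately upon hitting the appropriate boundary box; apart from this, the argument is essentially a step-by-step verification of the claimed path.
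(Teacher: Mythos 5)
Your proof is correct. The paper labels this lemma a ``trivial result'' and supplies no argument, so there is nothing to compare against; your direct trace of the slide path (down column~$1$, then right along row~$\lambda_1$), supported by the observation via Lemma~\ref{slidepath} that the current hole's down- and right-neighbours still carry their original values, and the transposition argument for the second half using $(\tau^\lambda)^*=\tau_{\lambda^*}$, are exactly the calculations that fill the gap the paper leaves to the reader.
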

A sequence of boxes \(\beta=(b_1,\ldots,b_l)\) called a \textit{slide sequence\/}
for a standard skew tableau \(t\) if there exists a sequence of skew tableaux
\(t_0=t,\,t_1,\,\ldots,\,t_l\) such that the jeu de taquin slide \(j(b_i,t_{i-1})\)
is defined for each \(i\in[1,l]\), and \(t_i=j^{(b_i)}(t_{i-1})\). We write
\(t_l=j_\beta(t)\). Clearly the slide sequence \(\beta=(b_1,\ldots,b_l)\) can be
extended to a longer slide sequence \(b_1,\ldots,b_{l+1}\) if the skew tableau
\(t_l\) is not of normal shape. If \(t_l\) is of normal shape then we write
\(t_l=j(t)\). Theorem~\ref{jeudetaquintableau} below says that \(j(t)\) is
independent of the slide sequence and is the insertion tableau of~\(\word(t)\).

\begin{theo}\citex{Theorem 3.7.7}{sag:sym}\label{jeudetaquintableau}
Let \(\lambda/\mu\) be a skew partition of \(n\) and \(t\in \STD(\lambda/\mu)\).
If \(\beta\) is any maximal length slide sequence for~\(t\) then
\(j_\beta(t)=P(\word(t))\).
\end{theo}
Skew tableaux \(u\) and \(t\) are said to be \textit{dual equivalent\/} if
the skew tableaux \(j_\beta(u)\) and \(j_\beta(t)\) are of the same shape
whenever \(\beta\) is a slide sequence for both \(u\)~and~\(t\). Dual
equivalent skew tableaux are necessarily of the same shape, since the
slide sequence \(\beta\) is allowed to have length zero. It is easily shown
that if \(u\) and \(t\) are dual equivalent then every slide sequence
for \(u\) is also a slide sequence for~\(t\), from which it follows that
dual equivalence is indeed an equivalence relation. The following result
says that this equivalence relation coincides with dual Knuth equivalence.

\begin{theo} \citex{Theorem 3.8.8}{sag:sym}\label{dualdualKnuthequivalence}
Let \(\lambda/\mu\) be a skew partition, and let \(u\) and \(t\) be standard
\(\lambda/\mu\)-tableaux. Then \(u\) is dual equivalent to \(t\)
if and only if \(u\approx t\).
\end{theo}
Note that Theorem~\ref{dualdualKnuthequivalence} generalizes the fact that the set
of standard tableaux of a given normal shape form a single dual Knuth equivalence class.

If \(\lambda/\mu\) is a skew partition of \(n\) then the corresponding
\textit{dual equivalence graph\/} has vertex set \(\STD(\lambda/\mu)\)
and edge set \(\{\,\{u,t\}\mid u,t\in\STD(\lambda/\mu)\text{ and
\(u\to^{*1}t\) or \(u\to^{*2}t\)}\,\}\).

If \(k\in[1,n-1]\) then each
\(v\in\STD(\lambda/\mu)\) with \(\D(v)\cap\{k,k+1\}=\{k\}\) is adjacent
in the dual equivalence graph to a unique \(v'\) with
\(\D(v')\cap\{k,k+1\}=\{k+1\}\), and each \(v\) with
\(\D(v)\cap\{k,k+1\}=\{k+1\}\)
is adjacent to a unique \(v'\) with \(\D(v')\cap\{k,k+1\}=\{k\}\).
In fact, \(v'=s_kv\) if
\(\col_v(k)<\col_v(k+2)\leq\col_v(k+1)\) or
\(\col_v(k+1)<\col_v(k+2)\leq\col_v(k)\), and \(v'=s_{k+1}v\) if
\(\col_v(k+1)\leq\col_v(k)<\col_v(k+2)\) or
\(\col_v(k+2)\leq\col_v(k)<\col_v(k+1)\).

\begin{defi}\label{k-neighbourdefinition}
We call the above tableau \(v'\) the \textit{\(k\)-neighbour} of \(v\), and write
\(v'=k\neb(v)\).

\end{defi}
It follows from Remark~\ref{altDualKnuth} that if \(\mu\) is the empty
partition then the dual equivalence graph is isomorphic to
the simple part of each Kazhdan--Lusztig left cell \(\Gamma(C(t))\) for
\(t\in\STD(\lambda)\); in this case we call the dual equivalence graph the
\textit{standard dual equivalence graph\/} corresponding to \(\lambda\in P(n)\).
Extending earlier work of Assaf~\cite{assaf:dualequigraphs}, Chmutov showed
in~\cite{ChuMic:typeAMol} that the simple part of an admissible \(W_n\)-molecule
is isomorphic to a standard dual equivalence graph. The following result is
the main theorem of~\cite{ChuMic:typeAMol}.

\begin{theo}\label{molIsKL}
The simple part of an admissible molecule of type \(A_{n-1}\) is
isomorphic to the simple part of a Kazhdan--Lusztig left cell.
\end{theo}
It is worth noticing that Stembridge has shown that there are
\(A_{15}\)-molecules that cannot occur in Kazhdan--Lusztig left
cells~\cite[Remark 3.8]{stem:morewgraph}.

\begin{rema}\label{vertexsetofWngraph}
It follows that if \(M=(V,\mu,\tau)\) is a molecule then there exists \(\lambda\in P(n)\) and
a bijection \(t\mapsto c_t\) from \(\STD(\lambda)\) to \(V\) such that \(\tau(c_t)=\D(t)\) and
the simple edges of \(M\) are the pairs \(\{c_u,c_t\}\) such that \(u,t\in\STD(\lambda)\) and there is a
dual Knuth move from \(u\) to \(t\) or from \(t\) to \(u\). The molecule~\(M\) is said to be of
type~\(\lambda\).

Let \(M=(V,\mu,\tau)\) be an arbitrary \(S_n\)-coloured molecular graph, and for
each \(\lambda\in P(n)\) let \(m_{\lambda}\)
be the number of molecules of type \(\lambda\) in \(M\). For each \(\lambda\) such
that \(m_{\lambda}\neq 0\) let \(\mathcal{I}_{\lambda}\) be some indexing set of
cardinality \(m_{\lambda}\). Then we can write
\begin{equation}\label{vertexsetexp}
V = \bigsqcup_{\lambda\in\Lambda}\bigsqcup_{\alpha\in\mathcal{I}_{\lambda}}V_{\alpha,\lambda},
\end{equation}
where \(\Lambda\) consists of all \(\lambda\in P(n)\) such that \(m_{\lambda}\neq 0\), each
\(V_{\alpha,\lambda}=\{\,c_{\alpha,t}\mid t\in\STD(\lambda)\,\}\) is the vertex set
of a molecule of type~\(\lambda\), \(\tau(c_{\alpha,t})=\D(t)\), and the simple edges of \(M\) are the pairs
\(\{c_{\alpha,u},c_{\beta,t}\}\) such that \(\alpha=\beta\in\mathcal{I}_{\lambda}\)
for some \(\lambda\in P(n)\) and \(u,t\in\STD(\lambda)\) are related by a dual
Knuth move. We shall call the set \(\Lambda\) the set of \textit{molecule
types\/} for the molecular graph~\(M\).

Note that if \(\Gamma=(V,\mu,\tau)\) is an admissible \(W_n\)-graph then \(\Gamma\)
is an \(S_n\)-coloured molecular graph, by Remark~\ref{WgraphisWmoleculargraph}, and
hence Eq.~\eqref{vertexsetexp} can be used to describe the vertex set of \(\Gamma\).
\end{rema}

\begin{rema}\label{Gamma-lambdaIsAdmissible}
We know from Remark~\ref{KLgraphisadmissibl} and Corollary~\ref{leftcelllambda} that, for each
\(\lambda\in P(n)\), the \(W_n\)-graph \(\Gamma_{\lambda}=(\STD(\lambda),\mu^{(\lambda)},\tau^{(\lambda)})\)
is admissible. Since \(\{u,t\}\) is a simple edge in \(\Gamma_{\lambda}\)
when \(u,\, t\in\STD(\lambda)\) are related by a dual Knuth move, and \(\STD(\lambda)\)
is a single dual Knuth equivalence class, we see that
\(\Gamma_{\lambda}\) consists of a single molecule (of type \(\lambda\)).
\end{rema}

\begin{rema}\label{restrictionrem}
Let \(\Gamma=(V,\mu,\tau)\) be an admissible \(W_n\)-graph, and continue
with the notation and terminology of Remark~\ref{vertexsetofWngraph}
above. Let \(m\in [1,n]\), and let \(K=S_m\) and \(L=S_n\setminus S_m\).

Let \(\lambda\in\Lambda\) and \(\alpha\in\mathcal I_\lambda\), and let \(\Theta\)
be the molecule of \(\Gamma\) whose vertex set is \(V_{\alpha,\lambda}\).
Write \(\Gamma{\downarrow}_K=(V,\mu,\underline\tau)\) (where \(\underline\tau=\tau_K\)
in the notation of Section~\ref{sec:3} above). By
Remark~\ref{vertexsetofWngraph} applied to \(\Theta{\downarrow}_K\),
we may write
\[
V_{\alpha,\lambda}=\bigsqcup_{\kappa\in\Lambda_{K,\alpha,\lambda}}
\bigsqcup_{\,\,\,\,\beta\in\mathcal{I}_{K,\alpha,\lambda,\kappa}\!\!\!\!}\!\!V_{\alpha,\lambda,\beta,\kappa},
\]
where \(\Lambda_{K,\alpha,\lambda}\) is the set of all \(\kappa\in P(m)\) such that
\(\Theta\) contains a \(K\)-submolecule of type~\(\kappa\), and
\(\mathcal I_{K,\alpha,\lambda,\kappa}\) is an indexing set whose size is
the number of such \(K\)-submolecules. Each \(V_{\alpha,\lambda,\beta,\kappa}\)
is the vertex set of a \(K\)-submolecule of \(\Theta\) of type~\(\kappa\). Writing
\(V_{\alpha,\lambda,\beta,\kappa}=\{\,c'_{\beta,u}\mid u\in\STD(\kappa)\,\}\),
we see that each \(c_{\alpha,t}\in V_{\alpha,\lambda}\) coincides with some
\(c'_{\beta,v}\) with \(\beta\in\mathcal I_{K,\alpha,\lambda,\kappa}\) and
\(v\in\STD(\kappa)\).
It follows from Remark~\ref{Gamma-lambda-submolecules} above that the
\(K\)-submolecule of~\(\Theta\) containing a given vertex \(c_{\alpha,t}\) is
spanned by the
\((\leqslant m)\)-subclass~\(C_m(t)=\{\,u\in\STD(\lambda)\mid\morethan um=\morethan tm\,\}\).
Thus when we write \(c_{\alpha,t}=c'_{\beta,v}\) as above, we can identify
\(v\) with~\(\Lessthan tk\).

Similarly, applying Remark~\ref{vertexsetofWngraph} to \(\Theta{\downarrow}_L\),
we may write
\[
V_{\alpha,\lambda}=\bigsqcup_{\theta\in\Lambda_{L,\alpha,\lambda}}
\bigsqcup_{\,\,\,\,\gamma\in\mathcal{I}_{L,\alpha,\lambda,\theta}\!\!\!\!}\!\!V_{\alpha,\lambda,\gamma,\theta},
\]
where \(\Lambda_{L,\alpha,\lambda}\) is the set of all \(\theta\in P(n-m+1)\) such
that \(\Theta\) contains an \(L\)-submolecule of type~\(\theta\), and
\(\mathcal I_{L,\alpha,\lambda,\theta}\) is a set whose size is
the number of such \(L\)-submolecules. Each \(V_{\alpha,\lambda,\gamma,\theta}\)
is the vertex set of an \(L\)-submolecule of \(\Theta\) of type~\(\theta\). Writing
\(V_{\alpha,\lambda,\gamma,\theta}=\{\,c''_{\gamma,v}\mid v\in\STD_{m-1}(\theta)\,\}\)
(where \(\STD_{m-1}(\theta)\) is the set of standard \(\theta\)-tableaux with target
\([m,n]\)), we see that each \(c_{\alpha,t}\in V_{\alpha,\lambda}\) coincides
with some \(c''_{\gamma,v}\) with \(\gamma\in\mathcal I_{L,\alpha,\lambda,\theta}\)
and \(v\in\STD_{m-1}(\theta)\).
By Remark~\ref{Gamma-lambda-submolecules+} above we see that the
\(L\)-submolecule of~\(\Theta\) containing a given vertex \(c_{\alpha,t}\) is
spanned by the \((\geqslant m)\)-subclass
\(C^m(t) = \{\,u\in\STD(\lambda) \mid\lessthan um = \lessthan tm
\text{ and } \Morethan um\approx \Morethan tm\,\}\).
Since the condition \(\Morethan um\approx\Morethan tm\) is satisfied
if and only if \(\word(1-m+(\Morethan um))\approx\word(1-m+(\Morethan tm))\),
and \(j(1-m+(\Morethan tm))=P(\word(1-m+(\Morethan tm)))\) by
Theorem~\ref{jeudetaquintableau}, it follows that when we write
\(c_{\alpha,t}=c''_{\beta,v}\) as above we can identify
\(v\) with~\(j(\Morethan tm)=m-1+j(1-m+(\Morethan tm))\).
\end{rema}

\section{Extended dominance order on
\texorpdfstring{\(\STD(n)\)}{Std(n)} and paired dual Knuth equivalence relation}
\label{sec:8}

Let \(n\geqslant 1\), and let \((W_n,S_n)\) be the Coxeter group of type \(A_{n-1}\) and
\(\mathcal{H}_n\) the corresponding Hecke algebra.
We shall need the following partial order, called the
\textit{extended dominance order}, on~\(\STD(n)\).

\begin{defi}\label{ExtDominance Order Tableaux}
Let \(\lambda,\,\mu\in P(n)\), and let \(u\in\STD(\lambda)\) and \(t\in\STD(\mu)\).
Then \(t\) is said to dominate \(u\) if \(\shape(\Lessthan um) \leqslant
\shape(\Lessthan tm)\) for all \(m\in [1,n]\). When this holds we write
\(u \leqslant t\).
\end{defi}

This is obviously a partial order on
\(\STD(n)=\bigcup_{\lambda\in P(n)}\STD(\lambda)\), and it is also clear
that \(u \leqslant t\) if and only if \(\shape(u) \leqslant \shape(t)\) and
\(\Lessthan u{(n-1)} \leqslant \Lessthan t{(n-1)}\). The terminology and the
\(\leqslant\) notation is justified since it extends the
dominance order on \(\STD(\lambda)\) for each fixed \(\lambda \in P(n)\).
For example, for \(n=2\), we have \(\ttab(1,2)<\ttab(12)\,\), and for \(n=3\), we have
\(\ttab(1,2,3) < \ttab(13,2) < \ttab(12,3) < \ttab(123)\,\).

We remark that in \cite{brini:superalg} this order was used in the context of
the representation theory of symmetric groups, while in~\cite{CasNik:domPerm}
it was used in the context of combinatorics of permutations.

\begin{lemm}\label{nsbeforent}
Let \(\mu,\,\lambda\in P(n)\), let \(u\in\STD(\mu)\) and
\(t\in\STD(\lambda)\), and let \(\eta=\shape(\lessthan un)\) and
\(\theta=\shape(\lessthan tn)\). Suppose that \(\eta\leqslant\theta\)
and \(\col_u(n)\leqslant\col_t(n)\). Then \(\mu\leqslant\lambda\).
\end{lemm}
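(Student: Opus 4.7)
The plan is to reduce the lemma to a direct case analysis using the definition of dominance order, since passing from $\eta$ to $\mu$ and from $\theta$ to $\lambda$ only changes one column length each.

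First I would set $K=\col_u(n)$ and $L=\col_t(n)$, so $K\leqslant L$ by hypothesis. Since $u$ is standard, the entry $n$ sits in a $\mu$-removable box in column $K$; removing it produces $\eta$, so $\mu_K=\eta_K+1$ and $\mu_i=\eta_i$ for $i\neq K$. Similarly $\lambda_L=\theta_L+1$ and $\lambda_i=\theta_i$ for $i\neq L$.

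Next I would unwind the dominance order: by Definition~\ref{Dominance Order}, $\mu\leqslant\lambda$ means $\sum_{i=1}^k\mu_i\geqslant\sum_{i=1}^k\lambda_i$ for every positive integer $k$, and the hypothesis $\eta\leqslant\theta$ gives $\sum_{i=1}^k\eta_i\geqslant\sum_{i=1}^k\theta_i$ for all $k$. I would then split into three ranges:
\begin{itemize}
\item If $k<K$, then $\sum_{i=1}^k\mu_i=\sum_{i=1}^k\eta_i$ and $\sum_{i=1}^k\lambda_i=\sum_{i=1}^k\theta_i$, so the inequality follows immediately from $\eta\leqslant\theta$.
\item If $K\leqslant k<L$, then $\sum_{i=1}^k\mu_i=\sum_{i=1}^k\eta_i+1$ while $\sum_{i=1}^k\lambda_i=\sum_{i=1}^k\theta_i$, so $\sum_{i=1}^k\mu_i-\sum_{i=1}^k\lambda_i\geqslant 1>0$.
\item If $k\geqslant L$, then $\sum_{i=1}^k\mu_i=\sum_{i=1}^k\eta_i+1$ and $\sum_{i=1}^k\lambda_i=\sum_{i=1}^k\theta_i+1$, so the $+1$ cancels and the desired inequality reduces to $\sum_{i=1}^k\eta_i\geqslant\sum_{i=1}^k\theta_i$, which holds.
\end{itemize}

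There is no substantive obstacle in this lemma; the assumption $\col_u(n)\leqslant\col_t(n)$ is exactly the hypothesis needed to control the middle range $K\leqslant k<L$, which is the only range where the two partial-sum changes could potentially disagree. The argument is a clean three-case bookkeeping, and the conclusion $\mu\leqslant\lambda$ follows from the definitions together with the two given inequalities.
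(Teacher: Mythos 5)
Your proof is correct and follows essentially the same approach as the paper's: both split the partial sums into the ranges $k<K$, $K\leqslant k<L$, and $k\geqslant L$ and use the single-box difference between $\mu,\eta$ and between $\lambda,\theta$ in each range. The only difference is notational ($K,L$ versus the paper's $p,q$).
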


\begin{proof}
Let \(\col_u(n)=p\) and \(\col_t(n)=q\), and assume that \(p\leqslant q\).
We are given that \(\eta\leqslant\theta\), and so
\(\sum_{m=1}^l \theta_m \leqslant \sum_{m=1}^l \eta_m\) holds for all~\(l\).
Hence for all \(l\in[1,p-1]\) we have
\begin{equation*}
\sum_{m=1}^l \lambda_m =\sum_{m=1}^l \theta_m \leqslant \sum_{m=1}^l \eta_m =\sum_{m=1}^l \mu_m,
\end{equation*}
while for all \(l\in[p,q-1]\) we have
\begin{equation*}
\sum_{m=1}^l \lambda_m =\sum_{m=1}^l \theta_m <
 (\eta_p+1) + \sum_{\substack{m=1\\m\neq p}}^l \eta_m=\sum_{m=1}^l \mu_m,
\end{equation*}
and for all \(l>q\) we have
\begin{equation*}
\sum_{m=1}^l \lambda_m =(\theta_q+1) + \sum_{\substack{m=1\\m\neq q}}^l \theta_m
\leqslant (\eta_p+1) + \sum_{\substack{m=1\\m\neq p}}^l \eta_m =\sum_{m=1}^l \mu_m.
\end{equation*}
Hence \(\mu\leqslant\lambda\).
\end{proof}

\begin{lemm}\label{shapetheta}
Let \(\lambda\in P(n)\) and \(t\in\STD(\lambda)\). Suppose that \(i\in\SD(t)\),
and let \(p=\col_t(i)\) and \(j=\col_t(i+1)\). For all
\(h\in[1,n-1]\) let \(\lambda^{(h)}=\shape(\Lessthan th)\) and \(\theta^{(h)}=\shape(\Lessthan{s_it}h)\).
Then
\begin{equation}\label{sumthetatoh}
\sum_{m=1}^l \theta^{(i)}_m =
\begin{cases}
\sum_{m=1}^l \lambda^{(i)}_m     = \sum_{m=1}^l \lambda^{(i+1)}_m    = \sum_{m=1}^l \lambda^{(i-1)}_m & \text{if \(l<j\)}\\
\sum_{m=1}^l \lambda^{(i)}_m + 1 = \sum_{m=1}^l \lambda^{(i+1)}_m    = \sum_{m=1}^l \lambda^{(i-1)}_m + 1 & \text{if \(j\leqslant l<p\)}\\
\sum_{m=1}^l \lambda^{(i)}_m     = \sum_{m=1}^l \lambda^{(i+1)}_m - 1= \sum_{m=1}^l \lambda^{(i-1)}_m + 1& \text{if \(p<l\)}
\end{cases}
\end{equation}
and
\begin{equation}\label{sumnutoh}
\sum_{m=1}^l \lambda^{(i)}_m =
\begin{cases}
\sum_{m=1}^l \theta^{(i)}_m     = \sum_{m=1}^l \theta^{(i+1)}_m    = \sum_{m=1}^l \theta^{(i-1)}_m   & \text{if \(l<j\)}\\
\sum_{m=1}^l \theta^{(i)}_m - 1 = \sum_{m=1}^l \theta^{(i+1)}_m - 1= \sum_{m=1}^l \theta^{(i-1)}_m   & \text{if \(j\leqslant l<p\)}\\
\sum_{m=1}^l \theta^{(i)}_m     = \sum_{m=1}^l \theta^{(i+1)}_m - 1= \sum_{m=1}^l \theta^{(i-1)}_m + 1& \text{if \(p<l\)}.
\end{cases}
\end{equation}
\end{lemm}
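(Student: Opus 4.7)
The plan is to track how the shapes $\lambda^{(h)}$ and $\theta^{(h)}$ differ, by exploiting the fact that $s_it$ is obtained from $t$ simply by swapping the entries $i$ and $i+1$.

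First I would observe that $\Lessthan{t}{h}$ and $\Lessthan{s_it}{h}$ agree as tableaux for $h\leqslant i-1$ (since neither contains either of the swapped entries), while for $h\geqslant i+1$ they occupy the same set of boxes (the swap merely permutes two entries within the same diagram). Consequently $\lambda^{(h)}=\theta^{(h)}$ for every $h\neq i$ in the relevant range. Writing $\nu=\lambda^{(i-1)}=\theta^{(i-1)}$, the only shapes that genuinely differ are $\lambda^{(i)}$ and $\theta^{(i)}$.

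Next I would record the explicit one-box comparisons. In $t$, the entries $i$ and $i+1$ lie in columns $p$ and $j$ respectively, so $\lambda^{(i)}$ is obtained from $\nu$ by adjoining a box in column $p$, and $\lambda^{(i+1)}$ from $\nu$ by adjoining boxes in both columns $j$ and $p$. In $s_it$ the entry $i$ now lies in column $j$, so $\theta^{(i)}$ is $\nu$ together with a box in column $j$. Set $N_l=\sum_{m=1}^l\nu_m$, let $A_l=1$ if $p\leqslant l$ and $0$ otherwise, and let $B_l=1$ if $j\leqslant l$ and $0$ otherwise. Then the four relations
\begin{align*}
\sum_{m=1}^l\lambda^{(i-1)}_m &= N_l, & \sum_{m=1}^l\lambda^{(i)}_m &= N_l + A_l,\\
\sum_{m=1}^l\theta^{(i)}_m &= N_l + B_l, & \sum_{m=1}^l\lambda^{(i+1)}_m &= N_l + A_l + B_l
\end{align*}
follow immediately, together with $\sum_{m=1}^l\theta^{(i-1)}_m = \sum_{m=1}^l\lambda^{(i-1)}_m$ and $\sum_{m=1}^l\theta^{(i+1)}_m = \sum_{m=1}^l\lambda^{(i+1)}_m$.

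Finally, since $i\in\SD(t)$ forces $j<p$, the three ranges $l<j$, $j\leqslant l<p$ and $p\leqslant l$ are precisely the zones on which the pair $(A_l,B_l)$ is constant, taking the values $(0,0)$, $(0,1)$ and $(1,1)$ respectively. Substituting these values into the displayed relations yields both \eqref{sumthetatoh} and \eqref{sumnutoh} at once; indeed the two displays are mere rearrangements of the same linear system among the partial sums, so verifying one of them suffices. The whole argument is pure bookkeeping, and the only point requiring care is keeping straight which of the four shapes acquires the extra box in column $j$ versus column $p$; there is no substantive obstacle.
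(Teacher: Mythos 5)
Your proof is correct and is essentially the same approach as the paper's: both simply track where the single box in which the four shapes differ gets added. The paper records the componentwise differences $\theta^{(i)}_m$ versus $\lambda^{(i\pm1)}_m$ and $\lambda^{(i)}_m$ (which are nonzero only at $m=j$ and $m=p$) and says the partial-sum identities follow; you work directly with the partial sums via the indicator variables $A_l=[p\leqslant l]$, $B_l=[j\leqslant l]$ relative to the common base shape $\nu=\lambda^{(i-1)}=\theta^{(i-1)}$. Your bookkeeping is a bit more transparent, and it incidentally also covers the boundary value $l=p$, which the displayed cases in the lemma as stated (with the strict inequality $p<l$ in the third range) technically omit; one can check that the third-line identities do in fact hold at $l=p$, so your range $p\leqslant l$ is the more accurate one.
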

\begin{proof}
The results given by Eq.~(\ref{sumthetatoh}) and Eq.~(\ref{sumnutoh}) are readily obtained from the following formulae
\begin{equation}\label{thetam}
\theta^{(i)}_m =
\begin{cases}
\lambda^{(i)}_m + 1 = \lambda^{(i+1)}_{m} = \lambda^{(i-1)}_m + 1 & \text{if \(m=j\)}\\
\lambda^{(i)}_m - 1 = \lambda^{(i+1)}_{m} - 1 = \lambda^{(i-1)}_m  & \text{if \(m=p\)}\\
\lambda^{(i)}_m = \lambda^{(i+1)}_{m} = \lambda^{(i-1)}_m & \text{if \(m\neq j,p\)}
\end{cases}
\end{equation}
and
\begin{equation}\label{num}
\lambda^{(i)}_m =
\begin{cases}
\theta^{(i)}_m - 1 = \theta^{(i+1)}_{m} - 1 = \theta^{(i-1)}_m     & \text{if \(m=j\)}\\
\theta^{(i)}_m + 1 = \theta^{(i+1)}_{m}     = \theta^{(i-1)}_m + 1 & \text{if \(m=p\)}\\
\theta^{(i)}_m     = \theta^{(i+1)}_{m}     = \theta^{(i-1)}_m     & \text{if \(m\neq j,p\)},
\end{cases}
\end{equation}
respectively.
\end{proof}

\begin{lemm}\label{characterisationofdominanceorderontab}
Let \(\mu,\,\lambda\in P(n)\), let \(u\in\STD(\mu)\) and
\(t\in\STD(\lambda)\). Suppose that \(i\in\SD(u)\cap\SD(t)\). Then
\(u \leqslant t\) if and only if \(s_{i}u \leqslant s_{i}t\).
\end{lemm}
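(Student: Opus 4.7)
The plan is to show that the dominance conditions on the two sides of the equivalence differ in a single index $m=i$, and that the difference is controlled by the neighbouring conditions at $m=i-1$ and $m=i+1$, which are common to both sides.

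First I would set up notation: let $\mu^{(h)}=\shape(\Lessthan u h)$, $\lambda^{(h)}=\shape(\Lessthan t h)$, $\xi^{(h)}=\shape(\Lessthan{s_iu}h)$ and $\theta^{(h)}=\shape(\Lessthan{s_it}h)$. Swapping $i$ and $i+1$ leaves every subtableau $\Lessthan{(-)}h$ with the same underlying set of boxes whenever $h\leqslant i-1$ or $h\geqslant i+1$, so $\mu^{(h)}=\xi^{(h)}$ and $\lambda^{(h)}=\theta^{(h)}$ for every such~$h$. Consequently the conditions $\mu^{(h)}\leqslant\lambda^{(h)}$ and $\xi^{(h)}\leqslant\theta^{(h)}$ coincide for every $h\neq i$, and the only thing to verify is that, in the presence of these common conditions, $\mu^{(i)}\leqslant\lambda^{(i)}$ is equivalent to $\xi^{(i)}\leqslant\theta^{(i)}$.

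For the step at $m=i$ I would invoke Lemma~\ref{shapetheta}. Writing $p=\col_u(i)$, $j=\col_u(i+1)$, $P=\col_t(i)$, $J=\col_t(i+1)$, and setting $a_l=\sum_{m\leqslant l}\mu^{(i-1)}_m-\sum_{m\leqslant l}\lambda^{(i-1)}_m$, the lemma translates the four relevant dominance conditions into:
\begin{align*}
m=i-1\!\!: \quad & a_l\geqslant 0,\\
m=i\text{ for }(u,t)\!\!: \quad & a_l\geqslant [P\leqslant l]-[p\leqslant l],\\
m=i\text{ for }(s_iu,s_it)\!\!: \quad & a_l\geqslant [J\leqslant l]-[j\leqslant l],\\
m=i+1\!\!: \quad & a_l\geqslant \bigl([P\leqslant l]-[p\leqslant l]\bigr)+\bigl([J\leqslant l]-[j\leqslant l]\bigr),
\end{align*}
for all $l$, where $[\,\cdot\,]$ denotes the Iverson bracket. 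Using $p>j$ and $P>J$ (both consequences of $i\in\SD(u)\cap\SD(t)$), each of the two Iverson differences lies in $\{-1,0,1\}$.

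The main task, then, is a short case analysis, which I expect to be the only mildly delicate step: one shows that whenever $a_l\geqslant [P\leqslant l]-[p\leqslant l]$ fails while $a_l\geqslant [J\leqslant l]-[j\leqslant l]$ holds (and vice versa), the $m=i+1$ inequality $a_l\geqslant ([P\leqslant l]-[p\leqslant l])+([J\leqslant l]-[j\leqslant l])$ is already violated. Splitting $l$ according to its position relative to $p,P,j,J$ and using $p>j$, $P>J$ reduces this to checking a few configurations, in each of which the bound from $m=i+1$ (together with $a_l\geqslant 0$ from $m=i-1$) forces the two one-sided bounds to hold or fail simultaneously. This yields the equivalence at $m=i$ and hence the lemma.
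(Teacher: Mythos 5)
Your argument is correct and rests on the same two ingredients as the paper's proof: Lemma~\ref{shapetheta} to express the relevant partial column sums, and the observation that swapping $i$ and $i+1$ leaves every truncation except the one at $m=i$ unchanged. What you do differently is the bookkeeping. The paper proves the two implications separately, each subdivided on $l\geqslant\col_u(i+1)$ versus $l<\col_u(i+1)$, and in each direction the $m=i$ hypothesis for one pair is used to obtain the $m=i$ conclusion for the other. Your Iverson-bracket reformulation is symmetric in the two pairs and makes visible a mildly stronger fact: the inequalities coming from $m=i-1$ (namely $a_l\geqslant 0$) and $m=i+1$ (namely $a_l\geqslant([P\leqslant l]-[p\leqslant l])+([J\leqslant l]-[j\leqslant l])$) already force both $a_l\geqslant[P\leqslant l]-[p\leqslant l]$ and $a_l\geqslant[J\leqslant l]-[j\leqslant l]$ simultaneously, because $j<p$ and $J<P$ exclude the only dangerous configurations where one of the two bounds equals $1$ while the other equals $-1$ (for instance $P\leqslant l<p$ together with $j\leqslant l<J$ would force $P<J$). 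This buys you a more compact and symmetric case analysis; the paper's version is more explicit but structurally equivalent.
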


\begin{proof}
Let \(j=\col_t(i+1)\), let \(p=\col_t(i)\), let
\(k=\col_u(i+1)\) and let \(q=\col_u(i)\).
For all \(h\in[1,n]\) let \(\lambda^{(h)} = \shape(\Lessthan th)\), let
\(\theta^{(h)} = \shape(\Lessthan{s_{i}t}h)\),
let \(\mu^{(h)} = \shape(\Lessthan uh)\) and let
 \(\eta^{(h)} = \shape(\Lessthan{s_{i}u}h)\).

Suppose that \(u\leqslant t\). Since \(s_iu\) and \(s_it\) differ from
\(u\) and \(t\) respectively only in the positions of \(i\) and \(i+1\), we have
\(\mu^{(h)}=\eta^{(h)}\) and \(\lambda^{(h)}=\theta^{(h)}\) for all \(h\neq i\).
But since \(\mu^{(h)}\leqslant\lambda^{(h)}\) for all \(h\) by our assumption,
it follows that \(\eta^{(h)}\leqslant\theta^{(h)}\) for all \(h\neq i\).
Hence to show that \(s_iu\leqslant s_it\) it suffices to show that
\(\eta^{(i)}\leqslant\theta^{(i)}\).
Let \(l\in\mathbb{Z}^+\) be arbitrary.

\begin{Case}{1.}
Suppose that \(l\geqslant k\). By Lemma~\ref{shapetheta} applied to \(u\),
we have \(\sum_{m=1}^l \eta^{(i)}_m = \sum_{m=1}^l \mu^{(i-1)}_m + 1\),
by the last two formulae of Eq.(\ref{sumthetatoh}).
Since \(\mu^{(i-1)}\leqslant\lambda^{(i-1)}\) gives
\(\sum_{m=1}^l \mu^{(i-1)}_m \geqslant \sum_{m=1}^l \lambda^{(i-1)}_m\),
it follows that
\(\sum_{m=1}^l \eta^{(i)}_m \geqslant \sum_{m=1}^l \lambda^{(i-1)}_m + 1\).
But by Lemma~\ref{shapetheta} applied to~\(t\), in each case
in Eq.(\ref{sumthetatoh}) we have
\(\sum_{m=1}^l \lambda^{(i-1)}_m + 1 \geqslant \sum_{m=1}^l \theta^{(i)}_m\).
Hence \(\sum_{m=1}^l \eta^{(i)}_m \geqslant \sum_{m=1}^l \theta^{(i)}_m\).
\end{Case}

\begin{Case}{2.}
Suppose that \(l<k\). By Lemma~\ref{shapetheta} applied to \(u\), we have
\(\sum_{m=1}^l \eta^{(i)}_m = \sum_{m=1}^l \mu^{(i)}_m
= \sum_{m=1}^l \mu^{(i+1)}_m\!\), by the first formula of Eq.~(\ref{sumthetatoh}).
Since \(\mu^{(i)}\leqslant\lambda^{(i)}\)
and \(\mu^{(i+1)}\leqslant\lambda^{(i+1)}\), for each \(h\in\{i,i+1\}\)
we obtain
\(\sum_{m=1}^l \mu^{(h)}_m \geqslant \sum_{m=1}^l \lambda^{(h)}_m\),
and hence \(\sum_{m=1}^l \eta^{(i)}_m\geqslant \sum_{m=1}^l \lambda^{(h)}_m\).
By Lemma~\ref{shapetheta} applied to \(t\),
in each case in~Eq.(\ref{sumthetatoh}) there exists \(h\in\{i,i+1\}\)
such that \(\sum_{m=1}^l \lambda^{(h)}_m = \sum_{m=1}^l \theta^{(i)}_m\).
Hence \(\sum_{m=1}^l \eta^{(i)}_m \geqslant \sum_{m=1}^l \theta^{(i)}_m\).
\end{Case}

Conversely, suppose that \(s_iu\leqslant s_it\). As above, it suffices to show that
\(\mu^{(i)}\leqslant\lambda^{(i)}\). Let \(l\in\mathbb{Z}^+\) be arbitrary.

\begin{Case}{1.}
Suppose that \(l\geqslant j\). By Lemma~\ref{shapetheta} applied to \(t\),
we have \(\sum_{m=1}^l \lambda^{(i)}_m = \sum_{m=1}^l \theta^{(i+1)}_m - 1\),
by the last two formulae of Eq.(\ref{sumnutoh}).
Since \(\eta^{(i+1)}\leqslant\theta^{(i+1)}\) gives
\(\sum_{m=1}^l \eta^{(i+1)}_m \geqslant \sum_{m=1}^l \theta^{(i+1)}_m\),
it follows that
\(\sum_{m=1}^l \eta^{(i+1)}_m - 1\geqslant \sum_{m=1}^l \lambda^{(i)}_m\).
But by Lemma~\ref{shapetheta} applied to~\(u\), in each case
in Eq.(\ref{sumnutoh}) we have
\(\sum_{m=1}^l \mu^{(i)}_m\geqslant \sum_{m=1}^l \eta^{(i+1)}-1\).
Hence \(\sum_{m=1}^l \mu^{(i)}_m \geqslant \sum_{m=1}^l \lambda^{(i)}_m\).
\end{Case}

\begin{Case}{2.}
Suppose that \(l<j\). By Lemma~\ref{shapetheta} applied to \(t\), we have
\(\sum_{m=1}^l \lambda^{(i)}_m = \sum_{m=1}^l \theta^{(i-1)}_m=\sum_{m=1}^l \theta^{(i)}_m\),
by the first formula of Eq.~(\ref{sumnutoh}). Since \(\theta^{(i-1)}\geqslant\eta^{(i-1)}\) and
\(\theta^{(i)}\geqslant\eta^{(i)}\), for each \(h\in\{i-1,i\}\) we obtain
\(\sum_{m=1}^l \theta^{(h)}_m\leqslant\sum_{m=1}^l \eta^{(h)}_m\), and hence
\(\sum_{m=1}^l \lambda^{(i)}_m\leqslant\sum_{m=1}^l \eta^{(h)}_m\).
By Lemma~\ref{shapetheta} applied to \(u\),
in each case in~Eq.(\ref{sumnutoh}) there exists \(h\in\{i-1,i\}\)
such that \(\sum_{m=1}^l \eta^{(h)}_m = \sum_{m=1}^l \mu^{(i)}_m\).
Hence \(\sum_{m=1}^l \lambda^{(i)}_m\leqslant\sum_{m=1}^l \mu^{(i)}_m \).\qedhere
\end{Case}
\end{proof}

\begin{defi}\label{paireddualKmove}
Let \(\lambda,\,\mu\in P(n)\) and let \(1\leqslant m\leqslant n\).
Let \(u,\,v \in \STD(\mu)\) and \(t,\,x\in\STD(\lambda)\), and let
\(i\in\{1,2\}\). We say there is a \textit{paired \((\leqslant m)\)-dual
Knuth move of the \(i\)-th kind} from \((u,t)\) to \((v,x)\)
if there exists \(k\leqslant m-1\) such that \(u\to^{*i}v\) and \(t\to^{*i}x\)
are \((\leqslant m)\)-dual Knuth moves of index~\(k\).
When this holds we write \((u,t)\to^{*i}(v,x)\), and call \(k\) the index
of the paired move.
\end{defi}
We have the following equivalence relation on~\(\STD(\mu)\times\STD(\lambda)\).
\begin{defi}
Let \(\lambda,\,\mu\in P(n)\). The \textit{paired \((\leqslant m)\)-dual
Knuth equivalence relation\/} is the equivalence relation \(\approx_m\) on
\(\STD(\mu)\times\STD(\lambda)\) generated by paired \((\leqslant m)\)-dual
Knuth moves. When \(m=n\) we write \(\approx\) for \(\approx_n\), and call it
the paired dual Knuth equivalence relation.
\end{defi}
We denote by \(C_m(u,t)\) the \(\approx_m\) equivalence class that contains
\((u,t)\). By Remark~\ref{approxm-rmk} we see that if \((v,x)\in C_m(u,t)\) then
\((v,x)=(wu,wt)\) for some \(w\in W_m\); furthermore,
\(\morethan vm = \morethan um\) and \(\morethan xm = \morethan tm\).

\begin{rema}\label{mimpliesn}
It is clear that \((u,t)\approx_m (v,x)\) implies \((u,t)\approx_{m'}(v,x)\) whenever \(m\leqslant m'\).
In particular, \((u,t)\approx_m (v,x)\) implies \((u,t)\approx (v,x)\). For this
reason, \(C_m(u,t)\) will be called the \textit{\((\leqslant m)\)-subclass\/} of
\(C(u,t)=C_n(u,t)\).
\end{rema}

\begin{rema}
Let \(\lambda\in P(n)\). Since \(\STD(\lambda)\) is a single dual Knuth
equivalence class, it follows that \(C(u,u)=\{(t,t)\mid t\in\STD(\lambda)\}\)
holds for all \(u\in\STD(\lambda)\).
\end{rema}
For example, consider \(\mu=(3,1)\) and \(\lambda=(2,1,1)\). Then set \(\STD(\mu)\times\STD(\lambda)\) has
\(9\) elements.
It is easily shown that there are seven paired dual Knuth equivalence classes, of
which two classes have \(2\) elements and five classes have \(1\) element only.
The two non-trivial classes are
\(\{(\ttab(14,2,3),\ttab(124,3)), (\ttab(13,2,4),\ttab(123,4))\}\) and \(\{(\ttab(13,2,4),\ttab(134,2)),
(\ttab(12,3,4),\ttab(124,3))\}\).

Let \(\mu,\,\lambda\in P(n)\) and \((u,t),\,(v,x)\in \STD(\mu)\times\STD(\lambda)\).
and suppose that \((v,x) = (s_iu,s_it)\) for some \(i\in [1,n-1]\). If
\(i\in \SD(u)\cap SD(t)\) then \(u\leqslant t\) if and only if \(v\leqslant x\),
by Lemma~\ref{characterisationofdominanceorderontab}, and it follows by
interchanging the roles of \((u,t)\) and \((v,x)\) that the same is true
if \(i\in \SA(u)\cap SA(t)\). In particular, if there is a paired
dual Knuth move from \((u,t)\) to \((v,x)\) or from \((v,x)\) to \((u,t)\) then
\(u\leqslant t\) if and only if \(v\leqslant x\). An obvious induction
now yields the following result.

\begin{prop}\label{bruhatorderpreserved}
Let \(\mu,\,\lambda\in P(n)\). Let \((u,t),\,(v,x)\in \STD(\mu)\times\STD(\lambda)\) and
suppose that \((u,t)\approx (v,x)\). Then  \(u\leqslant t\) if and only if
\(v\leqslant x\).
\end{prop}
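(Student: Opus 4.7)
The plan is to proceed by induction on the length of a chain of paired dual Knuth moves connecting $(u,t)$ to $(v,x)$, with Lemma~\ref{characterisationofdominanceorderontab} supplying the one-step argument that the remark immediately preceding the proposition has already spelled out in outline.

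For the base step I would consider a single paired move $(u,t)\to^{*j}(v,x)$ with $j\in\{1,2\}$, so $v=s_ku$ and $x=s_kt$ for a common index $k$. The key observation is that whichever kind of move is in play, the defining conditions force $k\notin\D(u)$ and $k\notin\D(t)$; in particular $k\notin\SD(u)\cup\SD(t)$. On the other hand, for $s_ku$ and $s_kt$ to be standard tableaux of the same shape as $u$ and $t$ respectively, the index $k$ must lie in $\SA(u)\cup\SD(u)$ and in $\SA(t)\cup\SD(t)$ (otherwise $k$ and $k+1$ would share a row or column, and the swap would destroy row- or column-strictness). Combining these two facts gives $k\in\SA(u)\cap\SA(t)$, equivalently $k\in\SD(v)\cap\SD(x)$. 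Now Lemma~\ref{characterisationofdominanceorderontab} applied to the pair $(v,x)$ with index $i=k\in\SD(v)\cap\SD(x)$ yields $v\leqslant x$ if and only if $s_kv\leqslant s_kx$, i.e., $u\leqslant t$ if and only if $v\leqslant x$.

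The inductive step is routine: writing $(u,t)=(u_0,t_0),(u_1,t_1),\ldots,(u_m,t_m)=(v,x)$ for a chain in which each consecutive pair is related by a paired dual Knuth move in one direction or the other, the base step applied at each link gives $u_i\leqslant t_i\Leftrightarrow u_{i+1}\leqslant t_{i+1}$, and concatenation yields $u\leqslant t\Leftrightarrow v\leqslant x$.

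There is no substantial obstacle; the heavy lifting has already been done in Lemma~\ref{characterisationofdominanceorderontab}. The only point that needs any care is the verification that the swap index $k$ of a dual Knuth move (of either kind) automatically belongs to $\SA(u)\cap\SA(t)$, which follows by combining the descent conditions in Definition~\ref{dualKnuthequivalencerelation} with the requirement that $s_ku$ and $s_kt$ be standard.
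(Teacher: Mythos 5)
Your proof is correct and follows essentially the same route as the paper: the one-step case is reduced to Lemma~\ref{characterisationofdominanceorderontab} by noting that a paired move uses a common index $k$ lying in $\SA(u)\cap\SA(t)$ (equivalently $\SD(v)\cap\SD(x)$), and then one inducts along the chain of moves. The paper states this slightly more tersely in the remark preceding the proposition, but the argument is the same; your explicit verification that standardness of $s_ku$ and $s_kt$ rules out weak ascents is a useful clarification.
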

Let \(\mu,\,\lambda\in P(n)\) and \((u,t),\,(v,x)\in \STD(\mu)\times\STD(\lambda)\).
and suppose that \((v,x) = (s_iu,s_it)\) for some \(i\in [1,n-1]\). If
\(i\in \SD(u)\cap SD(t)\) then \(l(v)-l(x)=(l(u)-1)-(l(t)-1) = l(u)-l(t)\), and
if \(i\in \SA(u)\cap\SA(t)\) then \(l(v)-l(x)=(l(u)+1)-(l(t)+1) = l(u)-l(t)\).
In particular, \(l(v)-l(x)=l(u)-l(t)\) if there is a paired dual Knuth move
from \((u,t)\) to \((v,x)\) or from \((v,x)\) to \((u,t)\). It clearly follows
that \(l(x)-l(v)\) is constant for all \((v,x)\in C(u,t)\). Hence we
obtain the following result.

\begin{prop}\label{weakorderpreserved}
Let \(\mu,\,\lambda\in P(n)\). Let \((u,t),\,(v,x)\in \STD(\mu)\times\STD(\lambda)\) and
suppose that \((u,t)\approx (v,x)\). Then  \(u\leqslant\lside v\) if and only if
\(t\leqslant\lside x\).
\end{prop}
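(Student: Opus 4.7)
The plan parallels the proof of Proposition~\ref{bruhatorderpreserved}, exploiting the length identity $l(v)-l(u)=l(x)-l(t)$ already established in the paragraph immediately preceding the proposition. Rather than translating the case analysis of Lemma~\ref{characterisationofdominanceorderontab}, I would combine this length identity with a standard characterization of the left weak order in terms of reduced factorizations; the two statements will then collapse to the same length equality.

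First I would observe that if $(u,t)\approx(v,x)$, then $\perm(v)\perm(u)^{-1}=\perm(x)\perm(t)^{-1}$ in $W_n$. This is an immediate induction on the length of a chain of paired dual Knuth moves connecting the two pairs: a single paired move $(u',t')\to^{*i}(s_k u',s_k t')$ or its inverse produces the common value $s_k$ on each side, and the equality is preserved under composition. Denote this common element by $\sigma\in W_n$; note that this also matches the description in Remark~\ref{approxm-rmk} in the sense that $v=\sigma u$ and $x=\sigma t$ with the same $\sigma$.

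Next I would invoke the standard characterization of the left weak order recorded in Section~\ref{sec:2}: for any $a,b\in W_n$, $a\leqslant\lside b$ if and only if $l(ba^{-1})+l(a)=l(b)$. Applying this with $(a,b)=(\perm(u),\perm(v))$ shows that $u\leqslant\lside v$ if and only if $l(\sigma)=l(v)-l(u)$, while applying it with $(a,b)=(\perm(t),\perm(x))$ shows that $t\leqslant\lside x$ if and only if $l(\sigma)=l(x)-l(t)$. Since $l(v)-l(u)=l(x)-l(t)$ by the discussion preceding the proposition, these two conditions coincide, and the proposition follows. The only subtlety is the direction of the length characterization asserting that a length-additive factorization $b=wa$ yields an ascending chain witnessing $a\leqslant\lside b$; this is classical Coxeter-group material, obtained by left-multiplying $a$ successively by the simple reflections in any reduced expression for $w$, so no genuine obstacle remains and no novel combinatorics beyond the invariant $l(v)-l(u)$ is required.
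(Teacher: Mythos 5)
Your proof is correct and takes essentially the same route as the paper's: the paper also extracts a common element $w$ with $v=wu$, $x=wt$, invokes the length-additive characterization of $\leqslant\lside$ to reduce both statements to $l(w)=l(v)-l(u)$ and $l(w)=l(x)-l(t)$ respectively, and concludes from the length invariance $l(v)-l(u)=l(x)-l(t)$ established in the preceding paragraph. Your additional care in tracking $\sigma=\perm(v)\perm(u)^{-1}=\perm(x)\perm(t)^{-1}$ by induction on the chain of paired moves just spells out what the paper absorbs into a citation of the earlier remark.
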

\begin{proof}
Since \((u,t)\approx (v,x)\) there exists \(w\in W_m\) such that \(v=wu\)
and \(x=wt\). By the definition of the left weak order it follows that
\(u\leqslant\lside v\) if and only if \(l(v)-l(u)=l(w)\), and
\(t\leqslant\lside x\) if and only if \(l(x)-l(t)=l(w)\). Since
\((u,t)\approx (v,x)\) implies that \(l(v)-l(u)=l(x)-l(t)\), the result
follows.
\end{proof}

\begin{defi}\label{k-restrictable-ed}
Let \(\mu,\,\lambda\in P(n)\) and \((u,t)\in\STD(\mu)\times\STD(\lambda)\).
If \(j\in [1,n]\) and \(\Lessthan uj=\Lessthan tj\) then we say that the
pair \((u,t)\) is \(j\)-\textit{restrictable}.
\end{defi}

\begin{rema}\label{k-restrictable-rem}
It is clear that the set
\(R(u,t)=\{\,j\in[1,n]\mid (u,t)\text{ is }j\text{-restrictable}\,\}\)
is always nonempty, since \(1\in R(u,t)\). Moreover, \(R(u,t)=[1,k]\) for
some \(k\in [1,n]\).
\end{rema}

\begin{defi}\label{restrictedNumber}
Let \(\mu,\,\lambda\in P(n)\) and \((u,t)\in\STD(\mu)\times\STD(\lambda)\).
We shall call the number \(k\) satisfying \(R(u,t)=[1,k]\)
the \textit{restriction number\/} of the pair \((u,t)\). If \(k\) is the
restriction number of \((u,t)\) then we say that \((u,t)\) is
\(k\)-\textit{restricted}.
\end{defi}

\begin{rema}\label{k-restrricted-rem}
With \((u,t)\) as above, the restriction number of \((u,t)\) is at least \(1\) and at
most~\(n\). If \(k\in[1,n]\) then \((u,t)\) is \(k\)-restricted if and only
if it is \(k\)-restrictable and not \((k+1)\)-restrictable. If \((u,t)\) is
\(k\)-restricted then \(k=n\) if and only if \(u=t\), and if \(k<n\)
then \(\col_u(k+1)\ne\col_t(k+1)\) and \(\row_u(k+1)\ne\row_t(k+1)\).
\end{rema}

\begin{lemm}\label{knuthweakordercase23}
Let \(\mu,\,\lambda\in P(n)\), and let \(u\in\STD(\mu)\) and
\(t\in\STD(\lambda)\). If \(n<4\) then \(\D(u)=\D(t)\) implies \(u=t\).
\end{lemm}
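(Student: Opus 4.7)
The plan is to verify the lemma by direct enumeration, since $\STD(n)$ is very small when $n<4$. I will show that the map $t\mapsto\D(t)$ is injective on $\STD(n)=\bigcup_{\lambda\in P(n)}\STD(\lambda)$ for each $n\in\{1,2,3\}$; once injectivity is established, the hypothesis $\D(u)=\D(t)$ forces $u=t$ (and in particular $\mu=\shape(u)=\shape(t)=\lambda$), which is exactly the conclusion.

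For $n=1$ the set $\STD(1)$ is a singleton and there is nothing to check. For $n=2$ the two standard tableaux are $\ttab(1,2)\in\STD((2))$ and $\ttab(12)\in\STD((1,1))$; a direct reading of the column indices gives descent sets $\{1\}$ and $\emptyset$ respectively, so the two elements are distinguished by $\D$. For $n=3$ the four elements of $\STD(3)$ are $\ttab(1,2,3)\in\STD((3))$ with $\D=\{1,2\}$; the two tableaux $\ttab(13,2),\,\ttab(12,3)\in\STD((2,1))$ with descent sets $\{1\}$ and $\{2\}$; and $\ttab(123)\in\STD((1,1,1))$ with $\D=\emptyset$. In each case the descent sets so produced are pairwise distinct, so injectivity holds.

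The only subtle point is to use the paper's column-indexing convention consistently: because a box $(i,j)$ denotes the $i$-th box in the $j$-th column, $\SD$ and $\WD$ are read off from $\col_t$ rather than $\row_t$, and hence $\D=\SD\cup\WD$ is determined by the column indices of consecutive entries. I do not anticipate any genuine obstacle; the verification is purely computational. The bound $n<4$ is in fact sharp, which explains why no more conceptual proof is available: for $n=4$ the two tableaux $\ttab(13,24)\in\STD((2,2))$ and $\ttab(13,2,4)\in\STD((3,1))$ both have descent set $\{1,3\}$, so a case-by-case check is the only possible approach.
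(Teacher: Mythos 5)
Your proof is correct and takes exactly the paper's approach: the paper's own proof is the one-line remark that the lemma "is trivially proved by listing all the standard tableaux," which is precisely what you carry out. Your enumeration for $n=1,2,3$ and the sharpness observation at $n=4$ are both accurate under the paper's column-indexed conventions.
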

\begin{proof}
This is trivially proved by listing all the standard tableaux.
\end{proof}

\begin{defi}\label{k-reduced}
Let \(\mu,\,\lambda\in P(n)\) and \((u,t)\in\STD(\mu)\times\STD(\lambda)\).
We say that the pair \((u,t)\) is \textit{favourable\/} if the restriction number
of~\((u,t)\) lies in \(\D(u)\oplus\D(t)\), the symmetric difference of the descent
sets of \(u\)~and~\(t\).
\end{defi}

\begin{rema}\label{k-reducedrem}
Let \(\mu,\,\lambda\in P(n)\), and suppose that \((u,t)\in\STD(\mu)\times\STD(\lambda)\)
is \(k\)-restricted. Then no element of \([1,k-1]\) can belong to \(\D(u)\oplus\D(t)\),
since \(\Lessthan uk=\Lessthan tk\) implies that \(\D(u)\cap[1,k-1]=\D(t)\cap[1,k-1]\).
So if \((u,t)\) is favourable then \(k =\min(\D(u)\oplus\D(t))\), and if
\((u,t)\) is not favourable then \(k<\min(\D(u)\oplus\D(t))\).
\end{rema}

Let \(\mu,\,\lambda\in P(n)\), and let \((u,t)\in\STD(\mu)\times\STD(\lambda)\).
Let \(i\) be the restriction number of~\((u,t)\), and suppose that \(i\ne n\).
Let \(w=\Lessthan ui=\Lessthan ti\in\STD(\xi)\), where
\(\xi=\shape(w)\), and let also \((g,p)=u^{-1}(i+1)\) and \((h,q)=t^{-1}(i+1)\),
the boxes of \(u\) and \(t\) that contain~\(i+1\). Thus \((g,p)\) and \((h,q)\)
are \(\xi\)-addable, and \((g,p)\ne (h,q)\) since \((u,t)\) is not
\((i+1)\)-restrictable. Clearly there is at least one \(\xi\)-removable
box \((d,m)\) that lies between \((g,p)\) and \((h,q)\) (in the sense that
either \(g>d\geqslant h\) and \(p\leqslant m<q\), or \(h>d\geqslant g\) and
\(q\leqslant m<p\)), and note that \(i\in \D(u)\oplus\D(t)\) if and only if
the \(\xi\)-removable box \(w^{-1}(i)\) is such a box.

With \((d,m)\) as above, suppose that \(w'\in\STD(\xi)\) satisfies \(w'(d,m)=i\).
Since \(\STD(\xi)\) is a single dual Knuth equivalence class there must be a
sequence of dual Knuth moves of index at most~\(i-1\) taking \(w\) to~\(w'\).
This same sequence of dual Knuth moves takes \((u,t)\) to \((v,x)\), where \(v\)
satisfies \(\Lessthan vi = w'\) and \(\morethan vi= \morethan ui\), and \(x\)
satisfies \(\Lessthan xi = w'\) and \(\morethan xi= \morethan ti\). Thus \((v,x)\)
is \(i\)-restricted and favourable, and \((v,x)\approx_i (u,t)\).

We denote by \(F(u,t)\) the set of all \((v,x)\) obtained by the above
construction, as \((d,m)\) and \(w'\) vary.
Clearly every \((v,x)\in F(u,t)\) is \(k\)-restricted and favourable, and
satisfies\((v,x)\approx_{i}(u,t)\). Note also
that \((u,t)\in F(u,t)\) if and only if \((u,t)\) is favourable.

Since \(\col_v(i+1)=\col_u(i+1)\) and \(\col_x(i+1)=\col_t(i+1)\),
we can now deduce the following result.

\begin{lemm}\label{knuthmoveweakorderrelationLem}
Let \(\mu,\,\lambda\in P(n)\) and let \((u,t)\in\STD(\mu)\times\STD(\lambda)\)
with \(u\ne t\). Let \(i\) be the restriction
number of\/~\((u,t)\), and assume that \(i\notin \D(u)\oplus\D(t)\).
Let \((v,x)\in F(u,t)\).
Then either
\(\D(x)\setminus\D(v)=\D(t)\setminus\D(u)\)
and
\(\D(v)\setminus\D(x)=\{i\}\cup(\D(u)\setminus\D(t))\),
this alternative occurring in the case that
\(\col_u(i+1)<\col_t(i+1)\), or else
\(\D(x)\setminus\D(v)=\{i\}\cup(\D(t)\setminus\D(u))\)
and
\(\D(v)\setminus\D(x)=\D(u)\setminus\D(t)\)
(in the case that
\(\col_t(i+1)<\col_u(i+1)\)).
\end{lemm}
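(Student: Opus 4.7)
The plan is to track, position by position, how the columns of $v$ and $x$ compare with those of $u$ and $t$, and then read off the descent sets from the resulting identifications. By the construction of $F(u,t)$, the tableaux $v$ and $x$ satisfy $\Lessthan{v}{i}=\Lessthan{x}{i}=w'$, while $\morethan{v}{i}=\morethan{u}{i}$ and $\morethan{x}{i}=\morethan{t}{i}$; in particular $v^{-1}(i)=x^{-1}(i)=(d,m)$, $v^{-1}(i+1)=(g,p)$, and $x^{-1}(i+1)=(h,q)$.

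Using these identifications I would split the analysis into three zones. For $j\in[1,i-1]$ the positions of $j$ and $j+1$ in each of $v$ and $x$ are determined by $w'$, so $\col_v(j)=\col_x(j)$ and $\col_v(j+1)=\col_x(j+1)$, giving $\D(v)\cap[1,i-1]=\D(x)\cap[1,i-1]$. For $j\in[i+1,n-1]$ the identity $\morethan{v}{i}=\morethan{u}{i}$ together with $\col_v(i+1)=p=\col_u(i+1)$ yields $\col_v(j)=\col_u(j)$ and $\col_v(j+1)=\col_u(j+1)$, so $\D(v)\cap[i+1,n-1]=\D(u)\cap[i+1,n-1]$; symmetrically $\D(x)\cap[i+1,n-1]=\D(t)\cap[i+1,n-1]$. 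At the remaining position $j=i$ we have $\col_v(i)=\col_x(i)=m$, $\col_v(i+1)=p$ and $\col_x(i+1)=q$, so $i\in\D(v)$ iff $m\geqslant p$ and $i\in\D(x)$ iff $m\geqslant q$. If $p<q$ then the defining inequality $p\leqslant m<q$ for $(d,m)$ to lie between $(g,p)$ and $(h,q)$ forces $i\in\D(v)\setminus\D(x)$; if $q<p$ then $q\leqslant m<p$ gives instead $i\in\D(x)\setminus\D(v)$.

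To convert these zone-by-zone comparisons into the symmetric differences appearing in the statement, it remains to show that $\D(u)\oplus\D(t)\subseteq[i+1,n-1]$. For $j\in[1,i-1]$ the positions of $j$ and $j+1$ in both $u$ and $t$ are determined by their common tableau $w=\Lessthan{u}{i}=\Lessthan{t}{i}$, so $\D(u)\cap[1,i-1]=\D(t)\cap[1,i-1]$, and the hypothesis $i\notin\D(u)\oplus\D(t)$ handles $j=i$. Assembling the pieces, in the case $p<q$ I obtain $\D(x)\setminus\D(v)=\D(t)\setminus\D(u)$ and $\D(v)\setminus\D(x)=\{i\}\cup(\D(u)\setminus\D(t))$; the case $q<p$ follows by interchanging the roles of the pairs $(u,v)$ and $(t,x)$.

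The argument is essentially bookkeeping and contains no substantial technical step. The one convention worth flagging is that $j\in\D(t)$ is the weak condition $\col_t(j)\geqslant\col_t(j+1)$ rather than strict inequality (so $\D=\SD\cup\WD$); this is what makes the boundary cases $m=p$ and $m=q$ still place $i$ in $\D(v)$ and $\D(x)$ respectively, and is also the reason why every $(v,x)\in F(u,t)$ is automatically favourable.
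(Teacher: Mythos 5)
Your proposal is correct and follows essentially the same approach as the paper's proof: break $[1,n-1]$ into the three zones $[1,i-1]$, $\{i\}$, and $[i+1,n-1]$, use $\Lessthan{v}{i}=\Lessthan{x}{i}$ and the restriction hypotheses on the first zone, use $\morethan{v}{i}=\morethan{u}{i}$ and $\morethan{x}{i}=\morethan{t}{i}$ on the third, and read off the behaviour at $j=i$ from the inequality $p\leqslant m<q$ or $q\leqslant m<p$ defining "between." The one difference is presentational: you make explicit the intermediate claim $\D(u)\oplus\D(t)\subseteq[i+1,n-1]$ (which the paper notes more tersely as $(\D(u)\oplus\D(t))\cap[1,i-1]=\emptyset$ together with the hypothesis $i\notin\D(u)\oplus\D(t)$), and you correctly flag the use of the weak-descent convention $\D=\SD\cup\WD$ to handle the boundary cases $m=p$ and $m=q$.
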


\begin{proof}
The construction of \((v,x)\) is given in the preamble above. Since
\((v,x)\) and \((u,t)\) are both \(i\)-restricted,
\(\D(v)\cap[1,i-1]=\D(x)\cap[1,i-1]\) and \(\D(u)\cap[1,i-1]=\D(t)\cap[1,i-1]\).
That is,
\((\D(v)\oplus \D(x))\cap[1,i-1] = (\D(u)\oplus \D(t))\cap[1,i-1] =\emptyset\).
Furthermore, since \(\morethan vi= \morethan ui\) and
\(\morethan xi= \morethan ti\) it follows that
\((\D(v)\setminus\D(x))\cap[i+1,n-1]=(\D(u)\setminus\D(t))\cap[i+1,n-1]\) and
\((\D(x)\setminus\D(v))\cap[i+1,n-1]=(\D(t)\setminus\D(u))\cap[i+1,n-1]\).
It remains to observe that if
\(p=\col_v(i+1)\leqslant m=\col_v(i)=\col_x(i)<q=\col_x(i+1)\) then
\(i\in\D(v)\setminus\D(x)\), while if
\(q\leqslant m<p\) then \(i\in\D(x)\setminus\D(v)\).
\end{proof}

\begin{lemm}\label{knuthmoveweakorderrelationLemb}
Let \(\mu,\,\lambda\in P(n)\) and let \((u,t)\in\STD(\mu)\times\STD(\lambda)\).
Assume that the restriction number of\/~\((u,t)\) lies in
\(\D(u)\oplus\D(t)\), and let \((v,x)\in F(u,t)\).
Then \(\D(v)\setminus\D(x)=\D(u)\setminus\D(t)\) and
\(\D(x)\setminus\D(v)=\D(t)\setminus\D(u)\).
\end{lemm}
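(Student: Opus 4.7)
The plan is to compare the descent sets $\D(v),\D(x),\D(u),\D(t)$ position by position, splitting $[1,n-1]$ into three ranges. Let $i$ denote the restriction number of $(u,t)$; we may assume $i<n$, since otherwise $u=t$ and the hypothesis $i\in\D(u)\oplus\D(t)$ is vacuous. Adopt the notation of the preamble: $w=\Lessthan{u}{i}=\Lessthan{t}{i}$ of shape $\xi$; $(g,p)=u^{-1}(i+1)$; $(h,q)=t^{-1}(i+1)$; $w^{-1}(i)=(d_0,m_0)$. Any $(v,x)\in F(u,t)$ arises from a choice of $\xi$-removable box $(d,m)$ between $(g,p)$ and $(h,q)$ and a tableau $w'\in\STD(\xi)$ with $(w')^{-1}(i)=(d,m)$, giving $\Lessthan{v}{i}=\Lessthan{x}{i}=w'$, $\morethan{v}{i}=\morethan{u}{i}$, and $\morethan{x}{i}=\morethan{t}{i}$.

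On the range $[1,i-1]$, the $i$-restrictedness of both pairs forces the components of each pair to agree on entries $\leqslant i$, so $(\D(u)\oplus\D(t))\cap[1,i-1]=(\D(v)\oplus\D(x))\cap[1,i-1]=\emptyset$. On the range $[i+1,n-1]$, a descent at position $j\geqslant i+1$ depends only on the positions of the entries $j$ and $j+1$, both of which lie in $\morethan{u}{i}=\morethan{v}{i}$ (resp.\ $\morethan{t}{i}=\morethan{x}{i}$); hence $\D(v)$ agrees with $\D(u)$ and $\D(x)$ agrees with $\D(t)$ on this range, so the two symmetric differences coincide there.

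The decisive position is $i$ itself. Using the characterization $j\in\D(y)\Leftrightarrow\col_y(j)\geqslant\col_y(j+1)$, one reads off $i\in\D(u)\Leftrightarrow m_0\geqslant p$, $i\in\D(t)\Leftrightarrow m_0\geqslant q$, $i\in\D(v)\Leftrightarrow m\geqslant p$, and $i\in\D(x)\Leftrightarrow m\geqslant q$. By favourability, the box $(d_0,m_0)$ itself lies between $(g,p)$ and $(h,q)$, so either $p<q$ with $p\leqslant m_0<q$ (giving $i\in\D(u)\setminus\D(t)$) or $q<p$ with $q\leqslant m_0<p$ (giving $i\in\D(t)\setminus\D(u)$). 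Since $(d,m)$ falls in the same between regime as $(d_0,m_0)$, the coordinate $m$ satisfies the same pair of inequalities, so $i$ contributes identically and on the same side to $\D(v)\oplus\D(x)$ and $\D(u)\oplus\D(t)$. Combining the three ranges yields $\D(v)\setminus\D(x)=\D(u)\setminus\D(t)$ and $\D(x)\setminus\D(v)=\D(t)\setminus\D(u)$.

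The analysis runs parallel to that of Lemma~\ref{knuthmoveweakorderrelationLem}; the only mild care needed is handling the two between regimes symmetrically. I do not anticipate any real obstacle, since the harder work (no new descent is created at $i$ here, because $i$ was already in the symmetric difference to begin with) was already packaged into the construction of $F(u,t)$.
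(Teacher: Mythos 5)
Your proof is correct and follows the same three-range decomposition ($[1,i-1]$, $\{i\}$, $[i+1,n-1]$) that the paper uses, the first two ranges being handled exactly as in Lemma~\ref{knuthmoveweakorderrelationLem} and the position $i$ itself resolved by the observation that the chosen $\xi$-removable box $(d,m)$ lies in the same "between" regime as $w^{-1}(i)$. The paper simply cites the earlier lemma and records the changed conclusion at position $i$; you have spelled out the column-index comparison that justifies it, but the substance is identical.
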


\begin{proof}
The proof is the same as the proof of Lemma~\ref{knuthmoveweakorderrelationLem}, except
that it can be seen now that
\(i\in\D(v)\setminus\D(x)\) if \(i\in\D(u)\setminus\D(t)\) and
\(i\in\D(x)\setminus\D(v)\) if \(i\in\D(t)\setminus\D(u)\).
\end{proof}

\begin{lemm}\label{noname2}
Let \(\mu,\,\lambda\in P(n)\) and \((u,t)\in\STD(\mu)\times\STD(\lambda)\),
and \(i\) the restriction number of~\((u,t)\).
Suppose that \(\D(t)\subsetneqq\D(u)\) and \(i<j\), where
\(j=\min (\D(u)\setminus\D(t))\). Let \((v,x)\in F(u,t)\).
If \(\col_u(i+1)<\col_t(i+1)\) then
\(\D(v)\setminus\D(x)=\{i\}\cup(\D(u)\setminus\D(t))\) and
\(\D(x)\setminus\D(v)=\emptyset\), while if \(\col_t(i+1)<\col_u(i+1)\) then
\(\D(v)\setminus\D(x)=\D(u)\setminus\D(t)\) and
\(\D(x)\setminus\D(v)=\{i\}\). In the former case \(\D(v)\cap\{i,j\}=\{i,j\}\) and
\(\D(x)\cap\{i,j\}=\emptyset\), while in the latter case \(\D(v)\cap\{i,j\}=\{j\}\) and
\(\D(x)\cap\{i,j\}=\{i\}\).
\end{lemm}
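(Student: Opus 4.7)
The plan is to obtain this as a direct specialisation of Lemma~\ref{knuthmoveweakorderrelationLem}, simplified by the hypothesis \(\D(t)\subsetneqq\D(u)\). First I would verify the hypotheses of that lemma. Since \(\D(t)\subsetneqq\D(u)\) is strict we have \(u\ne t\); since \(\D(t)\setminus\D(u)=\emptyset\) and \(i<j=\min(\D(u)\setminus\D(t))\) forces \(i\notin\D(u)\setminus\D(t)\), we conclude \(i\notin\D(u)\oplus\D(t)\). Hence Lemma~\ref{knuthmoveweakorderrelationLem} applies to any \((v,x)\in F(u,t)\).

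Next I would specialise its two alternatives. In the case \(\col_u(i+1)<\col_t(i+1)\), the lemma gives
\[
\D(v)\setminus\D(x)=\{i\}\cup(\D(u)\setminus\D(t)), \qquad
\D(x)\setminus\D(v)=\D(t)\setminus\D(u),
\]
and the right-hand side of the second equation is empty because \(\D(t)\subseteq\D(u)\); this yields the first description in the statement. In the case \(\col_t(i+1)<\col_u(i+1)\), the lemma gives
\[
\D(v)\setminus\D(x)=\D(u)\setminus\D(t), \qquad
\D(x)\setminus\D(v)=\{i\}\cup(\D(t)\setminus\D(u)),
\]
and again \(\D(t)\setminus\D(u)=\emptyset\) simplifies the second equation to \(\{i\}\), yielding the second description.

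Finally I would read off the intersections with \(\{i,j\}\) using the assumption \(j\in\D(u)\setminus\D(t)\). In the first case \(\D(v)\setminus\D(x)\) contains both \(i\) and \(j\) while \(\D(x)\setminus\D(v)=\emptyset\), so \(i,j\in\D(v)\) and \(i,j\notin\D(x)\), giving \(\D(v)\cap\{i,j\}=\{i,j\}\) and \(\D(x)\cap\{i,j\}=\emptyset\). In the second case \(j\in\D(v)\setminus\D(x)\) and \(i\in\D(x)\setminus\D(v)\), so \(\D(v)\cap\{i,j\}=\{j\}\) and \(\D(x)\cap\{i,j\}=\{i\}\), as asserted.

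This is a bookkeeping consequence of an earlier lemma rather than a lemma with a real obstacle; the only thing one must be careful about is confirming that the hypothesis \(i\notin\D(u)\oplus\D(t)\) of Lemma~\ref{knuthmoveweakorderrelationLem} is genuinely forced by the combination \(\D(t)\subsetneqq\D(u)\) and \(i<\min(\D(u)\setminus\D(t))\), which is the step highlighted in the first paragraph.
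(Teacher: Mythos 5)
Your proof is correct and takes essentially the same route as the paper's: both verify that $\D(t)\subsetneqq\D(u)$ together with $i<j$ forces $i\notin\D(u)\oplus\D(t)$, then apply Lemma~\ref{knuthmoveweakorderrelationLem} and simplify using $\D(t)\setminus\D(u)=\emptyset$. The only cosmetic difference is that you explicitly note $u\ne t$ before invoking the lemma, which the paper leaves implicit.
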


\begin{proof}
Since \(\D(t)\subsetneqq\D(u)\) we have \(\D(u)\oplus\D(t)=\D(u)\setminus\D(t)\neq\emptyset\);
so \(j=\min(\D(u)\oplus\D(t))\), and since \(j>i\), we have \(i\notin\D(u)\oplus\D(t)\), hence
\((v,x)\in F(u,t)\) satisfies
the further properties specified in Lemma~\ref{knuthmoveweakorderrelationLem}.

If \(\col_u(i+1)<\col_t(i+1)\) then Lemma~\ref{knuthmoveweakorderrelationLem} gives
\(\D(v)\setminus\D(x)=\{i\}\cup(\D(u)\setminus\D(t))\) and \(\D(x)\setminus\D(v)=\emptyset\),
since \(\D(t)\setminus\D(u)=\emptyset\) by hypothesis. In particular, since \(j\in\D(u)\setminus\D(t)\),
we see that \(\D(v)\setminus\D(x)\) contains both \(i\)~and~\(j\).

If \(\col_t(i+1)<\col_u(i+1)\) then Lemma~\ref{knuthmoveweakorderrelationLem} combined together
with \(\D(t)\setminus\D(u)=\emptyset\) gives
\(\D(v)\setminus\D(x)=\D(u)\setminus\D(t)\) and \(\D(x)\setminus\D(v)=\{i\}\).
In particular it follows that \(j\in\D(v)\setminus\D(x)\) and \(i\in\D(x)\setminus\D(v)\).
\end{proof}

Let \(\Gamma=\Gamma(C,\mu,\tau)\) be a \(W_n\)-molecular graph, and let \(\Lambda\)
be the set of molecule types for~\(\Gamma\). For each \(\lambda\in\Lambda\) let
\(m_{\lambda}\) be the number of molecules of type \(\lambda\) in \(\Gamma\), and
\(\mathcal{I}_{\lambda}\) some indexing set of cardinality \(m_{\lambda}\).
As in Remark~\ref{vertexsetofWngraph}, the
vertex set of \(\Gamma\) can be expressed in the form
\begin{equation*}
C=\bigsqcup_{\lambda\in\Lambda}\bigsqcup_{\alpha\in\mathcal{I}_{\lambda}}C_{\alpha,\lambda},
\end{equation*}
where \(C_{\alpha,\lambda}=\{c_{\alpha,t}\mid t\in\STD(\lambda)\}\) for each
\(\alpha\in\mathcal{I}_{\lambda}\),
and the simple edges of \(\Gamma\) are the pairs
\(\{c_{\beta,u},c_{\alpha,t}\}\) such that \(\alpha=\beta\in\mathcal{I}_{\lambda}\) for some \(\lambda\in\Lambda\)
and \(u,\,t\in\STD(\lambda)\) are related by a dual Knuth move.

Now let \(\lambda,\,\mu\in\Lambda\), and let \((\alpha,t)\in\mathcal{I}_\lambda\times\STD(\lambda)\) and
\((\beta,u)\in\mathcal{I}_\mu\times\STD(\mu)\), so that \(c_{\alpha,t}\) and
\(c_{\beta,u}\) are vertices of~\(\Gamma\). Suppose that
\(\D(u)\setminus\D(t)\neq\emptyset\), and let \(j\in\D(u)\setminus\D(t)\).

Suppose that there exist \(i<j\) and
\((v,x)\in\STD(\mu)\times\STD(\lambda)\) such that \((u,t)\) and \((v,x)\)
are related by a paired \((\leqslant i)\)-dual Knuth move.
Then \(j\in\D(\morethan ui)\setminus\D(\morethan ti)\), since \(j\in\D(u)\setminus\D(t)\)
and \(j>i\). Thus \(j\in\D(\morethan vi)\setminus\D(\morethan xi)\), since
\((v,x)\approx_i(u,t)\) gives \(\morethan vi=\morethan ui\) and \(\morethan xi=\morethan ti\).
Hence \(j\in\D(v)\setminus\D(x)\). Moreover, since \((u,t)\) and \((v,x)\) are related by a
paired \((\leqslant i)\)-dual Knuth move, there are \(k,l\leqslant i-1\) with \(|k-l|=1\)
such that
\begin{align*}
\qquad&&\D(x)\cap \{k,l,j\} &= \{k\},&\D(v)\cap \{k,l,j\} &= \{k,j\},&&\qquad\\
\qquad&&\D(t)\cap \{k,l,j\} &= \{l\},&\D(u)\cap \{k,l,j\} &= \{l,j\},&&\qquad
\end{align*}
and it follows from Proposition~\ref{arctransport}
that \(\mu(c_{\beta,v},c_{\alpha,x})=\mu(c_{\beta,u},c_{\alpha,t})\).

More generally, suppose that \(i<j\) and \((v,x)\in\STD(\mu)\times\STD(\lambda)\) satisfy
\((v,x)\approx_i (u,t)\), so that for some \(m\in\mathbb{N}\) there exist
\((u_0,t_0)\), \((u_1,t_1)\), \dots, \((u_m,t_m)\) in \(\STD(\mu)\times\STD(\lambda)\),
with \((u_{h-1},t_{h-1})\) and \((u_h,t_h)\) related by a
paired \((\leqslant i)\)-dual Knuth move for each \(h\in[1,m]\), and \((u_0,t_0)=(u,t)\)
and \((u_m,t_m)=(v,x)\). Applying the argument in the preceding paragraph and a trivial induction,
we deduce that \(j\in\D(u_h)\setminus\D(t_h)\) and
\(\mu(c_{\beta,u_h},c_{\alpha,t_h})=\mu(c_{\beta,u},c_{\alpha,t})\) for all \(h\in[0,m]\).
Thus we obtain the following result.

\begin{lemm}\label{noname0}
Let \(\Gamma\) be a \(W_n\)-molecular graph. Using the notation as above, let \(\lambda,\,\mu\in\Lambda\),
and let \((\alpha,t)\in\mathcal{I}_\lambda\times\STD(\lambda)\) and
\((\beta,u)\in\mathcal{I}_\mu\times\STD(\mu)\). Suppose that \(\D(u)\setminus\D(t)\neq\emptyset\), and
let \(j\in\D(u)\setminus\D(t)\). Then for all \(i<j\) and all \((v,x)\in C_i(u,t)\) we have
\(j\in\D(v)\setminus\D(x)\) and \(\mu(c_{\beta,v},c_{\alpha,x})=\mu(c_{\beta,u},c_{\alpha,t})\).
\end{lemm}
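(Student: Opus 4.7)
The plan is to prove the lemma by induction on the length $m$ of the shortest chain of paired $(\leqslant i)$-dual Knuth moves $(u,t) = (u_0, t_0), (u_1, t_1), \ldots, (u_m, t_m) = (v,x)$ inside $C_i(u,t)$. The base case $m=0$ is trivial, so all the substance lies in the single-step case; the inductive hypothesis applied to $(u_{m-1}, t_{m-1})$ together with the single-step case applied to the final move then yields the general statement.

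For the single-step case, suppose $(v,x)$ is obtained from $(u,t)$ by a paired $(\leqslant i)$-dual Knuth move of some index $k \leqslant i-1$. Since $\morethan v i = \morethan u i$ and $\morethan x i = \morethan t i$, and $j > i$ belongs to $\D(u) \setminus \D(t)$, we immediately read off $j \in \D(v) \setminus \D(x)$. For the equality of edge weights, the kind of the paired move (first or second) determines the other position $l \in \{k-1, k+1\}$, and in either case $l \leqslant i-1$. The configuration of descents of $u,t,v,x$ at the positions $k,l,j$ is exactly the one laid out in the paragraph preceding the lemma, namely $\D(t) \cap \{k,l,j\} = \{l\}$, $\D(u) \cap \{k,l,j\} = \{l,j\}$, $\D(x) \cap \{k,l,j\} = \{k\}$, and $\D(v) \cap \{k,l,j\} = \{k,j\}$. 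Set $J = \{s_k, s_l, s_j\}$: since $|k-l|=1$ we have $m(s_k, s_l) = 3$, and since $j > i > k, l$ we have $s_j \notin \{s_k, s_l\}$. The edges $\{c_{\alpha, t}, c_{\alpha, x}\}$ and $\{c_{\beta, u}, c_{\beta, v}\}$ are simple edges inside the molecules with vertex sets $C_{\alpha, \lambda}$ and $C_{\beta, \mu}$, arising from the dual Knuth moves that define the paired move. Proposition~\ref{arctransport} then applies, with $c_{\alpha, x}, c_{\alpha, t}, c_{\beta, v}, c_{\beta, u}$ playing the roles of $v, v', u, u'$ and $s_j$ playing the role of $r$, delivering $\mu(c_{\beta, v}, c_{\alpha, x}) = \mu(c_{\beta, u}, c_{\alpha, t})$.

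The main obstacle, such as it is, amounts to bookkeeping: identifying the correct triple $J$, and checking that $j$ lies strictly above both indices participating in the paired Knuth move so that $s_j$ genuinely plays the role of the extra element $r$ in Proposition~\ref{arctransport}. The paragraph preceding the lemma has essentially carried out this verification for the single-step case, so the proof itself is little more than an induction on chain length that propagates both $j \in \D(u_h) \setminus \D(t_h)$ and $\mu(c_{\beta, u_h}, c_{\alpha, t_h}) = \mu(c_{\beta, u}, c_{\alpha, t})$ along the chain from step to step.
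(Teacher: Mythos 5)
Your proof is correct and matches the paper's argument essentially verbatim: the paper's ``proof'' of Lemma~\ref{noname0} is exactly the discussion in the two paragraphs immediately preceding the lemma statement, which handles the single-step case by checking that the four vertices' descents at positions $k,l,j$ realise the hypotheses of Proposition~\ref{arctransport} and then finishes by ``a trivial induction'' along the chain of paired moves. You have reconstructed the same single-step configuration, the same application of Proposition~\ref{arctransport} with $s_j$ in the role of $r$, and the same induction, so there is nothing to flag.
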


\begin{coro}\label{arweightone}
Let \(\Gamma\) be a \(W_n\)-molecular graph as above.
Let \(\lambda\in\Lambda\), and \(u,\,t\in\STD(\lambda)\), and suppose that
\(u=s_jt>t\) for some \(j\in[1,n-1]\). Then \(\mu(c_{\alpha,u},c_{\alpha,t}) = 1\),
for all \(\alpha\in\mathcal{I}_{\lambda}\).
\end{coro}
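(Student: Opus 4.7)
The plan is to apply Lemma~\ref{noname0} to transport the desired claim to an auxiliary pair $(v, x) \in C_{j-1}(u, t)$ on which the edge $\{c_{\alpha, v}, c_{\alpha, x}\}$ can be exhibited directly as a simple edge of the molecule (and therefore must have weight~$1$). The hypothesis $u = s_j t > t$ with $u, t \in \STD(\lambda)$ forces $j \in \SA(t)$, so entries $j$ and $j+1$ of $t$ sit in different rows and different columns with $\col_t(j) < \col_t(j+1)$. From this one reads off $j \in \SD(u) \subseteq \D(u)$ and $j \notin \D(t)$, so $j \in \D(u) \setminus \D(t)$; and since entry~$1$ always occupies the cell $(1, 1)$ in any standard tableau, $j \geq 2$. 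Because $u$ and $t$ agree on entries $\leq j-1$ and differ at entry $j$, the restriction number of $(u, t)$ is exactly $j - 1$.

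Next I invoke the construction in the preamble to Lemma~\ref{knuthmoveweakorderrelationLem} to produce an element $(v, x) \in F(u, t) \subseteq C_{j-1}(u, t)$. With $\xi = \shape(\Lessthan t{(j-1)})$, the partition-corner argument recorded there guarantees a $\xi$-removable box $(d, m)$ lying between $t^{-1}(j)$ and $t^{-1}(j+1)$, which in our situation means $\col_t(j) \leq m < \col_t(j+1)$. Choosing any $w' \in \STD(\xi)$ with $w'(d, m) = j - 1$ produces $v, x \in \STD(\lambda)$ such that $v = s_j x$, $\col_v(j-1) = \col_x(j-1) = m$, $\col_x(j) = \col_t(j)$ and $\col_v(j) = \col_t(j+1)$. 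A short check of these column indices shows $\D(x) \cap \{j-1, j\} = \{j-1\}$ and $\D(v) \cap \{j-1, j\} = \{j\}$, so $x \to^{*1} v$ is a valid first-kind dual Knuth move; by Remark~\ref{vertexsetofWngraph} the pair $\{c_{\alpha, v}, c_{\alpha, x}\}$ is a simple edge of the molecule, and hence $\mu(c_{\alpha, v}, c_{\alpha, x}) = 1$.

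Lemma~\ref{noname0} applied to $(u, t)$ with the index $j \in \D(u) \setminus \D(t)$ and parameter $i = j - 1 < j$ then yields $\mu(c_{\alpha, u}, c_{\alpha, t}) = \mu(c_{\alpha, v}, c_{\alpha, x}) = 1$, completing the argument. The step requiring the most care is the verification that an arbitrary $(v, x) \in F(u, t)$ is joined by a first-kind dual Knuth move; the worry is that favourability at index $j - 1$ might place $j - 1$ on the wrong side of $\D(v) \oplus \D(x)$. This is ruled out by $\col_v(j-1) = \col_x(j-1)$ together with $\col_v(j) = \col_t(j+1) > \col_t(j) = \col_x(j)$, since the alternative $j - 1 \in \D(v) \setminus \D(x)$ would force $\col_x(j-1) < \col_t(j)$ and $\col_x(j-1) \geq \col_t(j+1)$ simultaneously, an impossibility given $\col_t(j) < \col_t(j+1)$. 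This observation is exactly what dictates placing $j-1$ in the column range $[\col_t(j), \col_t(j+1) - 1]$, where a removable corner of $\xi$ is guaranteed by the partition argument.
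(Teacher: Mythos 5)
Your proposal is correct and follows essentially the same route as the paper: you identify $j-1$ as the restriction number of $(u,t)$, construct a favourable $(v,x)\in F(u,t)\subseteq C_{j-1}(u,t)$, verify that $x\to^{*1}v$ is a simple dual Knuth edge so that $\mu(c_{\alpha,v},c_{\alpha,x})=1$, and then transport the weight back via Lemma~\ref{noname0}. The only cosmetic difference is that the paper first disposes of the case $\D(t)\nsubseteq\D(u)$ directly (where $\{c_{\alpha,u},c_{\alpha,t}\}$ is already a simple edge) and invokes Lemma~\ref{noname2} in the remaining case, whereas you handle both situations uniformly by computing the column indices of $(v,x)$ by hand -- an equivalent argument, slightly more self-contained.
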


\begin{proof}
Since \(t<s_jt=u\), it follows from Remark~\ref{altDualKnuth} that if \(\D(t)\nsubseteq\D(u)\)
then there is a dual Knuth move from \(t\) to \(u\), and \(\{c_{\alpha,u},c_{\alpha,t}\}\)
is a simple edge. Thus \(\mu(c_{\alpha,u},c_{\alpha,t}) = 1\) in this case, and so we may
assume that \(\D(t)\subsetneqq\D(u)\).

Since \(u=s_jt\) it is clear that \(\lessthan tj=\lessthan uj\), and hence
\(\D(t)\cap[1,j-2]=\D(u)\cap[1,j-2]\). If \(j-1\in\D(u)\) then \(j-1\in\D(t)\), as
\(\col_t(j-1)=\col_u(j-1)\geqslant\col_u(j)\geqslant\col_u(j+1)=\col_t(j)\). Moreover, since
\(t<s_jt\) gives \(j\in\D(u)\setminus\D(t)\), it follows that \(j=\min(\D(u)\setminus\D(t))\).
Note also that \(j-1\) is the restriction number of \((u,t)\).

Writing \(i\) for \(j-1\), we see that \(u\) and \(t\) satisfy the hypotheses of Lemma~\ref{noname2},
since \(i<j=\min(\D(u)\setminus\D(t))\). Since \(\col_t(i+1)<\col_u(i+1)\), it
follows that \((u,t)\approx_i(v,x)\), where \((v,x)\in F(u,t)\) satisfies
\(\D(v)\cap\{i,j\}=\{j\}\) and \(\D(x)\cap\{i,j\}=\{i\}\). Since
\((u,t)\approx_i(v,x)\) there exists \(w\in W_i\) with \(v=wu\) and \(x=wt\), and since
\(j>i\) it follows that \(s_jw=ws_j\). Thus \(s_jx=s_jwt=ws_jt=wu=v\). Furthermore \(s_jx>x\), since
\(j\notin\D(x)\), and \(\D(x)\nsubseteq\D(v)\) since \(i\in\D(x)\setminus\D(v)\).
So there is a dual Knuth move indexed by \(j\) from \(x\) to \(v\), and so
\(\{c_{\alpha,v},c_{\alpha,x}\}\) is a simple edge. Thus
\(\mu(c_{\alpha,v},c_{\alpha,x})=1\), and so \(\mu(c_{\alpha,u},c_{\alpha,t})=1\) by
Lemma~\ref{noname0}.
\end{proof}

\begin{lemm}\label{noname1muequallambda}
Let \(\Gamma\) be a \(W_n\)-molecular graph as above. Let
\(\mu,\,\lambda\in\Lambda\), let \(u\in\STD(\mu)\) and let \(t\in\STD(\lambda)\).
Suppose that \(\D(u) = \{n-1\} \cup \D(t)\) and \(\mu(c_{\beta,u},c_{\alpha,t}) \neq 0\)
for some \(\beta\in\mathcal{I}_{\mu}\) and \(\alpha\in\mathcal{I}_{\lambda}\).
Suppose further that the restriction number of~\((u,t)\)
is \(i< n-2\). Then \(\col_u(i+1)<\col_t(i+1)\), and \((u,t)\approx_i (v,x)\) for some
\((v,x)\in\STD(\mu)\times\STD(\lambda)\) such that
\(\D(v)=\D(x)\cup\{i,n-1\}\) and \(\mu(c_{\beta,v},c_{\alpha,x})=\mu(c_{\beta,u},c_{\alpha,t}) \neq 0\).
\end{lemm}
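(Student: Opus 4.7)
The plan is to choose some $(v,x)\in F(u,t)$ (which is nonempty since $(u,t)$ is not $(i+1)$-restrictable), apply Lemma~\ref{knuthmoveweakorderrelationLem} to describe the descent structure of $(v,x)$, and then invoke the $W$-Simplicity Rule together with the $W$-Compatibility Rule (both built into the definition of a $W_n$-molecular graph) to eliminate the unwanted possibilities.

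First, I would establish that $n-1\notin\D(t)$, so that $\D(u)\setminus\D(t)=\{n-1\}$. Indeed, if $n-1\in\D(t)$ then $\D(u)=\D(t)$ and hence $\tau(c_{\beta,u})=\tau(c_{\alpha,t})$; but then neither alternative of the $W$-Simplicity Rule can hold, forcing $\mu(c_{\beta,u},c_{\alpha,t})=0$ and contradicting the hypothesis. Since $i<n-2<n-1$, we then have $i\notin\D(u)\oplus\D(t)=\{n-1\}$, so $(u,t)$ is not favourable and Lemma~\ref{knuthmoveweakorderrelationLem} is applicable. Moreover, Lemma~\ref{noname0} applied with $j=n-1$ guarantees that every $(v,x)\in C_i(u,t)$ satisfies $n-1\in\D(v)\setminus\D(x)$ and $\mu(c_{\beta,v},c_{\alpha,x})=\mu(c_{\beta,u},c_{\alpha,t})\neq 0$.

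For $(v,x)\in F(u,t)\subseteq C_i(u,t)$, Lemma~\ref{knuthmoveweakorderrelationLem} yields two cases. If $\col_u(i+1)<\col_t(i+1)$, then $\D(v)\setminus\D(x)=\{i\}\cup(\D(u)\setminus\D(t))=\{i,n-1\}$ and $\D(x)\setminus\D(v)=\D(t)\setminus\D(u)=\emptyset$, giving $\D(v)=\D(x)\cup\{i,n-1\}$ as required. The main obstacle is to rule out the alternative $\col_t(i+1)<\col_u(i+1)$, under which Lemma~\ref{knuthmoveweakorderrelationLem} would give $\D(v)\setminus\D(x)=\{n-1\}$ and $\D(x)\setminus\D(v)=\{i\}$. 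For this I would appeal to the $W$-Compatibility Rule: since $\mu(c_{\beta,v},c_{\alpha,x})\neq 0$, the rule requires $s_i$ and $s_{n-1}$ to be joined by a bond in the Coxeter diagram of type $A_{n-1}$; but the hypothesis $i<n-2$ gives $(n-1)-i\geqslant 2$, so these generators commute and no such bond exists, a contradiction. The essential point is that the hypothesis $i<n-2$ is precisely what prevents the second alternative, by forcing the non-bond that makes $W$-Compatibility incompatible with the implied descent pattern.
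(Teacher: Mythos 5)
Your proof is correct and takes essentially the same route as the paper's: select $(v,x)\in F(u,t)$, read off the descent sets from the dichotomy in Lemma~\ref{knuthmoveweakorderrelationLem} (the paper uses its corollary Lemma~\ref{noname2}, which is the same dichotomy specialised to $\D(t)\subsetneqq\D(u)$), transport the edge weight via Lemma~\ref{noname0}, and kill the unwanted alternative with the $W$-Compatibility Rule because $i<n-2$ means $s_i$ and $s_{n-1}$ are not bonded. The only addition is your explicit verification via the Simplicity Rule that $n-1\notin\D(t)$, which the paper silently assumes; that is a sound clarification rather than a different argument.
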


\begin{proof}
Since clearly \(u\ne t\), the set \(F(u,t)\) is defined and nonempty.
Let \((v,x)\in F(u,t)\). Then it follows by Lemmas~\ref{noname2} and
\ref{noname0} that \((u,t)\approx_i (v,x)\)
and \(\mu(c_{\beta,v},c_{\alpha,x})=\mu(c_{\beta,u},c_{\alpha,t}) \neq 0\).
Moreover, if \(\col_u(i+1)<\col_t(i+1)\) then
Lemma~\ref{noname2} gives \(\D(v)=\D(x)\cup\{i,n-1\}\). Thus it
remains to show that \(\col_u(i+1)<\col_t(i+1)\). Suppose otherwise.
Then Lemma~\ref{noname2} shows that \(n-1\in\D(v)\setminus\D(x)\)
and \(i\in\D(x)\setminus\D(v)\), and now the \(W\)-Compatibility Rule says that
\(i\) and \(n-1\) must be joined by a bond in the Coxeter diagram of \(W_n\).
This contradicts the assumption that \(i<n-2\).
\end{proof}

\begin{lemm}\label{nminus1order0}
Suppose that \(u,\,t\in\STD(n)\) are such that the restriction number of~\((u,t)\)
is \(n-1\) and \(\D(u) = \{n-1\} \cup \D(t)\). Then
\(\col_u(n)<\col_t(n)\), \(\shape(u)<\shape(t)\), and \(u<t\).
\end{lemm}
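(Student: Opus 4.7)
The hypothesis that the restriction number of $(u,t)$ is $n-1$ unpacks via Remark~\ref{k-restrricted-rem} as follows: setting $w=\Lessthan{u}{n-1}=\Lessthan{t}{n-1}$ of shape $\xi$, the tableaux $u$ and $t$ agree on entries $1,\dots,n-1$ but differ only in where $n$ is placed. So $n$ lies in two distinct $\xi$-addable boxes, determined by columns $p:=\col_u(n)$ and $q:=\col_t(n)$, and the common value $c:=\col_u(n-1)=\col_t(n-1)$ is well-defined.

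The first step is to extract the column inequality from the descent hypothesis, reading $\D(u)=\{n-1\}\cup\D(t)$ with $n-1\notin\D(t)$ (the disjoint-union interpretation implicit in its usage in the paper). From $n-1\in\D(u)$ one obtains $p=\col_u(n)<\col_u(n-1)=c$, while $n-1\notin\D(t)$ gives $c=\col_t(n-1)\leqslant\col_t(n)=q$. Combining, $p<c\leqslant q$, which proves the first assertion $\col_u(n)<\col_t(n)$.

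Next, I would deduce $\shape(u)<\shape(t)$ by a short partial-sum computation. Since $\shape(u)$ is obtained from $\xi$ by incrementing part $p$ and $\shape(t)$ by incrementing part $q$, and Definition~\ref{Dominance Order} reads dominance as the reversed inequality on partial sums, one finds
\[
\sum_{j=1}^{k}\shape(u)_j - \sum_{j=1}^{k}\shape(t)_j \;=\; [p\leqslant k]-[q\leqslant k],
\]
which vanishes for $k<p$ or $k\geqslant q$ and equals $1$ when $p\leqslant k<q$. Since $p<q$ the range $[p,q-1]$ is nonempty, so $\shape(u)\leqslant\shape(t)$ with strict inequality at some partial sum, giving $\shape(u)<\shape(t)$. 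Alternatively, Lemma~\ref{nsbeforent} yields the non-strict statement and strictness follows from $\shape(u)\neq\shape(t)$.

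Finally, to conclude $u<t$ in the extended dominance order of Definition~\ref{ExtDominance Order Tableaux}, it suffices to check $\shape(\Lessthan{u}{m})\leqslant\shape(\Lessthan{t}{m})$ for all $m\in[1,n]$. This is equality for $m\leqslant n-1$ since $\Lessthan{u}{m}=\Lessthan{t}{m}$, and for $m=n$ it is the strict inequality $\shape(u)<\shape(t)$ just obtained, which also precludes $u=t$. Nowhere does the argument present a real obstacle; the only care needed is with the paper's column-based convention, in which ``dominates'' corresponds to the reversed inequality on partial sums of parts.
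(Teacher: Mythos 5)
Your proof takes the same route as the paper's — extract the column inequality from the descent hypothesis, deduce the shape inequality, then conclude via the definition of the extended dominance order — and the overall structure and conclusion are correct. However, you have the strictness of the two descent inequalities swapped. Since for a standard tableau $\D(t)=\SD(t)\cup\WD(t)$ allows equal columns, $n-1\in\D(u)$ gives only $\col_u(n)\leqslant\col_u(n-1)$ (weak), while $n-1\notin\D(t)$, being an ascent, forces the strict $\col_t(n-1)<\col_t(n)$ (both weak and strong ascents give a strict column increase). You wrote the opposite: $\col_u(n)<\col_u(n-1)$ and $\col_t(n-1)\leqslant\col_t(n)$. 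The chain still gives $p<q$ in both versions, so the final conclusion is unharmed, but the intermediate claim $p<c$ is false in general (it fails when $n-1\in\WD(u)$). Once this is corrected, the rest — your explicit partial-sum verification of $\shape(u)<\shape(t)$ (the paper instead cites Lemma~\ref{nsbeforent} plus the obvious $\shape(u)\neq\shape(t)$) and the reduction of $u<t$ to the $m=n$ comparison — is sound and essentially identical to the paper's argument.
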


\begin{proof}
Clearly \(n\geqslant 2\). Since \(\Lessthan u{(n-1)}=\Lessthan t{(n-1)}\)
we have \(\shape(\lessthan un)=\shape(\lessthan tn)\), and
since \(n-1\in\D(u)\setminus\D(t)\) we have
\(\col_u(n)\leqslant\col_u(n-1)=\col_t(n-1)<\col_t(n)\).
Hence \(\shape(u)<\shape(t)\) by Lemma~\ref{nsbeforent}, and \(u<t\)
by Definition~\ref{ExtDominance Order Tableaux}.
\end{proof}

\begin{lemm}\label{nminus1order}
Let \(\Gamma\) be a \(W_n\)-molecular graph as above. Let
\(\mu,\,\lambda\in\Lambda\), let \(u\in\STD(\mu)\), and let \(t\in\STD(\lambda)\).
Suppose that \(\D(u) = \{n-1\} \cup \D(t)\) and that \(\mu(c_{\beta,u},c_{\alpha,t}) \neq 0\)
for some \(\beta\in\mathcal{I}_{\mu}\) and \(\alpha\in\mathcal{I}_{\lambda}\),
and suppose that the restriction number of \((u,t)\) is \(n-2\).
Then \((u,t)\approx_{n-2} (v,x)\) for some
\((v,x)\in\STD(\mu)\times\STD(\lambda)\) with
\(\mu(c_{\beta,v},c_{\alpha,x})=\mu(c_{\beta,u},c_{\alpha,t})\), and
either \(u<t\) and \(\D(v) = \{n-2,n-1\} \cup \D(x)\)
(in the case \(\col_u(n-1)<\col_t(n-1)\)), or else \((\lambda,\alpha)=(\mu,\beta)\)
and \(u=s_{n-1}t>t\), and \(\mu(c_{\beta,u},c_{\alpha,t}) = 1\) (in the
case \(\col_t(n-1)<\col_u(n-1)\)).
\end{lemm}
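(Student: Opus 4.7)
The plan is to apply the \(F(u,t)\) construction described just before Lemma~\ref{knuthmoveweakorderrelationLem} to replace \((u,t)\) by a favourable pair \((v,x)\), and then analyze the two subcases of Lemma~\ref{noname2}. The hypothesis \(\D(u) = \{n-1\} \cup \D(t)\) gives \(\D(u) \oplus \D(t) = \{n-1\}\), which does not contain the restriction number \(n-2\); hence \((u,t)\) is not favourable and the construction produces a nonempty \(F(u,t)\). For any \((v,x) \in F(u,t)\) one has \((u,t) \approx_{n-2} (v,x)\) and, by Lemma~\ref{noname0} applied with \(j = n-1\), \(\mu(c_{\beta,v},c_{\alpha,x}) = \mu(c_{\beta,u},c_{\alpha,t}) \neq 0\). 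Remark~\ref{k-restrricted-rem} ensures that \(\col_u(n-1) \neq \col_t(n-1)\), so exactly one of the two subcases of Lemma~\ref{noname2} applies with \(i = n-2\) and \(j = n-1\).

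In the case \(\col_u(n-1) < \col_t(n-1)\), Lemma~\ref{noname2} immediately gives \(\D(v) = \{n-2,n-1\} \cup \D(x)\), so the remaining task is to verify \(u < t\) in the extended dominance order. The shapes \(\shape(\Lessthan u m)\) and \(\shape(\Lessthan t m)\) agree for \(m \leq n-2\) by \((n-2)\)-restrictedness; for \(m = n-1\) the entry \(n-1\) is placed in an earlier column of \(u\) than of \(t\), so a partial-sum comparison yields \(\shape(\Lessthan u{n-1}) \leq \shape(\Lessthan t{n-1})\); and for \(m = n\) the chain \(\col_u(n) < \col_u(n-1) < \col_t(n-1) \leq \col_t(n)\), which comes from \(n-1 \in \D(u) \setminus \D(t)\), combines with Lemma~\ref{nsbeforent} to give \(\shape(u) \leq \shape(t)\). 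Strictness follows from \(u \neq t\).

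The main obstacle is the case \(\col_t(n-1) < \col_u(n-1)\), where Lemma~\ref{noname2} delivers only \(\D(v) \cap \{n-2,n-1\} = \{n-1\}\) and \(\D(x) \cap \{n-2,n-1\} = \{n-2\}\). These descent sets are incomparable, so the \(W\!\)-Simplicity Rule forces \(\mu(c_{\beta,v},c_{\alpha,x}) = 1\), and hence \(\mu(c_{\beta,u},c_{\alpha,t}) = 1\). To identify the molecule of \(c_{\alpha,x}\) I will invoke the \(W\!\)-Simply-Laced Bonding Rule (Remark~\ref{molecularbondingrule}) applied to \(c_{\beta,v}\): the latter has a unique adjacent vertex with the complementary \(\{n-2,n-1\}\)-descent pattern. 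Within the molecule of type \(\mu\) containing \(c_{\beta,v}\) the dual Knuth neighbour \(c_{\beta,(n-2)\neb(v)}\) plays this role via a simple edge, so uniqueness forces \(c_{\alpha,x} = c_{\beta,(n-2)\neb(v)}\); in particular \((\alpha,\lambda) = (\beta,\mu)\) and \(x = (n-2)\neb(v)\).

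The delicate final step is to decide between the two possibilities \((n-2)\neb(v) = s_{n-2}v\) and \((n-2)\neb(v) = s_{n-1}v\). The construction of \(F(u,t)\) enforces \(\Lessthan v{n-2} = \Lessthan x{n-2}\), whereas \(s_{n-2}v\) relocates the entry \(n-2\) and so disagrees with \(v\) on entries \(\leq n-2\); this rules out \(x = s_{n-2}v\), leaving \(x = s_{n-1}v\). The equivalence \((u,t) \approx_{n-2} (v,x)\) then supplies \(w \in W_{n-2}\) with \(v = wu\) and \(x = wt\), and since \(s_{n-1}\) commutes with every generator of \(W_{n-2}\), we conclude \(u = s_{n-1}t\). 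Finally, \(u > t\) follows because \(n-1 \notin \D(t)\) makes \(l(s_{n-1}t) = l(t)+1\).
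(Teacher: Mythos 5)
Your proof is correct and follows essentially the same strategy as the paper: pass to \((v,x)\in F(u,t)\), apply Lemma~\ref{noname2} with \(i=n-2\) and \(j=n-1\), transport the edge weight via Lemma~\ref{noname0}, and split on the comparison of \(\col_u(n-1)\) and \(\col_t(n-1)\). The only minor variation is in the case \(\col_t(n-1)<\col_u(n-1)\), where you invoke the Simply-Laced Bonding Rule and the uniqueness of the \((n-2)\)-neighbour of \(v\) to place \(c_{\alpha,x}\) in the same molecule as \(c_{\beta,v}\); the paper instead observes directly that the simple edge \(\{c_{\beta,v},c_{\alpha,x}\}\) must lie inside a single molecule (Theorem~\ref{molIsKL} and Remark~\ref{vertexsetofWngraph}), and then reads off \(u=s_{n-1}t\) from \(\lambda=\mu\) together with \(\Lessthan u{(n-2)}=\Lessthan t{(n-2)}\) without needing to decide between \(s_{n-2}v\) and \(s_{n-1}v\).
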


\begin{proof}
Clearly \(n\geqslant 3\). Since \((u,t)\) is \((n-2)\)-restricted,
we have \(\Lessthan u{(n-2)} = \Lessthan t{(n-2)}\)
and \(\col_u(n-1) \neq \col_t(n-1)\). Observe that \((u,t)\) satisfies the
hypotheses of Lemma~\ref{noname2} with \(i=n-2\) and \(j=n-1\). Thus letting
\((v,x)\in F(u,t)\), it follows that
\((u,t)\approx_{n-2}(v,x)\),
and furthermore, \(\mu(c_{\beta,v},c_{\alpha,x})=\mu(c_{\beta,u},c_{\alpha,t}) \neq 0\)
by Lemma~\ref{noname0}.

\begin{Case}{1.}Suppose that \(\col_u(n-1)<\col_t(n-1)\). Then since
\(\shape(\Lessthan u{(n-2)})=\shape(\Lessthan t{(n-2)})\)
it follows from Lemma~\ref{nsbeforent} that
\(\shape(\Lessthan u{(n-1)})<\shape(\Lessthan t{(n-1)})\). Furthermore, since
\(n-1\in\D(u)\setminus\D(t)\), it follows that
\(\col_u(n)\leqslant\col_u(n-1)<\col_t(n-1)<\col_t(n)\). Hence \(\mu<\lambda\)
by Lemma~\ref{nsbeforent}, and \(u<t\) by Definition~\ref{ExtDominance Order Tableaux}.
Moreover, since \(\col_u(n-1)<\col_t(n-1)\) and \(\D(u)\setminus\D(t)=\{n-1\}\), it
follows from Lemma~\ref{knuthmoveweakorderrelationLem} that
\(\D(v)=\D(x)\cup\{n-2,n-1\}\).
\end{Case}
\begin{Case}{2.}Suppose that \(\col_t(n-1)<\col_u(n-1)\). Lemma~\ref{knuthmoveweakorderrelationLem}
gives \(\D(x)\cap\{n-2,n-1\}=\{n-2\}\) and \(\D(v)\cap\{n-2,n-1\}=\{n-1\}\), and since
\(\mu(c_{\beta,v},c_{\alpha,x})\ne0\) it follows from
\(W_n\)-Simplicity Rule that \(\mu(c_{\beta,v},c_{\alpha,x})=1\).
Hence \(\mu(c_{\beta,u},c_{\alpha,t})=1\). Moreover, since \(\{c_{\beta,v},c_{\alpha,x}\}\) is a
simple edge, it follows that from Theorem~\ref{molIsKL} and Remark~\ref{vertexsetofWngraph}
that \(\lambda=\mu\) and \(\alpha=\beta\). Hence \(u=s_{n-1}t\), since
\(\Lessthan u{(n-2)} = \Lessthan t{(n-2)}\), and \(u>t\) since \(\col_t(n-1)<\col_u(n-1)\).
\qedhere
\end{Case}
\end{proof}

\begin{rema}\label{nminus1iplusonest}
Let \(\Gamma\) be a \(W_n\)-molecular graph as above. Let
\(\mu,\,\lambda\in\Lambda\), let \(u\in\STD(\mu)\), and let \(t\in\STD(\lambda)\).
Suppose that \(\D(u) = \{n-1\} \cup \D(t)\) and that \(\mu(c_{\beta,u},c_{\alpha,t}) \neq 0\)
for some \(\beta\in\mathcal{I}_{\mu}\) and \(\alpha\in\mathcal{I}_{\lambda}\). Let
\(i\) be the restriction number of \((u,t)\), and note that \(i\leqslant n-1\). If \(i<n-2\) then
\(\col_u(i+1)<\col_t(i+1)\) by Lemma~\ref{nminus1order0}, and if \(i=n-1\) then
\(\col_u(n)<\col_t(n)\) by  Lemma~\ref{noname1muequallambda}. In the remaining case
\(i=n-2\), if \(\col_u(n-1)>\col_t(n-1)\) then \(u=s_{n-1}t>t\) by
Lemma~\ref{nminus1order}. Thus it can be deduced that if
\(u\neq s_{n-1}t>t\) then \(\col_u(i+1)<\col_t(i+1)\).
\end{rema}

\begin{rema}\label{iplusonest}
Let \(\Gamma\) be a \(W_n\)-molecular graph as above.
Let \(\mu,\,\lambda\in\Lambda\), and
let \((\beta,u)\in\mathcal{I}_{\mu}\times\STD(\mu)\) and
\((\alpha,t)\in\mathcal{I}_{\lambda}\times\STD(\lambda)\) satisfy the condition
\(\mu(c_{\beta,u},c_{\alpha,t}) \neq 0\) and \(\D(t) \subsetneqq \D(u)\).
Let \(j=\min(\D(u)\setminus\D(t))\), and~\(i\) the restriction number of~\((u,t)\), and
note that \(i\leqslant j\).

Let \(K=S\setminus\{s_{j+1},\ldots,s_{n-1}\}\), and let
\(\Gamma_K = \Gamma{\downarrow_K}\), the \(W_K\)-graph obtained by restricting
\(\Gamma\) to \(W_K\). As in Remark~\ref{restrictionrem}, for each \(\lambda\in\Lambda\)
and \(\alpha\in\mathcal{I}_\lambda\) we define \(\Lambda_{K,\alpha,\lambda}\) to be the
set of all \(\kappa\in P(j+1)\) such that the molecule of \(\Gamma\) with the vertex set
\(C_{\alpha,\lambda}\) contains a \(K\)-submolecule of type~\(\kappa\),
and let \(\mathcal{I}_{K,\alpha,\lambda,\kappa}\) index these
submolecules. Let \(\Lambda_K=\bigcup_{\alpha,\lambda}\Lambda_{K,\alpha,\lambda}\), the
set of molecule types for \(\Gamma_K\), and for each \(\kappa\in\Lambda_K\) let
\(\mathcal{I}_{K,\kappa}=\bigsqcup_{\{(\alpha,\lambda)|\kappa\in\Lambda_{K,\alpha,\lambda}\}}
\mathcal{I}_{K,\alpha,\lambda,\kappa}\). For each \(\beta\in\mathcal{I}_{K,\kappa}\) we
write \(\{c'_{\beta,u}\mid u\in\STD(\kappa)\}\) for the vertex set of the corresponding
\(K\)-submolecule of~\(\Gamma\).

Let \(v=\Lessthan u{(j+1)}\) and \(x=\Lessthan t{(j+1)}\), and write \(\eta=\shape(v)\)
and \(\theta=\shape(x)\). By Remark~\ref{restrictionrem}, we can identify the vertex
\(c_{\beta,u}\) of \(\Gamma_K\) with \(c'_{\delta,v}\) for some
\(\delta\in\mathcal{I}_{K,\beta,u,\eta}\), and
the vertex \(c_{\alpha,t}\) of \(\Gamma_K\) with \(c'_{\gamma,x}\) for some
\(\gamma\in\mathcal{I}_{K,\alpha,\lambda,\theta}\). It is clear that \(\D(v)=\D(x)\cup\{j\}\),
so it follows that \(\mu(c'_{\delta,v},c'_{\gamma,x})=\underline{\mu}(c_{\beta,u},c_{\alpha,t})\neq 0\).
Moreover, since \(i\leqslant j\), the restriction number of \((v,x)\) is also \(i\). Thus
Lemma~\ref{nminus1order0}, Lemma~\ref{noname1muequallambda} and Lemma~\ref{nminus1order} are
applicable to \(\Gamma_K\) and \((v,x)\) subject to hypotheses \(i=j\), \(i<j-1\) and
\(i=j-1\), respectively. In particular, Remark~\ref{nminus1iplusonest} says that
if \(u\neq s_{i+1}t>t\) then \(\col_u(i+1)<\col_t(i+1)\).
\end{rema}
We end this section with two technical lemmas that will be used throughout the rest of the
paper. They are concerned with descent sets and the lexicographic order on standard tableaux.
The first of these lemmas is needed for future applications of the polygon rule. Recall that
if \(t\in\STD(n)\) and \(i\in[1,n-1]\) then \(s_it\in\STD(n)\) if and only if either
\(i\in\SA(t)\) or \(i\in\SD(t)\).

\begin{lemm}\label{variousaltpaths}
Let \(t\in\STD(n)\) and let \(i\in\A(t)\) and \(j\in\SD(t)\). Put \(v=s_jt\).
\begin{enumerate}[label=(\roman*),topsep=1 pt]
\item Suppose that \(i<j-1\). Then \(i\notin\D(v)\) and \(j\notin\D(v)\).

\noindent
Additionally, if \(i\in\SA(v)\) then \(i\in\D(s_{i}v)\) and \(j\notin\D(s_{i}v)\).
\item Suppose that \(i=j-1\) and \(\col_t(j+1)>\col_t(j-1)\).
Then \(j-1\notin\D(v)\) and \(j\notin\D(v)\).

\noindent
Additionally, if \(j-1\in\SA(v)\) then
\(j-1\in\D(s_{j-1}v)\) and \(j\notin\D(s_{j-1}v)\).
\item Suppose that \(i=j-1\) and \(\col_t(j+1)<\col_t(j-1)\). Then
\(j-1\in\SD(v)\). Writing \(w=s_{j-1}v\), we have
\(j-1\in\D(v)\) and \(j\notin\D(v)\), and \(j-1\notin\D(w)\) and
\(j\notin\D(w)\).

\noindent
Additionally, if \(j\in\SA(w)\), then \(j-1\in\SA(s_jw)\),
and we have  \(j\in\D(s_jw)\) and \(j-1\notin\D(s_{j}w)\), and
\(j-1\in\D(s_{j-1}s_jw)\) and \(j\notin\D(s_{j-1}s_jw)\).
\end{enumerate}
\end{lemm}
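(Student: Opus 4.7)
The plan is to prove all three parts by direct case analysis, using the formula that for any tableau $u$ and any $k\in[1,n-1]$ the application of $s_k$ swaps the two pairs $(\col_u(k),\col_u(k+1))$ and $(\row_u(k),\row_u(k+1))$ while leaving every other $\col_u(m)$ and $\row_u(m)$ unchanged. Throughout I shall use the criteria that $k\in\D(u)$ iff $\col_u(k)\geqslant\col_u(k+1)$, that $k\in\SA(u)$ iff $\row_u(k)>\row_u(k+1)$, and that in a standard tableau the column and row indices of $k$ and $k+1$ cannot both strictly increase (so $\A$ and $\D$ partition $[1,n-1]$).

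In case (i) the positions of $i$ and $i+1$ are untouched by $s_j$, so $\col_v(i)=\col_t(i)$ and $\col_v(i+1)=\col_t(i+1)$; since $i\in\A(t)$ this gives $i\notin\D(v)$. For $j$ the swap yields $\col_v(j)=\col_t(j+1)$ and $\col_v(j+1)=\col_t(j)$, and $j\in\SD(t)$ then forces $\col_v(j)<\col_v(j+1)$, so $j\notin\D(v)$. Under the extra assumption $i\in\SA(v)$, applying $s_i$ swaps only the columns/rows of labels $i,i+1$; the strong-ascent condition on $v$ becomes a strong descent in $s_iv$, giving $i\in\D(s_iv)$, and $j,j+1$ are again untouched so $j\notin\D(s_iv)$.

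Cases (ii) and (iii) are handled in exactly the same style: write out $\col_v(m)$, $\col_w(m)$, etc.\ at $m\in\{j-1,j,j+1\}$ using the swap formula applied one transposition at a time, and then read off each $\D$-membership as a single inequality between two such values. The three inputs needed are $i=j-1\in\A(t)$, giving $\col_t(j-1)<\col_t(j)$; $j\in\SD(t)$, giving $\col_t(j)>\col_t(j+1)$ and $\row_t(j)<\row_t(j+1)$; and the case hypothesis comparing $\col_t(j+1)$ with $\col_t(j-1)$. For instance, in case (iii) the first assertion $j-1\in\SD(v)$ reduces to $\col_v(j-1)=\col_t(j-1)>\col_t(j+1)=\col_v(j)$, which is the case hypothesis; the assertions about $w=s_{j-1}v$ reduce to $\col_w(j-1)=\col_t(j+1)<\col_t(j-1)=\col_w(j)$ and $\col_w(j)=\col_t(j-1)<\col_t(j)=\col_w(j+1)$.

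The only step that requires bookkeeping of row indices (as opposed to columns) is the verification in case (iii) that $j-1\in\SA(s_jw)$ whenever $j\in\SA(w)$: tracing through the three swaps $s_j, s_{j-1}, s_j$ one finds $\row_{s_jw}(j-1)=\row_t(j+1)$ and $\row_{s_jw}(j)=\row_t(j)$, so $j\in\SD(t)$ immediately supplies $\row_{s_jw}(j-1)>\row_{s_jw}(j)$. The remaining $\D$-assertions about $s_jw$ and $s_{j-1}s_jw$ then follow from one-line inequality checks of the same type as before. The main obstacle is purely organisational: case (iii) threads through a chain of four tableaux $v,w,s_jw,s_{j-1}s_jw$ with interleaved $\SA/\SD$ hypotheses, so the work is in keeping the columns and rows at positions $j-1,j,j+1$ straight, not in any nontrivial argument.
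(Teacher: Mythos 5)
Your proposal is correct and follows essentially the same route as the paper's own proof: both proceed by tracking how each transposition $s_k$ permutes the row and column indices of $k$ and $k+1$ while leaving the others fixed, then reading off each $\D$, $\SA$, $\SD$ membership as a single inequality. The spot checks you give (e.g.\ $\col_w(j-1)=\col_t(j+1)<\col_t(j-1)=\col_w(j)$ and $\row_{s_jw}(j-1)=\row_t(j+1)>\row_t(j)=\row_{s_jw}(j)$) match the paper's computations; the one thing worth stating explicitly rather than burying in the parenthetical is the standard-tableau fact that $j\in\SD(t)$ forces $\row_t(j)<\row_t(j+1)$ (i.e.\ disjointness of $\A$ and $\D$, not just coverage), which is what you actually invoke in case~(iii).
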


\begin{proof}
(i)\quad Since \(v=s_jt\) and \(j\in\SD(t)\), it follows that \(j\in\SA(v)\),
whence \(j\notin\D(v)\).
Since \(v\) is obtained from \(t\) by switching the positions of~\(j\) and \(j+1\),
and since \(i+1<j\), it follows that \(i\) and \(i+1\) have the same row and column
index in \(v\) as they have in \(t\). Since \(i\notin\D(t)\), this shows that
\(i\notin\D(v)\).

If \(i\in\SA(v)\) then \(s_iv\) is standard and \(i\in\D(s_iv)\).
Since \(s_iv\) is obtained from \(v\) by switching \(i\)~and~\(i+1\),
and since \(j>i+1\), it follows that \(j\) and \(j+1\) have the same row and column
index in \(s_iv\) as in \(v\). Since \(j\notin\D(v)\) it follows
that \(j\notin\D(s_iv)\).

(ii)\quad Since \(v=s_jt\) and \(j\in\SD(t)\), it follows that \(j\in\SA(v)\),
whence \(j\notin\D(v)\). Now since \(\col_v(j-1)=\col_t(j-1)\) and \(\col_v(j)=\col_t(j+1)\),
and \(\col_t(j-1)<\col_t(j+1)\) by assumption, it follows that \(\col_v(j-1)<\col_v(j)\).
That is, \(j-1\notin\D(v)\).

If \(j-1\in\SA(v)\) then \(s_{j-1}v\) is standard and \(j-1\in\D(s_{j-1}v)\).
Since \(j-1\) and \(j\) are both ascents of \(v\), we have \(\col_v(j-1)<\col_v(j)<\col_v(j+1)\),
and since \(s_{j-1}v\) is obtained from \(v\) by switching \(j-1\) and \(j\), we have
\(\col_{s_{j-1}v}(j)=\col_v(j-1)\) and \(\col_{s_{j-1}v}(j+1)=\col_v(j+1)\), and
it follows that \(\col_{s_{j-1}v}(j)<\col_{s_{j-1}v}(j+1)\). Thus \(j\notin\D(s_{j-1}v)\).

(iii)\quad As in (i) and (ii) we have \(j\notin\D(v)\). The assumption \(\col_t(j+1)<\col_t(j-1)\)
gives \(\col_v(j)<\col_v(j-1)\), and so \(j-1\in\SD(v)\). Hence \(w=s_{j-1}v\) is standard,
and \(j-1\in\SA(w)\). Since \(\col_w(j+1)=\col_v(j+1)=\col_t(j)\) and
\(\col_w(j)=\col_v(j-1)=\col_t(j-1)\), and since \(j-1\in\A(t)\) by assumption, it follows
that \(j\in \A(w)\). Thus \(j-1\in\D(v)\) and \(j\notin\D(v)\), and \(j-1\notin\D(w)\) and
\(j\notin\D(w)\), as required.

If \(j\in\SA(w)\) then \(s_jw\in\STD(\lambda)\). Since \(j-1\) and \(j\) are both strong ascents of
\(w\), we have \(\row_w(j-1)>\row_w(j)>\row_w(j+1)\), and since \(s_jw\) is obtained from \(w\)
by switching \(j\) and \(j+1\), we have \(\row_{s_jw}(j-1)=\row_w(j-1)\) and
\(\row_{s_jw}(j)=\row_w(j+1)\), and it follows that \(\row_{s_jw}(j-1)>\row_{s_jw}(j)\).
Thus \(j-1\in\SA(s_jw)\).

Now \(j-1\in\SA(s_jw)\) gives \(j-1\notin\D(s_jw)\), and gives
\(j-1\in\D(s_{j-1}s_jw)\). Similarly, \(j\in\SA(w)\) gives
\(j\in\D(s_jw)\). Finally, the assumption \(\col_t(j+1)<\col_t(j-1)\) gives
\(\col_{s_{j-1}s_jw}(j)=\col_{s_jw}(j-1)=\col_t(j+1)<\col_t(j-1)=\col_{s_jw}(j+1)
=\col_{s_{j-1}s_jw}(j+1)\), and \(j\notin\D(s_{j-1}s_jw)\).
\end{proof}

Recall from Remark~\ref{LexTableauEquiv} that if \(\lambda\in P(n)\) and
\(u,\,t\in\STD(\lambda)\) then \(t>_{\lex}u\) if and only if there exists \(l\in[1,n]\)
such that \(\col_t(l)<\col_u(l)\) and \(\morethan tl=\morethan ul\).
\begin{lemm}\label{xlexlessthant}
Let \(\lambda\in P(n)\) and \(0\leqslant i\leqslant n-1\). Let \(t,t'\in\STD(\lambda)\) satisfy
\(\morethan ti=\morethan {t'}i\). Let \(j\in \SD(t)\) and put \(v=s_jt\), and suppose that
\(i\in\A(t)\) and \(i<j\). Then \(v<_{\lex}t'\), and the following all hold.
\begin{enumerate}[label=(\roman*),topsep=1 pt]
\item If \(i\in\SA(v)\) then  \(s_iv\in\STD(\lambda)\) and \(s_iv<_{\lex}t'\).
\item If \(y\in\STD(\lambda)\) and \(y<v\) then \(y<_{\lex}t'\).
\item Suppose that \(i=j-1\) and that \(\col_t(j+1)<\col_t(j-1)\), and
let \(w=s_{j-1}v\). Then \(w\in\STD(\lambda)\) and \(w<_{\lex}t'\).
If \(j\in\SA(w)\) then \(s_{j-1}s_jw\in\STD(\lambda)\) and \(s_{j-1}s_jw<_{\lex}t'\).
\item Suppose that \(i=j-1\) and that \(\col_t(j+1)<\col_t(j-1)\), and
let \(w=s_{j-1}v\). Let \(x\in\STD(\lambda)\) be such that \(x<w\) and \(\D(x)\)
contains exactly one of \(j-1\) or \(j\), and let \(y\) be the \((j-1)\)-neighbour
of \(x\) (see Definition~\ref{k-neighbourdefinition}). Then \(y<_{\lex}t'\).
\end{enumerate}
\end{lemm}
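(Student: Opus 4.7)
The plan is to establish all five assertions using, wherever possible, a common witness $l=j+1$ for the lex comparison. The key observation is that the various tableaux constructed in~(i)--(iii) differ from $t$ only by permuting entries at most $j+1$, while the hypothesis $\morethan ti=\morethan{t'}i$ with $i<j+1$ gives $\morethan t{(j+1)}=\morethan{t'}{(j+1)}$, and $j\in\SD(t)$ gives $\col_t(j)>\col_t(j+1)=\col_{t'}(j+1)$. Thus for the main assertion $v<_{\lex}t'$: since $v=s_jt$ only swaps $j$ and $j+1$, one has $\morethan v{(j+1)}=\morethan{t'}{(j+1)}$ and $\col_v(j+1)=\col_t(j)>\col_{t'}(j+1)$, and Remark~\ref{LexTableauEquiv} yields the conclusion.

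Parts~(i)--(iii) follow with little extra work. For~(i), $s_iv\in\STD(\lambda)$ comes from Remark~\ref{equalsubsetsS}, and since $i+1\leqslant j$ the swap $s_i$ does not touch entry $j+1$, so $l=j+1$ still witnesses the lex inequality. Part~(ii) is immediate from Lemma~\ref{bruhatimplieslextableau} and the main assertion via $y<_{\lex}v<_{\lex}t'$. For~(iii), the hypothesis $\col_t(j+1)<\col_t(j-1)$ gives $\col_v(j)<\col_v(j-1)$, so $j-1\in\SD(v)$ and $w=s_{j-1}v\in\STD(\lambda)$, and $l=j+1$ witnesses $w<_{\lex}t'$. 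For the additional claim, Lemma~\ref{variousaltpaths}(iii) supplies $j-1\in\SA(s_jw)$ so that $s_{j-1}s_jw\in\STD(\lambda)$, and tracking columns through $w\mapsto s_jw\mapsto s_{j-1}s_jw$ yields $\col_{s_{j-1}s_jw}(j+1)=\col_t(j-1)>\col_t(j+1)$, closing that case with the same witness.

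Part~(iv) is the main work. Since $y$ is the $(j-1)$-neighbour of $x<w$, Lemma~\ref{bruhatimplieslextableau} gives $x<_{\lex}w$; let $l_0$ be the largest index where $\col_x(l_0)>\col_w(l_0)$. If $l_0>j+1$, entries exceeding $l_0$ occupy identical positions in $y$, $x$, $w$, and $t'$, since the $(j-1)$-neighbour move perturbs only entries in $\{j-1,j,j+1\}$; then $\col_y(l_0)=\col_x(l_0)>\col_w(l_0)=\col_{t'}(l_0)$ gives $y<_{\lex}t'$ via $l=l_0$. Otherwise $l_0\leqslant j+1$, the columns of all entries $>j+1$ in $x$ and $w$ agree, and $\morethan y{(j+1)}=\morethan x{(j+1)}=\morethan{t'}{(j+1)}$ and $\shape(\Lessthan x{(j+1)})=\shape(\Lessthan w{(j+1)})$; it suffices to show $\col_y(j+1)>\col_t(j+1)$. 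Splitting on the four cases of the $(j-1)$-neighbour formula preceding Definition~\ref{k-neighbourdefinition}, the three with $y=s_{j-1}x$ or with $y=s_jx$ and $j\in\D(x)$ follow from the chain $\col_y(j+1)\in\{\col_x(j+1),\col_x(j)\}$ together with $\col_x(j+1)\geqslant\col_w(j+1)=\col_t(j)>\col_t(j+1)$ and, in the latter case, $\col_x(j)>\col_x(j+1)$.

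The one obstruction is Case~C: $\col_x(j)\leqslant\col_x(j-1)<\col_x(j+1)$ with $y=s_jx$, so $\col_y(j+1)=\col_x(j)$ is a priori unrelated to $\col_t(j+1)$. Here I exploit $x\leqslant w$ at level $j-1$: writing both $\shape(\Lessthan x{(j-1)})$ and $\shape(\Lessthan w{(j-1)})$ as the common shape $\pi=\shape(\Lessthan x{(j+1)})=\shape(\Lessthan w{(j+1)})$ with one box removed from each of columns $\col_x(j+1)$ and $\col_x(j)$ (resp.\ $\col_w(j+1)=\col_t(j)$ and $\col_w(j)=\col_t(j-1)$), the dominance inequality at level $j-1$ becomes
\[
[\col_t(j)\leqslant k]+[\col_t(j-1)\leqslant k]\geqslant[\col_x(j+1)\leqslant k]+[\col_x(j)\leqslant k]
\]
for all $k$. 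Specialising to $k=\col_t(j+1)$ and using $\col_t(j+1)<\col_t(j-1)\leqslant\col_t(j)$ together with $\col_x(j+1)\geqslant\col_w(j+1)=\col_t(j)>\col_t(j+1)$, every bracket except $[\col_x(j)\leqslant\col_t(j+1)]$ vanishes, forcing $\col_x(j)>\col_t(j+1)$ and hence $\col_y(j+1)>\col_{t'}(j+1)$. This Case~C dominance argument is the main obstacle; every other case is routine position-tracking against the witness $l=j+1$.
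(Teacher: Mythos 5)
Your proof is correct, and for part~(iv) it follows a genuinely different route from the paper's. Parts~(i)--(iii) and the main assertion coincide with the paper's argument: identify the witness $l=j+1$, track that the relevant tableaux agree with $t'$ on entries $>j+1$, and compare $\col(j+1)$ using $j\in\SD(t)$. Part~(ii) is the same transitivity argument in both.

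For part~(iv), the paper splits on the two possibilities for $\D(x)\cap\{j-1,j\}$ and then on whether $y>x$ or $y<x$. The decreasing cases ($y<x$) are dispatched by transitivity through $w$; the increasing cases are handled by applying Lemma~\ref{characterisationofdominanceorderontab} (once with $j$, once with $j-1$) to transfer $x<w$ to either $y<v$ (then invoke~(ii)) or $y<t$, the latter followed by a contradiction argument to force the lex-witness index to be $\geqslant j$. You instead stay with the pair $(x,w)$ throughout, first disposing of the case where the lex-witness index $l_0$ for $x<_{\lex}w$ exceeds $j+1$, then noting that otherwise $\morethan x{(j+1)}=\morethan w{(j+1)}$ so the uniform witness $l=j+1$ applies, and doing straight column-tracking in three of the four neighbour-formula cases. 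The only nontrivial case is when $y=s_jx$ with $j\in\SA(x)$ (the paper's Case~1a): there you extract from the dominance $\shape(\Lessthan x{(j-1)})\leqslant\shape(\Lessthan w{(j-1)})$ the indicator inequality
\[
[\col_t(j)\leqslant k]+[\col_t(j-1)\leqslant k]\geqslant[\col_x(j+1)\leqslant k]+[\col_x(j)\leqslant k],
\]
and specialise $k=\col_t(j+1)$, using $i\in\A(t)$, the Case hypothesis, and $\col_x(j+1)\geqslant\col_w(j+1)$ to kill three brackets and conclude $\col_x(j)>\col_t(j+1)$. This is a clean replacement for the paper's detour through $y<t$ and the subsequent reductio. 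The trade-off is that your argument requires the preliminary $l_0$ dichotomy (to ensure $\pi:=\shape(\Lessthan x{(j+1)})=\shape(\Lessthan w{(j+1)})$ before the indicator-function computation can even be set up), whereas the paper avoids that bookkeeping at the cost of an indirect contradiction step. Both are sound; yours is arguably more self-contained since it does not lean on Lemma~\ref{characterisationofdominanceorderontab} inside part~(iv).
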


\begin{proof}
Since \(j\in\SD(t)\) we have \(t>s_jt=v\), and hence
\(t>_{\lex}v\) by Corollary~\ref{domimplieslextableau}. Indeed,
\(\col_t(j+1)<\col_t(j)=\col_v(j+1)\) and \(\morethan t{(j+1)}=\morethan v{(j+1)}\).
Since \(\morethan ti=\morethan {t'}i\) and \(j+1>i\) it follows that
\(\col_{t'}(j+1)<\col_v(j+1)\) and
\(\morethan {t'}{(j+1)}=\morethan v{(j+1)}\),
giving \(t'>_{\lex}v\).

(i)\quad  The assumption \(i\in\SA(v)\) gives \(s_iv\in\STD(\lambda)\),
and since \(j+1>i+1\) it follows that \(\col_{t'}(j+1)<\col_v(j+1)=\col_{s_iv}(j+1)\)
and \(\morethan{t'}{(j+1)}=\morethan{s_iv}{(j+1)}\). So \(t'>_{\lex}s_iv\).

(ii)\quad If \(y<v\) then \(y<_\lex v\), by  Corollary~\ref{domimplieslextableau}, and
since \(v<_\lex t'\) this gives \(y<_{\lex}t'\).

(iii)\quad Since \(\col_v(j)=\col_t(j+1)<\col_t(j-1)=\col_v(j-1)\), we have
\(j-1\in\SD(v)\), and since this gives \(s_{j-1}v\in\STD(\lambda)\), an argument similar
to that for (i) yields \(w<_{\lex}t'\).

If \(j\in\SA(w)\) then \(s_jw\in\STD(\lambda)\). Since \(j-1\in\SA(s_jw)\)
by Lemma~\ref{variousaltpaths} (iii), we have \(s_{j-1}s_jw\in\STD(\lambda)\).
Since \(\col_t(j+1)<\col_t(j-1)=\col_{s_{j-1}s_jw}(j+1)\), and
since \(j+1>i+1\), it follows that \(\col_{t'}(j+1)<\col_{s_{j-1}s_jw}(j+1)\) and
\(\morethan{t'}{(j+1)}=\morethan{s_{j-1}s_jw}{(j+1)}\). This gives \(t'>_{\lex}s_{j-1}s_jw\).

(iv)\quad There are two cases to consider.

\begin{Case}{1.} Suppose that \(\D(x)\cap\{j-1,j\}=\{j-1\}\) and \(\D(y)\cap\{j-1,j\}=\{j\}\).
Then either \(y=s_jx>x\) or \(y=s_{j-1}x<x\).

Suppose first that \(y=s_{j}x>x\). Since \(x<w\) and \(w=s_{j-1}v<v\) by the proof of~(iii), it
follows that \(x<v\). Since \(v<s_jv=t\) and \(x<s_jx=y\), it follows by
 Lemma~\ref{characterisationofdominanceorderontab} that \(y<t\). Hence
\(y<_{\lex}t\) by Corollary~\ref{domimplieslextableau}. That is, there exists
\(l\in[1,n]\) such that \(\col_{t}(l)<\col_y(l)\) and
\(\morethan tl=\morethan yl\). Suppose, for a contradiction, that \(l\leqslant j-1\).
Then \(\morethan t{(j-1)}=\morethan y{(j-1)}\), giving
\(\morethan {(s_jt)}{(j-1)}=\morethan{(s_jy)}{(j-1)}\), that is,
\(\morethan v{(j-1)}=\morethan x{(j-1)}\). Thus \(\morethan vj=\morethan xj\), and
hence \(\morethan wj=\morethan{s_{j-1}v)}j=\morethan vj=\morethan xj\).
Now since \(\col_x(j)=\col_v(j)<\col_v(j-1)=\col_w(j)\) (using again
\(s_{j-1}v<v\)), it follows that \(w<_{\lex}x\),
contradicting the assumption that \(x<w\). Thus \(l\geqslant j\). Since
\(\morethan t{(j-1)}=\morethan {t'}{(j-1)}\), it follows that \(\col_{t'}{l}=\col_t(l)<\col_y(l)\)
and \(\morethan {t'}l=\morethan yl\). Hence \(y<_{\lex}t'\) as required.

Suppose now that \(y=s_{j-1}x<x\). Since
\(x<w\), we have \(y<w\), whence \(y<_{\lex}w\) by Corollary~\ref{domimplieslextableau}.
But \(w<_{\lex}t'\) by (iii), this yields \(y<_{\lex}t'\).
\end{Case}

\begin{Case}{2.} Suppose that \(\D(x)\cap\{j-1,j\}=\{j\}\) and \(\D(y)\cap\{j-1,j\}=\{j-1\}\).
Then either \(y=s_jx<x\) or \(y=s_{j-1}x>x\).

Suppose first that \(y=s_{j-1}x>x\). Since \(x<s_{j-1}x=y\) and \(w<s_{j-1}w=v\), the assumption
\(x<w\) gives \(y<v\) by Lemma~\ref{characterisationofdominanceorderontab}. Thus
\(y<_{\lex}t'\) by (ii).

Suppose now that \(y=s_jx<x\). Since \(x<w\), we have \(y<w\), whence
\(y<_{\lex}w\) by Corollary~\ref{domimplieslextableau}. But
\(w<_{\lex}t'\) by (iii), this yields \(y<_{\lex}t'\).\qedhere
\end{Case}
\end{proof}

\section{Ordered admissible \textit{W-}graphs in type \textit{A}}
\label{sec:8ex}

Let \(\Gamma=\Gamma(C,\mu,\tau)\) be an admissible \(W_n\)-graph, and let
\(\Lambda\subseteq P(n)\) be the set of molecule types for \(\Gamma\). As in
Remark~\ref{vertexsetofWngraph} we write
\begin{equation*}
C=\bigsqcup_{\lambda\in\Lambda}\bigsqcup_{\alpha\in\mathcal{I}_{\lambda}}C_{\alpha,\lambda},
\end{equation*}
where for each \(\lambda\in\Lambda\) the set \(\mathcal{I}_{\lambda}\) indexes
the molecules of \(\Gamma\) of type~\(\lambda\), and for each \(\lambda\in\Lambda\)
and \(\alpha\in\mathcal{I}_{\lambda}\) the set
\(C_{\alpha,\lambda}=\{c_{\alpha,t}\mid t\in\STD(\lambda)\}\) is the vertex
set of a molecule of type~\(\lambda\).
Fix \(\lambda\in\Lambda\) and let \(C'_\lambda=
C\setminus\bigl(\bigsqcup_{\alpha\in\mathcal{I}_{\lambda}}C_{\alpha,\lambda}\bigr)\),
the set of vertices of \(\Gamma\) belonging to molecules of type different
from~\(\lambda\). We define \(\Ini_{\lambda}(\Gamma)\) to be the set of
\((\alpha,t)\in\mathcal{I}_{\lambda}\times\STD(\lambda)\) such that there exists
an arc from \(c_{\alpha,t}\) to some vertex in \(C'_\lambda\). That is,
\[
\Ini_{\lambda}(\Gamma)=
\bigl\{\,(\alpha,t)\in\mathcal{I}_{\lambda}\times\STD(\lambda)\bigm|
\mu(c_{\beta,u},c_{\alpha,t})\neq 0 \text{ for some } (\beta,u)\in\!\!\!\!
\bigsqcup_{\mu\in\Lambda\setminus\{\lambda\}}\!\!\!
(\mathcal{I}_{\mu}\times\STD(\mu))\,\bigr\}.
\]
For each \(\alpha\in\mathcal{I}_\lambda\) we also define
\(\Ini_{(\alpha,\lambda)}(\Gamma)=\{\,t\in\STD(\lambda)\mid(\alpha,t)\in\Ini_{\lambda}(\Gamma)\,\}\).

Note that, by Theorem~\ref{combinatorialCharacterisation}, \(\Gamma\)
satisfies the \(W_n\)-Compatibility Rule, the \(W_n\)-Simplicity Rule,
the \(W_n\)-Bonding Rule and the \(W_n\)-Polygon Rule.

Now since \(\Gamma\) satisfies the \(W_n\)-Simplicity Rule, it follows by
Definition~\ref{simplicity} that whenever vertices \(c_{\alpha,t}\)
and \(c_{\beta,u}\) belong to different molecules and
\(\mu(c_{\beta,u},c_{\alpha,t})\neq 0\), we must have \(\D(t)\subsetneqq\D(u)\) and
\(\mu(c_{\alpha,t},c_{\beta,u})=0\).

Suppose that \(\Ini_{\lambda}(\Gamma)\neq\emptyset\). We define
\(t_{\Gamma,\lambda}\) to be the element of
\(\bigcup_{\alpha\in\mathcal{I}_\lambda}\Ini_{(\alpha,\lambda)}(\Gamma)\)
that is minimal in the  lexicographic order on \(\STD(\lambda)\). If
\(\Gamma\) is clear from the context then we will simply
write~\(t_\lambda\) for~\(t_{\Gamma,\lambda}\).

We make the following definition.
\begin{defi}\label{orderedWgraphdef}
Let \(\Gamma=\Gamma(C,\mu,\tau)\) be an admissible \(W_n\)-graph, and let
\begin{equation*}
C=\bigsqcup_{\lambda\in\Lambda}\bigsqcup_{\alpha\in\mathcal{I}_{\lambda}}C_{\alpha,\lambda},
\end{equation*}
as above. Then \(\Gamma\) is said to be \textit{ordered}
if for all vertices \(c_{\alpha,t}\) and \(c_{\beta,u}\) with
\(\mu(c_{\beta,u},c_{\alpha,t})\neq 0\), either \(u<t\) (in the extended
dominance order) or else \(\alpha=\beta\) and \(u=st>t\) for some \(s\in S_n\).
\end{defi}
Note that \(\mu(c_{\beta,u},c_{\alpha,t})\neq 0\)
implies that \(\D(u)\nsubseteq\D(t)\). In particular, since \(S_1=\emptyset\),
the condition \(\mu(c_{\beta,u},c_{\alpha,t})\neq 0\) can never be satisfied
in the case \(n=1\). Thus it is vacuously true that any \(W_1\)-graph is ordered.

Our objective in this section is to prove Theorem~\ref{inductivestep}, which states
that all admissible \(W_n\)-graphs are ordered. The proof will proceed by
induction on~\(n\).

\begin{rema}\label{orderedregular}
In particular, it will follow from Theorem~\ref{inductivestep} that the
Kazhdan--Lusztig \(W_n\)-graph corresponding to the regular representation
of~\(\mathcal{H}(W_n)\) is ordered in the sense of Definition~\ref{orderedWgraphdef}.
In this case the vertex set of \(\Gamma=(C,\mu,\tau)\) is \(C=W_n\), the set
of molecule types is \(\Lambda=P(n)\), for each \(\lambda\in P(n)\) the set of
molecules of type \(\lambda\) is indexed by~\(\mathcal{I}_\lambda=\STD(\lambda)\),
and for each \(\lambda\in\Lambda\) and \(x\in\mathcal{I}_\lambda\) the
set \(C_{x,\lambda}\) consists of those \(w\in W_n\) such that
\(Q(w)=x\), where \(Q(w)\) is the recording tableau in the Robinson--Schensted
process.

Now let \(v,\,w\in W_n\) and put \(\RS(w)=(t,x)\in\STD(\lambda)^2\) and
\(\RS(v)=(u,y)\in\STD(\nu)^2\), where \(\lambda,\,\mu\in P(n)\).
The conclusion of Theorem~\ref{inductivestep}, applied in this case, is that
if \(\mu(v,w)\ne 0\) and \(\tau(v)\nsubseteq\tau(w)\) then either \(u<t\) or
else \(\mu=\lambda\) and \((u,y)=(st,x)\) for some \(s\in S_n\).

If \(\Gamma\) is replaced by \(\Gamma\opp=(C,\mu,\tau\opp)\), then since
\(\RS(w^{-1})=(x,t)\) and \(\RS(v^{-1})=(y,u)\) by Theorem~\ref{rswinv},
the conclusion of Theorem~\ref{inductivestep} is that if \(\mu(v,w)\ne 0\) and
\(\tau\opp(v)\nsubseteq\tau\opp(w)\) then either \(y<x\) or
else \(\mu=\lambda\) and \((u,y)=(t,sx)\) for some \(s\in S_n\).

Thus, in particular, if \(\mu(v,w)\neq 0\) and \(\tau(v)\nsubseteq\tau(w)\) or
\(\tau\opp(v)\nsubseteq\tau\opp(w)\) then \(\mu\leq\lambda\).

It follows from the definition of the preorder~\(\preceq\leftright\) (in
Section~\ref{sec:3} above) that if \(v,\,w\in W_n\) and \(v\preceq\leftright w\)
then there is a sequence of elements
 \(z_0=v,\, z_1,\, \ldots,\, z_{m-1},\, z_m=w\) such that \(\mu(z_{i-1},z_i)\ne 0\)
and \(\bar\tau(z_{i-1})\nsubseteq\bar\tau(z_i)\) for
each \(i\in[1,m]\). Since \(\bar\tau(z_{i-1})\nsubseteq\bar \tau(z_i)\) is equivalent
to \(\tau(z_{i-1})\nsubseteq\tau(z_i)\) or \(\tau\opp(z_{i-1})\nsubseteq\tau\opp(z_i)\),
it follows that \(\mu\leqslant\lambda\).
\end{rema}

We now commence the proof of Theorem~\ref{inductivestep}. We assume that \(n\) is a
positive integer and that all admissible \(W_m\)-graphs are ordered for \(1\leqslant m<n\).
We let \(\Gamma=\Gamma(C,\mu,\tau)\) be an admissible \(W_n\)-graph, and use the
notation introduced in the preamble to this section: \(\Lambda\) is the set of molecule
types of \(\Gamma\), and for each \(\lambda\in\Lambda\) the set \(\mathcal{I}_\lambda\)
indexes the molecules of type~\(\lambda\). We fix \(K=S_n\setminus\{s_{n-1}\}\) and
\(L=S_n\setminus\{s_1\}\), and we let \(\Gamma_K = \Gamma{\downarrow_K}\)
and \(\Gamma_L = \Gamma{\downarrow_L}\), the \(W_K\)-graph and \(W_L\)-graph obtained by restricting
\(\Gamma\) to \(W_K\) and \(W_L\). Since \(|K|=|L|=n-1\), the
inductive hypothesis tells us that \(\Gamma_K\) and \(\Gamma_L\) are ordered.

By Remark~\ref{restrictionrem}, the set of molecule types for
\(\Gamma_K\) is
\(\Lambda_K=\bigcup_{\alpha,\lambda}\Lambda_{K,\alpha,\lambda}\),
where \(\Lambda_{K,\alpha,\lambda}\) is the set of all \(\kappa\in P(n-1)\)
such that the molecule with the vertex set \(C_{\alpha,\lambda}\) contains a
\(K\)-submolecule of type \(\kappa\), and for each \(\kappa\in\Lambda_K\),
the indexing set for those molecules of type \(\kappa\) is
\(\mathcal{I}_{K,\kappa}=\bigsqcup_{\{\alpha,\lambda\mid \kappa\in\Lambda_{K,\alpha,\lambda}\}}
\mathcal{I}_{K,\alpha,\lambda,\kappa}\), where \(\mathcal{I}_{K,\alpha,\lambda,\kappa}\)
indexes the \(K\)-submolecules of type \(\kappa\) in the molecule with
the vertex set \(C_{\alpha,\lambda}\). The vertex set of \(\Gamma_K\) is
\begin{equation*}
C = \bigsqcup_{\kappa\in\Lambda_K}\{c'_{\gamma,x}\mid (\gamma,x)\in\mathcal{I}_{K,\kappa}\times\STD(\kappa)\}.
\end{equation*}

By Remark~\ref{restrictionrem}, the set of molecule types for
\(\Gamma_L\) is
\(\Lambda_L=\bigcup_{\alpha,\lambda}\Lambda_{L,\alpha,\lambda}\),
where \(\Lambda_{L,\alpha,\lambda}\) is the set of all \(\theta\in P(n-1)\)
such that the molecule with the vertex set \(C_{\alpha,\lambda}\) contains an
\(L\)-submolecule of type \(\theta\), and for each \(\theta\in\Lambda_L\),
the indexing set for those molecules of type \(\theta\) is
\(\mathcal{I}_{L,\theta}=\bigsqcup_{\{\alpha,\lambda\mid \theta\in\Lambda_{L,\alpha,\lambda}\}}
\mathcal{I}_{L,\alpha,\lambda,\theta}\), where \(\mathcal{I}_{L,\alpha,\lambda,\theta}\)
indexes the \(L\)-submolecules of type \(\theta\) in the molecule with
the vertex set \(C_{\alpha,\lambda}\). The vertex set of \(\Gamma_L\) is
\begin{equation*}
C = \bigsqcup_{\theta\in\Lambda_L}\{c''_{\epsilon,y}\mid (\epsilon,y)\in\mathcal{I}_{L,\theta}\times\STD(\kappa)\}.
\end{equation*}

\begin{lemm}\label{kldominancesamenolecule0}
Let \(\mu,\,\lambda\in\Lambda\) with \(\mu\leqslant\lambda\), and
let \((\beta,u)\in\mathcal{I}_{\mu}\times\STD(\mu)\) and
\((\alpha,t)\in\mathcal{I}_{\lambda}\times\STD(\lambda)\) satisfy the condition
\(\mu(c_{\beta,u},c_{\alpha,t}) \neq 0\) and \(\D(t) \subsetneqq \D(u)\).
Let \(j=\min(\D(u)\setminus\D(t))\) and assume that \(j<n-1\). Then \(u < t\) unless
\(\alpha = \beta\) and \(u=s_j t>t\).
\end{lemm}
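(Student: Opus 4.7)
The plan is to restrict to $\Gamma_K$, where $K = S_n \setminus \{s_{n-1}\}$ is as fixed above, and invoke the inductive hypothesis that $\Gamma_K$ is ordered. Since $j \in \D(u) \setminus \D(t)$ and $j \leqslant n-2$, we have $s_j \in K$, so the arc from $c_{\beta,u}$ to $c_{\alpha,t}$ persists in $\Gamma_K$ with the same nonzero weight. Setting $v = \Lessthan u{(n-1)}$ and $x = \Lessthan t{(n-1)}$, Remark~\ref{restrictionrem} (applied with $m = n-1$) identifies $c_{\beta,u}$ and $c_{\alpha,t}$ with vertices $c'_{\delta,v}$ and $c'_{\gamma,x}$ of $\Gamma_K$.

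The ``ordered'' property of $\Gamma_K$ (Definition~\ref{orderedWgraphdef}) then presents two alternatives: either (A)~$v < x$ in the extended dominance order on $\STD(n-1)$, or (B)~$\delta = \gamma$ and $v = s_i x > x$ for some $s_i \in K$. In alternative~(A), the inequality $v < x$ gives $\shape(\Lessthan um) = \shape(\Lessthan vm) \leqslant \shape(\Lessthan xm) = \shape(\Lessthan tm)$ for every $m \leqslant n-1$, while the hypothesis $\mu \leqslant \lambda$ supplies the remaining inequality at $m = n$. Combined with $u \ne t$ (which follows from $\D(u) \ne \D(t)$), this yields $u < t$ in the extended dominance order on $\STD(n)$, as required.

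In alternative~(B), $\delta = \gamma$ puts $c'_{\delta,v}$ and $c'_{\gamma,x}$ in a common $K$-submolecule of $\Gamma_K$, hence in a common molecule of $\Gamma$, forcing $(\alpha,\lambda) = (\beta,\mu)$. Since the swap $v = s_i x$ preserves shape, $\shape(v) = \shape(x)$, and together with $\shape(u) = \shape(t) = \lambda$ this forces the box containing $n$ to be the same in $u$ and $t$. Hence $u = s_i t$ as tableaux, and the inequality $v > x$ translates via Remark~\ref{equalsubsetsS} to $i \in \SA(x) = \SA(t)$, whence $u = s_i t > t$.

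The hard part will be to identify $i$ with $j$. Since $u = s_i t$, the symmetric difference $\D(u) \oplus \D(t)$ is contained in $\{i-1, i, i+1\}$, and $i \in \SA(t)$ places $i$ in $\D(u) \setminus \D(t)$, so immediately $j \leqslant i$. Conversely, I would use $i \in \SA(t)$ (which gives $\col_t(i) < \col_t(i+1)$) and the identities $\col_u(i-1) = \col_t(i-1)$, $\col_u(i) = \col_t(i+1)$, $\col_u(i+1) = \col_t(i)$, $\col_u(i+2) = \col_t(i+2)$ to rule out both $i-1$ and $i+1$ from $\D(u) \setminus \D(t)$ by direct column-index comparison. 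This forces $\D(u) \setminus \D(t) = \{i\}$, and hence $j = i$, yielding exactly the exceptional case $\alpha = \beta$ and $u = s_j t > t$ of the lemma.
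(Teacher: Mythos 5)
Your proof is correct and takes essentially the same route as the paper's: restrict to $\Gamma_K$ with $K = S_n\setminus\{s_{n-1}\}$, note that the arc survives because $j\leqslant n-2$, apply the inductive hypothesis that $\Gamma_K$ is ordered, and handle the two alternatives exactly as the paper does. The one place where you do more work is the verification that $i=j$ in the exceptional case, which the paper dismisses with ``since it is clear that $i=j$''; your column-index bookkeeping (showing $i-1,i+1\notin\D(u)\setminus\D(t)$ from $\col_t(i)<\col_t(i+1)$) is a correct and welcome filling-in of that gap.
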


\begin{proof}
Since \(j\) is at least \(1\), the requirement that \(n-1>j\) implies that \(n\geqslant 3\).
Let \(v=\Lessthan u{(n-1)}\) and \(x=\Lessthan t{(n-1)}\), and write
\(\eta=\shape(v)\) and \(\theta=\shape(x)\). We shall need the restriction of \(\Gamma\)
to \(W_K\) constructed earlier.

By Remark~\ref{restrictionrem}, we can identify the vertex \(c_{\beta,u}\) of
\(\Gamma_K\) with \(c'_{\delta,v}\) for some \(\delta\in\mathcal{I}_{K,\beta,u,\eta}\), and
the vertex \(c_{\alpha,t}\) of \(\Gamma_K\) with \(c'_{\gamma,x}\) for some
\(\gamma\in\mathcal{I}_{K,\alpha,\lambda,\theta}\). Now since \(j\in\D(u)\setminus\D(t)\) and
\(j<n-1\), we have \(j\in(\D(u)\cap[1,n-2])\setminus(\D(t)\cap[1,n-2])=\D(v)\setminus\D(x)\),
and it follows that
\(\mu(c'_{\delta,v},c'_{\gamma,x})=\underline{\mu}(c_{\beta,u},c_{\alpha,t})\neq 0\).
Since \(\Gamma_K\) is ordered, we have either \(v<x\) or \(\gamma=\delta\) and
\(v=s_ix>x\) for some \(i\in[1,n-2]\). In the former case, since
\(\shape(u)=\mu\leqslant\lambda=\shape(t)\) by hypothesis and
since \(\Lessthan u{(n-1)}=v<x=\Lessthan t{(n-1)}\),
we have \(u<t\) by the remark following Definition~\ref{ExtDominance Order Tableaux}
In the latter case, we have \(\alpha=\beta\), and since it is clear that
\(i=j\), it follows that \(u=s_jt>t\).
\end{proof}

\begin{prop}\label{kldominancesamenolecule}
Let \(\mu,\,\lambda\in\Lambda\) with \(\mu\leqslant\lambda\), and
suppose that \((\beta,u)\in\mathcal{I}_{\mu}\times\STD(\mu)\) and
\((\alpha,t)\in\mathcal{I}_{\lambda}\times\STD(\lambda)\) satisfy
\(\mu(c_{\beta,u},c_{\alpha,t}) \neq 0\). Then \(u < t\) unless
\(\alpha = \beta\) and \(u=s_i t>t\) for some \(i\in[1,n-1]\).
\end{prop}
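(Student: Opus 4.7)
The plan is to split the argument according to whether the $W_n$-Simplicity Rule forces $\D(t)\subsetneqq\D(u)$ or permits $\D(t)$ and $\D(u)$ to be incomparable. In the incomparable case, we must have $\mu(c_{\beta,u},c_{\alpha,t})=\mu(c_{\alpha,t},c_{\beta,u})=1$, so $\{c_{\beta,u},c_{\alpha,t}\}$ is a simple edge; its endpoints lie in the same molecule, and by Theorem~\ref{molIsKL} and Remark~\ref{vertexsetofWngraph} this forces $\mu=\lambda$, $\alpha=\beta$, and $u=s_k t$ for some~$k$ coming from a dual Knuth move. Either $u>t$, delivering the exception case, or $u<t$ in the Bruhat order, which by Theorem~\ref{equidombruhat} becomes $u<t$ in the dominance order on $\STD(\lambda)$; for same-shape tableaux this coincides with the extended dominance order, so we are done.

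For the remaining case $\D(t)\subsetneqq\D(u)$, set $j=\min(\D(u)\setminus\D(t))$. When $j<n-1$ we invoke Lemma~\ref{kldominancesamenolecule0} directly. When $j=n-1$, so that $\D(u)=\{n-1\}\cup\D(t)$, we further stratify by the restriction number $i$ of $(u,t)$: the case $i=n-1$ follows from Lemma~\ref{nminus1order0}, which yields $u<t$ outright; the case $i=n-2$ follows from Lemma~\ref{nminus1order}, which gives either $u<t$ (when $\col_u(n-1)<\col_t(n-1)$) or the exception $\alpha=\beta$ and $u=s_{n-1}t>t$ (when $\col_t(n-1)<\col_u(n-1)$).

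The principal case is $i<n-2$. Here Lemma~\ref{noname1muequallambda} supplies a pair $(v,x)\in\STD(\mu)\times\STD(\lambda)$ with $(u,t)\approx_i(v,x)$, $\D(v)=\D(x)\cup\{i,n-1\}$, and $\mu(c_{\beta,v},c_{\alpha,x})\neq 0$. Since $\min(\D(v)\setminus\D(x))=i<n-1$, Lemma~\ref{kldominancesamenolecule0} applies to $(v,x)$, producing either $v<x$ in extended dominance or else $\alpha=\beta$ and $v=s_i x>x$. The main obstacle is ruling out this second alternative. The resolution is that the transposition $s_i=(i,i+1)$ moves only the entries $i$ and $i+1$, and since $i<n-2$ neither of these equals $n-1$ or $n$, so $\col_v(n-1)=\col_x(n-1)$ and $\col_v(n)=\col_x(n)$; thus $n-1\in\D(v)$ would force $n-1\in\D(x)$, contradicting $n-1\in\D(v)\setminus\D(x)$. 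Hence $v<x$, and Proposition~\ref{bruhatorderpreserved} transfers this to $u\leqslant t$, with strictness preserved since $(u,t)\approx_i(v,x)$ means $v=wu$ and $x=wt$ for a common $w\in W_i$, so $v\neq x$ forces $u\neq t$.
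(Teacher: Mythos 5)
Your proof is correct and follows essentially the same route as the paper's: treat the incomparable-descent-set case via the Simplicity Rule, reduce the remaining case by $j=\min(\D(u)\setminus\D(t))$ to Lemma~\ref{kldominancesamenolecule0} when $j<n-1$, and when $j=n-1$ stratify by the restriction number $i$, invoking Lemma~\ref{nminus1order0}, Lemma~\ref{nminus1order}, and (for $i<n-2$) Lemma~\ref{noname1muequallambda} followed by another application of Lemma~\ref{kldominancesamenolecule0} and Proposition~\ref{bruhatorderpreserved}. You also make explicit two points the paper leaves implicit — that the $u=s_kt<t$ subcase of the simple-edge alternative still yields $u<t$ in extended dominance, and why $v\neq s_ix>x$ in the last step (since $i<n-2$, $s_i$ fixes $n-1$ and $n$, so it cannot change membership of $n-1$ in the descent set) — which is a mild but welcome expansion of the paper's terse "it is clear".
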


\begin{proof}
Since \(\mu(c_{\beta,u},c_{\alpha,t}) \neq 0\), it follows that
\(\D(u)\nsubseteq\D(t)\).
If \(\D(t)\nsubseteq\D(u)\) then the \(W_n\)-Simplicity Rule shows that
\(\{c_{\beta,s},c_{\alpha,t}\}\) is a simple edge, thus \(\alpha=\beta\) and
\(u=s_i t\) for some \(i\in[1,n-1]\). Thus we may assume that \(\D(t)\subsetneqq\D(u)\).
If \(\min(\D(s)\setminus\D(t))<n-1\) then
the result is given by Lemma~\ref{kldominancesamenolecule0}.

It remains to consider the case \(\D(u)=\D(t)\cup\{n-1\}\). Let \(i\) be the restriction
number of the pair \((u,t)\) and note that \(i<n\) by Remark~\ref{k-restrricted-rem}.
If \(i=n-1\) or \(i=n-2\) then the results are given
by Lemma~\ref{nminus1order0} and Lemma~\ref{nminus1order}, respectively. We may assume that
\(i<n-2\). It follows by Lemma~\ref{noname1muequallambda} that
\((u,t)\approx_i (v,x)\) for some \((v,x)\in\STD(\mu)\times\STD(\lambda)\) satisfying
the condition \(\mu(c_{\beta,u},c_{\alpha,t}) = \mu(c_{\beta,v},c_{\alpha,x}) \neq 0\)
and \(\D(x)\subsetneqq\D(v)=\D(x)\cup\{i,n-1\}\).
Since it is clear that \(v\neq s_k x > x\) for all \(k\in[1,n-1]\),
Lemma~\ref{kldominancesamenolecule0}
shows that \(v<x\), equivalently, \(u<t\) by Proposition~\ref{bruhatorderpreserved}.
\end{proof}

The following definitions are motivated by the structure of \(t_{\Gamma,\lambda}\).

\begin{defi}\label{k-minimalappr}
Let \(\mu,\lambda\in P(n)\). Let \((u,t)\in\STD(\mu)\times\STD(\lambda)\), and
let \(k\) be the restriction number of \((u,t)\).
The pair \((u,t)\) is said to be \(k\)-\textit{minimal}, and
\(t\) is said to be \(k\)-\textit{minimal}
with respect to \(u\), if \(\D(t)\subsetneqq\D(u)\) and \(\Morethan tk\)
is \(k\)-critical, and \(\lessthan tk\) is the minimal tableau of its shape.
\end{defi}
For example, \((\ttab(125,3,4),\ttab(123,45))\) is
\(2\)-restricted but not \(2\)-minimal, \((\ttab(123,4,5),\ttab(123,45))\) is \(4\)-minimal,
and \((\ttab(125,3,4),\ttab(125,34))\) is \(3\)-minimal.

Let \(\mu,\,\lambda\in P(n)\), and let \((u,t)\in\STD(\mu)\times\STD(\lambda)\).
Let \(k\) be the restriction number of~\((u,t)\), and assume that
\(k\in[1,n-1]\) (or, equivalently, \(u\neq t\)).
Recall that
\begin{equation*}
F(u,t) = \{(v,x)\in C_k(u,t) \mid v^{-1}(k)=x^{-1}(k) \text{ lies between } u^{-1}(k+1) \text{ and } t^{-1}(k+1)\}.
\end{equation*}

\begin{defi}\label{k-minimalapproximate}
Let \(\mu,\lambda\in P(n)\) and \((u,t)\in\STD(\mu)\times\STD(\lambda)\)
with \(u\neq t\), and let \(k\) be the restriction number of~\((u,t)\).
We define \(A(u,t) = \{(v,x)\in F(u,t) \mid \col_x(k)=\col_t(k+1)-1\}\) and call any
element of \(A(u,t)\) an \textit{approximate} of~\((u,t)\).
\end{defi}
Note that \(A(u,t)\neq\emptyset\) if and only if \(\col_u(k+1)<\col_t(k+1)\).

\begin{rema}\label{k-1subclassofk}
Let \(u,\,t\) as above and assume that \(A(u,t)\ne\emptyset\). It is clear
from Definition~\ref{k-minimalapproximate} that every approximate
\((v,x)\) of~\((u,t)\) is \(k\)-restricted and satisfies \((v,x)\approx_k(u,t)\),
and that
\(A(u,t) = \{(v,x)\in C_k(u,t) \mid \col_{v}(k)=\col_{x}(k)=\col_t(k+1)-1\}\),
which is a (non-empty) \((k-1)\)-subclass of \(C_k(u,t)\). It follows
that if \(\kappa = \shape(\Lessthan {x}{(k-1)})=\shape(\Lessthan {v}{(k-1)})\), where
\((v,x)\in A(u,t)\), then
the bijection from \(\STD(\kappa)\) to \(A(u,t)\) given by \(w\mapsto (v,x)\) such that
\(\Lessthan {v}{(k-1)}=\Lessthan {x}{(k-1)}=w\) transfers the partial order
\(\leqslant\) from \(\STD(\kappa)\) to \(A(u,t)\).
The minimal element of
\(A(u,t)\), called the \textit{minimal approximate} of~\((u,t)\),
is the pair \((v,x)\) given by \(w=\tau_{\kappa}\), and
the maximal element of \(A(u,t)\), called the \textit{maximal approximate} of~\((u,t)\),
is the pair \((v,x)\) given by \(w=\tau^{\kappa}\).
\end{rema}

\begin{rema}\label{nonemptyAkst}
Let \(\mu,\,\lambda\in\Lambda\), and
let \((\beta,u)\in\mathcal{I}_{\mu}\times\STD(\mu)\) and
\((\alpha,t)\in\mathcal{I}_{\lambda}\times\STD(\lambda)\) satisfy the condition
\(\mu(c_{\beta,u},c_{\alpha,t}) \neq 0\) and \(\D(t) \subsetneqq \D(u)\).
Let \(k\in[1,n-1]\) be the restriction number of the pair
\((u,t)\). Let \(l=\min(\D(u)\setminus\D(t))\), and let
\(L=\{s_1,\ldots,s_l\}\). Remark~\ref{iplusonest}
applied to \(\Gamma\downarrow_L\), the \(W_L\)-graph
obtained by restricting \(\Gamma\) to \(W_L\), shows that
if \(u\neq s_{k+1}t>t\) then \(\col_u(k+1)<\col_t(k+1)\). Thus if \(u\neq s_{k+1}t>t\)
then the set \(A(u,t)\neq\emptyset\).
In particular,
if \(\alpha\neq\beta\) then,
since \(\mu(c_{\beta,u},c_{\alpha,t})\neq 0\) implies that \(\D(t) \subsetneqq \D(u)\), and
since \(\mu=\lambda\) and \(\mu(c_{\beta,u},c_{\alpha,t})\neq 0\) imply that \(u<t\)
by Proposition\ref{kldominancesamenolecule},
the condition \(\mu(c_{\beta,u},c_{\alpha,t})\neq 0\) is sufficient for
the set \(A(u,t)\neq\emptyset\).
\end{rema}

\begin{lemm}\label{k-minimallemma1}
Let \(\mu,\lambda\in P(n)\) and \((u,t)\in\STD(\mu)\times\STD(\lambda)\)
with \(u\neq t\), and let \(k\) be the restriction number of~\((u,t)\).
Assume that \(A(u,t)\neq\emptyset\), and let
\((v,x)\in A(u,t)\). Then \((v,x)\) is \(k\)-restricted and satisfies
\((v,x)\approx_k(u,t)\).
If, moreover, \(D(t)\subsetneqq\D(u)\), then we have
\(\D(x)\subsetneqq\D(v)\) with \(k=\min(\D(v)\setminus\D(x))\).
\end{lemm}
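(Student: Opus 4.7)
The first two conclusions will come essentially for free. By definition $A(u,t) \subseteq F(u,t)$, and the preamble to Lemma~\ref{knuthmoveweakorderrelationLem} already constructs every element of $F(u,t)$ as a $k$-restricted pair obtained from $(u,t)$ by a sequence of paired $(\leqslant k)$-dual Knuth moves on the common restriction to $[1,k-1]$. In particular $(v,x)\in F(u,t)$ satisfies $\morethan{v}{k}=\morethan{u}{k}$ and $\morethan{x}{k}=\morethan{t}{k}$ with common $(k-1)$-restriction, so $(v,x)\approx_k(u,t)$ and $(v,x)$ is $k$-restricted (compare Remark~\ref{k-1subclassofk}). So the first plan step is just to cite this construction.

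For the third claim, the plan is to split into two cases according to whether $(u,t)$ is favourable. Set $j=\min(\D(u)\setminus\D(t))$; this is well defined because $\D(t)\subsetneqq\D(u)$, and $j=\min(\D(u)\oplus\D(t))$. By Remark~\ref{k-reducedrem}, we have $k\leqslant j$, with equality precisely when $(u,t)$ is favourable.

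In the favourable case $k=j$, I will apply Lemma~\ref{knuthmoveweakorderrelationLemb} directly to obtain $\D(v)\setminus\D(x)=\D(u)\setminus\D(t)$ and $\D(x)\setminus\D(v)=\D(t)\setminus\D(u)=\emptyset$, whence $\D(x)\subsetneqq\D(v)$ and $\min(\D(v)\setminus\D(x))=j=k$. In the non-favourable case $k<j$, the key observation is that $A(u,t)\neq\emptyset$ forces $\col_u(k+1)<\col_t(k+1)$ (the remark after Definition~\ref{k-minimalapproximate}). This places us in the first alternative of Lemma~\ref{noname2}, which gives $\D(v)\setminus\D(x)=\{k\}\cup(\D(u)\setminus\D(t))$ and $\D(x)\setminus\D(v)=\emptyset$. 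Again $\D(x)\subsetneqq\D(v)$, and since every other element of $\D(v)\setminus\D(x)$ lies in $\D(u)\setminus\D(t)$ and so is at least $j>k$, we have $k=\min(\D(v)\setminus\D(x))$.

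There is no substantial obstacle here: the hard structural work was already done in Lemma~\ref{noname2} and Lemma~\ref{knuthmoveweakorderrelationLemb}. The only bookkeeping is to confirm, via the defining condition $\col_x(k)=\col_t(k+1)-1$ of $A(u,t)$, that the relevant branch of Lemma~\ref{noname2} is the one that applies, and then to read off $k=\min(\D(v)\setminus\D(x))$ from the explicit descriptions of $\D(v)\setminus\D(x)$ produced by those two lemmas.
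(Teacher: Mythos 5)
Your proposal is correct and takes essentially the same approach as the paper: both rely on Remark~\ref{k-1subclassofk} for the first two claims, then use the fact that $A(u,t)\neq\emptyset$ forces $\col_u(k+1)<\col_t(k+1)$ together with the explicit descriptions of $\D(v)\setminus\D(x)$ and $\D(x)\setminus\D(v)$ coming from Lemma~\ref{knuthmoveweakorderrelationLem}/\ref{knuthmoveweakorderrelationLemb} (your Lemma~\ref{noname2} is just the form these take in the non-favourable case). The only cosmetic difference is at the very end: you read off $k=\min(\D(v)\setminus\D(x))$ directly from the two explicit descent-set formulas, whereas the paper instead observes that $(v,x)$ is favourable and invokes Remark~\ref{k-reducedrem} to get $k=\min(\D(v)\oplus\D(x))=\min(\D(v)\setminus\D(x))$ in one stroke; both are equally valid one-line finishes.
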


\begin{proof}
It follows from Remark~\ref{k-1subclassofk} that \((v,x)\) is \(k\)-restricted and
satisfies \((v,x)\approx_k(u,t)\).
It remains to show that \(\D(x)\subsetneqq\D(v)\) with \(k=\min(\D(v)\setminus\D(x))\)
if \(\D(t)\subsetneqq\D(u)\). So suppose further that
\(\D(t)\subsetneqq\D(u)\)

Since \(\col_u(k+1)<\col_t(k+1)\)
(since \(A(u,t)\neq\emptyset\)), Lemma~\ref{knuthmoveweakorderrelationLem}
and Lemma~\ref{knuthmoveweakorderrelationLemb} show that
\(\D(x)\setminus\D(v)=\D(t)\setminus\D(u)\) and
\(\D(v)\setminus\D(x)\supseteq\D(u)\setminus\D(t)\). Since
\(\D(t)\subsetneqq\D(u)\), this yields \(\D(x)\subsetneqq\D(v)\).
Now since \((v,x)\) is favourable, we have \(k=\min(\D(v)\oplus\D(x)\)
by Remark~\ref{k-reducedrem}, and it follows that \(k=\min(\D(v)\setminus\D(x))\).
\end{proof}

\begin{lemm}\label{k-minimallemma}
Let \(\mu,\lambda\in\Lambda\) with \(\mu\neq\lambda\), and let
\((\beta,u)\in\mathcal{I}_{\mu}\times\STD(\mu)\) and
\((\alpha,t)\in\mathcal{I}_{\lambda}\times\STD(\lambda)\) satisfy
\(\mu(c_{\beta,u},c_{\alpha,t}) \neq 0\). Let \(k\) be the
restriction number of \((u,t)\). Then \(A(u,t)\neq\emptyset\), and for
all \((v,x)\in A(u,t)\) the following three conditions hold:
\begin{enumerate}[label=(\roman*),topsep=1 pt]
\item
\((v,x)\approx (u,t)\),
\item
\(\D(x)\subsetneqq\D(v)\) and \(k=\min(\D(v)\setminus\D(x))\),
\item
\(\mu(c_{\beta,v},c_{\alpha,x})=\mu(c_{\beta,u},c_{\alpha,t})\).
\end{enumerate}
\end{lemm}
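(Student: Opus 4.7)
The plan is to verify the four claims in turn. First, to show $A(u,t)\neq\emptyset$, note that $\mu\neq\lambda$ forces $u$ and $t$ to have different shapes, so $u\neq s_it$ for any $i\in[1,n-1]$; in particular $u\neq s_{k+1}t>t$, and Remark~\ref{nonemptyAkst} then yields $A(u,t)\neq\emptyset$. Because $c_{\beta,u}$ and $c_{\alpha,t}$ belong to distinct molecules and $\mu(c_{\beta,u},c_{\alpha,t})\neq 0$, the $W_n$-Simplicity Rule also forces $\D(t)\subsetneqq\D(u)$, a fact I will use freely below. Claim (i) then follows at once because each $(v,x)\in A(u,t)\subseteq F(u,t)\subseteq C_k(u,t)$ satisfies $(v,x)\approx_k(u,t)$, whence $(v,x)\approx(u,t)$ by Remark~\ref{mimpliesn}; claim (ii) is Lemma~\ref{k-minimallemma1} applied in the presence of $\D(t)\subsetneqq\D(u)$.

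For (iii), my plan is to reduce to Lemma~\ref{noname0}. Set $j=\min(\D(u)\setminus\D(t))$; since $\Lessthan uk=\Lessthan tk$ forces $\D(u)\cap[1,k-1]=\D(t)\cap[1,k-1]$, we have $k\leqslant j$. In the generic case $k<j$ the conclusion follows immediately from Lemma~\ref{noname0} applied with $i=k<j$, using that $(v,x)\in C_k(u,t)$. In the favourable case $k=j$, whenever $\D(u)\setminus\D(t)$ contains some $j'>k$, one applies Lemma~\ref{noname0} with that $j'$ in place of $j$ and $i=k<j'$.

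The main obstacle is the residual sub-case $k=j$ with $\D(u)\setminus\D(t)=\{k\}$, where no larger witness is available. My plan for this case is to track the explicit sequence of paired $(\leqslant k)$-dual Knuth moves produced by the construction of $F(u,t)$ and apply Proposition~\ref{arctransport} step by step, each time choosing a reflection $r$ lying in $\D(u)\cap\D(t)$ that commutes with the two simple reflections governing the current move; the $(k-1)$-equivalence internal to $A(u,t)$ is then absorbed by Lemma~\ref{noname0} with $i=k-1<j$. Paired type-$2$ moves of index $k-1$ are excluded automatically since they require $k\in\D(t)$, and the delicate point, which I expect to be the hardest part, is producing a suitable $r$ for paired type-$1$ moves of index $k-1$ and paired type-$2$ moves of index $k-2$, where $r$ must have index at most $k-4$ or at least $k+1$; this should follow from a careful analysis of $\D(t)$ together with the structural constraints arising from the favourability of $(u,t)$.
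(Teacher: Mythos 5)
Parts (i) and (ii), the non-emptiness of \(A(u,t)\), and the ``generic'' sub-case of (iii) (where \(\D(u)\setminus\D(t)\) contains some \(j'>k\)) are all handled correctly and match the paper. The gap is in the residual sub-case \(\D(u)\setminus\D(t)=\{k\}\), which you have not actually proved, and for which your stated plan would fail.

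Your plan is to apply Proposition~\ref{arctransport} once per paired dual Knuth move, ``each time choosing a reflection \(r\) lying in \(\D(u)\cap\D(t)\) that commutes with the two simple reflections governing the current move.'' Both requirements on \(r\) are wrong. First, in Proposition~\ref{arctransport} the reflection \(r\) must satisfy \(r\in\tau(u)\cap J\) and \(r\notin\tau(v)\cap J\), i.e.\ \(r\) must distinguish the two pairs; translated to tableaux, \(r\in\D(u)\setminus\D(t)\). An element of \(\D(u)\cap\D(t)\) cannot play the role of \(r\). In the residual case \(\D(u)\setminus\D(t)=\{k\}\), so the only admissible choice is \(r=s_k\). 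Second, Proposition~\ref{arctransport} imposes no commutation hypothesis on \(r\); it only asks \(r\notin\{s,t\}\). And indeed commutation is unattainable here: to change \(\col(k)\) by \((\leqslant k)\)-dual Knuth moves one must use a move of the first kind of index \(k-1\) (this is the only \((\leqslant k)\)-move that moves \(k\)), whose governing generators are \(s_{k-2},s_{k-1}\), and \(r=s_k\) is adjacent to \(s_{k-1}\). So requiring a commuting \(r\) would leave you with no candidate, and since \((u,t)\notin A(u,t)\) in this sub-case, \(A(u,t)\) is not contained in \(C_{k-1}(u,t)\), so Lemma~\ref{noname0} alone cannot bridge the gap either.

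The paper's actual proof of this sub-case constructs two explicit intermediate pairs: first it uses a \((\leqslant k{-}1)\)-equivalence (covered by Lemma~\ref{noname0} with \(i=k-1<j=k\)) to move to a pair \((u_1,t_1)\) with a specific bottom-\(k\) configuration, then a single type-1 move of index \(k-1\) to \((u_2,t_2)=(s_{k-1}u_1,s_{k-1}t_1)\), invoking Proposition~\ref{arctransport} directly with \(J=\{s_{k-2},s_{k-1},s_k\}\) and \(r=s_k\) non-commuting with \(s_{k-1}\); and finally it absorbs the remaining \((k-1)\)-equivalence inside \(A(u,t)\) by Lemma~\ref{noname0}. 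You will need the non-commuting application of Proposition~\ref{arctransport} (which that proposition licenses), and you must verify the four \(\tau\cap J\) conditions explicitly for a suitably chosen intermediate pair, which is what the careful choice of the tableau \(w'\) in the paper accomplishes.
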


\begin{proof}
We have \(A(u,t)\neq\emptyset\) by Remark~\ref{nonemptyAkst}. Let
\((v,x)\in A(u,t)\), then by Lemma~\ref{k-minimallemma1}, we have
\((u,t)\approx_k (v,x)\), whence \((u,t)\approx (v,x)\).
Moreover, since \(\mu\neq\lambda\), and since \(\mu(c_{\beta,u},c_{\alpha,t}) \neq 0\),
we have \(\D(t)\subsetneqq\D(u)\), and it follows by Lemma~\ref{k-minimallemma1}
that \(\D(x)\subsetneqq\D(v)\) with \(k=\min(\D(v)\setminus\D(x))\).
It remains to show that
\(\mu(c_{\beta,v},c_{\alpha,x})=\mu(c_{\beta,u},c_{\alpha,t})\).
Let \(l=\min(\D(u)\setminus\D(t))\). Since \((u,t)\) is \(k\)-restricted, we have
\(k\leqslant l\).

Suppose first that \(k<l\). Since \((u,t)\approx_k (v,x)\) and
\(k<l\in\D(u)\setminus\D(t)\), the result follows from Lemma~\ref{noname0}.

Suppose now that \(k=l=\min(\D(u)\setminus\D(t))\), in particular,
this shows that \(k\in\D(u)\setminus\D(t)\). Let \(w=\Lessthan tk=\Lessthan uk\in\STD(\xi)\),
where \(\xi=\shape(w)\), and let \((h,q)=t^{-1}(k+1)\) and \((g,p)=t^{-1}(k)\),
the boxes of \(t\) that contain \(k+1\) and \(k\) respectively. Since
\(k\notin\D(t)\), it follows that \(g\geqslant h\) and \(p<q\). If \(p=q-1\) then
we have \((u,t)\in A(u,t)\). Since \((u,t)\approx_{k-1} (v,x)\) by
Remark~\ref{k-1subclassofk} and since
\(k\in\D(u)\setminus\D(t)\), we have
\(\mu(c_{\beta,v},c_{\alpha,x})=\mu(c_{\beta,u},c_{\alpha,t})\)
by Lemma~\ref{noname0}. Thus, we can assume that \(p<q-1\).

Let \((d,m)=(\xi_{q-1},q-1)\), and note that the assumption implies that
\(g>d\geq h>\xi_{q}\). It is clear that \((g,p)\) and \((d,m)\) are \(\xi\)-removable,
and \((g,p)\neq (d,m)\). Let \(\zeta\in P(k-2)\) such that
\([\zeta]=[\xi]\setminus \{(g,p),(d,m)\}\), and let \((i,j)\) be a
\(\zeta\)-removable box that lies between \((g,p)\) and \((d,m)\)
(in the sense that \(g>i\geqslant d\) and \(p\leqslant j<m\)).
We can choose \(w'\in\STD(\xi)\) with \(w'(i,j)=k-2\), \(w'(d,m)=k-1\) and
\(w'(g,p)=k\), and define \((u_1,t_1)\) by \(\Lessthan {u_1}k = w'\) and
\(\morethan {u_1}k = \morethan {u}k\), and \(\Lessthan {t_1}k = w'\) and
\(\morethan {t_1}k = \morethan tk\). Since
\[(u_1,t_1)\in \{(v,x)\in C_k(u,t) \mid \col_v(k)=\col_x(k)=g\},\]
the
\((k-1)\)-subclass of \(C_k(u,t)\), it follows that
\((u,t)\approx_{k-1}(u_1,t_1)\).
and it follows by  Lemma~\ref{noname0} that
\(\mu(c_{\beta,u_1},c_{\alpha,t_1})=\mu(c_{\beta,u},c_{\alpha,t})\) and
\(k\in\D(u_1)\setminus\D(t_1)\).

Since \(p<m\), we have \(k-1\in\SD(w')\), and so \(k-1\in\SD(u_1)\) and
\(k-1\in\SD(t_1)\). It follows that we can define
\((u_2,t_2)\in\STD(\mu)\times\STD(\lambda)\) by \(u_2=s_{k-1}u_1\) and \(t_2=s_{k-1}t_1\),
and we note that \(w''=\Lessthan {u_2}k = \Lessthan {t_2}k = s_{k-1}w'\), and
\(\morethan {u_2}k = \morethan {u_1}k\) and \(\morethan {t_2}k = \morethan {t_1}k\).
Since
\(w'=s_{k-1}w''>w''\), and \(\D(w'')\cap \{k-2,k-1\}=\{k-2\}\)
and \(\D(w')\cap \{k-2,k-1\} = \{k-1\}\), it follows that there is a dual
Knuth move (of the first kind) of index~\(k-1\) taking \(w''\) to~\(w'\). As the
same dual Knuth move takes \((u_2,t_2)\) to \((u_1,t_1)\), we have
\((u_1,t_1)\) and \((u_2,t_2)\) are related by a paired
\(\leqslant k\)-dual Knuth relation indexed by~\((k-1)\). Moreover, it can be
verified easily that
\begin{align*}
\qquad&&\D(t_1)\cap \{k-2,k-1,k\} &= \{k-1\},&\D(u_1)\cap \{k-2,k-1,k\} &= \{k-1,k\},&&\qquad\\
\qquad&&\D(t_2)\cap \{k-2,k-1,k\} &= \{k-2\},&\D(u_2)\cap \{k-2,k-1,k\} &= \{k-2,k\}.&&\qquad,
\end{align*}
and it follows by Proposition~\ref{arctransport} that
\(\mu(c_{\beta,u_2},c_{\alpha,t_2})=\mu(c_{\beta,u_1},c_{\alpha,t_1})\).

Finally, since it is clear that \((u_2,t_2)\in A(u,t)\), we have
\((u_2,t_2)\approx_{k-1} (v,x)\) by Remark~\ref{k-1subclassofk},
and since \(k\in\D(u_2)\setminus\D(t_2)\), it follows by Lemma~\ref{noname0} that
\(\mu(c_{\beta,v},c_{\alpha,x})=\mu(c_{\beta,u_2},c_{\alpha,t_2})\). Thus,
 \(\mu(c_{\beta,v},c_{\alpha,x})=\mu(c_{\beta,u},c_{\alpha,t})\). as required.
\end{proof}

\begin{prop}\label{cellorder}
Let \(\lambda\in\Lambda\) satisfy the condition that
\(\Ini_{\lambda}(\Gamma)\neq\emptyset\), and let \(t'=t_{\Gamma,\lambda}\).
Let \((\alpha,t')\in\mathcal{I}_{\lambda}\times\STD(\lambda)\) and \((\beta,u')\in\mathcal{I}_{\mu}\times\STD(\mu)\),
where \(\mu\in\Lambda\setminus \{\lambda\}\), satisfy the condition that
\(\mu(c_{\beta,u'},c_{\alpha,t'})\neq 0\).
Let \(k\) be the restriction number of~\((u',t')\), and let
\((u,t)\in A(u',t')\).
Then \(\Morethan tk\) is \(k\)-critical. Thus if \((u,t)\) is
the minimal approximate of \((u',t')\) then \(t\) is \(k\)-minimal
with respect to~\(u\).
\end{prop}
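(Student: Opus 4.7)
The plan is to argue by contradiction with the lexicographic minimality of $t' = t_{\Gamma,\lambda}$ among $\bigcup_{\alpha}\Ini_{(\alpha,\lambda)}(\Gamma)$.

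\textbf{Setup.} Since $\mu\neq\lambda$, Lemma~\ref{k-minimallemma} applies to $(\beta,u')$, $(\alpha,t')$, and yields that any approximate $(u,t)\in A(u',t')$ satisfies $(u,t)\approx(u',t')$, $\D(t)\subsetneqq\D(u)$ with $k=\min(\D(u)\setminus\D(t))$, and $\mu(c_{\beta,u},c_{\alpha,t})=\mu(c_{\beta,u'},c_{\alpha,t'})\neq 0$. In particular $t\in\Ini_{(\alpha,\lambda)}(\Gamma)$, whence $t'\leqslant_{\lex}t$. Moreover $(u,t)\in A(u',t')$ forces $\col_t(k+1)=\col_t(k)+1$ and $\morethan{t}{k}=\morethan{t'}{k}$, so Remark~\ref{critical-tableau} with $m=k$ reduces $k$-criticality of $\Morethan{t}{k}$ to the two conditions (1) and (2) there. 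I will assume one of them fails and, in each failure case, produce some $(\alpha^*,t^*)$ with $t^*\in\Ini_{(\alpha^*,\lambda)}(\Gamma)$ and $t^*<_{\lex}t'$, contradicting minimality.

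\textbf{Case: (2) fails.} Let $j>k+1$ be minimal with $j\in\SD(t)$. The $W_n$-Compatibility Rule, applied to the non-zero arc at $(c_{\beta,u},c_{\alpha,t})$ with $k\in\tau(c_{\beta,u})\setminus\tau(c_{\alpha,t})$, forces $j\in\D(u)$, since otherwise $k$ and $j$ would need to be joined by a bond in the Dynkin diagram, contradicting $j-k>1$. Using Lemma~\ref{noname0} along a suitable sequence of paired $(\leqslant k)$-dual Knuth moves I can, if necessary, replace $(u,t)$ to reduce to the sub-case $j\in\SD(u)$. Then Proposition~\ref{arctransport} transports the non-zero arc across the simple edges $\{c_{\alpha,t},c_{\alpha,s_jt}\}$ and $\{c_{\beta,u},c_{\beta,s_ju}\}$, giving $\mu(c_{\beta,s_ju},c_{\alpha,s_jt})\neq 0$. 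Since $k\in\A(s_jt)$ and $k<j$, Lemma~\ref{xlexlessthant} applied with $t=t$, $t'=t'$ yields $s_jt<_{\lex}t'$, contradicting the minimality of $t'$.

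\textbf{Case: (1) fails.} Here $k+1\in\SD(t)$ and $\col_t(k)\neq\col_t(k+2)$; combined with $\col_t(k+1)=\col_t(k)+1$ this forces $\col_t(k+2)<\col_t(k)$. Lemma~\ref{xlexlessthant}(iii)--(iv) with $i=k$, $j=k+1$ then supplies candidate tableaux $w=s_ks_{k+1}t$, its further modification under part~(iii), and the $k$-neighbours arising in part~(iv), all of which are lex-smaller than $t'$. The goal is to exhibit one such candidate as the target of a non-zero arc from outside the molecule. For this I would combine the $W_n$-Polygon Rule for the rank-two parabolic generated by $s_k,s_{k+1}$ (where $m(s_k,s_{k+1})=3$) with Proposition~\ref{arctransport} and the $W_n$-Simplicity Rule, after first using paired $(\leqslant k)$-dual Knuth moves and Lemma~\ref{noname0} to reach a configuration in which the rule becomes applicable. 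The concluding ``Thus'' clause is then immediate from Remark~\ref{k-1subclassofk}: once $\Morethan{t}{k}$ is known to be $k$-critical, the minimal approximate has $\lessthan{t}{k}=\Lessthan{t}{k-1}=\tau_\kappa$ minimal of its shape, which (together with $\D(t)\subsetneqq\D(u)$) gives $k$-minimality of $t$ with respect to $u$.

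\textbf{Main obstacle.} The hardest step is the arc-transport in the ``(1) fails'' case: because $k+1\in\D(t)$, the pair $\{s_k,s_{k+1}\}$ does \emph{not} lie in $\tau(c_{\beta,u})\setminus\tau(c_{\alpha,t})$, so the Polygon Rule cannot be applied directly at $(c_{\beta,u},c_{\alpha,t})$. One must first move the arc along simple edges and via Proposition~\ref{arctransport} to reach a vertex at which both $s_k$ and $s_{k+1}$ are descents of the source but not the target, and only then invoke the Polygon Rule; throughout, careful bookkeeping of descent sets using Lemmas~\ref{knuthmoveweakorderrelationLem}, \ref{knuthmoveweakorderrelationLemb} and \ref{noname0} is required to keep the endpoint $t^*$ lex-smaller than $t'$.
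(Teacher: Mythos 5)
Your plan contains genuine gaps, and the diagnosis of the ``main obstacle'' actually points away from the paper's (correct) argument.

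The paper does not transport the arc forward; it extends the directed path backward and applies the Polygon Rule at a new source vertex. Concretely, in what you call the ``(2) fails'' case, the paper sets \(v=s_jt\) (so that \(j,k\notin\D(v)\) by Lemma~\ref{variousaltpaths}(i)), notes \(\mu(c_{\alpha,t},c_{\alpha,v})=1\) by Corollary~\ref{arweightone} (this is an arc weight, not a simple edge), obtains an alternating directed path \((c_{\alpha,v},c_{\alpha,t},c_{\beta,u})\) of type \((j,k)\), and applies the Polygon Rule at \((c_{\alpha,v},c_{\beta,u})\). No arc transport is required. Your alternative --- moving the arc to \((c_{\beta,s_ju},c_{\alpha,s_jt})\) via Proposition~\ref{arctransport} --- has two problems. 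First, the ``simple edges'' \(\{c_{\alpha,t},c_{\alpha,s_jt}\}\) and \(\{c_{\beta,u},c_{\beta,s_ju}\}\) need not be simple edges of \(\Gamma\): \(j\in\SD(t)\) only gives \(s_jt\in\STD(\lambda)\) with \(s_jt<t\); for a simple edge you need a dual Knuth move, which requires additional conditions on descents at \(j\pm1\) that do not follow from \(j\in\SD(t)\) alone. Second, the reduction ``to the sub-case \(j\in\SD(u)\)'' via paired \((\leqslant k)\)-dual Knuth moves cannot work: such moves change only \(\Lessthan uk\) and leave \(\morethan uk\) fixed, and since \(j>k\) the membership \(j\in\SD(u)\) versus \(j\in\WD(u)\) is determined entirely by \(\morethan uk\).

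For the ``(1) fails'' case you offer only a sketch, and the obstacle you identify is not the real one. You worry that the Polygon Rule cannot be applied at \((c_{\beta,u},c_{\alpha,t})\) because \(k+1\in\D(t)\). But the paper applies the Polygon Rule at \((c_{\alpha,w},c_{\beta,u})\) where \(v=s_{k+1}t\) and \(w=s_kv\); Lemma~\ref{variousaltpaths}(iii) guarantees \(k,k+1\notin\D(w)\) and \(k,k+1\in\D(u)\), so the length-three alternating path \((c_{\alpha,w},c_{\alpha,v},c_{\alpha,t},c_{\beta,u})\) of type \((k,k+1)\) gives \(N^3_{k,k+1}(\Gamma;w,u)>0\), whence by the Polygon Rule \(N^3_{k+1,k}(\Gamma;w,u)>0\); one then analyses the resulting path using the Simplicity Rule, Proposition~\ref{kldominancesamenolecule}, and Lemma~\ref{xlexlessthant}(iii),(iv) to produce a tableau in \(\bigcup_\alpha\Ini_{(\alpha,\lambda)}(\Gamma)\) that is lex-smaller than \(t'\). (You also omit the easy but necessary sub-case \(\col_t(k)<\col_t(k+2)\), which the paper disposes of directly from \(\col_t(k+1)=\col_t(k)+1\).) So the ``arc transport into a configuration where the rule becomes applicable'' that you describe as the hardest step is not needed; the essential trick is to take the \emph{source} of the alternating path far enough back that both bond-generators are absent from its descent set.
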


\begin{proof}
Lemma~\ref{k-minimallemma} tells us that \((u,t)\approx (u',t')\), that
\(\D(t)\subsetneqq\D(u)\) and \(k=\min(\D(u)\setminus\D(t))\),
and that \(\mu(c_{\beta,u},c_{\alpha,t})=\mu(c_{\beta,u'},c_{\alpha,t'})\neq 0\).
Note that \(\col_t(k+1)=\col_t(k)+1\), since \((u,t)\) is an approximate of
\((u',t')\) (see Definition~\ref{k-minimalapproximate}). Thus, by Remark~\ref{critical-tableau}, to show that \(\Morethan tk\)
is \(k\)-critical it will suffice to show that every \(j\in\D(t)\) with
\(j>k+1\) is in \(\WD(t)\), and that either \(\col_t(k+2)=\col_t(k)\)
or \(k+1\notin\SD(t)\). We do both parts of this by contradiction.

For the first part, suppose that \(j>k+1\) and \(j\in\SD(t)\).
Since \(j\in\D(t)\) and \(\D(t)\subsetneqq\D(u)\), we have
\(j\in\D(t)\cap\D(u)\), and since \(k\in\D(u)\setminus\D(t)\),
it follows that \(j\in\D(t)\) and \(k\notin\D(t)\),
and \(k,j\in\D(u)\). Let \(v=s_jt\), which is standard since \(j\in\SD(t)\).
It follows by Lemma~\ref{variousaltpaths} (i) that \(k,j\notin\D(v)\).
Moreover, since \(\mu(c_{\alpha,t},c_{\alpha,v})=1\) by Corollary~\ref{arweightone},
and since \(\mu(c_{\beta,u},c_{\alpha,t})\neq 0\), it follows that
\((c_{\alpha,v},c_{\alpha,t},c_{\beta,u})\) is an alternating directed path of
type \((j,k)\).

Recall that if \(\mu(c_{\beta,u},c_{\alpha,t})\neq 0\) then
\(\mu(c_{\beta,u},c_{\alpha,t})> 0\), because \(\Gamma\) is admissible.
Thus \(N^{2}_{j,k}(\Gamma;v,u)>0\), whence
\(N^{2}_{k,j}(\Gamma;v,u)>0\), as~\(\Gamma\) satisfies the \(W_n\)-Polygon
Rule. So there exists at least one \(\nu\in\Lambda\) and
\((\gamma,y)\in\mathcal{I}_{\nu}\times\STD(\nu)\)
such that \((c_{\alpha,v},c_{\gamma,y},c_{\beta,u})\)
is an alternating directed path of type \((k,j)\).
Since \(\morethan {t'}k=\morethan tk\) and~\(k<j-1\),
we have \(v<_{\lex}t'\) by Lemma~\ref{xlexlessthant}. Thus, if
\(\nu\neq\lambda\) then we have \((\alpha,v)\in\Ini_\lambda(\Gamma)\)
and \(v\in\bigcup_{\alpha\in\mathcal{I}_\lambda}\Ini_{(\alpha,\lambda)}(\Gamma)\), and
so this contradicts the assumption that \(t'=t_{\Gamma,\lambda}\). It follows that
\(\nu=\lambda\) and \(y\in\STD(\lambda)\).

By Proposition~\ref{kldominancesamenolecule}, we must
have either \(\gamma=\alpha\) and \(y=s_kv>v\)
or \(y<v\). Recall that \(s_kv\in\STD(\lambda)\) and \(s_kv>v\) if and only if
\(k\in\SA(v)\). Thus in the case \(\gamma=\alpha\) and \(y=s_kv>v\), then since
\(\morethan {t'}k=\morethan tk\) and \(k<j-1\) we have
\(y=s_kv <_{\lex}t'\) by Lemma~\ref{xlexlessthant} (i),
while in the case \(y<v\), then since \(\morethan {t'}k=\morethan tk\) and
\(k<j-1\) we have \(y<_{\lex}t'\) by Lemma~\ref{xlexlessthant} (ii).
In either case, since \((\gamma,y)\in\Ini_\lambda(\Gamma)\)
and \(y\in\bigcup_{\alpha\in\mathcal{I}_\lambda}\Ini_{(\alpha,\lambda)}(\Gamma)\),
this contradicts the assumption that \(t'=t_{\Gamma,\lambda}\).

For the second part, suppose that \(k+1\in\SD(t)\) and
\(\col_t(k+2)\neq\col_t(k)\).

\begin{Case}{1.} Suppose that \(\col_t(k)<\col_t(k+2)\).
Since
\((u,t)\in A(u',t')\),  we have \(\col_t(k)=\col_t(k+1)-1\),
and it follows that \(\col_t(k+1)\leqslant\col_t(k+2)\).
This contradicts the assumption that \(k+1\in\SD(t)\).
\end{Case}

\begin{Case}{2.} Suppose that \(\col_t(k+2)<\col_t(k)\).
Since \(k+1\in\SD(t)\subseteq\D(t)\) and \(\D(t)\subsetneqq\D(u)\),
it follows that \(k+1\in\D(t)\cap\D(u)\),
and since \(k\in\D(u)\setminus\D(t)\), it follows that
\(k+1\in\D(t)\) and \(k\notin\D(t)\),
and \(k,k+1\in\D(u)\).
Let \(v=s_{k+1}t\). Since \(k+1\in\SD(t)\)), we have \(v\in\STD(\lambda)\).
Let \(w=s_kv\). Since \(k\in\SD(v)\) by Lemma~\ref{variousaltpaths} (iii),
we have \(w\in\STD(\lambda)\). Now, it follows by
Lemma~\ref{variousaltpaths} (iii) that \(k\in\D(v)\) and \(k+1\notin\D(v)\),
and \(k\notin\D(w)\) and \(k+1\notin\D(w)\).

Moreover, since \(\mu(c_{\alpha,v},c_{\alpha,w})=\mu(c_{\alpha,t},c_{\alpha,v})=1\)
by Corollary~\ref{arweightone}, and since it is also true that
\(\mu(c_{\beta,u},c_{\alpha,t})\neq 0\), it follows that
\((c_{\alpha,w},c_{\alpha,v},c_{\alpha,t},c_{\beta,u})\) is
an alternating directed path of type \((k,k+1)\).

As recalled above, if \(\mu(c_{\beta,u},c_{\alpha,t})\neq 0\) then
\(\mu(c_{\beta,u},c_{\alpha,t})> 0\), because \(\Gamma\) is admissible.
Thus \(N^{3}_{k,k+1}(\Gamma;w,u)>0\), whence
\(N^{3}_{k+1,k}(\Gamma;w,u)>0\), as~\(\Gamma\) satisfies the \(W_n\)-Polygon
Rule.

So there exist \(\xi\in\Lambda\) and
\((\delta,x)\in\mathcal{I}_{\xi}\times\STD(\xi)\), and
\(\nu\in\Lambda\) and \((\gamma,y)\in\mathcal{I}_{\nu}\times\STD(\nu)\)
such that \((c_{\alpha,w}, c_{\delta,x}, c_{\gamma,y}, c_{\beta,u})\)
is an alternating directed path of type \((k+1,k)\).

Since \(\morethan {t'}k=\morethan tk\), we have \(w<_{\lex}t'\) by
Lemma~\ref{xlexlessthant} (iii). Thus, if
\(\xi\neq\lambda\) then we have \((\alpha,w)\in\Ini_\lambda(\Gamma)\)
and \(w\in\bigcup_{\alpha\in\mathcal{I}_\lambda}\Ini_{(\alpha,\lambda)}(\Gamma)\), and
so this contradicts the assumption that \(t'=t_{\Gamma,\lambda}\). It follows that
\(\xi=\lambda\) and \(x\in\STD(\lambda)\).

Since \(\mu(c_{\gamma,y},c_{\delta,x})\neq 0\), and since
\(\D(x)\cap\{k,k+1\}=\{k+1\}\) and \(\D(y)\cap\{k,k+1\}=\{k\}\), we
have \(\{c_{\delta,x},c_{\gamma,y}\}\) is a simple edge by the \(W_n\)-Simplicity Rule.
Thus \(\nu=\lambda\) and \(\gamma=\delta\), and
\(y\) and \(x\) are related by a dual Knuth move. We have
either \(\delta=\alpha\) and \(x=s_{k+1}w>w\) or \(x<w\) by
Proposition~\ref{kldominancesamenolecule}, and
\(y\) is the unique \(k\)-neighbour of \(x\). If \(x=s_{k+1}w>w\),
then since \(x\in\STD(\lambda)\), this is equivalent to
\(k+1\in\SA(w)\). It follows by Lemma~\ref{variousaltpaths} (iii) that
\(y=s_kx>x\) is the unique \(k\)-neighbour of \(x\).
In this case, since \(\morethan {t'}k=\morethan tk\), we have
\(y<_{\lex}t'\) by Lemma~\ref{xlexlessthant} (iii). If
\(x<w\) and \(y\) is the unique \(k\)-neighbour of \(x\), then since
\(\morethan {t'}k=\morethan tk\), we have
\(y<_{\lex}t'\) by Lemma~\ref{xlexlessthant} (iv).
In either case, since \((\gamma,y)\in\Ini_\lambda(\Gamma)\)
and \(y\in\bigcup_{\alpha\in\mathcal{I}_\lambda}\Ini_{(\alpha,\lambda)}(\Gamma)\),
this contradicts the assumption that \(t'=t_{\Gamma,\lambda}\).
\end{Case}

If \((u,t)\) is the minimal approximate of
\((u',t')\), then it is clear that \(t\) is \(k\)-minimal
with respect to \(u\) in accordance with Definition~\ref{k-minimalappr}.
\end{proof}

\begin{coro}\label{talphanystructure}
Let \(\lambda\in\Lambda\) satisfy the condition that
\(\Ini_{\lambda}(\Gamma)\neq\emptyset\), and let \(t'=t_{\lambda}\).
Let \((\alpha,t')\in\mathcal{I}_{\lambda}\times\STD(\lambda)\) and \((\beta,u')\in\mathcal{I}_{\mu}\times\STD(\mu)\),
where \(\mu\in\Lambda\setminus \{\lambda\}\), satisfy the condition that
\(\mu(c_{\beta,u'},c_{\alpha,t'})\neq 0\). Let \(k\) be the restriction number of~\((u',t')\).
Then \(\morethan {t_{\lambda}}{(k+1)}\) is minimal and if \(k+1\in\SD(t_{\lambda})\) then
\(\col_{t_{\lambda}}(k+1)=\col_{t_{\lambda}}(k+2)+1\).
\end{coro}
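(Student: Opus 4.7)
The plan is to reduce the statement about \(t'=t_\lambda\) to Proposition~\ref{cellorder} applied to an arbitrary approximate \((u,t)\in A(u',t')\), exploiting that the paired equivalence \((u,t)\approx_k(u',t')\) preserves the tail sub-tableau consisting of the entries greater than~\(k\).

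First I would verify that \(A(u',t')\ne\emptyset\). Since \(\mu\ne\lambda\), the hypothesis \(\mu(c_{\beta,u'},c_{\alpha,t'})\ne0\) together with the \(W_n\)-Simplicity Rule forces \(\D(t')\subsetneqq\D(u')\); then Remark~\ref{nonemptyAkst} guarantees \(\col_{u'}(k+1)<\col_{t'}(k+1)\), and hence \(A(u',t')\ne\emptyset\). Fixing any \((u,t)\in A(u',t')\), Remark~\ref{k-1subclassofk} gives both \((u,t)\approx_k(u',t')\) and \(\col_t(k)=\col_{t'}(k+1)-1\); by Remark~\ref{approxm-rmk} the first of these amounts to \(\morethan tk=\morethan{t'}k\), so in particular \(\col_t(j)=\col_{t'}(j)\) for every \(j\geqslant k+1\), and consequently \(\col_t(k+1)=\col_t(k)+1\).

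Proposition~\ref{cellorder} now tells us that \(\Morethan tk\) is \(k\)-critical, which by Definition~\ref{m-critical} means that \(\morethan t{(k+1)}\) is the minimal tableau of its shape. Removing the box containing \(k+1\) from both sides of \(\morethan tk=\morethan{t'}k\) yields \(\morethan t{(k+1)}=\morethan{t'}{(k+1)}\), proving that \(\morethan{t_\lambda}{(k+1)}\) is minimal. For the second assertion, \(k+1\in\SD(t_\lambda)\) is equivalent to \(k+1\in\SD(t)\) since the positions of \(k+1\) and \(k+2\) agree in \(t\) and \(t'\); applying Remark~\ref{critical-tableau}(1) to the \(k\)-critical tableau \(\Morethan tk\) under the assumption \(k+1\in\SD(t)\) forces \(\col_t(k)=\col_t(k+2)\), whence
\[
\col_{t_\lambda}(k+1)=\col_t(k+1)=\col_t(k)+1=\col_t(k+2)+1=\col_{t_\lambda}(k+2)+1.
\]

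The main obstacle has already been overcome inside Proposition~\ref{cellorder}, whose proof isolates the \(k\)-criticality of \(\Morethan tk\) using the \(W_n\)-Polygon Rule and the minimality of \(t_\lambda\) in lexicographic order. What remains here is purely bookkeeping: the \((\leqslant k)\)-paired dual Knuth equivalence leaves \(\morethan\cdot k\) and hence the column indices of all entries \(\geqslant k+1\) unchanged, so every property that Proposition~\ref{cellorder} established for the approximate \((u,t)\) transfers verbatim to \((u',t')\).
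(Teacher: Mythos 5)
Your argument is correct and follows the same route as the paper: fix an approximate $(u,t)\in A(u',t')$, invoke Proposition~\ref{cellorder} to conclude $\Morethan tk$ is $k$-critical, and then transfer the two criticality consequences (minimality of $\morethan t{(k+1)}$ and, via Remark~\ref{critical-tableau}, the column relation when $k+1\in\SD(t)$) back to $t_\lambda$ using the identity $\morethan tk=\morethan{t_\lambda}k$. The paper's proof is simply a terser version of this, leaving the nonemptiness of $A(u',t')$ and the unpacking of Definition~\ref{m-critical}/Remark~\ref{critical-tableau} implicit, so your extra verification steps are all sound bookkeeping rather than a different approach.
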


\begin{proof}
Let \((u,t)\in A(u',t')\).
Then \(\Morethan tk\) is \(k\)-critical, by Proposition~\ref{cellorder}. Now
since \(\morethan tk=\morethan {t_{\lambda}}k\), this shows that
\(\morethan {t_{\lambda}}{(k+1)}\) is minimal and if \(k+1\in\SD(t_{\lambda})\) then
\(\col_{t_{\lambda}}(k+1)=\col_{t_{\lambda}}(k+2)+1\).
\end{proof}

\begin{lemm}\label{1-minimalcomp}
Let \(n\geq 2\), and let \(\mu,\,\lambda\in P(n)\).
Let \(t\in\STD(\lambda)\) and \(u\in\STD(\mu)\) and suppose that \(t\) is \(1\)-minimal with
respect to \(u\). Then \(\mu<\lambda\).
\end{lemm}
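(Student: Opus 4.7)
The plan is to show $\sum_{i=1}^{j}\mu_i\geq\sum_{i=1}^{j}\lambda_i$ for every $j\geq 1$, with strict inequality at $j=1$; this gives $\mu<\lambda$ in the column-based dominance order of Definition~\ref{Dominance Order}. First I would unpack the hypotheses. Definitions~\ref{k-minimalappr} and~\ref{m-critical} with $k=1$ give $\D(t)\subsetneqq\D(u)$ and force $\col_t(1)=1$, $\col_t(2)=2$, with $\morethan{t}{2}$ the minimal tableau of shape $\lambda/(1,1)$. The entries $3,4,\ldots,n$ of $t$ therefore fill $\lambda/(1,1)$ column by column: the entries of $t$ in column~$1$ are $\{1\}\cup[3,\lambda_1+1]$, the entries in column~$2$ are $\{2\}\cup[\lambda_1+2,S_2]$, and the entries in column~$j$ for $j\geq 3$ are $[S_{j-1}+1,S_j]$, where $S_j:=\lambda_1+\cdots+\lambda_j$. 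From restriction number $1$ we have $\col_u(1)=1=\col_t(1)$ and $\col_u(2)\neq 2$; since the only addable positions for entry~$2$ in a standard tableau are $(1,2)$ and $(2,1)$, this forces $\col_u(2)=1$.

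Next I would read off descents from this structure: $2\in\SD(t)$ (when $\lambda_1\geq 2$), and $k\in\WD(t)$ whenever the entries $k,k+1$ of $t$ lie in a common column. The containment $\D(t)\subseteq\D(u)$, together with the elementary implication $k\in\D(u)\Rightarrow\col_u(k+1)\leq\col_u(k)$, then yields by a short induction along $[2,\lambda_1]\subseteq\D(u)$ (vacuous if $\lambda_1=1$, in which case $\lambda=(1^n)$) that $\col_u(k)=1$ for all $k\in[1,\lambda_1+1]$. In particular $\mu_1\geq\lambda_1+1>\lambda_1$, which is the required strict inequality at $j=1$.

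The main step is the claim: for every $j\geq 2$, entries $1,2,\ldots,S_j$ of $u$ all lie in columns $\leq j$. I would prove this by induction on $j$. For the base $j=2$, the sub-tableau $\Lessthan{u}{(\lambda_1+1)}$ is a single column of length $\lambda_1+1$, so the only addable positions for entry $\lambda_1+2$ are $(\lambda_1+2,1)$ and $(1,2)$, forcing $\col_u(\lambda_1+2)\leq 2$; the consecutive weak descents in column~$2$ of $t$, namely $[\lambda_1+2,S_2-1]\subseteq\D(u)$, then propagate $\col_u(k)\leq 2$ up to $k=S_2$. For the inductive step $j\geq 3$, the hypothesis states that $\Lessthan{u}{S_{j-1}}$ has its shape supported in the first $j-1$ columns, so all addable positions lie in columns $\leq j$, giving $\col_u(S_{j-1}+1)\leq j$; the consecutive weak descents in column~$j$ of $t$, namely $[S_{j-1}+1,S_j-1]\subseteq\D(u)$, extend this bound to $\col_u(k)\leq j$ for all $k\in[S_{j-1}+1,S_j]$. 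Since $\sum_{i=1}^{j}\mu_i=|\{k\leq n:\col_u(k)\leq j\}|$, the claim yields $\sum_{i=1}^{j}\mu_i\geq S_j=\sum_{i=1}^{j}\lambda_i$ for every $j\geq 2$, and combined with $\mu_1>\lambda_1$ at $j=1$ this completes the proof.

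The main obstacle will be the transition step at $k=S_{j-1}+1$: since $S_{j-1}\notin\D(t)$, the descent hypothesis is silent there, so the bound $\col_u(k)\leq j$ must be obtained by combining the inductive column-containment of $\Lessthan{u}{S_{j-1}}$ with the addability rule for the standard tableau~$u$, before the chain of weak descents in $[S_{j-1}+1,S_j-1]$ can carry the bound forward.
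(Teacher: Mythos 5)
Your proof is correct, and it takes a genuinely different route from the paper. The paper proceeds by induction on $n$: it shows $\mu_1 > \lambda_1$ by the same initial argument you give, observes that $\Lessthan{t}{(n-1)}$ is still $1$-minimal with respect to $\Lessthan{u}{(n-1)}$ so that $\shape(\Lessthan u{(n-1)}) < \shape(\Lessthan t{(n-1)})$ by the inductive hypothesis, and then argues by contradiction (case analysis on $\col_t(n-1)$) that $\col_u(n)\leqslant\col_t(n)$ before invoking Lemma~\ref{nsbeforent}. Your approach instead fixes $n$ and runs an induction over column indices: after noting that the $1$-critical structure forces $t$ to fill columns $1,2,3,\ldots$ in order (apart from the entry $2$), and that $\D(t)\subsetneqq\D(u)$ together with the addability constraint for $u$ pushes the entries $1,\ldots,S_j$ of $u$ into the first $j$ columns, you read off the dominance inequalities $\sum_{i\leqslant j}\mu_i\geqslant\sum_{i\leqslant j}\lambda_i$ directly, with strictness at $j=1$. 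What the paper's route buys is brevity via reuse of the inductive hypothesis, at the cost of a case analysis on where the largest entry sits; what your route buys is transparency — the dominance inequality is exhibited column by column without any auxiliary lemma and without contradiction — at the cost of spelling out the addability step at each column transition (which you correctly flag as the delicate point). Both are sound.
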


\begin{proof}
Since \((u,t)\) is \(1\)-minimal, we have
\(t(1,1)=u(1,1)=1\) and \(t(1,2)=u(2,1)=2\). So if \(n=2\), we have
\(\mu=(2)<(1,1)=\lambda\). We proceed inductively on \(n\geq 3\).
If \(t(1,3)=3\) then since \(t\) is \(1\)-minimal, we have \(t=\tau_{\lambda}\),
where \(\lambda=(1,\ldots,1)\). Since \(\lambda=\max((P(n),\leqslant))\), and
since \(\mu_1>1=\lambda_1\), we deduce that \(\mu<\lambda\). We may just assume that
\(t(2,1)=3,\ldots,t(\lambda_1,1)=\lambda_1+1\), and it follows that
\(2,\ldots,\lambda_1\in\D(t)\). Now since \(\D(t)\subsetneqq\D(u)\), we have
\(2,\ldots,\lambda_1\in\D(u)\), and so, \(u(3,1)=3,\ldots,u(\lambda_1+1,1)=\lambda_1+1\). In
particular, this shows \(\mu_1>\lambda_1\).

Let \(\eta=\shape(\Lessthan u{(n-1)})\) and let \(\theta=\shape(\Lessthan t{(n-1)})\).
It is clear that  \(\Lessthan t{(n-1)}\) is \(1\)-minimal with respect to
\(\Lessthan u{(n-1)}\), whence
\(\eta<\theta\) by the inductive hypothesis. We shall show that
\(\col_u(n)\leqslant\col_t(n)\). Suppose to the contrary that
\(\col_t(n)<\col_u(n)\).

Suppose first that \(\col_t(n-1)=1\) so that \(1,3,\ldots,n-1\) fill column~\(1\)
of~\(t\). Since \(\D(t)\subsetneqq\D(u)\), we have \(1,2,3,\ldots,n-1\) fill column~\(1\) of~\(u\).
Since \(\col_u(n)>1\), we have \(u(1,2)=n\), and it follows that
\(n-1\in\A(u)\). Now since
\(\D(t)\subsetneqq\D(u)\), we have \(n-1\in\A(t)\), consequently \(\col_t(n)>\col_t(n-1)=1\).
It follows that \(\col_t(n)\geqslant 2=\col_u(n)\), contradicting our assumption.

Suppose now that \(\col_t(n-1)>1\). Let \(1<q=\col_t(n-1)\leqslant\col_t(n)\).
Since \(\eta\leqslant\theta\), we have \(n-1=\sum_{m=1}^q \theta_m \leqslant\sum_{m=1}^q \eta_m\),
and so, if \(i<n\) then \(\col_u(i)\leqslant\col_t(n-1)\). It follows that
\(\col_u(n)\leqslant q+1=\col_t(n-1)+1<\col_t(n)+1\), whence
\(\col_u(n)\leqslant\col_t(n)\), if \(n-1\in\WA(u)\), and
\(\col_u(n)\leqslant\col_u(n-1)\leqslant\col_t(n-1)\leqslant\col_t(n)\), if \(n-1\in\D(u)\).
Either case contradicts our assumption.

Since \(\eta<\theta\) and \(\col_u(n)\leqslant\col_t(n)\), we have \(\mu\leqslant\lambda\)
by Lemma~\ref{nsbeforent}, and since \(\mu_1>\lambda_1\), we obtain \(\mu<\lambda\).
\end{proof}

\begin{lemm}\label{k-minimalonerow}
Let \(\lambda\in\Lambda\) satisfy the condition that
\(\Ini_{\lambda}(\Gamma)\neq\emptyset\). Let
\((\alpha,t')\in\Ini_{\lambda}(\Gamma)\) with \(t'=t_{\lambda}\), and
let \(\mu\in\Lambda\setminus\{\lambda\}\) and
\((\beta,u')\in\mathcal{I}_{\mu}\times\STD(\mu)\) such that
\(\mu(c_{\beta,u'},c_{\alpha,t'})\neq 0\).
Let \((u,t)\in\STD(\mu)\times\STD(\lambda)\), and let \(k\geqslant 3\)
be the restriction number of~\((u,t)\). Suppose that \((u,t)\) satisfies
\[
\Lessthan tk=\Lessthan uk\ =\ \vcenter{\offinterlineskip\small
\hbox{\vrule height 0.4 pt depth 0 pt width 108.4pt}
\hbox{\vrule height 9 pt depth 4 pt\hbox to 13pt{\hfil\(1\)\hfil}\vrule
\hbox to 13pt{\hfil\(2\)\hfil}\vrule\hbox to 30pt{\hfil\(\cdots\)\hfil}\vrule
\hbox to 25pt{\hfil\(k-2\)\hfil}\vrule
\hbox to 25pt{\hfil\(k-1\)\hfil}\vrule}
\hrule
\hbox{\vrule height 9 pt depth 4 pt\hbox to 13pt{\hfil\(k\)\hfil}\vrule}
\hrule width 14.2 pt}\ ,
\]
and \(t\) satisfies further properties that
\(\col_t(n)=k-1\), and
\(u\) satisfies further properties that \(u(1,k)=n\) and
\((2,k-1)\notin [\mu]\).
Then \((u,t)\notin A(u',t')\).
\end{lemm}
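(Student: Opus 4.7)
The proof is by contradiction; assume $(u,t)\in A(u',t')$.  By Lemma~\ref{k-minimallemma} (applicable since $\mu\neq\lambda$) together with Proposition~\ref{cellorder} we obtain $\mu(c_{\beta,u},c_{\alpha,t})=\mu(c_{\beta,u'},c_{\alpha,t'})\neq 0$, $\D(t)\subsetneqq\D(u)$ with $k=\min(\D(u)\setminus\D(t))$, and $\Morethan{t}{k}$ is $k$-critical.  Combined with the hook hypothesis on $\Lessthan{t}{k}$ and $\col_t(n)=k-1$, this pins down $\lambda=\shape(t)$ to have exactly $k-1$ columns with $\lambda_{k-1}\ge 2$, and $\morethan{t}{(k+1)}$ to be the minimal filling of $\lambda/(2,2,1^{k-3})$, with $\col_t(k+1)=2$.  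The hypotheses $u(1,k)=n$ and $(2,k-1)\notin[\mu]$ give $\mu_{k-1}=\mu_k=1$, so row~$1$ of $u$ is exactly $1,2,\ldots,k-1,n$ and both of columns $k-1$ and $k$ of $u$ consist of a single box; consequently $n-1$ lies in $u$ at some column of index at most $k-2$, so $\col_u(n-1)<k=\col_u(n)$, $n-1\in\SA(u)$, and in particular $n-1\notin\D(u)$.

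The argument now splits on $\lambda_{k-1}$.  If $\lambda_{k-1}\ge 3$, the minimal filling of $\morethan{t}{(k+1)}$ places $n$ at $(\lambda_{k-1},k-1)$ and $n-1$ at $(\lambda_{k-1}-1,k-1)$, so $\col_t(n-1)=\col_t(n)=k-1$ and $n-1\in\WD(t)\subseteq\D(t)$; combined with $n-1\notin\D(u)$ this yields $n-1\in\D(t)\setminus\D(u)$, contradicting $\D(t)\subsetneqq\D(u)$.

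In the remaining case $\lambda_{k-1}=2$, however, $n$ is placed at $(2,k-1)$ and $n-1$ at the bottom of an earlier column, so $n-1\notin\D(t)$ and the direct descent-set argument breaks down.  Here the contradiction is derived instead from the lex-minimality of $t'=t_\lambda$ in $\Ini_{(\alpha,\lambda)}(\Gamma)$.  Since $(u,t)\in F(u',t')$ forces $\shape(\Lessthan{t'}{k})$ to be the hook $\xi=(2,1^{k-2})$, the plan is to apply a sequence of paired $(\le k)$-dual Knuth moves to $(u',t')$ transporting it to a new pair $(u'',y)$ in which $\Lessthan{y}{k}\in\STD(\xi)$ places $k$ at the rightmost box $(1,k-1)$ of row~$1$ instead of at $(2,1)$.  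By Lemma~\ref{noname0} the arc weight is preserved along this transport, so $\mu(c_{\beta,u''},c_{\alpha,y})\neq 0$ and thus $y\in\Ini_{(\alpha,\lambda)}(\Gamma)$.  A comparison of $\cp(y)$ and $\cp(t')$ at the entry corresponding to $k$ then gives $\col_y(k)=k-1>1=\col_{t'}(k)$ at the first disagreement, whence $y<_\lex t'$, contradicting the lex-minimality of $t_\lambda$.  The principal difficulty lies in the explicit construction of the sequence of paired Knuth moves and the verification of the lex comparison in this final case; Case~A, by contrast, reduces to a short descent-set computation.
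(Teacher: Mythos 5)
Your descent-set computation for the case where column $k-1$ of $\lambda/\rho$ (with $\rho=\shape(\Lessthan{t}{(k+1)})=(2,2,1^{k-3})$) contains at least two boxes is sound. But note your stated criterion $\lambda_{k-1}\geq 3$ is the right threshold only for $k\geq 4$: when $k=3$ one has $\rho_2=2$, so column $2$ of $\lambda/\rho$ has $\lambda_2-2$ boxes, and you actually need $\lambda_2\geq 4$ for $n-1$ to land at $(\lambda_2-1,2)$. The sub-case $k=3$, $\lambda_2=3$ (for which the paper exhibits $\lambda=(n-3,3)$) falls between your two cases and is not handled by your descent argument.

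The critical gap is in your second case. Your plan rests on the claim $\col_{t'}(k)=1$, but that equality holds only when $(u',t')$ coincides with $(u,t)$. Since $(u,t)$ is just the (unique, here) element of $A(u',t')$ inside the $\approx_k$-class of $(u',t')$, the tableau $\Lessthan{t'}{k}$ can be any hook tableau in $\STD(\xi)$ — in particular it could already place $k$ at $(1,k-1)$, in which case your proposed $y$ equals $t'$ and the lex comparison yields nothing. (The paper, in its $(u,t)\neq(u',t')$ branch, shows precisely that $\Lessthan{t'}{k}$ must have $2$ at $(2,1)$ and $k$ at $(1,k-1)$.) Moreover, even when $\col_{t'}(k)=1$, the citation of Lemma~\ref{noname0} to preserve the arc weight along moves of index up to $k-1$ requires some $j\in\D(u')\setminus\D(t')$ with $j>k$ and then $(v,x)\in C_i(u',t')$ for $i<j$, i.e.\ moves of index $\leq i-1$; the mere fact $\min(\D(u')\setminus\D(t'))\geq k$ allows equality, in which case Lemma~\ref{noname0} only permits moves of index $\leq k-2$, which cannot move $k$. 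The paper circumvents all of this by a different decomposition: it splits on $(u',t')=(u,t)$ versus not. In the first branch it builds an alternating directed path via a dual Knuth move of index $k-1$, invokes the Polygon Rule and Corollary~\ref{talphanystructure} to force $k=3$ and $\lambda=(2,n-2)$, contradicting $\lambda_1\geq\lambda_2$; in the second branch it first pins down $\Lessthan{t'}{k}$ by a lex argument, then uses jeu de taquin, the vacated-box bookkeeping of Lemma~\ref{slidepath}, and the inductive hypothesis that $\Gamma{\downarrow}_L$ is ordered to derive a shape contradiction. Neither of these two steps is present in your sketch, and the case you label hardest is precisely where the real content of the lemma lies.
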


\begin{proof}
Assume to the contrary that \((u,t)\in A(u',t')\). By Remark~\ref{k-1subclassofk},
both \((u',t')\) and \((u,t)\) have the same restriction number, and \(A(u',t')\) consists of
\((v,x)\in\STD(\mu)\times\STD(\lambda)\) such that
\(\Lessthan v{(k-1)}=\Lessthan x{(k-1)}\in\STD((1^{k-1}))\), and
\(\Morethan vk=\Morethan uk\) and \(\Morethan xk=\Morethan tk\). Thus it follows that
\((u',t')\) is \(k\)-restricted, and \(A(u',t')=\{(u,t)\}\).

It follows by Lemma~\ref{k-minimallemma} that
\((u,t)\approx (u',t')\), \(\D(t)\subsetneqq\D(u)\) with \(k=\min(\D(u)\setminus\D(t))\), and \(\mu(c_{\beta,u},c_{\alpha,t})=\mu(c_{\beta,u'},c_{\alpha,t'})\neq 0\), and it follows by
Lemma~\ref{cellorder} that \(\Morethan tk\) is \(k\)-critical.
Since \(\col_t(k+1)=\col_t(k)+1\), we have \(t(2,2)=k+1\), and since
\(k\in\D(u)\setminus\D(t)\), we have \(\col_u(k+1)\leqslant\col_u(k)\), hence \(\col_u(k+1)=1\),
and it follows that \(u(3,1)=k+1\). Since \(u(1,k)=n\), it follows further that
\(k+1<n\).

\Case{1.} Suppose that \((u,t)=(u',t')\).

Since \(k\geqslant 3\), we have
\(\col_u(k)=1<k-1=\col_u(k-1)\), and so
\(k-1\in\SD(u)\subseteq\D(u)\). Let \(v=s_{k-1}u\). Since
\(k-1\in\SD(u)\), it follows that
\(v\in\STD(\mu)\) and \(k-1\notin\D(v)\).
Since \(\col_u(k-2)=k-2<k-1=\col_u(k-1)\), it follows that
\(k-2\notin\D(u)\). Moreover,
since \(v\) is obtained from \(u\) by switching the positions of \(k-1\) and \(k\),
and since \(k\geqslant 3\), we have
\(\col_v(k-1)=\col_u(k)=1\leqslant k-2=\col_u(k-2)=\col_v(k-2)\),
and so \(k-2\in\D(v)\).Thus there is a dual
Knuth move (of the first kind) of index~\(k-1\) taking \(v\) to \(u\),
which shows that \(\{c_{\beta,u},c_{\beta,v}\}\) is a simple edge in \(\Gamma\).

Since \(k\geqslant 3\), we have \(1\leqslant k-2=\col_t(k-2)<k-1=\col_t(k-1)\),
and it follows that \(k-2\notin\D(t)\). Since \(k\in\D(u)\setminus\D(t)\), we
also have \(k\notin\D(t)\). Similarly, since \(\Lessthan uk = \Lessthan tk\),
we have \(k-2\notin\D(u)\), but since
\(k\in\D(u)\setminus\D(t)\), we have \(k\in\D(u)\).
We have shown that \(k-2\in\D(v)\). Now since
\(\col_v(k+1)=\col_u(k+1)=1<k-1=\col_u(k-1)=\col_v(k)\), as
\(v\) is obtained from \(u\) by switching the positions of \(k-1\) and \(k\),
we also have \(k\in\D(v)\).
Moreover, since \(\mu(c_{\beta,u},c_{\alpha,t})\neq 0\) and
\(\mu(c_{\beta,v},c_{\beta,u})=1\) (as
\(\{c_{\beta,u},c_{\beta,v}\}\) is a simple edge),
it follows that \((c_{\alpha,t},c_{\beta,u},c_{\beta,v})\) is
an alternating directed path of type~\((k,k-2)\).

Since \(N^{2}_{k-2,k}(\Gamma;t,v)=N^{2}_{k,k-2}(\Gamma;t,v)\), as
\(\Gamma\) satisfies the \(W_n\)-Polygon Rule,
and since
\(N^{2}_{k,k-2}(\Gamma;t,v)\geqslant \mu(c_{\beta,u},c_{\alpha,t})\),
it follows that \(N^{2}_{k-2,k}(\Gamma;t,v)>0\),
whence there are \(\xi\in\Lambda\) and \((\gamma,x)\in\mathcal{I}_{\xi}\times\STD(\xi)\)
such that \((c_{\alpha,t},c_{\gamma,x},c_{\beta,u})\)
is an alternating directed path of type \((k-2,k)\).

Since \(\row_t(k-2)=\row_t(k-1)\), we have \(k-2\in\WA(t)\),
and so \(k-2\notin\D(t)\), and since \(\Lessthan tk = \Lessthan uk\), we have
\(k-1\in\D(t)\), since \(k-1\in\D(u)\). Thus if \(k-1\notin\D(x)\) then
\(\{c_{\alpha,t},c_{\gamma,x}\}\) is a simple edge. That is, if \(k-1\notin\D(x)\) then
\(\alpha=\gamma\) and \(t\) and \(x\) are related by a dual Knuth move. Moreover, since
\(s_{k-2}t\notin\STD(\lambda)\), this shows that
\(x=s_{k-1}t\). But then since \(k\geqslant 3\) and since \(x\) is obtained from \(t\)
by switching the positions of \(k-1\) and \(k\), we have
\(\col_x(k+1)=\col_t(k+1)=2\leq k-1=\col_t(k-1)=\col_x(k)\),
and it follows that \(k\in\D(x)\), contradicting the requirement that \(k\notin\D(x)\).
Thus \(k-1\in\D(x)\).

Since \(\mu(c_{\beta,v},c_{\gamma,x})\neq 0\), and since
\(\D(x)\cap\{k-1,k\}=\{k-1\}\) and \(\D(v)\cap\{k-1,k\}=\{k\}\), the
\(W_n\)-Simplicity Rule shows that \(\{c_{\beta,v},c_{\gamma,x}\}\) is a
simple edge. Equivalently, \(\gamma=\beta\), and \(x\) and \(v\)
are related by a dual Knuth move. Indeed, \(x\) is, in this case,
the \((k-1)\)-neighbour of~\(v\). But the \((k-1)\)-neighbour of~\(v\) is
\(s_kv\), since \(\col_v(k+1)=\col_v(k-1)<\col_v(k)\). Therefore, \(x=s_kv\).

It can be seen that \((\alpha,t)=(\alpha,t')\) and \((\beta,s_kv)\) satisfy the
conditions of Corollary~\ref{talphanystructure}.
Since it is clear that \((s_kv,t')=(s_kv,t)\) is \((k-2)\)-restricted, and since
\(\col_{t}(k)<\col_{t}(k-1)\), we have by Corollary~\ref{talphanystructure} that
\(k-1=\col_t(k-1)=\col_t(k)+1=2\). Thus, \(k=3\), and so \(\col_t(n)=2\). Since
\(\morethan t{(k-1)}\) is the minimal tableau of its shape by Corollary~\ref{talphanystructure},
we have \(\col_t(3)\leqslant\col_t(4)\leqslant \cdots \leqslant\col_t(n)\), and so
\(\col_t(3)=1\) and \(\col_t(4)=\cdots = \col_t(n)=2\). Thus \(\lambda_1=2\) and \(\lambda_2=n-2\),
and  since \(\lambda_1\geqslant \lambda_2\), it follows that \(n\leqslant 4\).
This contradicts the fact that \(n>k+1\) (as shown earlier).

\Case{2. }Suppose that \((u,t)\neq(u',t')\).

Since \((u,t)\approx_k(u',t')\) by Lemma~\ref{k-minimallemma1}, there exists
\(z\in W_k\setminus \{1\}\) with \(u'=zu\) and \(t'=zt\). Hence there is an \(i\in[1,k-2]\)
such that \(u'\) and \(t'\) satisfy
\[
w'=\Lessthan {t'}{k}=\Lessthan {u'}k\ =\ \vcenter{\offinterlineskip\small
\hbox{\vrule height 0.4 pt depth0 pt width 152.4pt}
\hbox{\vrule height 9 pt depth 4 pt\hbox to 25pt{\hfil\(1\)\hfil}\vrule
\hbox to 30pt{\hfil\(\cdots\)\hfil}\vrule\hbox to 13pt{\hfil\(i\)\hfil}\vrule
\hbox to 25pt{\hfil\(i+2\)\hfil}\vrule\hbox to 30pt{\hfil\(\cdots\)\hfil}\vrule\
\hbox to 25pt{\hfil\(k\)\hfil}\vrule}
\hrule
\hbox{\vrule height 9 pt depth 4 pt\hbox to 25pt{\hfil\(i+1\)\hfil}\vrule}
\hrule width 25 pt}\ ,
\]
and, furthermore, \(\morethan {t'}k=\morethan tk\) and \(\morethan {u'}k=\morethan uk\).

If \(k=3\) then since \((2,2)\notin [\mu]\) and \(u(1,3)=n\), we have
\(\mu_2=1\) and \(\mu_3=1\). Therefore \(\mu=(n-2,1,1)\) and
the first row of \(u\) is
\(\vcenter{\offinterlineskip
\hrule
\hbox{\vrule height 8 pt depth 2 pt\hbox to 15pt{\hfil\(1\)\hfil}\vrule
\hbox to 15pt{\hfil\(2\)\hfil}\vrule
\hbox to 15pt{\hfil\(n\)\hfil}\vrule}
\hrule}\,\),
while \(u(2,1)=3\) and \(u(i,1)=i+1\) for \(i\in[3, n-2]\). Since
\(\col_u(n-1)=1<3=\col_u(n)\), we have \(n-1\notin\D(u)\), and
since \(\D(t)\subsetneqq\D(u)\) it follows that \(n-1\notin\D(t)\). That is,
\(\col_t(n-1)<\col_t(n)\). Now \(\morethan t{(k+1)}\) is the minimal tableau
of it shape, by Corollary~\ref{talphanystructure}, and so
\[
\col_t(5)\leqslant\col_t(6)\leqslant\cdots\leqslant\col_t(n-1)<\col_t(n).
\]
Thus \(\col_t(k+2)=\col_t(5)=\cdots=\col_t(n-1)=1\)
and \(\col_t(n)=2\). Hence \(\lambda=(n-3,3)\), the first three rows of \(t\) are
\(\vcenter{\offinterlineskip
\hrule
\hbox{\vrule height 8 pt depth 2 pt\hbox to 15pt{\hfil\(1\)\hfil}\vrule
\hbox to 15pt{\hfil\(2\)\hfil}\vrule}
\hrule}\,\),
\(\vcenter{\offinterlineskip
\hrule
\hbox{\vrule height 8 pt depth 2 pt\hbox to 15pt{\hfil\(3\)\hfil}\vrule
\hbox to 15pt{\hfil\(4\)\hfil}\vrule}
\hrule}\,\)
and
\(\vcenter{\offinterlineskip
\hrule
\hbox{\vrule height 8 pt depth 2 pt\hbox to 15pt{\hfil\(5\)\hfil}\vrule
\hbox to 15pt{\hfil\(n\)\hfil}\vrule}
\hrule}\,\)
and \(t(i,1)=i+2\) for \(i\in [4, n-2]\).
Now since \(C_k(u,t)=\{(u,t),(s_2u,s_2t)\}\),
we have \((u',t')=(s_2u,s_2t)\). It can be verified easily that
\(\D(s_2u)=\D(s_2t)=\{1,3,\ldots, n-2\}\), and it follows that
\(\mu(c_{\beta,s_2u},c_{\alpha,s_2t})=0\). This is in contradiction to
the assumption  that \(\mu(c_{\beta,u'},c_{\alpha,t'})\neq 0\).
Henceforth, we may assume that \(k\geqslant 4\).

Since \(\Lessthan {t'}{k}=\Lessthan {u'}k\), we have \(\D(t')\cap[1,k-1]=\D(u')\cap[1,k-1]\).
Since \(k\geqslant 4\), one the one hand, we have
\(\col_{t'}(k+1)=\col_t(k+1)=2<k-1=\col_t(k-1)=\col_{t'}(k)\),
and on the other hand, we have  \(\col_{u'}(k+1)=\col_u(k+1)=1<k-1=\col_u(k-1)=\col_{u'}(k)\).
Thus, it follows that \(k\in\D(t')\cap\D(u')\), and so
\(\D(t')\cap[1,k]=\D(u')\cap[1,k]\). Let \(l\in\D(u')\setminus\D(t')\),
which is not an empty set since \(\D(t')\subsetneqq\D(u')\).
This shows that \(l>k\).

We claim that \(i=1\).
Suppose to the contrary that \(i>1\).
Now since \(i>1\), it follows that
\(\col_{w'}(i+1)=1\leqslant i-1=\col_{w'}(i-1)<\col_{w'}(i-1)+1=\col_{w'}(i)\),
and so \(i\in\SD(w')\subseteq\D(w')\) and \(i-1\notin\D(w')\). Since
\(i\in\SD(w')\), we have \(s_iw'\) is standard and \(i\notin\D(s_iw')\). Moreover,
since \(\col_{s_iw'}(i)=\col_{w'}(i+1)<\col_{w'}(i)=\col_{s_iw'}(i-1)\), it follows
that \(i-1\notin\D(s_iw')\).
Thus \(s_iw'\to^{*1} w'\) with the index \(i\),
and since the same dual Knuth move takes \((s_iu',s_it')\) to \((u',t')\), we have
\((s_iu',s_it')\approx_k(s',t')\). It follows by Lemma~\ref{noname0} that
\(\mu(c_{\beta,s_iu'},c_{\alpha,s_it'})=\mu(c_{\beta,u'},c_{\alpha,t'})\neq 0\).
Since \(s_it'<t'\), it follows from
Corollary~\ref{domimplieslextableau} that
\(s_it'<_{lex}t'\). But \((\alpha,s_it')\in\Ini_{\lambda}(\Gamma)\) and
\(s_it'\in\bigcup_{\alpha\in\mathcal{I}_\lambda}\Ini_{(\alpha,\lambda)}(\Gamma)\), this
contradicts the assumption that \(t'=t_{\lambda}\).
Hence, \(i=1\), as claimed.

Let \(v=j(\morethan{u'}{1})\) and \(x=j(\morethan{t'}{1})\), and write
\(\zeta=\shape(v)\) and \(\xi=\shape(x)\). We shall need the restriction of
\(\Gamma\) to \(W_L\) constructed earlier.

By Remark~\ref{restrictionrem}, we can identify the vertex \(c_{\beta,u'}\) of
\(\Gamma_L\) with \(c''_{\delta,v}\) for some \(\delta\in\mathcal{I}_{L,\beta,\mu,\zeta}\), and
the vertex \(c_{\alpha,t'}\) of \(\Gamma_L\) with \(c''_{\gamma,x}\) for some
\(\gamma\in\mathcal{I}_{L,\alpha,\lambda,\xi}\). Note that since
\(\mu\neq\lambda\), and so \(\beta\neq\alpha\), we have
\(\mathcal{I}_{L,\beta,\mu,\xi}\cap\mathcal{I}_{L,\alpha,\lambda,\zeta}=\emptyset\),
and it follows that \(\delta\neq\gamma\).
Now since \(l>1\), we have \(l\in\D(v)\setminus\D(x)\), whence \(\D(v)\nsubseteq\D(x)\),
and it follows that \(\underline{\mu}(c_{\delta,v},c_{\gamma,x})=\mu(c_{\beta,u'},c_{\alpha,t'})\neq 0\).
Since \(\Gamma_L\) is ordered, and since
\(\delta\neq\gamma\), we obtain \(v<x\); in particular, \(\zeta\leqslant\xi\).

Let \((g,p)\) and \((h,q)\) be boxes vacated in \(j(\morethan{u'}{1})\) and
\(j(\morethan{t'}{1})\) respectively. Since \(\morethan {t'}k=\morethan tk\), we
have \(t'(2,2)=k+1\) and \(\col_{t'}(n)=k-1\). Moreover, Corollary~\ref{talphanystructure}
shows that \(\morethan{t'}{(k+1)}\) is the minimal tableau of its shape, equivalently
\(\col_{t'}(k+2)\leqslant\cdots\leqslant\col_{t'}(n)\). It is therefore clear that
\(\col_{t'}(i)\leqslant\col_{t'}(n)=k-1\) for all \(i\in[1,n]\), in particular,
this shows that \(q\leqslant k-1\). Since \(\morethan{u'}{k}=\morethan{u}{k}\),
we have \(u'(1,k)=n\), and since \(u'(1,k-1)=k\) while \((2,k-1)\notin [\mu]\),
it follows that \(\col_{u'}(i)\leqslant k-2\) for all \(i\in [1,n]\setminus \{k\}\cup\{n\}\).
Note, moreover, that the box \((2,1)\) is in the slide path of \(j((1,1),\morethan{u'}{1})\)),
and so we have \(g\geq 2\), and it follows that \(p\leqslant k-2<k-1\).
Hence, we obtain
\(\sum_{m=1}^{k-1} \xi_m = \sum_{m=1}^{k-1} \lambda_m - 1 = \sum_{m=1}^{k-1} \mu_m + 1 - 1 = \sum_{m=1}^{k-1} \zeta_m +1 + 1- 1\).
Thus \(\zeta\nleqslant\xi\), a desired contradiction.
\end{proof}

\begin{lemm}\label{kminimimalcompared}
Let \(\lambda\in\Lambda\), let \(\mu\in\Lambda\setminus\{\lambda\}\), and suppose that
\((\alpha,t')\in\mathcal{I}_{\lambda}\times\STD(\lambda)\) and \((\beta,u')\in\mathcal{I}_{\mu}\times\STD(\mu)\)
satisfy the condition that \(\mu(c_{\beta,s'},c_{\alpha,t'})\neq 0\), where we write
\(t'\) for \(t_{\lambda}\). Then \(\mu<\lambda\).
\end{lemm}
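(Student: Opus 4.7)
The plan is first to take the minimal approximate $(u,t) \in A(u',t')$—which is nonempty by Remark~\ref{nonemptyAkst} since $\mu\neq\lambda$ forces $\mu(c_{\beta,u'},c_{\alpha,t'})\neq0$ to come with $\D(t')\subsetneqq\D(u')$ and $\col_{u'}(k+1)<\col_{t'}(k+1)$—and let $k$ be the restriction number of $(u,t)$. Lemma~\ref{k-minimallemma} then gives $(u,t)\approx(u',t')$ (so $\shape(u)=\mu$ and $\shape(t)=\lambda$), $\D(t)\subsetneqq\D(u)$ with $k=\min(\D(u)\setminus\D(t))$, and $\mu(c_{\beta,u},c_{\alpha,t})=\mu(c_{\beta,u'},c_{\alpha,t'})\neq0$. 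Proposition~\ref{cellorder}, together with Corollary~\ref{talphanystructure}, then pins down the structure of $t$: it is $k$-minimal with respect to $u$, meaning $\lessthan tk$ is the minimal tableau of some shape $\nu\in P(k-1)$, $\col_t(k)=\col_u(k)=i:=\min\{j:\lambda_j>\nu_j\}$, $\col_t(k+1)=i+1$, and $\morethan t(k+1)$ is the minimal tableau of its shape. Moreover $\col_u(k+1)\leqslant i$, so $u$'s entries immediately after position $k$ are placed into columns of index $\leqslant i$ while $t$'s go into column $i+1$.

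The argument then proceeds by induction on $k$. For the base case $k=1$, the pair $(u,t)$ is $1$-minimal and $\mu\neq\lambda$ by hypothesis, so Lemma~\ref{1-minimalcomp} directly yields $\mu<\lambda$. For the inductive step $k\geqslant 2$, I would carry out a column-sum comparison: the minimality of $\morethan t(k+1)$ forces $\lambda$'s entries to be spread out into the lowest available columns beyond column $i+1$, while the constraint $\col_u(k+1)\leqslant i$ (together with whatever information on $\morethan u(k+1)$ comes from $(u,t)\approx(u',t')$ and the admissibility of $\Gamma$) forces $\mu$'s entries to be more concentrated in lower-indexed columns. This yields $\sum_{j\leqslant l}\lambda_j\leqslant\sum_{j\leqslant l}\mu_j$ for every $l$, i.e.\ $\mu\leqslant\lambda$ in the dominance order, and the hypothesis $\mu\neq\lambda$ upgrades this to $\mu<\lambda$.

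To make the column-sum comparison rigorous when $k\geqslant 2$ one likely needs to restrict $\Gamma$ to a suitable parabolic subgroup (in the style of Remark~\ref{restrictionrem}) and apply the inductive hypothesis from Theorem~\ref{inductivestep} that every admissible $W_m$-graph with $m<n$ is ordered. The restricted ordered-ness then gives a dominance comparison on the shapes of $\Lessthan u{(n-1)}$ and $\Lessthan t{(n-1)}$ via Lemma~\ref{nsbeforent}, which together with the location of $n$ in $u$ versus $t$ translates into the desired inequality $\mu<\lambda$ on the full shapes.

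The main obstacle will be the $k\geqslant 2$ case, specifically handling the degenerate configurations in which the column-sum comparison could fail: these are precisely the configurations where $u$ places $n$ so far to the right and $t$ places $n$ in such a way that the bound on $\mu$ for the partial sums $\sum_{j\leqslant l}\mu_j$ would be violated. Lemma~\ref{k-minimalonerow} is what handles this: it rules out, for $k\geqslant 3$, the hook configuration in which $\lessthan tk=\lessthan uk$ fills the first row with $1,\ldots,k-1$ plus a single box in the second row containing $k$, together with $\col_t(n)=k-1$, $u(1,k)=n$, and $(2,k-1)\notin[\mu]$. Excluding this configuration via Lemma~\ref{k-minimalonerow}, the direct column-sum argument (or restriction plus the inductive hypothesis) goes through to produce $\mu\leqslant\lambda$, whence $\mu<\lambda$.
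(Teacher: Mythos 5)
Your proposal correctly identifies the overall strategy of the paper's proof (pass to an approximate $(u,t)\in A(u',t')$; use Lemma~\ref{k-minimallemma} and Proposition~\ref{cellorder}/Corollary~\ref{talphanystructure} to get $k$-minimality; treat $k=1$ via Lemma~\ref{1-minimalcomp}; restrict to a parabolic and invoke the $n$-induction from Theorem~\ref{inductivestep}; conclude with Lemma~\ref{nsbeforent}; exclude the hook configuration via Lemma~\ref{k-minimalonerow}). But as stated it has genuine gaps that would prevent the proof from closing.

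First, the framing as ``induction on $k$'' is not what happens, and more to the point your sketch is missing the separate case $k=n-1$. When $(u,t)$ is $(n-1)$-restricted the restriction $\Lessthan u{(n-1)}=\Lessthan t{(n-1)}$ gives no information from $\Gamma{\downarrow}_K$, so that case must be settled directly by Lemma~\ref{nminus1order0}; the paper treats $k=1$, $k=n-1$ and $1<k<n-1$ as three distinct cases before anything else.

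Second, and more seriously, for $1<k<n-1$ you restrict only to $W_K=S_n\setminus\{s_{n-1}\}$ and say the resulting dominance ``together with the location of $n$ in $u$ versus $t$'' yields $\mu<\lambda$ via Lemma~\ref{nsbeforent}. But Lemma~\ref{nsbeforent} needs both $\shape(\lessthan un)\leqslant\shape(\lessthan tn)$ \emph{and} $\col_u(n)\leqslant\col_t(n)$, and it is precisely the inequality $\col_u(n)\leqslant\col_t(n)$ that is hard; nothing in what you have produced establishes it. The paper gets it by \emph{also} restricting to $W_L=S_n\setminus\{s_1\}$ and applying ordered-ness to the jeu-de-taquin images $j(\morethan u1)$ and $j(\morethan t1)$, then combining the two dominance inequalities with a delicate column-sum bookkeeping and slide-path analysis (Lemma~\ref{slidepath}, Lemma~\ref{jofminandmaxt}) to pin down the vacated boxes, and with the use of \emph{both} the minimal and the maximal approximate to locate $u^{-1}(k)$. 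Only after all of this does the argument reduce, in the supposedly bad case $\col_t(k)<\col_t(n)<\col_u(n)$, to exactly the hook configuration ruled out by Lemma~\ref{k-minimalonerow}. In your proposal the $W_L$ restriction and the jeu-de-taquin analysis do not appear at all, and the claim that Lemma~\ref{k-minimalonerow} covers ``precisely the configurations'' where the column-sum bound would fail is asserted rather than derived. Without that derivation, and without the second restriction, the key inequality $\col_u(n)\leqslant\col_t(n)$ (or the alternative $\col_t(n)=\col_t(k)$, which the paper also treats separately) is left unproved, so the argument does not close.
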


\begin{proof}
It is clear that \(n\) is at least \(2\). Recall that \(\mu(c_{\beta,u'},c_{\alpha,t'})\neq 0\)
implies that \(\D(t')\subsetneqq\D(u')\) and \(\mu(c_{\alpha,t'},c_{\beta,u'})=0\),
since vertices \(c_{\beta,u'}\) and \(c_{\alpha,t'}\) belong to different molecules.
Let \(k\) be the restriction number of the pair \((u',t')\) and note that \(1\leqslant k\leqslant n-1\).
By Lemma~\ref{k-minimallemma}, we have \(A(u',t')\neq\emptyset\).
Let \((u,t)\) be an approximate of \((u',t')\).
By Lemma~\ref{k-minimallemma1}, we have
\((u,t)\) is \(k\)-restricted. By Lemma~\ref{k-minimallemma}, we have
\((u,t)\approx (u',t')\), \(\D(t)\subsetneqq\D(u)\) with \(k=\min(\D(u)\setminus\D(t))\), and \(\mu(c_{\beta,u},c_{\alpha,t})=\mu(c_{\beta,u'},c_{\alpha,t'})\neq 0\).
By Proposition~\ref{cellorder}, \(\Morethan{t}{k}\) is \(k\)-critical.
For later reference, let \(\nu=\shape(\Lessthan uk) = \shape(\Lessthan tk)\).

If \(k=1\) then since \(t\) is \(1\)-minimal with respect to \(u\), it follows
by Lemma~\ref{1-minimalcomp} that \(\mu<\lambda\), and if \(k=n-1\) then since
\(\D(u)=\{n-1\}\cup\D(t)\),  it follows by Lemma~\ref{nminus1order0} that \(\mu<\lambda\) .
We may therefore assume that \(1<k<n-1\).

Let \(w=j(\morethan u1)\) and let \(y=j(\morethan t1)\),
and let \(v=\Lessthan u{(n-1)}\) and let \(x=\Lessthan t{(n-1)}\).
Let \(\zeta=\shape(w)\) and \(\xi=\shape (y)\),
and let \(\eta=\shape(v)\) and \(\theta=\shape(x)\).
We shall need the restriction of \(\Gamma\) to \(W_K\) and \(W_L\) established earlier.

By Remark~\ref{restrictionrem}, the vertex
\(c_{\beta,u}\) of \(\Gamma_K\) coincides with the vertex
\(c'_{\delta,v}\) for some \(\delta\in\mathcal{I}_{K,\eta}\)
and the vertex \(c_{\alpha,t}\) of \(\Gamma_K\) coincides with
the vertex \(c'_{\gamma,x}\) for some \(\gamma\in\mathcal{I}_{K,\theta}\).
Since \(k\in\D(u)\setminus\D(t)\) and \(k<n-1\), we have \(k\in\D(v)\setminus\D(x)\), and so,
\(\mu(c'_{\delta,v},c_{\gamma,x})=\underline{\mu}(c_{\beta,u},c_{\alpha,t})\neq 0\).

By Remark~\ref{restrictionrem}, the vertex
\(c_{\beta,u}\) of \(\Gamma_L\) coincides with the vertex
\(c''_{\pi,w}\) for some \(\pi\in\mathcal{I}_{L,\zeta}\)
and the vertex \(c_{\alpha,t}\) of \(\Gamma_L\) coincides with
the vertex \(c''_{\epsilon,y}\) for some \(\epsilon\in\mathcal{I}_{L,\xi}\).
Since \(k\in\D(u)\setminus\D(t)\) and \(k>1\), we have \(k\in\D(w)\setminus\D(y)\), and so,
\(\mu(c''_{\pi,w},c_{\epsilon,y})=\underline{\mu}(c_{\beta,u},c_{\alpha,t})\neq 0\).

Since \(\alpha\neq\beta\) (since \(\mu\neq\lambda\)), we have
\(\gamma\neq\delta\) and \(\epsilon\neq\pi\). Since
\(\Gamma_K\) and \(\Gamma_L\) are ordered, it follows that
\(v<x\) and \(w<y\). In particular, this gives \(\eta\leqslant\theta\) and \(\zeta\leqslant\xi\).

Since \(\Morethan tk\) is~\(k\)-critical, it follows from the minimality
of \(\col_{\Morethan tk}(k)\) that \(\col_t(n)\geqslant\col_t(k)\).  We shall show
that if \(\col_t(n)>\col_t(k)\) then \(\col_t(n)\geqslant\col_u(n)\).
Suppose to the contrary that \(\col_t(k)<\col_t(n)<\col_u(n)\).
We aim to show that \((u,t)\) satisfies the hypothesis of Lemma~\ref{k-minimalonerow}.

Let \(l=\col_t(n)\). Since \(\eta\leqslant\theta\), it follows that
\begin{equation}\label{etalethetaeq}
\sum_{m=1}^l\theta_m\leqslant\sum_{m=1}^l\eta_m.
\end{equation}
Moreover,
since \(\col_t(k+2)\leqslant\cdots\leqslant\col_t(n-1)\leqslant\col_t(n)\),
since \(\morethan t{(k+1)}\) is the minimal tableau of its shape, and
since  \(\col_t(k+1)\leqslant\col_t(n)\), since \(\col_t(k)+1=\col_t(k+1)\)
and \(\col_t(k)+1\leqslant\col_t(n)\) by assumption, we have
\begin{equation}\label{coltileql}
\col_t(i)\leqslant l \quad \text{if} \quad k<i<n,
\end{equation}
and so Eq.~\eqref{etalethetaeq} can be expressed in the form
\begin{equation*}
\sum_{m=1}^l\nu_m + n-1-k \leqslant\sum_{m=1}^l\nu_m + \sum_{m=1}^l (\eta_m-\nu_m).
\end{equation*}
Thus  \(\sum_{m=1}^l (\eta_m-\nu_m)\geqslant n-1-k\). But for each \(m\in[1,l]\),
\(\eta_m-\nu_m\) counts certain positive integers between \(k\) and \(n\),
and so, we have \(\sum_{m=1}^l (\eta_m-\nu_m)\leqslant n-1-k\).
It follows that \(\sum_{m=1}^l (\eta_m-\nu_m)= n-1-k\). Equivalently, we have
\begin{equation}\label{colsileql}
\col_u(i)\leqslant l \quad \text{if} \quad k<i<n.
\end{equation}
In particular,  Eq.(\ref{colsileql}) shows that \(\col_u(n-1)\leqslant l=\col_t(n)\). Since
\(\col_t(n)<\col_u(n)\) by our assumption, this implies that \(\col_u(n-1)<\col_u(n)\).
Thus \(n-1\in\A(u)\). Since \(\D(t)\subsetneqq\D(u)\), it follows that
\(n-1\in\A(t)\).
Since \(\col_u(i)\leqslant l\) whenever \(k<i<n\) by Eq.(\ref{colsileql}) and
\(\col_t(i)\leqslant l\) whenever \(k<i<n\) by Eq.(\ref{coltileql}),
and since \(\col_u(n)>\col_t(n)=l\) by our assumption, we have
\begin{equation}\label{smuonemorethansnu}
\sum_{m=1}^l\mu_m = \sum_{m=1}^l\nu_m + n-1-k = \sum_{m=1}^l\nu_m + (n-k) - 1 =
\sum_{m=1}^l\lambda_m-1.
\end{equation}

Let \((g,p)\) and \((h,q)\) be the boxes vacated by \(j((1,1),\morethan u1)\)
and \(j((1,1),\morethan t1)\), respectively. We claim that
\begin{equation}\label{qleqcoltnlp}
q\leqslant l<p.
\end{equation}
If \(l<q\) then \(\sum_{m=1}^l \xi_m = \sum_{m=1}^l \lambda_m\),
and since \(\sum_{m=1}^l \mu_m \geqslant \sum_{m=1}^l \zeta_m\), it follows by
Eq.(\ref{smuonemorethansnu}) that \(\sum_{m=1}^l \xi_m > \sum_{m=1}^l \zeta_m\).
If \(p\leqslant l=\col_t(n)\) then
\(\sum_{m=1}^l \zeta_m =\sum_{m=1}^l \mu_m -1< \sum_{m=1}^l \mu_m\).
Moreover, since
\(\sum_{m=1}^l \lambda_m - 1\leqslant\sum_{m=1}^l\xi_m\), it follows by Eq.(\ref{smuonemorethansnu})
that
\(\sum_{m=1}^l \zeta_m <\sum_{m=1}^l\xi_m\).
Since \(\zeta\leqslant\xi\), either case results in a contradiction, whence
\(q\leqslant l<p\), as claimed.

Let \(u(g,p)=b\). We claim that \(b=n\).\\
If \(k+1\leqslant b<n\) then \(p=\col_u(b)\leqslant l\) by Eq.~(\ref{colsileql}),
contradicting Eq.~(\ref{qleqcoltnlp}). Thus \(b\leqslant k\) or \(b=n\). But
\(\col_u(k)=\col_t(k)<\col_t(n)=l\) by our assumption, and so the case \(b=k\)
is excluded by Eq.~(\ref{qleqcoltnlp}). Suppose that
\(b<k\). Since the box \((g,p)\) in the diagram of \(\shape(\morethan u1)\)
is vacated by \(j((1,1),\morethan u1)\), and since
\(\Lessthan uk = \Lessthan tk\), the box \((g,p)\) in the diagram of \(\shape(\morethan t1)\)
is in the slide path of
\(j((1,1),\morethan t1)\), and it follows that \(h\geqslant g\) and \(q\geqslant p\)
by Lemma~\ref{slidepath}. The latter inequality contradicts \(q<p\)
given by Eq.(~\ref{qleqcoltnlp}). Hence \(b=n\), as claimed.

We claim that
\begin{equation}\label{colsiltpminus1}
\col_u(i)<p-1 \quad \text{if} \quad k<i<n.
\end{equation}
By Eq.~(\ref{coltileql}), we have \(\col_t(k+1)\leq\col_t(n)\).

Suppose first that \(\col_t(k+1)=\col_t(n)\). Since
\(n-1\in\A(t)\), as shown above, we have \(\col_t(n-1)<\col_t(n)\), and since
\(\col_t(k+1)=\col_t(k)+1\), the assumption \(\col_t(k+1)=\col_t(n)\) implies that
\(\col_t(n-1)\leqslant\col_t(k)\). But \(\col_t(n-1)\geqslant\col_t(k)\)
by the minimality of \(\col_{\Morethan tk}(k)\), we therefore have \(\col_t(n-1)=\col_t(k)\).
Since \(\morethan t{(k+1)}\) is the minimal tableau of its shape, this shows that
\(\col_t(k+2)\leqslant\cdots\leqslant\col_t(n-1)=\col_t(k)\), and so
it follows by the minimality of \(\col_{\Morethan tk}(k)\) that
\(\col_t(k)=\col_t(k+2)=\cdots=\col_t(n-1)\), whence
\(k+1,k+2,\ldots,n-2\in\D(t)\). On the one hand,
since \(k\in\D(u)\), and since
\(\D(t)\subsetneqq\D(u)\), we have \(k,k+1,\ldots,n-2\in\D(u)\),
and it follows that
\(\col_u(n-1)\leqslant\col_u(n-2)\leqslant\cdots\leqslant\col_u(k+1)\leqslant\col_u(k)\).
On the other hand, since \(\col_u(k)=\col_t(k)=\col_t(k+1)-1=\col_t(n)-1=l-1\), and it
follows by~Eq.(\ref{qleqcoltnlp}) that \(\col_u(k)<p-1\). Hence,
if \(k<i<n\) then \(\col_u(i)<p-1\).

Suppose now that \(\col_t(k+1)<\col_t(n)=l\). Since \(\morethan t{(k+1)}\)
is the minimal tableau of its shape, and since \(n-1\in\A(t)\), as shown above, so that \(\col_t(n-1)<\col_t(n)\), we have
\(\col_t(k+2)\leqslant\cdots\leqslant\col_t(n-1)<\col_t(n)\). Hence
if \(k<i<n\), we have \(\col_t(i)<l\). Since \(\eta\leqslant\theta\), we have
\(\sum_{m=1}^{l-1}\theta_m\leqslant\sum_{m=1}^{l-1}\eta_m\). This gives
\begin{equation*}
\sum_{m=1}^{l-1}\nu_m + n-1-k \leqslant\sum_{m=1}^{l-1}\nu_m + \sum_{m=1}^{l-1} (\eta_m-\nu_m)
\end{equation*}
that is, \(n-1-k\leqslant\sum_{m=1}^{l-1} (\eta_m-\nu_m)\). But since \(\eta_m-\nu_m\)
counts, for each \(m\in[1,l-1]\), certain positive integers between \(k\) and \(n\),
it follows that \( \sum_{m=1}^{l-1} (\eta_m-\nu_m)\leqslant n-1-k\). Therefore, we
conclude that \(\sum_{m=1}^{l-1} (\eta_m-\nu_m)=n-1-k\),
that is, \(\col_u(i)\leqslant l-1\) if \(k<i<n\). Since \(l<p\) by Eq.~(\ref{qleqcoltnlp}), we have
\(\col_u(i)<p-1\) if \(k<i<n\). This completes the proof of our claim.

Obviously \(n\) slides from the box \((g,p)\) of the diagram of
\(\shape(\morethan u1)\) into either the box \((g-1,p)\) or the box \((g,p-1)\).
Note that Eq.~(\ref{colsiltpminus1}) gives \(u(g,p-1)\leqslant k\) and
\(u(g-1,p)\leqslant k\), and so \(t(g,p-1)=u(g,p-1)\) and \(t(g-1,p)=u(g-1,p)\).
Now if \(n\) slides into the box \((g-1,p)\), so that the box \((g-1,p)\) is
in the slide path of \(j((1,1),\morethan t1)\), then Lemma~\ref{slidepath}
gives \(p\leqslant q\), contradicting Eq.~(\ref{qleqcoltnlp}).
Thus \(n\) slides into the box \((g,p-1)\), so that the box \((g,p-1)\) is in the
slide path of \(j((1,1),\morethan t1)\), and Lemma~\ref{slidepath} gives \(p-1\leqslant q\).
But since \(q\leqslant l<p\) by Eq.~(\ref{qleqcoltnlp}),
this shows that \(\col_t(n)=l=q=p-1\).

Let
\(\kappa=(\kappa_1^{m_1},\ldots,\kappa_s^{m_s})=\shape(\Lessthan u{k-1})=\shape(\Lessthan t{k-1})\).

We claim that \(\col_u(k)=\col_t(k)=1\).\\
Suppose to the contrary that \(\col_t(k)>1\).
Choose \((u,t)\) to be the minimal approximate of \((u',t')\).
By Lemma~\ref{jofminandmaxt}, we have \((\kappa_1,m_1)\) is vacated by
\(j((1,1),\morethan {\tau_{\kappa}}{1})\).
Since \((g,p-1)\) is vacated by \(j((1,1),\morethan {\tau_{\kappa}}{1})\), we have
\(\kappa_1=g\) and \(m_1=p-1\). Since \(\col_t(n)=p-1\), and since
\(t^{-1}(k)\) is a \(\kappa\)-addable box and \(t^{-1}(k)\neq (\kappa_1,1)\),
we have \(p-1=m_1<\col_t(k)\),
the latter inequality shows that \(\col_t(n)<\col_t(k)\), contradicting our assumption
that \(\col_t(k)<\col_t(n)\). Thus \(\col_u(k)=\col_t(k)=1\), as claimed.

Since \(\kappa_1=g\), as shown above, we have
\(\row_u(k)=\row_t(k)=\kappa_1+1 = g+1\). We claim that~\(g=1\). \\
Suppose to the contrary that \(g>1\).
We have
\((g,p)\notin[\kappa]\) since \(u(g,p)=n\) but \((g-1,p)\in[\kappa]\) because
\(g>1\) and because of Eq.~(\ref{colsiltpminus1}). It follows that
\((g,p)=u^{-1}(n)\) is a \(\kappa\)-addable box, whence \(s>1\).
Choose \((u,t)\) to be the maximal approximate of \((u',t')\). Let
\(\kappa^*=(\kappa_1^{*n_1},\ldots,\kappa_r^{*n_r})\). It follows from
 Lemma~\ref{jofminandmaxt} that \((n_1,\kappa_1^{*})\) is
vacated by \(j((1,1),\morethan {\tau^{\kappa}}{1})\).
Since \(n_1=m_s\) and \(\kappa_1^{*}=m_1+\cdots + m_s\), it follows that
 \((\kappa_1,m_1)\neq (n_1,\kappa_1^{*})\), a clear contradiction.
Thus \(g=1\), as claimed.

Since \(t(2,1)=u(2,1)=k\), we deduce that \(\kappa\) consists of \((k-1)\) parts of length \(1\),
that is, \(m_1=k-1\) and \(\kappa_1=1\). Thus \(\col_t(n)=k-1\) and
\(u(1,k)=n\), and since
\(\col_t(n)>\col_t(k)\) by our assumption, we have
\(\col_t(n)\geqslant 2\), and it follows that \(k\geqslant 3\). Moreover,
since \(\col_u(i)<k-1\) for \(k\leqslant i\leqslant n-1\), we have
\((2,k-1)\notin [\mu]\). It is clear that \((u,t)\)
satisfies the hypothesis of Lemma~\ref{k-minimalonerow}. But
Lemma~\ref{k-minimalonerow} shows that \((u,t)\notin A(u',t')\),
which completes our argument by contradiction.

We have shown that \(\col_t(n)=\col_t(k)\) or if
\(\col_t(n)>\col_t(k)\) then \(\col_t(n)\geqslant\col_u(n)\).

Suppose first that  \(\col_t(n)=\col_t(k)\). Since
\(\morethan t{(k+1)}\) is the minimal tableau of its shape,
we have
\(\col_t(n)\geqslant\col_t(n-1)\geqslant\cdots\geqslant\col_t(k+3)\geqslant\col_t(k+2)\).
Since \(\col_t(k)=\col_t(n)\), it follows from the minimality of
\(\col_{\Morethan tk}(k)\) that \(\col_t(k)=\col_t(k+2)=\cdots=\col_t(n)\).
Moreover, since \(\col_t(k+1)=\col_t(k)+1>\col_t(k)\), this shows that
\(\col_t(k+1)>\col_t(k+2)\). Thus, it follows that
\(k+1,k+2,\ldots,n-1\in\D(t)\).
Now since \(\D(t)\subsetneqq\D(u)\) and
\(k=\min(\D(u)\setminus\D(t))\), in particular, \(k\in\D(u)\setminus\D(t)\),
it follows that \(k,k+1,k+2\ldots,n-1\in\D(u)\).
This shows that \(\col_u(n)\leqslant\col_u(n-1)\leqslant\cdots\leqslant\col_u(k)=\col_t(k)=\col_t(n)\), whence
\(\mu<\lambda\) by Lemma~\ref{nsbeforent}.

Finally, suppose that \(\col_u(n)\leqslant l=\col_t(n)\). Since \(\eta\leqslant \theta\),
we have \(\mu\leqslant\lambda\) by Lemma~\ref{nsbeforent}, and since \(\mu\neq\lambda\), we have
\(\mu<\lambda\).
\end{proof}

\begin{lemm}\label{samelambda}
Suppose further that \(\Gamma\) is a cell.
Then \(\Lambda=\{\lambda\}\) for some \(\lambda\in P(n)\).
\end{lemm}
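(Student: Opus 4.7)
The plan is to argue by contradiction. I will suppose that $|\Lambda|\geqslant 2$ and use strong connectedness of $\Gamma$ to derive a contradiction.

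First I will pick a dominance-minimal element $\lambda\in\Lambda$, which exists since $\Lambda$ is finite. The key step is to show $\Ini_{\lambda}(\Gamma)=\emptyset$ using Lemma~\ref{kminimimalcompared}. Indeed, if $\Ini_{\lambda}(\Gamma)$ were nonempty, then $t_{\Gamma,\lambda}$ would be defined, so there would exist $\alpha\in\mathcal{I}_\lambda$, $\mu\in\Lambda\setminus\{\lambda\}$ and $(\beta,u')\in\mathcal{I}_{\mu}\times\STD(\mu)$ with $\mu(c_{\beta,u'},c_{\alpha,t_{\Gamma,\lambda}})\neq 0$, and Lemma~\ref{kminimimalcompared} would then force $\mu<\lambda$ in the dominance order, contradicting the minimality of $\lambda$ in $\Lambda$.

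Next I will translate $\Ini_\lambda(\Gamma)=\emptyset$ into the structural statement that $C_\lambda=\bigsqcup_{\alpha\in\mathcal{I}_\lambda}C_{\alpha,\lambda}$ is $\Gamma$-closed. If $v=c_{\alpha,t}\in C_\lambda$ and $u\in C$ satisfies $\mu(u,v)\ne 0$ and $\tau(u)\nsubseteq\tau(v)$, then $u$ cannot lie outside $C_\lambda$, for otherwise $u=c_{\beta,u'}$ for some $(\beta,u')\in\mathcal{I}_{\mu}\times\STD(\mu)$ with $\mu\ne\lambda$, and the pair $(\alpha,t)$ would by definition belong to $\Ini_\lambda(\Gamma)$. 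This verification is essentially unpacking the definition of $\Ini_\lambda$.

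Finally I will invoke the observation in Section~\ref{sec:3} that $\Gamma$-closed subsets of $C$ are exactly unions of cells that form ideals in the cell preorder $\leqslant_\Gamma$. Since $\Gamma$ is itself a single cell by hypothesis, the only $\Gamma$-closed subsets of $C$ are $\emptyset$ and $C$; and as $\lambda\in\Lambda$ gives $C_\lambda\ne\emptyset$, we conclude $C_\lambda=C$ and hence $\Lambda=\{\lambda\}$. The substantive difficulty is already absorbed into Lemma~\ref{kminimimalcompared}, so no new obstacle is expected beyond being careful about the direction convention for arcs (which ensures that $\Ini_\lambda(\Gamma)$ really does capture outgoing arcs from $C_\lambda$ to its complement, via the $W_n$-Simplicity Rule ruling out incomparable-$\tau$ arcs between different molecules).
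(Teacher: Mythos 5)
Your proof is correct and rests on the same engine as the paper's argument, namely Lemma~\ref{kminimimalcompared} together with strong connectedness of~\(\Gamma\). The paper instead picks an arbitrary \(\lambda\in\Lambda\), uses strong connectedness to see \(\Ini_\lambda(\Gamma)\neq\emptyset\), and iterates Lemma~\ref{kminimimalcompared} to build a strictly descending chain in the finite poset \(\Lambda\) (impossible); your variant of choosing a dominance-minimal \(\lambda\) and then observing that \(\Ini_\lambda(\Gamma)=\emptyset\) forces \(C_\lambda\) to be a proper nonempty \(\Gamma\)-closed subset is an equivalent packaging of that same descent.
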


\begin{proof}
Assume to the contrary that \(\Lambda\) consists of more than one partitions of \(n\).
Let \(\lambda\in\Lambda\). Since \(\Gamma\) is strongly connected, the set
\(\Ini_{\lambda}(\Gamma)\neq\emptyset\). Let \((\alpha,t_{\lambda})\in\Ini_{\lambda}(\Gamma)\).
Let \(\mu\in\Lambda\setminus\{\lambda\}\) be such that \(\mu(c_{\beta,u},c_{\alpha,t_{\lambda}})\neq 0\),
for some \((\beta,u)\in\mathcal{I}_{\mu}\times\STD(\mu)\).
Then \(\mu<\lambda\) by Lemma~\ref{kminimimalcompared}.
Repeating the argument with \(\mu\) in place of \(\lambda\). Since \(\Lambda\) is a
finite set and \(\Gamma\) is strongly connected, a finite chain
\(\lambda>\mu>\cdots>\gamma>\cdots>\nu>\gamma\) is eventually reached,
a clear contradiction.
\end{proof}
Lemma~\ref{samelambda} says that the set of molecule types for an admissible \(W_n\)- cell is
a singleton set \(\{\lambda\}\), where \(\lambda\) is a partition of \(n\).

\begin{lemm}\label{muandnucells}
Suppose that \(n\geqslant 2\). Let \(D\) and \(D'\) be cells of \(\Gamma\), and let
\(\{\mu\}\) and \(\{\lambda\}\) be the sets of molecule types for \(D\) and \(D'\), respectively.
Then \(D\leqslant_{\Gamma} D'\) implies \(\mu\leqslant\lambda\). In particular, if
\(c_{\alpha,t}\in D'\) and \(c_{\beta,u}\in D\) satisfy the
condition that \(\mu(c_{\beta,u},c_{\alpha,t}) \neq 0\),
then \(\mu\leqslant\lambda\).
\end{lemm}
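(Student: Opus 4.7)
The main statement reduces to the ``in particular'' arc-level claim by a standard path-decomposition. Unpacking the definition of $\leqslant_\Gamma$, when $D\leqslant_\Gamma D'$ there is a directed sequence of arcs $v_0,v_1,\dots,v_k$ in $\Gamma$ with $v_0\in D$ and $v_k\in D'$; letting $C_i$ be the cell of~$v_i$ and $\nu_i$ its unique molecule type (well-defined by Lemma~\ref{samelambda}), so that $\nu_0=\mu$ and $\nu_k=\lambda$, transitivity of the dominance order makes it enough to verify $\nu_i\leqslant\nu_{i+1}$ at each step. Equality holds whenever $C_i=C_{i+1}$, and the case $C_i\neq C_{i+1}$ is exactly the ``in particular'' claim applied to the single arc $(v_{i+1},v_i)$.

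For the arc-level claim, $\mu=\lambda$ holds automatically when $D=D'$ (by Lemma~\ref{samelambda}), and $\mu\leqslant\lambda$ is trivial when $\mu=\lambda$; it remains to treat $D\neq D'$ and $\mu\neq\lambda$, with goal $\mu<\lambda$. My plan is to restrict to the $\Gamma$-closed subgraph $\Gamma(Y)$ on the set $Y$ of vertices belonging to cells $\leqslant_\Gamma D'$. This $\Gamma(Y)$ is again an admissible $W_n$-graph, the given arc persists in it, $D'$ is a maximal cell of~$\Gamma(Y)$, and every cell of~$\Gamma(Y)$ is reachable from $D'$ by a directed arc-path. Within $\Gamma(Y)$ I would then mimic the finite-descent argument of Lemma~\ref{samelambda}: the given arc witnesses $\Ini_\lambda(\Gamma(Y))\neq\emptyset$, so Lemma~\ref{kminimimalcompared} yields some $\mu_1<\lambda$ in $\Lambda_{\Gamma(Y)}$, and iterating with $\mu_1,\mu_2,\dots$ in place of~$\lambda$ produces a strictly dominance-decreasing chain in the finite poset $\Lambda_{\Gamma(Y)}$, which must terminate.

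The main obstacle I anticipate is that Lemma~\ref{kminimimalcompared} only controls arcs emanating from the single lex-minimal initial vertex $t_{\Gamma(Y),\lambda_i}$ at each stage, whereas the arc under consideration may start at a different vertex of type~$\lambda$. To bridge this gap I would combine the descent with an induction on~$|V(\Gamma)|$: if the first application of Lemma~\ref{kminimimalcompared} already witnesses a type-$\mu$ target then $\mu<\lambda$ follows immediately, and otherwise one passes to a strictly smaller $\Gamma$-closed subgraph (obtained by removing top-level cells whose contribution is already accounted for) in which the inductive hypothesis supplies the arc-level claim, allowing the conclusion $\mu<\lambda$ to be lifted back to~$\Gamma$.
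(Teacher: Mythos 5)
You correctly observe that the cell-order statement reduces to the arc-level ``in particular'' claim via path-decomposition and transitivity, and you correctly identify the obstacle: Lemma~\ref{kminimimalcompared} only controls arcs emanating from the lexicographically minimal initial vertex $t_{\Gamma,\lambda}$, whereas the arc under consideration may emanate from a different vertex of~$D'$. However, the fix you sketch does not close this gap, and in fact the direction of your reduction is the opposite of the one that works. Once you restrict to $\Gamma(Y)$ and $D'$ is the unique maximal cell, removing $D'$ --- the only ``top-level'' cell available --- destroys the arc $(c_{\alpha,t},c_{\beta,u})$ you are tracking, so the inductive hypothesis for the smaller graph says nothing about that arc, and there is no way to ``lift the conclusion back''. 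Removing any intermediate cell $C''$ that is neither maximal nor minimal is also not available, since neither $C''$ nor its complement is then $\Gamma$-closed, so the quotient construction does not yield an admissible $W_n$-graph. And the finite-descent step borrowed from Lemma~\ref{samelambda} produces a chain $\mu_1>\mu_2>\cdots$ of types witnessed by the successive lex-minimal vertices, none of which need equal the $\mu$ you want to bound. The structural reason the direct induction on the arc-level claim stalls is that a single arc from $D'$ to $D$ does not factor through intermediate cells: when $D$ is the unique minimal and $D'$ the unique maximal cell with at least one cell between them, there is nothing you can remove without either losing the arc or losing admissibility.

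The paper resolves this by reversing the reduction: it proves the cell-order statement $D\leqslant_\Gamma D'\Rightarrow\mu\leqslant\lambda$ first, by induction on the number of cells $|\mathcal{C}|$. In the base case $|\mathcal{C}|=2$ the only other molecule type is $\mu$, so any arc from $t_{\Gamma,\lambda}$ to a different-type vertex lands in type $\mu$ and Lemma~\ref{kminimimalcompared} gives $\mu<\lambda$ directly. In the inductive step, if $D$ is not the unique minimal cell one passes to $\Gamma(C\setminus C_0)$ for a minimal $C_0\neq D$ (which retains $D$ and $D'$), and dually if $D'$ is not the unique maximal cell; in the remaining case one picks any intermediate cell $C'$ of type $\nu$ and obtains $\mu\leqslant\nu$ from $\Gamma(C\setminus C_1)$ and $\nu\leqslant\lambda$ from $\Gamma(C\setminus C_0)$ by the inductive hypothesis. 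This transitivity argument goes through precisely because the preorder $\leqslant_\Gamma$ on cells, unlike a single arc, factors through intermediate cells. The arc-level ``in particular'' claim is then a one-line corollary: $\mu(c_{\beta,u},c_{\alpha,t})\neq 0$ with $\D(u)\nsubseteq\D(t)$ gives $c_{\beta,u}\leqslant_\Gamma c_{\alpha,t}$, hence $D\leqslant_\Gamma D'$, hence $\mu\leqslant\lambda$.
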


\begin{proof}
If \(\mu=\lambda\) then the result holds trivially. So we can assume that
\(\mu\neq\lambda\). Let \((\mathcal{C},\leqslant_{\Gamma})\)
be the poset of cells of \(\Gamma\) induced by the preorder \(\leqslant_{\Gamma}\).
It follows that \(|\mathcal{C}|\geqslant 2\).

Suppose first that \(D\) and \(D'\) are the only cells of~\(\Gamma\). Since
\(D\leqslant_{\Gamma} D'\), the set \(\Ini_\lambda(\Gamma)\neq\emptyset\). Let
\((\alpha,t_{\lambda})\in\mathcal{I}_{\lambda}\times\STD(\lambda)\) and
\((\beta,u)\in\mathcal{I}_{\mu}\times\STD(\mu)\) satisfy
\(\mu(c_{\beta,u},c_{\alpha,t_{\lambda}}) \neq 0\).
It follows readily from Lemma~\ref{kminimimalcompared} that \(\mu<\lambda\).

Suppose now that \(|\mathcal{C}|>2\) and the result holds for
any admissible \(W_n\)-graph of less than \(|\mathcal{C}|\) cells.
Let \(C_0\) and \(C_1\) be a minimal and a maximal cell in \((\mathcal{C},\leqslant_{\Gamma})\).
It is clear that \(C_0\) and \(C\setminus C_1\) are closed subsets of \(C\), hence
the full subgraphs \(\Gamma(C\setminus C_0)\) and \(\Gamma(C\setminus C_1)\)
induced by \(C\setminus C_0\) and \(C\setminus C_1\)
are themselves admissible \(W_n\)-graph with edge weights and vertex colours
inherited from \(\Gamma\). It follows that if both \(D\) and \(D'\) are cells of
\(\Gamma(C\setminus C_0)\) or  \(\Gamma(C\setminus C_1)\), then the result is
given by the inductive hypothesis. Furthermore, since \(D\leqslant_{\Gamma} D'\) by
assumption, we can assume that
\(D=C_0\) and \(D'=C_1\) are the (unique) minimal and maximal cells in
\((\mathcal{C},\leqslant_{\Gamma})\).

Let \(C'\neq C_0,\,C_1\) be a cell of
\(\Gamma\). By Lemma~\ref{samelambda}, the set of molecule types for \(C'\)
is \(\{\nu\}\) for some \(\nu\in\Lambda\). Now since \(C_0\leqslant_{\Gamma} C'\) and
\(C_0\) and \(C'\) are cells of \(\Gamma(C\setminus C_1)\), we have
\(\mu\leqslant\nu\) by the inductive hypothesis. Similarly, since \(C'\leqslant_{\Gamma} C_1\) and
\(C'\) and \(C_1\) are cells of \(\Gamma(C\setminus C_0)\), we have
\(\nu\leqslant\lambda\) by the inductive hypothesis. It follows that \(\mu\leq\lambda\) as required.

Since \(\mu(c_{\beta,u},c_{\alpha,t}) \neq 0\), we have \(\D(u)\nsubseteq\D(t)\).
It follows that \(c_{\beta,u}\leqslant_{\Gamma}c_{\alpha,t}\), hence
\(D\leqslant_{\Gamma} D'\) by the definition of the preorder \(\leqslant_{\Gamma}\). It
follows from the result above that \(\mu\leqslant\lambda\).
\end{proof}

\begin{theo}\label{inductivestep}
\(\Gamma\) is ordered.
\end{theo}

\begin{proof}
Suppose that \((\alpha,t)\in\mathcal{I}_{\lambda}\times\STD(\lambda)\) and
\((\beta,u)\in\mathcal{I}_{\mu}\) satisfy
\(\mu(c_{\beta,u},c_{\alpha,t}) \neq 0\). It follows from
Lemma~\ref{muandnucells} that \(\mu\leqslant\lambda\). Now
Proposition~\ref{kldominancesamenolecule} says that
\(u < t\) unless \(\alpha=\beta\) and \(u=s_i t>t\) for
some \(i\in[1,n-1]\). That is, \(\Gamma\) is ordered.
\end{proof}

\begin{rema}\label{orderedtwosided}
Let \(y,w \in W_n\), and let \(RS(y)=(u,v)\) and \(RS(w)=(t,x)\). It follows
from Remark~\ref{orderedregular} that if\(y\preceq\leftright w\) then
\(\mu\leqslant\lambda\), where \(\mu=\shape(x)=\shape(u)\) and
\(\lambda=\shape(y)=\shape(v)\). This gives an alternative approach to
the necessary part of the following well-known result. (See, for example,
\cite[Theorem 5.1]{geck:klMurphy}.)
\begin{theo}\label{Wn-two-sidedcell}
Let \(y,w\in W_n\) and \(\mu,\lambda\in P(n)\), and suppose that \(RS(y)\in\STD(\mu)\times\STD(\mu)\) and
\(RS(w)\in\STD(\lambda)\times\STD(\lambda)\). Then
\(y\preceq\leftright w\) if and only if \(\mu\leqslant\lambda\). In particular,
the sets \(D(\lambda):=\{w\in W_n \mid RS(w)\in\STD(\lambda)\times\STD(\lambda)\}\), where
\(\lambda\in P(n)\), are precisely the Kazhdan--Lusztig two-sided cells.
\end{theo}
Let \(\lambda\in P(n)\). For each \(t\in\STD(\lambda)\), since
\(C(t)=\{w\in W_n\mid Q(w)=t\}\) gives rise to the left cell isomorphic to
\(\Gamma_{\lambda}\), we have \(D(\lambda)=\bigsqcup_{t\in\STD(\lambda)}C(t)\) gives
rise to the union of \(\lvert \STD(\lambda) \rvert\) left cells whose molecule types
are \(\lambda\).
\end{rema}

\section{\textit{W-}graphs for admissible cells in type \textit{A}}
\label{sec:9}

\begin{defi}\label{probablepair}
Let \(\lambda\in P(n)\). A pair of standard \(\lambda\)-tableaux \((u,t)\) is said to
be a \textit{probable pair} if \(u<t\) and \(\D(t)\subsetneqq\D(u)\).
\end{defi}
It can be seen that there is no probable pair unless \(n\geqslant 5\).

\begin{lemm}\label{rnltmaxsd}
Let \(\lambda\in P(n)\), and let \(u,t\in\STD(\lambda)\).
Let \(i\) be the restriction number of~\((u,t)\) and \(j=\max(\SD(t))\).
If \((u,t)\) is favourable and satisfies
\(\D(t)\subsetneqq\D(u)\) then \(i<j\).
\end{lemm}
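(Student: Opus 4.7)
The plan is to argue by contradiction, assuming $i\geqslant j$. By Remark~\ref{k-reducedrem}, favourability together with $\D(t)\subsetneqq\D(u)$ forces $i=\min(\D(u)\setminus\D(t))$, so $i\in\D(u)$ and $i\notin\D(t)$. Standardness on a partition shape (where $\SA\sqcup\SD\sqcup\WA\sqcup\WD=[1,n-1]$) translates these into $\col_u(i)\geqslant\col_u(i+1)$ and $\col_t(i)<\col_t(i+1)$, while the restriction equality $\Lessthan ui=\Lessthan ti$ gives $\col_u(i)=\col_t(i)=:c_0$. The subcase $i=j$ is immediate, as $j\in\SD(t)\subseteq\D(t)$ contradicts $i\notin\D(t)$. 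Hence the remaining task is to dispose of $i>j$.

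Under $i>j$, every $k\in[i+1,n-1]$ satisfies $k>j=\max\SD(t)$, so $k\notin\SD(\morethan ti)$. Thus the skew standard tableau $\morethan ti$, of shape $\lambda/\nu$ with $\nu=\shape(\Lessthan ti)$, has empty strong descent set, and by Remark~\ref{minimal-tableau} it equals its minimal tableau $i+\tau_{\lambda/\nu}$. Reading off from Eq.~\eqref{toptab}, the smallest entry $i+1$ of $i+\tau_{\lambda/\nu}$ sits at the box $(\nu_{j^*}+1,j^*)$, where $j^*=\min\{c\mid\lambda_c>\nu_c\}$, giving $\col_t(i+1)=j^*$.

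To finish, I combine both sides at column $c_0$. The inequality $\col_t(i)<\col_t(i+1)$ becomes $c_0<j^*$, which forces $\nu_c=\lambda_c$ for every $c\leqslant c_0$. On the other hand, $i\in\D(u)$ with $\col_u(i)=c_0$ yields $\col_u(i+1)\leqslant c_0$, making $u^{-1}(i+1)$ a $\nu$-addable box lying inside $[\lambda]$ at some column $c_1\leqslant c_0$; any such addable box requires $\nu_{c_1}<\lambda_{c_1}$, directly contradicting the equality just obtained. The only mildly subtle step is the appeal to Remark~\ref{minimal-tableau} to identify $\morethan ti$ with the minimal skew tableau from the single hypothesis $\SD(\morethan ti)=\emptyset$; the remainder is straightforward column-profile bookkeeping.
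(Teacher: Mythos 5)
Your proof is correct and takes essentially the same approach as the paper: argue by contradiction from \(i\geqslant j\), exclude \(i=j\) since \(j\in\SD(t)\subseteq\D(t)\) while \(i\notin\D(t)\), invoke Remark~\ref{minimal-tableau} to see that \(\morethan ti\) is the minimal skew tableau, and then derive a column contradiction with \(i\in\D(u)\). The only cosmetic difference is that you locate the contradiction by observing that \(u^{-1}(i+1)\) would need to be a \(\nu\)-addable box in a column \(c_1\leqslant c_0\) where \(\lambda_{c_1}=\nu_{c_1}\), whereas the paper phrases the same conclusion as \(\col_u(i+1)>\col_u(i)\), contradicting \(i\in\D(u)\); these are equivalent.
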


\begin{proof}
Suppose to the contrary that \(i\geqslant j\). Since \((u,t)\) is favourable, and
since \(\D(t)\subsetneqq\D(u)\), we have \(i\in\D(u)\oplus\D(t)=\D(u)\setminus\D(t)\).
Since \(i\notin\D(t)\), we have \(i\neq j\), and so \(i>j\).
Let \(w=\Lessthan ti=\Lessthan ui\in\STD(\mu)\), where \(\mu=\shape(w)\).
Since \(j=\max(\SD(t))\) and since \(i>j\),
we have \(\D(\morethan ti)\cap [i+1,n-1]=\WD(\morethan ti)\cap [i+1,n-1]\),
and it follows by Remark~\ref{minimal-tableau} that \(\morethan ti\) is minimal,
that is, \(\morethan ti=\tau_{\lambda/\mu}\).
Moreover, since \(i\notin\D(t)\), this shows that for all \(k>i\), we have
\(\col_t(k)\geqslant\col_t(i+1)>\col_t(i)\), from which we have
\(\lambda_m=\mu_m\) for all \(m\leqslant \col_t(i)\). Hence if
\(k>i\) then \(\col_u(k)>\col_u(i)\), in particular,
\(\col_u(i+1)>\col_u(i)\), contradicting \(i\in\D(u)\).
\end{proof}

\begin{lemm}\label{existencef}
Let \(\lambda\in P(n)\), and let \(u,t\in\STD(\lambda)\). Let \(i\) be the
restriction number of~\((u,t)\).
Suppose that \((u,t)\) is favourable and satisfies \(\D(t)\subsetneqq\D(u)\).
If, moreover, \(i+1=\max\SD(t)\), then \(\col_{t}(i+2)\neq\col_t(i)\).
\end{lemm}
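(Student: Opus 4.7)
The plan is to derive a contradiction from the assumption $\col_t(i+2) = \col_t(i)$, writing $c_0 := \col_t(i)$ and $a_0 := \row_t(i)$.

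First I would extract the structural data. Since $(u,t)$ is favourable with $\D(t) \subsetneqq \D(u)$, Remark~\ref{k-reducedrem} yields $i = \min(\D(u) \setminus \D(t))$, so $i \in \D(u) \setminus \D(t)$, giving $\col_u(i+1) \leqslant c_0$ and $\col_t(i+1) > c_0$. The hypothesis $i+1 = \max\SD(t)$ combined with Remark~\ref{minimal-tableau} tells us that $\morethan{t}{(i+1)}$ is the minimal tableau of its shape $\lambda/\mu$, where $\mu = \shape(\Lessthan{t}{(i+1)})$. Writing $\nu = \shape(\Lessthan{t}{i})$ and $c_1 = \col_t(i+1)$, one first checks that $i$ sits at the bottom of column $c_0$ of the column-standard $\Lessthan{t}{i}$, so $\nu_{c_0} = a_0$; since $\mu$ differs from $\nu$ only by an added box in column $c_1 > c_0$, we also have $\mu_{c_0} = a_0$. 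The assumption $\col_t(i+2) = c_0$ now forces $c_0$ to be the leftmost nonempty column of $\lambda/\mu$, whence $\lambda_j = \nu_j$ for all $j < c_0$, and $\row_t(i+2) = a_0 + 1$. Setting $M = \lambda_{c_0} - a_0$, the minimality of $\morethan{t}{(i+1)}$ forces the entries $i+2, i+3, \ldots, i+M+1$ to fill rows $a_0+1, \ldots, a_0+M$ of column $c_0$ of $t$ consecutively, giving $\{i+2, \ldots, i+M\} \subseteq \WD(t) \subseteq \D(t) \subseteq \D(u)$.

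Next I would transfer this picture to $u$ by showing inductively that $u(a_0+k, c_0) = i+k$ for $k = 0, 1, \ldots, M$. The base case is $\Lessthan{u}{i} = \Lessthan{t}{i}$. For the inductive step, the key observation is that every addable box of $\shape(\Lessthan{u}{(i+k-1)})$ lying inside $\lambda$ must have column $\geqslant c_0$: columns $j < c_0$ are already full in $\nu$ (since $\nu_j = \lambda_j$), and the insertions performed so far all lie in column $c_0$, preserving this property. Combined with $\col_u(i+k) \leqslant c_0$---which follows from $i+k-1 \in \D(u)$, itself guaranteed by $i \in \D(u)$ when $k = 1$, by $i+1 \in \SD(t) \subseteq \D(u)$ when $k = 2$, and by $i+k-1 \in \WD(t) \subseteq \D(u)$ when $k \geqslant 3$---this forces $\col_u(i+k) = c_0$, so the new entry lands at row $a_0 + k$ of column $c_0$, which lies in $\lambda$ because $a_0 + k \leqslant \lambda_{c_0}$.

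The main obstacle, and the place where the contradiction crystallises, is the terminal step. After placing $u(a_0+M, c_0) = i+M$, column $c_0$ of $\shape(\Lessthan{u}{(i+M)})$ contains $\lambda_{c_0}$ entries and is thus full in $\lambda$, while columns $j < c_0$ also remain full. Since $M \leqslant |\lambda/\mu| = n - i - 1$, the entry $i+M+1 \leqslant n$ still needs to be placed. But $i+M \in \D(u)$ (using $i+1 \in \SD(t)$ when $M = 1$, or $i+M \in \WD(t)$ when $M \geqslant 2$) forces $\col_u(i+M+1) \leqslant c_0$, which is impossible since no addable box of $\shape(\Lessthan{u}{(i+M)})$ in $\lambda$ lies in a column $\leqslant c_0$. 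This contradiction establishes $\col_t(i+2) \neq \col_t(i)$.
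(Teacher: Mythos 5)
Your proof is correct and follows essentially the same strategy as the paper's: assume $\col_t(i+2)=\col_t(i)$, use minimality of $\morethan{t}{(i+1)}$ together with favourability to show that columns to the left of $\col_t(i)$ are already full (so entries $>i$ in $u$ must land in column $\geqslant\col_t(i)$), deduce $u(a_0+k,c_0)=i+k$ inductively, and derive the contradiction from the fact that $i+M\in\D(u)$ forces $i+M+1$ into a column that is already full. Your write-up is somewhat more explicit than the paper's (which compresses the induction and the terminal step into a single sentence), but the argument is the same.
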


\begin{proof}
Suppose to the contrary that \(\col_{t}(i+2)=\col_t(i)\). Since
\((u,t)\) is \(i\)-restricted, we have \(\Lessthan ui=\Lessthan ti\).
Let \(\mu=\shape(\Lessthan ti)=\shape(\Lessthan ui)\), and let
\(u(g,p)=t(g,p)=i\). Now since \(i+1=\max\SD(t)\), we have
\(\morethan t{(i+1)}\) is minimal, hence
\(\col_t(i)=\col_t(i+2)\leqslant\col_t(k)\) for \(k>i+2\).
Furthermore, since \((u,t)\) is favourable, we have \(i\in\D(u)\setminus\D(t)\),
and so we have \(\col_t(i)<\col_t(i+1)\). It follows that for \(k>i\),
we have \(\col_t(k)\geqslant\col_t(i)\). Therefore, for
each \(j\in[1,p-1]\), we have \(\lambda_j=\mu_j\). This shows that for
\(k>i\), we have \(\col_u(k)\geqslant\col_u(i)\), in particular, we have
\(\col_u(i+1)\geqslant\col_u(i)\). Since \(i\in\D(u)\setminus\D(t)\),
as \((u,t)\) is favourable, we have \(\col_u(i+1)\leqslant\col_u(i)\).
Thus \(\col_u(i+1)=\col_u(i)\), and
we have \(u(g+1,p)=i+1\). An easy induction on \(l\in[1,\lambda_p-g]\) shows that
if \(t(g+l,p)=i+l+1\) then \(u(g+1,p)=i+l\). However, this contradicts
\(\D(t)\subsetneqq\D(u)\), as desired.
\end{proof}
Let \(\Gamma = \Gamma(C,\mu,\tau)\) be an admissible \(W_n\)-graph. Suppose that
\(\Lambda=\{\lambda\}\), where \(\lambda\in P(n)\), is the set of molecule types
for \(\Gamma\), and let \(\mathcal{I}=\mathcal{I}_{\lambda}\) index the
molecules of~\(\Gamma\). By Remark~\ref{vertexsetofWngraph},
the vertex set of \(\Gamma\) is given by \(C=\bigsqcup_{\alpha\in\mathcal{I}}C_{\alpha,\lambda}\),
where for each \(\alpha\in\mathcal{I}\),
\(C_{\alpha,\lambda}=\{c_{\alpha,t}\mid t\in\STD(\lambda)\}\), the simple
edges of \(\Gamma\) are the pairs \(\{c_{\beta,u},c_{\alpha,t}\}\) such that
\(\alpha=\beta\) and \(u\) and \(t\) are related by
a dual Knuth move, and \(\tau(c_{\alpha,t})=\D(t)\).

\begin{lemm}\label{linktominimalCor}
Let \(u,t\in\STD(\lambda)\), and suppose that the pair \((u,t)\) is probable.
Then for all \((v,x)\in F(u,t)\), the pair \((v,x)\) is probable,
\(\max(\SD(v))=\max(\SD(t))\), and
\(\mu(c_{\beta,v},c_{\alpha,x})=\mu(c_{\beta,u},c_{\alpha,t})\).
\end{lemm}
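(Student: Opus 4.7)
The plan is to establish the three assertions in turn, relying on the key fact that any $(v,x)\in F(u,t)$ satisfies $(v,x)\approx_i(u,t)$ and is itself $i$-restricted and favourable, where $i$ denotes the restriction number of $(u,t)$; in particular $\morethan vi=\morethan ui$, $\morethan xi=\morethan ti$, and $v=wu$, $x=wt$ for the same $w\in W_i$.

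First I would show that $(v,x)$ is probable. From $(v,x)\approx(u,t)$ and $u<t$, Proposition~\ref{bruhatorderpreserved} gives $v\leqslant x$, and $u\ne t$ yields $v\ne x$, whence $v<x$. For the descent containment, I verify that $u<t$ forces $\col_u(i+1)<\col_t(i+1)$: writing $(g,p)=u^{-1}(i+1)$ and $(h,q)=t^{-1}(i+1)$ as addable boxes of $\xi=\shape(\Lessthan ui)$, the opposite inequality $p\geqslant q$ would violate $\shape(\Lessthan u{(i+1)})\leqslant\shape(\Lessthan t{(i+1)})$ in the paper's dominance convention. Lemma~\ref{knuthmoveweakorderrelationLem} (non-favourable case) or Lemma~\ref{knuthmoveweakorderrelationLemb} (favourable case), applied with $\col_u(i+1)<\col_t(i+1)$, then gives $\D(x)\setminus\D(v)=\D(t)\setminus\D(u)=\emptyset$ and $\D(v)\setminus\D(x)\ne\emptyset$, hence $\D(x)\subsetneqq\D(v)$.

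For the maximal strong-descent equality, I would apply Lemma~\ref{rnltmaxsd} to the favourable probable pair $(v,x)$ to force $i<\max\SD(x)$, so $\max\SD(x)\in[i+1,n-1]$; combined with $\SD(x)\cap[i+1,n-1]=\SD(t)\cap[i+1,n-1]$ (a consequence of $\morethan xi=\morethan ti$) this yields $\max\SD(x)=\max(\SD(t)\cap[i+1,n-1])$. An analogous argument, using the tail-equality $\SD(v)\cap[i+1,n-1]=\SD(u)\cap[i+1,n-1]$ together with Lemma~\ref{rnltmaxsd} applied to $(u,t)$ (or, in the non-favourable case, a direct comparison through $\D(u)\setminus\D(t)$), transports the maxima across the equivalence to deliver $\max\SD(v)=\max\SD(t)$.

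Finally, for the arc-weight equality, the equivalence $(v,x)\approx_i(u,t)$ decomposes as a finite sequence of paired $(\leqslant i)$-dual Knuth moves, so I would apply Lemma~\ref{noname0}. If $(u,t)$ is non-favourable, Remark~\ref{k-reducedrem} gives $i<\min(\D(u)\setminus\D(t))$; taking $j=\min(\D(u)\setminus\D(t))$ gives $i<j$, and Lemma~\ref{noname0} immediately yields $\mu(c_{\beta,v},c_{\alpha,x})=\mu(c_{\beta,u},c_{\alpha,t})$; the same argument succeeds in the favourable case whenever some $j\in\D(u)\setminus\D(t)$ satisfies $j>i$. The main obstacle is the remaining edge case where $(u,t)$ is favourable with $\D(u)\setminus\D(t)=\{i\}$, so Lemma~\ref{noname0} no longer applies; here I would mimic the device from the proof of Lemma~\ref{k-minimallemma} (Case~2, $k=l$), introducing auxiliary pairs $(u_1,t_1),(u_2,t_2)$ via extra paired $(\leqslant i-1)$-dual Knuth moves on $w=\Lessthan ui$ and invoking Proposition~\ref{arctransport} on the resulting squares to chain the weight transports, with the Coxeter-compatible descent configurations verified at each step.
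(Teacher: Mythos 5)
Your treatment of the first claim (that $(v,x)$ is probable) and of the maximal‑strong‑descent claim is essentially the paper's argument, and in one respect is more careful: the paper opens its proof with ``We may assume that $(u,t)$ is not favourable'' (which it needs in order to get $i<\min(\D(u)\setminus\D(t))$), whereas you cover the favourable case as well, using Lemma~\ref{knuthmoveweakorderrelationLemb} in place of Lemma~\ref{knuthmoveweakorderrelationLem}. Your verification that $\col_u(i+1)<\col_t(i+1)$ from $u<t$ is a correct direct version of the step the paper routes through Remark~\ref{resdomrem}. One caution: the second conclusion, as the paper later uses it, should read $\max\SD(x)=\max\SD(t)$ (it is invoked as $\max\SD(t)=\max\SD(t')$ after renaming), and that is what both your argument and the paper's establish via Lemma~\ref{rnltmaxsd} applied to $(v,x)$ together with $\morethan xi=\morethan ti$. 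Your additional ``analogous argument'' with $\SD(v)\cap[i+1,n-1]=\SD(u)\cap[i+1,n-1]$ would at best compare $\max\SD(v)$ to $\max\SD(u)$, not to $\max\SD(t)$; drop it.

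The genuine gap is in the arc‑weight equality, and it is present in both proofs. Lemma~\ref{noname0} needs some $j\in\D(u)\setminus\D(t)$ with $(v,x)\in C_{i'}(u,t)$ for some $i'<j$, so it handles the non‑favourable case and the favourable case with $\D(u)\setminus\D(t)\ne\{i\}$, exactly as you say. The residual case — $(u,t)$ probable and favourable with $\D(u)\setminus\D(t)=\{i\}$, and $F(u,t)$ meeting more than one $(i-1)$‑subclass — does occur: take $\lambda=(3,3,1,1,1)$, $\xi=(2,2,1)$, $i=5$, with $t(3,1)=6$, $t(1,4)=7$, $t(3,2)=8$, $t(1,5)=9$ and $u(1,4)=6$, $u(1,5)=7$, $u(3,1)=8$, $u(3,2)=9$; then $(t,u)$ is probable, favourable, $\D(t)\setminus\D(u)=\{5\}$, and the removable boxes $(2,2)$ and $(1,3)$ of $\xi$ both lie between $(3,1)$ and $(1,4)$, so $F(t,u)$ is not a single $(i-1)$‑subclass. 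Thus the paper's ``we may assume not favourable'' is not actually a harmless reduction, and your instinct to isolate this case is correct. However your resolution stops at ``mimic the device from Lemma~\ref{k-minimallemma} Case~2 \dots\ invoking Proposition~\ref{arctransport}''. To make this rigorous you must exhibit, for each pair of adjacent removable boxes, the paired move of index $i-1$ connecting representatives of the two $(i-1)$‑subclasses, check that $\{i-2,i-1,i\}$ (or $\{i-1,i,i+1\}$, depending on the move type) furnishes the colour configuration that Proposition~\ref{arctransport} requires with $r=i$ (here you need that every member of $F(u,t)$, being favourable, has $i$ in the relevant descent difference), and close the argument by Lemma~\ref{noname0} within each subclass. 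Until that is carried out, your proof — like the paper's own — leaves this case open.
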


\begin{proof}
We may assume that \((u,t)\) is not favourable.
Let \((v,x)\in F(u,t)\).
Let \(i\) be the restriction number of~\((v,x)\) (which is also
the restriction number of~\((u,t)\)), and let \(j=\max(\SD(x))\).
Since \((v,x)\in F(u,t)\), we have \((v,x)\approx_i (u,t)\), and so
\((v,x)\approx (u,t)\) by Remark~\ref{mimpliesn}, and since
\((u,t)\) is probable, we have \(u<t\), and
it follows by Lemma~\ref{bruhatorderpreserved} that \(v<x\). Furthermore,
\(u<t\) implies that  \(\Lessthan u{(i+1)}\leqslant\Lessthan t{(i+1)}\),
but since \((u,t)\) is \(i\)-restricted. we have  \(\Lessthan u{(i+1)}\neq\Lessthan t{(i+1)}\),
therefore \(\Lessthan u{(i+1)} < \Lessthan t{(i+1)}\). It follows by
Remark~\ref{resdomrem} that \(\col_u(i+1)<\col_t(i+1)\).
Moreover, as \((u,t)\) is not favourable and \(\D(t)\subsetneqq\D(u)\),
so that \(\D(u)\oplus\D(t)=\D(u)\setminus\D(t)\),
we have \(i<\min(\D(u)\setminus\D(t))\) by Remark~\ref{k-reducedrem}.
Thus \((u,t)\) satisfies the hypothesis of Lemma~\ref{noname2}.
Since \(\col_u(i+1)<\col_t(i+1)\) as shown above, it
follows by Lemma~\ref{noname2} that \((v,x)\) satisfies
\(\D(v)\setminus\D(x)=\{i\}\cup(\D(u)\setminus\D(t))\) and \(\D(x)\setminus\D(v)=\emptyset\).
Since \(\D(x)\setminus\D(v)=\emptyset\) while \(\D(v)\setminus\D(x)\neq\emptyset\),
we have  \(\D(x)\subsetneqq\D(v)\). Hence \((v,x)\) is probable.

Next, since \(j=\max(\SD(x))\) and \(j>i\) by Lemma~\ref{rnltmaxsd},
we have \(j=\max(SD(\morethan xi))\), and since \(\morethan ti = \morethan xi\),
we have \(j=\max(\SD(\morethan ti))\), and it follows that
\(j=\max(\SD(t))\), as required.

Finally, since \(i<\min(\D(u)\setminus\D(t))\) as shown above,
we have \(\mu(c_{\beta,v},c_{\alpha,x})=\mu(c_{\beta,u},c_{\alpha,t})\)
by Lemma~\ref{noname0}.
\end{proof}

\begin{prop}\label{monomolecularadmcellsareKL}
Monomolecular admissible cells of type \(A_{n-1}\) are Kazhdan--Lusztig.
\end{prop}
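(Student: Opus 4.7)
The plan is to establish an isomorphism $\Gamma \cong \Gamma_\lambda$ in three stages: first match vertex sets, colourings, and simple edges; then reduce the arc-weight question to "probable pairs"; then determine those weights by induction via the Polygon Rule.

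First I would set up the identification. By Lemma~\ref{samelambda}, the set of molecule types of $\Gamma$ is $\{\lambda\}$ for some $\lambda \in P(n)$, and by assumption $|\mathcal{I}_\lambda| = 1$, so Remark~\ref{vertexsetofWngraph} lets me write the vertex set of $\Gamma$ as $\{c_t \mid t \in \STD(\lambda)\}$, with $\tau(c_t) = \D(t)$, and simple edges exactly those pairs $\{c_u, c_t\}$ with $u$ and $t$ related by a dual Knuth move. By Remark~\ref{Gamma-lambdaIsAdmissible}, the Kazhdan--Lusztig graph $\Gamma_\lambda = \Gamma(\STD(\lambda),\mu^{(\lambda)},\tau^{(\lambda)})$ has the same combinatorial simple-part structure. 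Thus the map $\phi: c_t \mapsto t$ is a bijection preserving $\tau$ and simple edges (and their weights, which are $1$ on both sides).

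Next I would reduce to probable pairs. It remains to prove $\mu_\Gamma(c_u, c_t) = \mu^{(\lambda)}(u, t)$ for every $u,t\in\STD(\lambda)$. When $\D(u)$ and $\D(t)$ are incomparable, the $W_n$-Simplicity Rule forces both weights to be $1$. Otherwise, by Theorem~\ref{inductivestep} applied in the monomolecular setting and by Proposition~\ref{kldominancesamenolecule} (with $\mu=\lambda$ and $\alpha=\beta$), any arc with $\D(t) \subsetneqq \D(u)$ either satisfies $u = s_i t > t$ for some $i$, or $(u,t)$ is a probable pair in the sense of Definition~\ref{probablepair}. In the first case, Corollary~\ref{arweightone} gives $\mu_\Gamma(c_u, c_t) = 1$, and the same holds in $\Gamma_\lambda$ since $\Gamma_\lambda$ is itself a monomolecular admissible graph. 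Only the probable case remains.

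For probable pairs I would induct (on $l(t)$, or on the pair $(t,u)$ in a suitable refinement of Bruhat order). The inductive step uses Lemma~\ref{linktominimalCor}: given a probable pair $(u,t)$, I replace it with a favourable probable $(v,x) \in F(u,t)$ having $\mu(c_v,c_x)=\mu(c_u,c_t)$, so it suffices to treat favourable probable pairs. Let $i = \min(\D(u)\setminus\D(t))$ and $j = \max\SD(t)$; by Lemma~\ref{rnltmaxsd} we have $i < j$. I then apply the $W_n$-Polygon Rule of Definition~\ref{polygon} to a carefully chosen alternating path through $c_u$ and $c_t$, using colours $\{i,j\}$ (or $\{i,i+1\}$, invoking Lemma~\ref{existencef} to control the column structure near $\max\SD(t)$). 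The resulting identity $N^r_{i,j}(\Gamma;\cdot,\cdot) = N^r_{j,i}(\Gamma;\cdot,\cdot)$ isolates $\mu_\Gamma(c_u,c_t)$ against a sum of products of weights each of which is either a simple-edge weight or a weight on a strictly smaller probable pair (smaller in either $l(t)$ or the chosen induction statistic). The same polygon identity holds verbatim in $\Gamma_\lambda$, and the inductive hypothesis identifies the corresponding terms on the two sides, forcing $\mu_\Gamma(c_u,c_t) = \mu^{(\lambda)}(u,t)$.

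The hard part will be the combinatorial verification that the chosen alternating path genuinely isolates $\mu(c_u,c_t)$: one must show that every other vertex appearing in the polygon identity indexes either a simple edge or a probable pair strictly smaller than $(u,t)$. This is precisely the role of Lemmas~\ref{variousaltpaths} and~\ref{xlexlessthant}, together with the structural descriptions of $F(u,t)$, of the approximate sets $A(u,t)$ (Definition~\ref{k-minimalapproximate} and Remark~\ref{k-1subclassofk}), and of critical tableaux (Remark~\ref{critical-tableau})---tools developed in the preceding sections for an analogous purpose in Proposition~\ref{cellorder}. A careful case analysis on whether $i < j-1$, $i = j-1$ with $\col_t(j+1) > \col_t(j-1)$, or $i = j-1$ with $\col_t(j+1) < \col_t(j-1)$ (mirroring Lemma~\ref{variousaltpaths}) will close the induction; the only polygon rules that appear are those of length~$3$, since the Coxeter system is of type~$A$.
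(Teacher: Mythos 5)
Your proposal follows essentially the same route as the paper: reduce to probable pairs via the ordering theorem, the Simplicity Rule and Corollary~\ref{arweightone}; pass to a favourable representative via Lemma~\ref{linktominimalCor}; then close by induction, using the Polygon Rule with the case split $j-i>1$ versus $i=j-1$ (further split by the sign of $\col_t(j+1)-\col_t(j-1)$), invoking Lemmas~\ref{rnltmaxsd}, \ref{existencef}, \ref{variousaltpaths} and \ref{xlexlessthant} exactly as the paper does. One small caution: inducting on $l(t)$ alone would fail, since the auxiliary tableaux appearing in the polygon sums need not have shorter length than $t$; the correct statistic is the lexicographic order on the second component $t'$ (which is the refinement of Bruhat order that Lemma~\ref{xlexlessthant} controls), and both the $r=2$ and $r=3$ polygon identities are used, not only $r=3$.
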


\begin{proof}
Suppose that \(\Gamma=\Gamma(C,\mu,\tau)\) is a monomolecular admissible \(W_n\,\)-cell.
Then there is a partition \(\lambda\) of \(n\) such that
\(C=\{c_t\mid t\in\STD(\lambda)\}\), and
\(\{c_u,c_t\}\) is a simple edge of \(\Gamma\)
if and only if \(u,\,t\in\STD(\lambda)\) are related by
a dual Knuth move.
In view of Corollary~\ref{leftcelllambda}, our task is to show that
\(\Gamma\cong \Gamma_\lambda=\Gamma(\STD(\lambda),\mu^{(\lambda)},\tau^{(\lambda)})\).
Recall from Remark~\ref{Gamma-lambdaIsAdmissible}
that \(\Gamma_\lambda\) is  an admissible \(W_n\)-graph consisting
of a single molecule of type~\(\lambda\).
Since it follows from Remark~\ref{equalsubsetsS} that
\(\tau(c_t) = \D(t) = \tau^{(\lambda)}(t)\) for all \(t\in\STD(\lambda)\), it remains
to show that \(\mu(c_u,c_t)=\mu^{(\lambda)}(u,t)\) for all \(u,\,t\in\STD(\lambda)\).
Note that, by Theorem~\ref{combinatorialCharacterisation}, both \(\Gamma\) and
\(\Gamma_\lambda\) satisfy the \(W_n\)-Compatibility Rule, the \(W_n\)-Simplicity Rule,
the \(W_n\)-Bonding Rule and the \(W_n\)-Polygon Rule.

We have shown in Theorem~\ref{inductivestep} that \(\Gamma\) and
\(\Gamma_\lambda\) are both ordered. Thus if \(u,t\in\STD(\lambda)\)
then \(\mu(c_u,c_t)=\mu^{(\lambda)}(u,t)=0\)
unless \(u<t\) or \(u=s_it>t\) for some \(i\in[1,n-1]\).
If \(u=s_it>t\) for some \(i\in[1,n-1]\) then we have
\(\mu(c_u,c_t)=\mu^{(\lambda)}(u,t)=1\) by Corollary~\ref{arweightone}.
Now suppose that \(u<t\) and \(\D(t)\nsubseteq\D(u)\). If one
or other of \(\mu(c_u,c_t)\) and \(\mu^{(\lambda)}(u,t)\) is nonzero then,
by the Simplicity Rule, one or other of
\(\{c_u,c_t\}\) and \(\{u,t\}\) is a simple edge, whence \(u\) and \(t\) are
related by a dual Knuth move (by Remark~\ref{vertexsetofWngraph}), and
both \(\{c_u,c_t\}\) and \(\{u,t\}\) are simple edges. So
\(\mu(c_u,c_t)=\mu^{(\lambda)}(u,t)=1\) in this case. Obviously there
is nothing to show if \(\mu(c_u,c_t)\) are \(\mu^{(\lambda)}(u,t)\) both zero,
and so all that remains is to show that \(\mu(c_u,c_t)=\mu^{(\lambda)}(u,t)\)
whenever \(u<t\) and \(\D(t)\subsetneqq\D(u)\). That is, it remains to show
that \(\mu(c_u,c_t)=\mu^{(\lambda)}(u,t)\) for all probable pairs of
standard \(\lambda\)-tableaux.

Let \((u',t')\) be a probable pair. If \(t'=\tau_{\lambda}\) then there is
nothing to prove. Proceeding inductively on the lexicographic order,
let \(\tau_{\lambda}\neq t'\in\STD(\lambda)\), and assume that the result holds
for all \(x\in\STD(\lambda)\) such that \(x<_{\lex} t'\).

Let \(i\) be the restriction number of \((u',t')\), and
let \(j=\max(\SD(t'))\). Let \((u,t)\in F(u',t')\), and note that \((u,t)\) is
\(i\)-restricted and favourable, and satisfies \(\morethan ti=\morethan {t'}i\)
and \(\morethan ui=\morethan {u'}i\). Moreover, Lemma~\ref{linktominimalCor} shows that
\((u,t)\) is probable, \(\max(\SD(t))=\max(\SD(t'))=j\),
and \(\mu(c_{u},c_{t})=\mu(c_{u'},c_{t'})\) and
\(\mu^{(\lambda)}(u,t)=\mu^{(\lambda)}(u',t')\).
By the last result, it suffices to show that \(\mu(c_{u},c_{t})=\mu^{(\lambda)}(u,t)\).

Since \(j\in\SD(t)\), we have \(s_{j}t\in\STD(\lambda)\) and \(s_{j}t<t\).
Let \(v=s_jt\), and note that \(v<_{\lex}t'\) by Lemma~\ref{xlexlessthant}.
Since \(i<j\) by Lemma~\ref{rnltmaxsd}, we have either \(j-i>1\) or \(j-i=1\).

\begin{Case}{1.} Suppose that \(j-i>1\), so that \(m(i,j)=2\).
Since \((u,t)\) is favourable, and
since \(\D(t)\subsetneqq\D(u)\) (since \((u,t)\) is probable),
we have \(i\in\D(u)\setminus\D(t)\) and \(j\in D(u)\cap\D(t)\), that is,
\(i\notin\D(t)\) and \(j\in\D(t)\), and \(i,j\in\D(u)\).
We also have \(i,j\notin\D(v)\), by Lemma~\ref{variousaltpaths} (i).

If \((c_v,c_{y_1},c_{u})\) is any alternating directed path of type \((j,i)\), then, since
\(\Gamma\) is ordered, it follows that either \(y_1=s_jv=t>v\) or \(y_1<v\). Similarly, if
\((c_v,c_{x_1},c_{u})\) is any alternating directed path of type \((i,j)\), then
it follows that either \(x_1=s_iv>v\) or \(x_1<v\). Note that
if \(x_1=s_iv>v\), then since \(x_1\in\STD(\lambda)\), it follows that
\(i\in\SA(v)\). Thus, if \(x_1=s_iv>v\), then
\(i\in\D(s_iv)\) and \(j\notin\D(s_iv)\) by Lemma~\ref{variousaltpaths} (i).
Now since \(\Gamma\) satisfies the \(W_n\)-Polygon Rule, we have
\(N^{2}_{j,i}(\Gamma;c_{v},c_{u})=N^{2}_{i,j}(\Gamma;c_{v},c_{u})\), and it follows that
\begin{multline}\label{sumijorder2}
\mu(c_{t},c_v)\mu(c_{u},c_{t})\,\, +
\!\!\sum_{y_1<v}\!\!\mu(c_{y_{1}},c_v)\mu(c_{u},c_{y_{1}})\\[-5pt]
=\mu(c_{s_iv},c_v)\mu(c_{u},c_{s_iv})\,\, +
\!\!\sum_{x_1<v}\!\!\mu(c_{x_1},c_v)\mu(c_{u},c_{x_1}),
\end{multline}
where the term \(\mu(c_{s_iv},c_v)\mu(c_{u},c_{s_iv})\) on the right hand side of
Eq.~\eqref{sumijorder2} should be omitted if \(i\notin\SA(v)\). Note that if \(i\in\SA(v)\)
then \((c_v,c_{s_iv},c_{u})\) is not necessarily a directed path, since there need not
be an arc from \(s_iv\) to~\(u\), but in this case
\(\mu(c_{s_iv},c_v)\mu(c_{u},c_{s_iv})=0\) since \(\mu(c_{u},c_{s_iv})=0\). Similarly,
\((c_v,c_{t},c_{u})\) is not necessarily a directed path, since there need not be an arc
from \(t\) to~\(u\), but \(\mu(c_{t},c_v)\mu(c_{u},c_{t})=0\) in this case.
So Eq.~\eqref{sumijorder2} still holds in these cases.

Since Corollary~\ref{arweightone} gives \(\mu(c_{t},c_v)= 1\), and \(\mu(c_{s_iv},c_v)= 1\)
if \(i\in\SA(v)\),
Eq.~\eqref{sumijorder2} yields the following formula for \(\mu(c_{u},c_{t})\):
\begin{equation*}
\mu(c_{u},c_{t}) =\mu(c_{u},c_{s_iv})\,\, +
\!\!\sum_{x_1<v}\!\!\mu(c_{x_1},c_v)\mu(c_{u},c_{x_1})\,\,-
\!\!\sum_{y_1<v}\!\!\mu(c_{y_1},c_v)\mu(c_{u},c_{y_1}),
\end{equation*}
where \(\mu(c_{u},c_{s_iv})\) should be interpreted as 0 if \(s_iv\notin\STD(\lambda)\).

Working similarly on \(\Gamma_{\lambda}\) yields the following formula for
\(\mu^{(\lambda)}(u,t)\):
\begin{equation*}
\mu^{(\lambda)}(u,t) =\mu^{(\lambda)}(u,s_iv)\,\,
+\!\!\sum_{x_1<v}\mu^{(\lambda)}(x_1,v)\mu^{(\lambda)}(u,x_1)
\,\,-\!\!\sum_{y_1<v}\mu^{(\lambda)}(y_1,v)\mu^{(\lambda)}(u,y_1).
\end{equation*}

Since \(v<_{\lex}t\) by Lemma~\ref{xlexlessthant}, \(s_iv<_{\lex}t\) (if \(i\in\SA(t)\))
by Lemma~\ref{xlexlessthant} (i),  and \(x_1<_{\lex}t\)
and \(y_1 <_{\lex} t\) by Lemma~\ref{xlexlessthant} (ii), it follows by the
inductive hypothesis that the corresponding edge weights that appear in the two formulae
above are the same. Thus \(\mu(c_{u},c_{t})=\mu^{(\lambda)}(u,t)\), as desired.
\end{Case}

\begin{Case}{2.} Suppose that \(i=j-1\), so that \(m(i,j)=3\).
By Lemma~\ref{existencef}, \(\col_{t}(j-1)\neq\col_{t}(j+1)\), and it follows that either
one of the following situations occurs:
\(\col_{t}(j-1)<\col_{t}(j+1)\) or \(\col_{t}(j-1)>\col_{t}(j+1)\).

If \(\col_{t}(j-1)<\col_{t}(j+1)\), then the result follows by the same argument as above,
with \(j-1\) replacing \(i\) and Lemma~\ref{variousaltpaths}(ii) replacing
Lemma~\ref{variousaltpaths}(i).

Suppose that \(\col_{t}(j-1)>\col_{t}(j+1)\). Since
\(j-1\in\SD(v)\) by Lemma~\ref{variousaltpaths} (iii), we have
\(s_{j-1}v\in\STD(\lambda)\) and \(s_{j-1}v<v\). Let
\(w=s_{j-1}v\). It follows by Lemma~\ref{variousaltpaths} (iii) that
\(j-1,\,j\notin\D(w)\), but
\(j-1,\,j\in\D(u)\), since \((u,t)\) is favourable and probable.

We consider length three alternating directed paths
of type \((j-1,j)\) and \((j,j-1)\) from \(c_w\) to~\(c_{u}\).
We have \(j\in\D(t)\) and \(j-1\notin\D(t)\) (since \((u,t)\) is favourable),
while \(j-1\in\D(v)\) and \(j\notin\D(v)\) by Lemma~\ref{variousaltpaths}~(iii).

If \((c_w,c_{x_1},c_{x_2},c_{u})\) is any alternating directed path of
type~\((j-1,j)\), then, since \(\Gamma\) is ordered, it follows that
either \(x_1=s_{j-1}w=v>w\), or else \(x_1<w\). Moreover, since \(\Gamma\) satisfies the
\(W_n\)-Simply Laced Bonding Rule, the fact that
\(j-1\in\D(x_1)\) and \(j\notin\D(x_1)\)
shows that \(c_{x_{2}}\) is the unique vertex adjacent to \(c_{x_{1}}\) satisfying \(j-1\notin\D(x_2)\)
and \(j\in\D(x_2)\). That is, \(x_2\) is the \((j-1)\)-neighbour of \(x_1\). Thus it follows that
either \(x_1=v\) and \(x_2=s_{j}v=t\), or else \(x_1<w\) and
either \(x_2=s_jx_1>x_1\) or \(x_2=s_{j-1}x_1<x_1\).

Similarly, if \((c_{w},c_{y_1},c_{y_2},c_{u})\) is any alternating directed path
of type~\((j,j-1)\), then it follows that either
\(y_1=s_jw>w\) or \(y_1<w\), and \(y_2\) is the \((j-1)\)-neighbour of~\(y_1\).
Note that if \(y_1=s_jw>w\), then since \(y_1\in\STD(\lambda)\),
it follows that \(j\in\SA(w)\). Thus, if \(y_1=s_jw>w\) then
\(y_2=s_{j-1}y_1=s_{j-1}s_jw>s_jw=y_1\), and
\(j\in\D(s_{jw})\) and \(j-1\notin\D(s_jw)\), and
\(j-1\in\D(s_{j-1}s_jw)\) and \(j\notin\D(s_{j-1}s_jw)\)
by Lemma~\ref{variousaltpaths} (iii),
while if \(y_1<w\) then either \(y_2=s_{j-1}y_1>y_1\) or \(y_2=s_jy_1<y_1\).

Now since \(\Gamma\) satisfies the \(W_n\)-Polygon Rule, we have
\(N^{3}_{j-1,j}(\Gamma;c_{w},c_{u})=N^{3}_{j,j-1}(\Gamma;c_{w},c_{u})\), and it follows that
\begin{multline}\label{sumijorder3}
\mu(c_v,c_w)\mu(c_{t},c_v)\mu(c_{u},c_{t}) +\mskip -25mu\sum_{\substack{x_1<w\\x_2=(j-1)\neb(x_1)}}\mskip -25mu
\mu(c_{x_{1}},c_w)\mu(c_{x_{2}},c_{x_{1}})\mu(c_{u},c_{x_{2}})\\
=\mu(c_{s_jw},c_w)\mu(c_{s_{j-1}s_jw},c_{s_jw})\mu(c_{u},c_{s_{j-1}s_jw}) +\\
\sum_{\substack{y_1<w\\y_2=(j-1)\neb(y_1)}}\mskip -25mu
\mu(c_{y_{1}},c_{w})\mu(c_{y_{2}},c_{y_{1}})\mu(c_{u},c_{y_{2}}),
\end{multline}
where the term \(\mu(c_{s_jw},c_w)\mu(c_{s_{j-1}s_jw},c_{s_jw})\mu(c_{u},c_{s_{j-1}s_jw})\)
on the right hand side of Eq.~(\ref{sumijorder3} should be omitted if \(j\notin\SA(w)\).
Note that if \(j\in\SA(w)\) then \((c_w,c_{s_jw},c_{s_{j-1}s_jw},c_u)\) is not necessarily a
directed path, since there need not to be an arc from \(c_{s_{j-1}s_jw}\) to \(c_u\), but
in this case \(\mu(c_{s_jw},c_w)\mu(c_{s_{j-1}s_jw},c_{s_jw})\mu(c_{u},c_{s_{j-1}s_jw})=0\)
since \(\mu(c_u,c_{s_{j-1}s_jw})=0\). Similarly, \((c_w,c_v,c_t,c_u)\) is not necessarily a
directed path, since there need not be an arc from \(c_t\) to \(c_u\), but
\(\mu(c_v,c_w)\mu(c_t,c_v)\mu(c_u,c_t)=0\) in this case.
So Eq.~(\ref{sumijorder3}) still holds in these cases.
%if \((c_w,c_{v},c_{t},c_{u})\) is not the directed path, in which case we shall assign the value of \(0\) to
%\(\mu(c_v,c_w)\mu(c_{t},c_v)\mu(c_{u},c_{t})\).

Since \(\mu(c_v,c_w) = \mu(c_{s_jw},c_w)= 1\) and
\(\mu(c_{t},c_v)= \mu(c_{s_{j-1}s_jw},c_{s_jw}) = 1\), by Corollary~\ref{arweightone},
and since \(\mu(c_{x_{2}},c_{x_{1}})=\mu(c_{y_{2}},c_{y_{1}})=1\), since
\(\{c_{x_{1}},c_{x_{2}}\}\) and \(\{c_{y_{1}},c_{y_{2}}\}\) are simple edges,
Eq.~(\ref{sumijorder3}) yields the following formula for \(\mu(c_{u},c_{t})\):
\begin{multline*}
\mu(c_{u},c_{t})=\mu(c_{u},c_{s_{j-1}s_{j}w})+\mskip -25mu
\sum_{\substack{y_1<w\\y_2=(j-1)\neb(y_1)}}\mskip -25mu \mu(c_{y_{1}},c_{w})\mu(c_{u},c_{y_{2}})\\[-5pt]
-\mskip -25mu\sum_{\substack{x_1<w\\x_2=(j-1)\neb(x_1)}}\mskip -25mu\mu(c_{x_{1}},c_w)\mu(c_{u},c_{x_{2}}),
\end{multline*}
where \(\mu(c_{u},c_{s_{j-1}s_{j}w})\) should be interpreted as \(0\) if \(s_jw\notin\STD(\lambda)\).

Working similarly on \(\Gamma_{\lambda}\) yields the following formula for \(\mu^{(\lambda)}(u,t)\):
\begin{multline*}
\mu^{(\lambda)}(u,t)=\mu^{(\lambda)}(u,s_{j-1}s_{j}w)+\mskip -25mu
\sum_{\substack{y_1<w\\y_2=(j-1)\neb(y_1)}}\mskip -25mu \mu^{(\lambda)}(y_{1},w)\mu^{(\lambda)}(u,y_{2})\\[-5pt]
-\mskip -25mu\sum_{\substack{x_1<w\\x_2=(j-1)\neb(x_1)}}\mskip -25mu\mu^{(\lambda)}(x_{1},w)\mu^{(\lambda)}(u,x_{2}).
\end{multline*}
Since \(w,\,s_{j-1}s_jw<_{\lex}t'\) (if \(j\in\SA(w)\)) by Lemma~\ref{xlexlessthant} (iii),
and \(x_2,y_2<_{\lex}t'\) by Lemma~\ref{xlexlessthant} (iv),
it follows by the inductive hypothesis that the corresponding edge weights that appear in the two formulae
above are the same. Thus \(\mu(c_{u'},c_{t'})=\mu^{(\lambda)}(u',t')\), as desired.
\qedhere
\end{Case}
\end{proof}

\begin{prop}\label{admcellaremonomol}
Let \(\Gamma=\Gamma(C,\mu,\tau)\) be an admissible \(W_n\)-graph. Suppose that
\(\Lambda=\{\lambda\}\), where \(\lambda\in P(n)\), is
the set of molecule types for \(\Gamma\), and let \(\mathcal{I}=\mathcal{I}_{\lambda}\) index
the molecules of \(\Gamma\). For each \(\alpha\in\mathcal{I}\), let
\(C_{\alpha}=C_{\alpha,\lambda} = \{c_{\alpha,t}\mid t\in\STD(\lambda)\}\) be the vertex set of
a molecule of \(\Gamma\).
Then \(\Gamma = \bigsqcup_{\alpha\in\mathcal{I}}\Gamma(C_{\alpha})\),
and \(\Gamma(C_{\alpha})\) is isomorphic to \(\Gamma_{\lambda}\) for
each \(\alpha\in\mathcal{I}\).
\end{prop}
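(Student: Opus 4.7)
The plan is to prove, by induction on the lexicographic order of $t \in \STD(\lambda)$, the unified identity
\[
\mu(c_{\beta,u}, c_{\alpha,t}) = \delta_{\alpha,\beta}\,\mu^{(\lambda)}(u,t)
\]
for all $\alpha,\beta \in \mathcal{I}$ and $u,t \in \STD(\lambda)$, modelled closely on the inductive polygon-rule argument of Proposition~\ref{monomolecularadmcellsareKL}. Since $\tau(c_{\alpha,t}) = \D(t) = \tau^{(\lambda)}(t)$ by Remark~\ref{equalsubsetsS}, this identity immediately yields both claims of the proposition: each $\Gamma(C_\alpha)$ is a vertex-, colour-, and weight-preserving copy of $\Gamma_\lambda$, while all inter-molecule arc weights vanish, so $\Gamma = \bigsqcup_{\alpha\in\mathcal{I}} \Gamma(C_\alpha)$.

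The easy cases are dispatched as in Proposition~\ref{monomolecularadmcellsareKL}. If $\D(u) \nsubseteq \D(t)$ then Theorem~\ref{inductivestep} gives either $u<t$ or $\alpha=\beta$ with $u = s_it > t$; in the latter case Corollary~\ref{arweightone} delivers $\mu(c_{\alpha,u},c_{\alpha,t}) = 1 = \mu^{(\lambda)}(u,t)$. If additionally $\D(t) \nsubseteq \D(u)$, any nonzero weight produces a simple edge by the $W_n$-Simplicity Rule, which forces $\alpha=\beta$ with $u,t$ dual-Knuth related and both weights equal to~$1$. The nontrivial case is that of a probable pair $(u,t)$ with $u<t$ and $\D(t) \subsetneqq \D(u)$.

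For the inductive step on probable pairs, fix $(u',t')$ and assume the identity for all $y<_\lex t'$. Using Lemma~\ref{linktominimalCor}, pass to a favourable $(u,t) \in F(u',t')$ with $\max(\SD(t)) = \max(\SD(t'))$ and $\mu(c_{\beta,u},c_{\alpha,t}) = \mu(c_{\beta,u'},c_{\alpha,t'})$. Put $j = \max(\SD(t))$, $v = s_j t \in C_\alpha$, and let $i$ be the restriction number of $(u,t)$, so that $i<j$ by Lemma~\ref{rnltmaxsd}. Applying the $W_n$-Polygon Rule to the pair $(v, c_{\beta,u})$ with labels $\{i,j\}$, exactly one summand on one side encodes $\mu(c_{\beta,u}, c_{\alpha,t})$ multiplied by intra-molecule arc weights that all equal~$1$ by Corollary~\ref{arweightone}. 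Every other intermediate vertex $c_{\gamma,y}$ appearing in the two alternating sums satisfies $y <_\lex t'$ by Lemma~\ref{xlexlessthant}, so by the inductive hypothesis each such contribution reduces to $\delta_{\beta,\gamma}\delta_{\gamma,\alpha}$ times its counterpart in the polygon-rule identity for $\Gamma_\lambda$, which already holds. For $\alpha=\beta$ the calculation reproduces the one in Proposition~\ref{monomolecularadmcellsareKL} and yields $\mu(c_{\alpha,u},c_{\alpha,t}) = \mu^{(\lambda)}(u,t)$. For $\alpha \neq \beta$, every $\delta_{\beta,\gamma}\delta_{\gamma,\alpha}$ vanishes, so every non-current term vanishes and the identity collapses to $\mu(c_{\beta,u},c_{\alpha,t}) = 0$. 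The case split $m(i,j)=2$ versus $m(i,j)=3$, with sub-cases depending on the sign of $\col_t(j-1)-\col_t(j+1)$, follows exactly the structure used in Proposition~\ref{monomolecularadmcellsareKL}.

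The main obstacle will be verifying, in each sub-case, that the coefficient multiplying the target weight $\mu(c_{\beta,u},c_{\alpha,t})$ is exactly~$1$, coming from intra-molecule arcs of weight~$1$ via Corollary~\ref{arweightone}, rather than some degenerate factor that would trivialise the equation. This amounts to retracing the careful tracking of intermediate tableaux $x_1, y_1, x_2, y_2, w, s_iv, s_{j-1}s_jw$ used in Proposition~\ref{monomolecularadmcellsareKL}, now carrying along the molecule indices $\alpha, \beta, \gamma$; conceptually the argument requires no new ingredient beyond this extra bookkeeping of molecule labels.
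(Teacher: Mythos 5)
Your proposal is correct, and the underlying machinery (restriction to a favourable approximate via Lemma~\ref{linktominimalCor}, the case split at $j=\max(\SD(t))$ with Lemmas~\ref{rnltmaxsd} and~\ref{existencef}, the polygon rule anchored at $c_v$ or $c_w$, and the lexicographic bounds of Lemma~\ref{xlexlessthant}) is identical to what the paper uses. The organisation, however, is genuinely different. You prove the single identity $\mu(c_{\beta,u},c_{\alpha,t})=\delta_{\alpha,\beta}\,\mu^{(\lambda)}(u,t)$ by one induction on the lexicographic order of $t$, which subsumes Proposition~\ref{monomolecularadmcellsareKL} as the diagonal case $\alpha=\beta$ and kills all cross-molecule arcs in the off-diagonal case. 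The paper instead first establishes Proposition~\ref{monomolecularadmcellsareKL} independently, and then, for Proposition~\ref{admcellaremonomol}, runs a separate minimal-counterexample argument: pick $t'$ lex-minimal with an outgoing cross-molecule arc, note that admissibility forces the relevant alternating sums to be strictly positive, and show that the polygon rule then produces another cross-molecule arc at some $y<_\lex t'$, contradicting minimality; the intra-molecule weights are then supplied by citing Proposition~\ref{monomolecularadmcellsareKL} as a black box. Your version trades the paper's modularity for a cleaner unified statement; the paper's version trades the unified statement for a lighter touch in the disjointness step, where only nonvanishing of certain sums is needed, not their exact values. Both are sound, and the bookkeeping of molecule indices $\gamma,\delta$ in your sums works exactly because $\Gamma$ is ordered (Theorem~\ref{inductivestep}) and the Simply-Laced Bonding Rule forces the middle arcs in the $r=3$ case to be simple edges, so every intermediate vertex's molecule label is pinned down by either the inductive hypothesis or Corollary~\ref{arweightone}.
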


\begin{proof}
If \(\lvert \mathcal{I}\rvert=1\) then \(\Gamma=\Gamma(C_{\alpha})\). Since \(\Gamma(C_{\alpha})\) is a
monomolecular admissible \(W_n\)-cell of type \(\lambda\), Proposition~\ref{monomolecularadmcellsareKL}
says that \(\Gamma(C_{\alpha})\) is isomorphic to \(\Gamma_{\lambda}\). So we assume that
\(\lvert \mathcal{I}\rvert >1\).

If \(\Gamma = \bigsqcup_{\alpha\in\mathcal{I}}\Gamma(C_{\alpha})\) then
for each \(\alpha\in\mathcal{I}\) the set \(C_{\alpha}=\{c_{\alpha,t}\mid t\in\STD(\lambda)\}\)
is the vertex set of a monomolecular admissible \(W_n\)-cell of type~\(\lambda\). The result
then follows immediately from Proposition~\ref{monomolecularadmcellsareKL}, which says that each
\(\Gamma(C_{\alpha})\) is isomorphic to \(\Gamma_{\lambda}\).
Thus it suffices to show that \(\Gamma = \bigsqcup_{\alpha\in\mathcal{I}}\Gamma(C_{\alpha})\).

Suppose otherwise. Then there exists \(\alpha\in\mathcal{I}\) such that
\begin{equation*}
\Ini_{\alpha}(\Gamma)=\{\,t\in\STD(\lambda)\mid\mu(c_{\beta,u},c_{\alpha,t})\neq 0
\text{ for some }(\beta,u)\in(\mathcal{I}\setminus\{\alpha\})\times\STD(\lambda)\}\ne\emptyset,
\end{equation*}
and we let \(t'\) be the element of \(\Ini_{\alpha}(\Gamma)\)
that is minimal in the  lexicographic order on \(\STD(\lambda)\). Choose
\((\beta,u')\in (\mathcal{I}\setminus \{\alpha\})\times\STD(\lambda)\) with
\(\mu(c_{\beta,u'},c_{\alpha,t'})\neq 0\). Since
\(\Gamma\) satisfies the \(W_n\)-Simplicity Rule (by
Theorem~\ref{combinatorialCharacterisation}), the assumption that
\(\alpha\neq\beta\) and  \(\mu(c_{\beta,u'},c_{\alpha,t'})\neq 0\) implies that
\(\D(t')\subsetneqq\D(u')\). Moreover, since \(\Gamma\) is ordered (by Theorem~\ref{inductivestep}),
\(\alpha\neq\beta\) implies  that \(u'<t'\). Hence \((u',t')\) is a probable pair.

Let \(i\) be the restriction number of \((u',t')\) and
\(j=\max(\SD(t'))\). Let \((u,t)\in F(u',t')\). It is clear that \((u,t)\) is
\(i\)-restricted and favourable. Thus Lemma~\ref{linktominimalCor} shows that
\((u,t)\) is probable, \(\max(\SD(t))=\max(\SD(t'))=j\),
and \(\mu(c_{\beta,u},c_{\alpha,t})=\mu(c_{\beta,u'},c_{\alpha,t'})\neq 0\).
Furthermore, since \(i\in\D(u)\setminus\D(t)\) (since \((u,t')\) is favourable), and
since \(j\in\D(t)\) and
\(\D(t)\subsetneqq\D(u)\) (since \((u,t)\) is probable), it follows
that \(j\in\D(t)\) and \(i\notin\D(t)\), and
\(i,j\in\D(u)\). Let \(v=s_jt\),
and note that \(v\in\STD(\lambda)\) and \(v<t\).
Since  \(i<j\) by Lemma~\ref{rnltmaxsd}, either \(i<j-1\) or \(i=j-1\).

Suppose first that \(i<j-1\). It follows by
Lemma~\ref{variousaltpaths} (i) that \(i,j\notin\D(v)\).
Moreover, since \(\mu(c_{\alpha,t},c_{\alpha,v})=1\)
by Corollary~\ref{arweightone}, and since \(\mu(c_{\beta,u},c_{\alpha,t})\neq 0\),
it follows that
\((c_{\alpha,v},c_{\alpha,t},c_{\beta,u})\) is
an alternating directed path of type \((j,i)\).

Since \(\Gamma\) is admissible, if \(\mu(c_{\beta,u},c_{\alpha,t})\neq 0\) then
\(\mu(c_{\beta,u},c_{\alpha,t})> 0\). So it follows that
\(N^{2}_{j,i}(\Gamma;v,u)>0\), and so \(N^{2}_{i,j}(\Gamma;v,u)>0\), since
\(\Gamma\) satisfies the \(W_n\)-Polygon Rule.
Thus there exists at least one \((\delta,x_1)\in\mathcal{I}\times\STD(\lambda)\)
such that \((c_{\alpha,v},c_{\delta,x_1},c_{\beta,u})\)
is an alternating directed path of type \((i,j)\). If \(\delta\neq\alpha\) then
\(v\in\Ini_{\alpha}(\Gamma)\). Now since \(\morethan {t'}i=\morethan ti\), we have
\(v<_{\lex}t'\) by Lemma~\ref{xlexlessthant}. This, however, contradicts the definition
of~\(t'\). Hence \(\delta=\alpha\), and \(x_1\in\Ini_{\alpha}(\Gamma)\). Now
Theorem~\ref{inductivestep} shows that either \(x_1=s_iv\) and \(i\in\SA(v)\), or else
\(x_1<v\). But \(x_1<_{\lex}t'\) by Lemma~\ref{xlexlessthant} (i)
in the former case, and \(x_1<_{\lex}t'\) by Lemma~\ref{xlexlessthant} (ii) in
the latter case. Both alternatives contradict the definition of~\(t'\),
thus showing that \(i<j-1\) is impossible.

Suppose now that \(i=j-1\). By Lemma~\ref{existencef}, we have
\(\col_{t}(j-1)\neq\col_{t}(j+1)\), and it follows that either
\(\col_{t}(j-1)<\col_{t}(j+1)\) or \(\col_{t}(j-1)>\col_{t}(j+1)\).

\begin{Case}{1.} Suppose that \(\col_{t}(j-1)<\col_{t}(j+1)\).
The result follows by the same argument as above,
with \(j-1\) replacing \(i\) and Lemma~\ref{variousaltpaths} (ii)
replacing Lemma~\ref{variousaltpaths} (i).
\end{Case}

\begin{Case}{2.} Suppose that \(\col_{t}(j-1)>\col_{t}(j+1)\). Then
\(j-1\in\SD(v)\), and \(j-1\in\D(v)\), and \(j\notin\D(v)\) by
Lemma~\ref{variousaltpaths} (iii). Note that since \(j-1\in\SD(v)\),
we have \(s_{j-1}v\in\STD(\lambda)\) and \(s_{j-1}v<v\). Let \(w=s_{j-1}v\). It
follows by Lemma~\ref{variousaltpaths} (iii) that
\(j-1,j\notin\D(w)\).

Next, since \(\mu(c_{\alpha,c},x_{\alpha,w})=\mu(c_{\alpha,t},c_{\alpha,v})=1\)
by Corollary~\ref{arweightone} and since \(\mu(c_{\beta,u},c_{\alpha,t})\neq 0\),
it follows that
\((c_{\alpha,w},c_{\alpha,v},c_{\alpha,t},c_{\beta,u})\) is
an alternating directed path of type \((j-1,j)\).

Since \(\Gamma\) is admissible, if \(\mu(c_{\beta,u},c_{\alpha,t})\neq 0\) then
\(\mu(c_{\beta,u},c_{\alpha,t})> 0\). So it follows that
\(N^{3}_{j-1,j}(\Gamma;w,u)>0\), and so \(N^{3}_{j,j-1}(\Gamma;w,u)>0\), since
\(\Gamma\) satisfies the \(W_n\)-Polygon Rule.
Thus there exists at least one \((\delta,x_1)\in\mathcal{I}\times\STD(\lambda)\) and one
\((\gamma,x_2)\in\mathcal{I}\times\STD(\lambda)\)
such that \((c_{\alpha,w},c_{\delta,x_1},c_{\gamma,x_2},c_{\beta,u})\)
is an alternating directed path of type \((j,j-1)\).
If \(\delta\neq\alpha\) then \(w\in\Ini_{\alpha}(\Gamma)\).
Now since \(\morethan t{(j-1)}=\morethan {t'}{(j-1)}\), we have \(w<_{\lex}t'\) by
Lemma~\ref{xlexlessthant} (iii). This, however,
contradicts the definition of~\(t'\).
Therefore, \(\delta=\alpha\).

Since
\(\D(x_1)\cap\{j-1,j\}=\{j\}\) and \(\D(x_2)\cap\{j-1,j\}=\{j-1\}\),
and \(\mu(c_{\gamma,x_2},c_{\delta,x_1})\neq 0\), it follows from
the \(W_n\)-Simplicity Rule that \(\{c_{\delta,x_1},c_{\gamma,x_2}\}\) is a simple edge.
Thus \(\gamma=\delta\), and \(x_1\) and \(x_2\) are related by a dual Knuth move. Thus
\(x_2\) is the \((j-1)\)-neighbour of \(x_1\). We see that \(x_2\in\Ini_{\alpha}(\Gamma)\),
and it will suffice to show that \(x_2<_{\lex}t'\), contradicting the definition of~\(t'\).

By Theorem~\ref{inductivestep} either \(x_1=s_jw>w\) or \(x_1<w\). If
\(x_1<w\) then since \(\morethan t{(j-1)}=\morethan {t'}{(j-1)}\), the conclusion
\(x_2<_{\lex}t'\) follows from Lemma~\ref{xlexlessthant}~(iv).
We are left with the case \(x_1=s_jw>w\). This gives \(j\in\SA(w)\), and we see
that the conditions of Lemma~\ref{variousaltpaths}~(iii) are satisfied: we
have \(v=s_jt\) with \(j\in\SD(t)\) and \(\col_t(j+1)<\col_t(j-1)\), and \(w=s_{j-1}v\).
Since \(j\in\SA(w)\) it follows that \(j-1\in\SA(x_1)\), and \(s_{j-1}x_1\)
is the \((j-1)\)-neighbour of~\(x_1\). Thus \(x_2=s_{j-1}x_1=s_{j-1}s_jw\), and
since \(\morethan t{(j-1)}=\morethan {t'}{(j-1)}\), we have
\(x_2<_{\lex}t'\) by Lemma~\ref{xlexlessthant}~(iii).
\qedhere
\end{Case}
\end{proof}

\begin{rema}
Since \(\alpha\neq\beta\) and \(\mu(c_{\beta,u'},c_{\alpha,t'})\neq 0\),
it follows by Remark~\ref{nonemptyAkst}
that \(A(u',t')\neq\emptyset\). Let \((u,t)\in A(u',t')\), noting that \((u,t)\in F(u',t')\).
Then \(\Morethan {t'}i\) is \(i\)-critical. (The proof is very much the same as that
for Proposition~\ref{cellorder}.)
Since \(i<j\) (as shown above) and \(j<i+2\), since \(\morethan t{(i+1)}\) is minimal,
we have \(j=i+1\). Definition~\ref{m-critical} says that \(\col_{t}(i+2)=\col_{t}(i)\).
But \(\col_{t}(i+2)\neq\col_{t}(i)\) by Lemma~\ref{existencef}. This contradiction provides
an alternative proof for Proposition~\ref{admcellaremonomol}.
\end{rema}

We are now in a position to state and prove the main result of the paper.
\begin{theo}\label{mainresult}
Admissible cells of type \(A_{n-1}\) are Kazhdan--Lusztig.
\end{theo}

\begin{proof}
Let \(\Gamma = \Gamma(C,\mu,\tau)\) be an admissible \(W_n\)-cell, and let
\(\Lambda\) be the set of molecule types for \(\Gamma\). By Lemma~\ref{samelambda},
\(\Lambda=\{\lambda\}\) for some \(\lambda\in P(n)\).
Let \(\mathcal{I}=\mathcal{I}_{\lambda}\) be the indexing set for
the molecules of \(\Gamma\), and let, for each \(\gamma\in\mathcal{I}\),
\(C_{\gamma}=C_{\gamma,\lambda} = \{c_{\gamma,w}\mid w\in\STD(\lambda)\}\)
be the vertex set of a molecule of \(\Gamma\). By Proposition~\ref{admcellaremonomol},
\(\Gamma = \bigsqcup_{\gamma\in\mathcal{I}}\Gamma(C_{\gamma})\),
where for each \(\gamma\in\mathcal{I}\), \(\Gamma(C_{\gamma})\) is
isomorphic to \(\Gamma_{\lambda}\). Since \(\Gamma\) is an admissible \(W_n\)-cell by
hypothesis, it follows that \(\mathcal{I}=\{\gamma\}\), whence
\(\Gamma=\Gamma(C_{\gamma})\) and \(\Gamma\) is isomorphic to \(\Gamma_{\lambda}\).
Since \(\Gamma_{\lambda}\) is isomorphic to \(\Gamma(C(\tau_{\lambda}))\), it follows from
Corollary~\ref{leftcelllambda} that \(\Gamma\) is isomorphic to a Kazhdan--Lusztig left cell.
\end{proof}

\begin{rema}\label{remontwosidedcells}
Let \(\lambda\in P(n)\) and let \(D(\lambda)=\bigsqcup_{t\in\STD(\lambda)}C(t)\),
the Kazhdan--Lusztig two-sided cell corresponding to \(\lambda\).
By Remark~\ref{orderedtwosided}, the singleton set \(\{\lambda\}\) is the set
of molecule types of the admissible \(W_n\)-graph \(\Gamma(D(\lambda))\). It follows
from Proposition~\ref{admcellaremonomol} that \(\Gamma(D(\lambda))\) is a disjoint union of
the Kazhdan--Lusztig left cells \(\Gamma(C(t))\). This implies the following well known
result (see, for example, \cite[Theorem 5.3]{gecpfei:charHecke}).
\begin{theo}\label{xyinsameleftcell}
Let \(\lambda\in P(n)\) and \(y,w\in D(\lambda)\). If \(y\preceq\lside w\)
then \(y,w\in C(t)\) for some \(t\in\STD(\lambda)\).
\end{theo}
\end{rema}

\section*{Acknowledgement}

I am grateful to A/Prof Robert B. Howlett for providing many improvements to the
exposition of this paper.

\end{document}